\documentclass{amsart}

\newtheorem{thm}{Theorem}[section]
\newtheorem{lemma}{Lemma}[section]
\newtheorem{prop}{Proposition}[section]
\newtheorem{coro}{Corollary}[section]

\newtheorem{assume}{Assumption}[section]
\newtheorem{remark}{Remark}[section]

\newtheorem{alphthm}{Theorem}[section]

\newtheorem{alphlemma}{Lemma}[section]

\newcommand{\lct}{\; \raisebox{-.96ex}{$\stackrel{\textstyle <}{\sim}$} \;}
\newcommand{\gct}{\; \raisebox{-.96ex}{$\stackrel{\textstyle >}{\sim}$} \;}
\newcommand{\mbb}{\mathbb}

\begin{document}

\title[Weighted restriction estimates]{Weighted restriction estimates \\
                                       using polynomial partitioning}

\author{Bassam Shayya}
\address{Department of Mathematics\\
         American University of Beirut\\
         Beirut\\
         Lebanon}
\email{bshayya@aub.edu.lb}



\subjclass[2010]{42B10, 42B20; 28A75.}

\begin{abstract}
We use the polynomial partitioning method of Guth~\cite{guth:poly} to prove
weighted Fourier restriction estimates in $\mbb R^3$ with exponents $p$ that
range between $3$ and $3.25$, depending on the weight. As a corollary to our
main theorem, we obtain new (non-weighted) local and global restriction
estimates for compact $C^\infty$ surfaces $S \subset \mbb R^3$ with strictly
positive second fundamental form. For example, we establish the global
restriction estimate $\| Ef \|_{L^p(\mbb R^3)} \lct \| f \|_{L^q(S)}$ in
the full conjectured range of exponents for $p > 3.25$ (up to the sharp
line), and the global restriction estimate
$\| Ef \|_{L^p(\Omega)} \lct \| f \|_{L^2(S)}$ for $p>3$ and certain
sets $\Omega \subset \mbb R^3$ of infinite Lebesgue measure. As a corollary
to our main theorem, we also obtain new results on the decay of spherical
means of Fourier transforms of positive compactly supported measures on
$\mbb R^3$ with finite $\alpha$-dimensional energies.
\end{abstract}

\maketitle

\section{Introduction}

There has been a surge of activity in recent years of using polynomial
partitioning methods to study some important and open problems in
combinatorics and harmonic analysis. One striking example is Guth's recent
paper~\cite{guth:poly}, where polynomial partitioning was used to obtain
progress on the Fourier restriction problem in $\mbb R^3$.

Let $S$ be a compact $C^\infty$ surface in $\mbb R^3$ with strictly positive
second fundamental form. In a recent paper~\cite{guth:poly}, Guth
used polynomial partitioning to obtain a new local restriction estimate on
$S$: to every $\epsilon > 0$ there is a constant $C_\epsilon$ (which also
depends on $S$) such that
\begin{equation}
\label{3pt25}
\int_{|x| \leq R} |E f(x)|^{3.25} dx
\leq C_\epsilon R^\epsilon \| f \|_{L^\infty(S)}^{3.25}
\end{equation}
for all $f \in L^\infty(S)=L^\infty(\sigma)$, where $\sigma$ is the surface
measure on $S$ and $E f$ is the Fourier extension (or adjoint restriction)
operator on $S$ defined as
\begin{displaymath}
E f(x)= E_S f(x) = \widehat{f d\sigma}(x)
                 = \int e^{-2 \pi i x \cdot \xi} f(\xi) d\sigma(\xi).
\end{displaymath}
Using Tao's $\epsilon$-removal lemma, (\ref{3pt25}) can be turned into a
global restriction estimate:
\begin{equation}
\label{3pt25global}
\int_{\mbb R^3} |E f(x)|^p dx
\leq C \, \| f \|_{L^\infty(S)}^p \hspace{0.44in} (p > 3.25)
\end{equation}
for all $f \in L^\infty(S)$.

The restriction conjecture in harmonic analysis asserts that the exponent
$3.25$ in (\ref{3pt25}) and (\ref{3pt25global}) can be replaced by $3$.
Guth's theorem is the current best known result on this conjecture in
$\mbb R^3$. The purpose of this paper is to lower the $3.25$ exponent by
considering weighted variants of (\ref{3pt25}). For example, as a
consequence of the results of this paper, one obtains the estimates
\begin{equation}
\label{3ptzero}
\int_{B_R} |E f(x)|^3 \chi_{\Omega_1}(x) dx
\leq C_\epsilon R^\epsilon \| f \|_{L^2(S)}^3
\end{equation}
for all $f \in L^2(S)$, and
\begin{equation}
\label{3ptfourteen}
\int_{B_R} |E f(x)|^{22/7} \chi_{\Omega_2}(x) dx
\leq C_\epsilon R^\epsilon \| f \|_{L^2(S)}^3 \| f \|_{L^\infty(S)}^{1/7}
\end{equation}
for all $f \in L^\infty(S)$, where $B_R$ is any ball in $\mbb R^3$ of radius
$R$, $\chi_{\Omega_1}$ is the characteristic function of the set
\begin{displaymath}
\Omega_1 = \{ (x_1,x_2,x_3) \in \mbb R^3 : |x_3| \leq |(x_1,x_2)|^{-1/2} \},
\end{displaymath}
and $\chi_{\Omega_2}$ is the characteristic function of the set
\begin{displaymath}
\Omega_2 = \{ (x_1,x_2,x_3) \in \mbb R^3 : |x_3| \leq 1 \}.
\end{displaymath}
We refer the reader to the paragraph following the statement of our main
theorem, Theorem \ref{mainjj}, for the proofs of (\ref{3ptzero}) and
(\ref{3ptfourteen}).

Some of our local restriction estimates can be turned into global ones. For
example, we establish the global version of (\ref{3ptfourteen}):
\begin{displaymath}
\int_{\mbb R^3} |E f(x)|^p \chi_{\Omega_2}(x) dx
\leq C \, \| f \|_{L^{44/21}(S)}^p \hspace{0.44in} (p > 22/7)
\end{displaymath}
for all $f \in L^{44/21}(S)$. For certain types of sets, we get global
estimates for the full range of exponents $p > 3$. For example, we prove
that
\begin{displaymath}
\int_{\mbb R^3} |E f(x)|^p \chi_{\Omega}(x) dx
\leq C \, \| f \|_{L^2(S)}^p \hspace{0.44in} (p > 3)
\end{displaymath}
for all $f \in L^2(S)$, where
\begin{displaymath}
\Omega = \cup_{m=0}^\infty \cup_{n=0}^\infty
\big( \mbb R \times [-1,1]^2 + (0,m^4,n^4) \big).
\end{displaymath}
We also obtain the following improvement on (\ref{3pt25global}):
\begin{equation}
\label{3pt25newglobal}
\int_{\mbb R^3} |E f(x)|^p dx
\leq C \, \| f \|_{L^q(S)}^p \hspace{0.44in}
\big( p > 3.25, \; q' < p/2 \big)
\end{equation}
for all $f \in L^q(S)$, where $q'$ is the exponent conjugate to $q$. Up to
the end point $q'=p/2$, the range of the $q$ exponent in
(\ref{3pt25newglobal}) is known to be the best possible.

All the results of this paper concerning global Fourier restriction
estimates are stated in Corollary \ref{global}.

Let $M(\mbb R^3)$ be the space of all complex Borel measures on $\mbb R^3$,
and $S \subset \mbb R^3$ be, as above, a compact $C^\infty$ surface with
strictly positive second fundamental form. The second application of the
main theorem of this paper concerns the decay properties of the $L^q$ norms
\begin{displaymath}
\| \widehat{\mu}(R \cdot) \|_{L^q(S)}
= \Big( \int |\widehat{\mu}(R\xi)|^q d\sigma(\xi) \Big)^{1/q}
\end{displaymath}
as $R \to \infty$, where $\mu \in M(\mbb R^3)$ is a positive compactly
supported measure with finite $\alpha$-dimensional energy $I_\alpha(\mu)$
(see (\ref{defofenergy}) for the definition of $I_\alpha(\mu)$).
This is an important topic for the study of
distance sets in geometric measure theory, as we explain in Section 3 below.

For $q = 2$, Erdo\v{g}an \cite{mbe:birestfal} proved the decay
estimate
\begin{displaymath}
\| \widehat{\mu}(R \cdot) \|_{L^2(\mbb S^2)} \leq C_\epsilon R^\epsilon
R^{-(\alpha/4)-(1/8)} \, \sqrt{I_\alpha(\mu)}
\end{displaymath}
for $3/2 \leq \alpha \leq 5/2$, where $\mbb S^2$ is the unit sphere in
$\mbb R^3$ (see (\ref{l2erdogan}) for the $n$-dimensional version of this
estimate), and used it to get the best known result on the distance set
problem in $\mbb R^3$, but no better decay estimate was known for
$1 \leq q < 2$. In this paper, we get a better decay estimate for the range
of exponents $1 \leq q \leq p_0$, where $p_0=4(4\alpha+3)/(10\alpha+3)$ and
$3/2 < \alpha < 5/2$. We also reprove Erdo\v{g}an's estimate when
$3/2 \leq \alpha < 2$, obtaining a proof of his distance set result that is
based on polynomial partitioning. All these results are stated in Corollary
\ref{decaysphm}.

Let $\mu$ be a positive and compactly supported measure in $M(\mbb R^3)$
satisfying
\begin{displaymath}
\sup_{x \in \mbb R^3} \sup_{r >0} \frac{\mu(B(x,r))}{r^\alpha} < \infty.
\end{displaymath}
(We refer the reader to the first paragraph of Section 4 for an explanation
of how this condition relates to the condition $I_\alpha(\mu) < \infty$.)
The third application of our main theorem establishes $L^p(\mu)$ bounds on
exponential sums of the form $\sum_{l=1}^N a_l e^{2\pi i R w_l \cdot x}$,
where $w_1, \ldots, w_N \in S$ are $R^{-1}$-separated. These results are
stated in Corollary \ref{expsum}.

\subsection{The main theorem}

Suppose $3/2 \leq \alpha \leq 3$ and $H$ is a non-negative measurable
function on $\mbb R^3$. We define $A_\alpha(H)$ to be the infimum of the set
\begin{displaymath}
\Big\{ \lambda \in [0,\infty] : \int_{B(x_0,R)} H(x) dx \leq \lambda
R^\alpha \mbox{ for all } x_0 \in \mbb R^3 \mbox{ and } R \geq 1 \Big\}.
\end{displaymath}
We also define
\begin{displaymath}
A_{\alpha,p}(H)= \max \big[ A_\alpha(H), A_\alpha(H)^{1-\frac{p}{4}} \big]
\end{displaymath}
and
\begin{displaymath}
{\mathcal A}_{\alpha,p}(H)
= \max \big[ A_\alpha(H), A_\alpha(H)^{2-\frac{p}{2}} \big].
\end{displaymath}
Note that if $H_\tau$ is a translate of $H$, defined by
$H_\tau(x)=H(x+\tau)$, then $A_\alpha(H)=A_\alpha(H_\tau)$ (and, of course,
the same is true for $A_{\alpha,p}(H)$ and ${\mathcal A}_{\alpha,p}(H)$).

We say $H$ is a weight of dimension $\alpha$ if $\| H \|_{L^\infty} \leq 1$
and $A_\alpha(H) < \infty$. In this case, any translate $H_\tau$ of $H$
is also a weight of dimension $\alpha$.

We alert the reader that since $A_\beta(H) \leq A_\alpha(H)$ if
$\beta \geq \alpha$, a weight $H$ of dimension $\alpha$ is also a weight of
dimension $\beta$. So the phrase ``$H$ is a weight of dimension $\alpha$''
is just a way of expressing in words the inequality $A_\alpha(H) < \infty$,
and is not meant to assign the specific dimension $\alpha$ to the function
$H$.

The aim of this paper is to prove the following theorem.

\begin{thm}
\label{mainjj}
Let $S \subset \mbb R^3$ be a compact $C^\infty$ surface with strictly
positive second fundamental form, and $H$ be a weight of dimension $\alpha$.

{\rm (i)} Suppose $3/2 \leq \alpha < 5/2$. Then to every $\epsilon > 0$
there is a constant $C_\epsilon(\alpha,S)$ such that
\begin{displaymath}
\int_{B(0,R)} |E f(x)|^p H(x)dx
\leq C_\epsilon(\alpha,S) R^\epsilon A_{\alpha,p}(H)
     \| f \|_{L^2(S)}^3 \| f \|_{L^\infty(S)}^{p-3}
\end{displaymath}
for all $f \in L^\infty(S)$ and $R \geq 1$, where
\begin{displaymath}
p= 2 \frac{4 \alpha + 3}{2 \alpha + 3}.
\end{displaymath}

{\rm (ii)} Suppose $3/2 \leq \alpha < 2$. Then to every $\epsilon > 0$
there is a constant $C_\epsilon(\alpha,S)$ such that
\begin{displaymath}
\int_{B(0,R)} |E f(x)|^3 H(x)dx
\leq C_\epsilon(\alpha,S) R^\epsilon A_{\alpha,3}(H)
     R^{\frac{1}{4}(\alpha - \frac{3}{2})} \| f \|_{L^2(S)}^3
\end{displaymath}
for all $f \in L^2(S)$ and $R \geq 1$.

{\rm (iii)} Suppose $5/2 \leq \alpha \leq 3$. Let
$2 \leq \gamma < (11/2) - \alpha$. Then to every $\epsilon > 0$ there is a
constant $C_\epsilon(\alpha,\gamma,S)$ such that
\begin{displaymath}
\int_{B(0,R)} |E f(x)|^p H(x)dx
\leq C_\epsilon(\alpha,\gamma,S) R^\epsilon {\mathcal A}_{\alpha,p}(H)
     \| f \|_{L^2(S)}^{\gamma} \| f \|_{L^\infty(S)}^{p-\gamma}
\end{displaymath}
for all $f \in L^\infty(S)$ and $R \geq 1$, where $p=13/4$.
\end{thm}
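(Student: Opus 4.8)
The plan is to follow the polynomial-partitioning scheme of Guth~\cite{guth:poly}, but to carry a weight $H$ of dimension $\alpha$ through the induction and to keep careful track of how the $L^2$ and $L^\infty$ norms of $f$ enter. I would set up a broad/narrow dichotomy after a single application of polynomial partitioning at degree $D$ (to be chosen as a power of $R^\epsilon$): either a large fraction of $\int_{B(0,R)} |Ef|^p H$ comes from cells $O_i$ on which the contribution is essentially governed by a single cell (the \emph{cellular} case), or it comes from a neighborhood of the zero set $Z(P)$ (the \emph{algebraic} case). In the cellular case one has roughly $D^3$ cells, each a ball of radius $\sim R/D$, with $f$ split into $\sim D^3$ pieces; the key point is that $A_\alpha(H)$ is unchanged under the rescaling $B(0,R)\to B(0,R/D)$ up to a factor $D^\alpha$, so the induction on scales closes provided the exponent $p$ satisfies $p = 2(4\alpha+3)/(2\alpha+3)$, which is exactly the balance point making the gain from the number of cells match the loss in the weight bound. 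The quantities $A_{\alpha,p}(H)$ and $\mathcal A_{\alpha,p}(H)$ are engineered precisely so that they behave submultiplicatively under this rescaling — this is where the two cases $A_\alpha(H)\le 1$ and $A_\alpha(H)> 1$ in the definition of the max come from.

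For the algebraic case I would use the standard wave-packet decomposition and the transversality/tangency dichotomy of Guth: the tangential wave packets concentrate in a thin neighborhood of $Z(P)$, and a polynomial Wongkew-type bound controls how much of a weight of dimension $\alpha$ can sit in such a neighborhood, giving a factor like $R^\epsilon A_\alpha(H)$ times a gain from the codimension-one concentration. The transverse wave packets are handled by the bilinear (or multilinear) restriction estimate of Tao/Bennett–Carbery–Tao, which is where the $\|f\|_{L^2(S)}^2$ appears — one trades two powers of $f$ for the $L^2$ bilinear estimate and the remaining $p-3$ (respectively $p-\gamma$) powers stay in $L^\infty$. Part (ii) with $p=3$ is the degenerate endpoint where the bilinear estimate is used with no room to spare, producing the extra $R^{(\alpha - 3/2)/4}$ loss; the condition $\alpha < 2$ is what keeps this loss from swamping the other terms. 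Part (iii) for $5/2 \le \alpha \le 3$ is where a single power of the $L^2$ norm is not enough and one iterates, extracting $\gamma$ powers of $\|f\|_{L^2}$; the constraint $\gamma < 11/2 - \alpha$ and the value $p = 13/4$ come from optimizing the interpolation between the Guth $3.25$ estimate and the weighted $L^2$ input, and here the relevant rescaling quantity is $\mathcal A_{\alpha,p}$ rather than $A_{\alpha,p}$ because the weight now enters with a different power.

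The induction is run on the scale $R$: one assumes the claimed estimate at all scales $\le R/2$ (or $\le R/D$) with a slightly larger $\epsilon$ and a constant $C_{\epsilon}(\alpha,S)$, and shows it persists at scale $R$; the $R^\epsilon$ factor absorbs the degree $D = R^{\delta}$ and the $\log R$ losses from summing over dyadic pieces, provided $\delta$ is chosen small relative to $\epsilon$. One also needs a parabolic rescaling step to reduce to the case where $S$ is a small perturbation of a fixed paraboloid cap, and to handle the passage between $|Ef|$ on a ball of radius $R$ and on smaller balls — here the invariance $A_\alpha(H_\tau) = A_\alpha(H)$ under translation, noted in the paper, is what lets the induction hypothesis be applied on translated sub-balls without changing the weight constant.

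I expect the main obstacle to be the bookkeeping in the algebraic/tangential case: making sure that the Wongkew-type bound on $\int_{N_{R^{1/2}}(Z(P))\cap B(0,R)} H$ is $\lesssim D^{O(1)} R^{\epsilon} A_\alpha(H) R^{\alpha - 1/2}$ (the codimension-one gain) with constants uniform in the degree $D$, and that when this is fed back into the induction it combines with the transverse contribution to give exactly the exponent $p$ claimed in each of (i), (ii), (iii) — in other words, verifying that the arithmetic of exponents in the broad/narrow balancing produces $2(4\alpha+3)/(2\alpha+3)$, $3$ (with the stated loss), and $13/4$ respectively, and not something weaker. A secondary subtlety is ensuring the weight hypothesis $\|H\|_{L^\infty}\le 1$ is actually used only where needed (essentially to control the worst local behavior of $H$ inside small balls when $A_\alpha(H)$ alone is not enough), so that the statement is not vacuous.
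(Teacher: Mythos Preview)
Your outline misidentifies where each ingredient enters, and as a result the arithmetic you describe cannot produce the stated exponents.

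First, in Guth's scheme the cells $O_i$ are \emph{not} balls of radius $\sim R/D$; they are arbitrary open components of $\mathbb{R}^3\setminus Z(P)$.  There is no rescaling in the cellular case and no factor $D^\alpha$ from the weight.  The cellular step closes by induction on $\sum_\tau\|f_\tau\|_{L^2}^2$: each tube hits at most $D+1$ cells, so one cell $i$ has $\sum_\tau\|f_{\tau,i}\|_{L^2}^2\lesssim D^{-2}\sum_\tau\|f_\tau\|_{L^2}^2$, and the gain $D^{-2(3/2+\epsilon)}$ beats the $D^3$ count for \emph{any} $p$ in the relevant range.  The value $p=2(4\alpha+3)/(2\alpha+3)$ does \emph{not} come from the cellular balance.

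Second, your transverse/tangential roles are swapped.  Transverse tubes are handled by induction on the radius (the balls $B_j$ have radius $R^{1-\delta}$), not by Tao/BCT bilinear restriction.  The bilinear structure appears only in the \emph{tangential} term $\mathrm{Bil}_{P,\delta}Ef_{j,\mathrm{tang}}$, and it is estimated not by a Wongkew volume bound but by C\'ordoba's $L^4$ argument (tangential tubes in $B_j$ are nearly coplanar) combined with Guth's Lemma~3.6, which says the tangential tubes point in at most $D^2 R^{1/2+O(\delta)}$ directions.  The weight enters via H\"older: one writes $\int_{B_j\cap W}|\cdot|^p H\le (\int|\cdot|^4)^{p/4}(\int_{B_j}H)^{1-p/4}$ and uses $\int_{B_j}H\le A_\alpha(H)R^\alpha$.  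Balancing the $R^{-1/2}$ from the $L^4$ estimate, the $R^{\alpha(1-p/4)}$ from the weight, and the $R^{-b/2}$ Kakeya gain against the needed $(\sum\|f_\tau\|_2^2)^{3/2}$ is what forces $p=2(4\alpha+3b)/(2\alpha+2b+1)$.  For part~(iii) one instead interpolates the $L^4$ bound with a weighted local $L^2$ estimate $\int_{B_R}|Ef|^2H\lesssim A_\alpha(H)R\|f\|_{L^2}^2$, which is where $\mathcal{A}_{\alpha,p}$ and the range $\gamma<11/2-\alpha$ come from.

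Third, part~(ii) is not simply the endpoint $p=3$ of the same argument.  The paper introduces a parameter $b\ge1$ in the local $L^2$ averages $\int_{B(\xi_0,R^{-1/2})}|f|^2\le R^{-(b+1)/2}$ and takes $b$ large (of order $1/\epsilon$) to push $p$ down to $3+2\epsilon$; the $R^{(\alpha-3/2)/4}$ loss emerges from H\"older after this.  Your sketch has no mechanism that would produce this.

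Finally, the passage from the broad estimate to $Ef$ itself does use parabolic rescaling and induction on $R$, but here the crucial point---absent from your outline---is that one must track how the weight transforms under the anisotropic dilation $T$: one shows $A_\alpha(H\circ T^{-1})\lesssim r^{3-\alpha}A_\alpha(H)$, and closing the induction requires $2p-\alpha-1-\gamma>0$, which is the origin of the constraint $\gamma<11/2-\alpha$ in~(iii) and the restriction $\alpha<5/2$ in~(i).
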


For example, if $H=\chi_{\Omega_1}$, then
\begin{displaymath}
\int_{B_R} |Ef(x)|^3 H(x) dx= \int_{B(0,R)} \big| E
\big( e^{-2\pi i \tau \cdot \xi} f \big) (x) \big|^3
H(x+\tau)dx,
\end{displaymath}
where $\tau$ is the center of $B_R$. Applying part (i) (or part (ii)) of
Theorem \ref{mainjj} with $\alpha=3/2$, we prove (\ref{3ptzero}). Similarly,
taking $H=\chi_{\Omega_2}$ and applying part (i) of Theorem \ref{mainjj}
with $\alpha=2$, we prove (\ref{3ptfourteen}).

It would be very interesting to know if the estimates in Theorem
\ref{mainjj} have counterparts in dimension $n=2$ or $n \geq 4$. For
example, if $H$ is a weight on $\mbb R^n$ of dimension $\alpha=n/2$, and
$p=2n/(n-1)$, then do we have
\begin{displaymath}
\int_{B(0,R)} |E f(x)|^p H(x)dx
\leq C_\epsilon R^\epsilon A_{\alpha,p}(H) \| f \|_{L^2(S)}^p
\end{displaymath}
for all $f \in L^2(S)$, $R \geq 1$, and $\epsilon > 0$?

The main obstacle in generalizing Theorem \ref{mainjj} to higher dimensions
is the current unavailability of the needed Kakeya information (in the form
of Lemma \ref{Lemma3.6}) in dimensions four and more. We refer the reader to
Conjecture 11.6 in \cite{guth:poly2} for more information about this
important topic. As for an explanation of the reason why the methods of this
paper do not seem to apply in dimension $n=2$, we refer the reader to Remark
\ref{soandso}.

\subsection{Methodology}

We start with a couple of definitions.

If ${\mathcal L}$ is a set of lines in $\mbb R^3$, and $r \geq 2$ is an
integer, then the set of $r$-rich points of ${\mathcal L}$ is defined as
\begin{displaymath}
P_r({\mathcal L})= \{ x \in \mbb R^3 : x \mbox{ belongs to at least } r
                      \mbox{ lines from } {\mathcal L} \}.
\end{displaymath}

We denote the zero set of a polynomial $Q$ by $Z(Q)$, and we say $Q$ is
non-singular if $\nabla Q(x) \not= 0$ for all $x \in Z(Q)$.

To prove Theorem \ref{mainjj}, we employ Guth's polynomial partitioning
method from \cite{guth:poly}. The proof is carried along nine main steps.

Step one translates the geometric properties of the surface $S$ into a
decomposition, called the wave packet decomposition, which is applicable to
the functions $f \in L^2(S)$ and allows one who works locally in the ball
$B_R$ of center 0 and radius $R$ to think of $Ef$ as being essentially 
supported on long thin tubes of radius $R^{(1/2)+\delta}$ for some
parameter $\delta$ (which is positive and rather small).

Step two associates to every polynomial $P$ on $\mbb R^3$ of degree at most 
$D$, which is a product of non-singular polynomials, a partitioning of 
$\mbb R^3$ as follows. We know that $\mbb R^3 \setminus Z(P)$ is a disjoint 
union of at most $CD^3$ open sets $O_i$, which are often called cells. We
also know that a line can intersect at most $D+1$ of the cells $O_i$. We let 
$W$ be the $R^{(1/2)+\delta}$-neighborhood of $Z(P)$. Then 
$\mbb R^n \setminus W$ is a disjoint union of the open sets
$O_i'=O_i \setminus W$, and a tube of radius $R^{(1/2)+\delta}$ can 
intersect at most $D+1$ of the modified cells $O_i'$. 

Step three organizes those tubes from step one that meet $W$ into two 
groups: the transverse tubes are those tubes that intersect $W$ 
transversally, and the tangential tubes are those that lie in $W$ over a 
long stretch. 

Step four identifies the part of $Ef$ corresponding to those tubes that
point in different directions. Guth calls this the broad part of $Ef$. The 
relation between the broad part of $Ef$ and the long thin tubes that support 
$Ef$ resembles the relation between the set $P_r({\mathcal L})$, as defined
at the start of this subsection, and the lines of ${\mathcal L}$. This is 
the main reason behind the polynomial method becoming as important in 
restriction theory as it has become in incidence geometry. We refer the 
reader to Subsections 0.4 and 0.5 of \cite{guth:poly} for a thorough 
discussion about the common features between bounding the number of $r$-rich 
points of ${\mathcal L}$ and estimating the broad part of $Ef$.

Step five formulates a theorem, Theorem \ref{biltobr}, that estimates the 
$L^p(Hdx)$ norm of the broad part of $Ef$ on $B_R$ conditional on having a 
favorable bound on the contribution coming from the tangential tubes. The 
conditional bound on the tangential tubes is uniform over all polynomials 
$P$ satisfying $\mbox{Deg}(P) \leq D$ (for some specified $D$), and $P$ is a 
product of non-singular polynomials. This conditional formulation of Theorem 
\ref{biltobr} resembles the formulation of incidence geometry theorems in 
\cite{guth:math307j} and \cite{gk:erdos} that estimate $|P_r({\mathcal L})|$ 
conditional on having a favorable bound on the number of lines of 
${\mathcal L}$ that lie in $Z(P)$ (with the conditional bound being uniform 
over all $P$ satisfying appropriate properties).

Step six proves Theorem \ref{biltobr}. We first find (using algebraic 
topology) an appropriate polynomial $P$ such that the modified cells $O_i'$,
which were associated to $P$ in step two, contribute roughly equally to the
$L^p(Hdx)$ norm of the broad part of $Ef$ over $B_R$. This will then allow 
us to bound the contribution that comes from the cells by induction on the 
``size'' of the function $f$, and the contribution from the transverse tubes 
by induction on the radius $R$.

Step seven bounds the contribution from the tangential tubes. The tangential 
tubes gather rather close to an algebraic surface and dealing with them 
becomes roughly a two dimensional problem. In the incidence geometry 
setting, to bound the number of lines of ${\mathcal L}$ that lie in $Z(P)$,
one uses the available geometric properties of the lines of ${\mathcal L}$.
In the Fourier restriction setting, to bound the contribution from the
tubes tangent to $Z(P)$, one uses the available Kakeya information. In this 
paper, the Kakeya information we use are contained in Lemma \ref{Lemma3.6}, 
which was proved in \cite{guth:poly} by adapting Wolff's hairbrush argument 
to the polynomial partitioning setting.

Step eight inserts the bounds from step seven into Theorem \ref{biltobr}
obtaining estimates on the $L^p(Hdx)$ norm of the broad part of $Ef$ 
over $B_R$ (see Theorems \ref{thejapp} and \ref{themapp}).

Step nine uses parabolic scaling and induction on the radius $R$ to 
upgrade the estimates we now have on the $L^p(Hdx)$ norm of the broad part 
of $Ef$ on $B_R$ into estimates on the $L^p(Hdx)$ norm of $Ef$ itself,
proving Theorem \ref{mainjj}. 

Steps one to four are more or less identical to the treatment in 
\cite{guth:poly}. Steps five to nine differ from the treatment in 
\cite{guth:poly} in the following aspects.

The estimate on the broad part of $Ef$ (steps five, six, and eight) is 
established in \cite{guth:poly} in the non-weighted setting (i.e., $H(x)=1$ 
for all $x \in \mbb R^3$)  and for functions $f$ that obey the condition
\begin{displaymath}
\int_{B(\xi_0,R^{-1/2}) \cap S} |f(\xi)|^2 d\sigma(\xi) \leq \frac{1}{R}
\end{displaymath}
for all $\xi_0 \in S$. In this paper, we let $b \geq 1$ be a parameter and
establish the estimate on the broad part in the weighted setting for 
functions $f$ that obey the condition
\begin{displaymath}
\int_{B(\xi_0,R^{-1/2}) \cap S} |f(\xi)|^2 d\sigma(\xi)
\leq \frac{1}{R^{(b+1)/2}}
\end{displaymath}
for all $\xi_0 \in S$. To prove parts (i) and (iii) of Theorem \ref{mainjj},
we let $b=1$ later in the argument. To prove part (ii) of Theorem
\ref{mainjj}, we pick for $b$ a large value that depends on $\epsilon$. We
alert the reader to the fact that even though the estimate in the case
$\alpha=3/2$ is stated in both parts (i) and (ii) of Theorem \ref{mainjj},
its proof belongs to part (ii), and hence requires the large value of $b$.

In bounding the contribution from the tangential tubes (step seven), we
adjust the corresponding argument from \cite{guth:poly} to take into
consideration the dimensionality of the weight $H$, which is reflected in
the inequality $\int_{B(x_0,R)} H(x) dx \leq A_\alpha(H) R^\alpha$. This is
where we lower the value of the exponent $p$ from $13/4$ to
$2(4\alpha+3)/(2\alpha+3)$. 

The induction argument that \cite{guth:poly} uses to upgrade the estimate on 
the broad part of $Ef$ to an estimate on $Ef$ itself (step nine), assumes in 
the induction hypothesis that the desired estimate holds for all the 
surfaces in $\mbb R^3$ that have the same geometric properties as the given 
surface $S$, and then uses parabolic scaling. The induction hypothesis in
this paper assumes that the desired estimate holds not only for all the 
surfaces in $\mbb R^3$ that have the same geometric properties as $S$, but 
also for all the weights on $\mbb R^3$ of dimension $\alpha$. Then, to carry 
the induction out, the parabolic scaling argument gets adjusted accordingly. 
This turns out to be a little more involved than the corresponding argument 
in \cite{guth:poly}.

\subsection{Notation}

Throughout this paper, a closed ball in $\mbb R^3$ of center $x$ and radius
$r$ will be denoted by $B(x,r)$. A closed ball in $\mbb R^2$ of center
$\omega$ and radius $r$ will be denoted by $B^2(x,r)$. For example,
$B^2(0,1)$ is the closed unit ball in the plane.

If $A$ and $B$ are two positive quantities, then $A \lct B$ means that
$A \leq C B$ for a suitable constant $C$. If $A \lct B$ and $B \lct A$,
then we write $A \sim B$.

If $\phi$ is a function on $\mbb R^n$ and $r$ is a positive number, then
$\phi_r$ will denote the function defined by
$\phi_r(x)=r^{-n} \phi(r^{-1}x)$. Also, if $\Theta$ is a ball in $\mbb R^n$,
then $r \Theta$ will denote the ball of the same center as $\Theta$ and $r$
times the radius.

\subsection{Acknowledgment}

The author wishes to thank the anonymous referee for many insightful
suggestions concerning the structure of this paper.

\section{Global restriction estimates}

Some of our local restriction estimates can be turned into global estimates.
The tool for doing this is Tao's $\epsilon$-removal lemma from
\cite{tt:removal}. When one goes over the proof of Tao's $\epsilon$-removal
lemma (especially as presented in \cite{bg:bgmethod}), one sees that the
proof can be carried over to the weighted setting of this paper when $H$ is
the characteristic function of a set of the form
\begin{displaymath}
\Omega_{a,b} = \left\{
\begin{array}{l}
\hspace{-0.1in} \cup_{(m,n) \in \mbb Z^2} \Big( \mbb R \times [-1,1]^2
+ \big( 0,(\mbox{sgn} \, m) |m|^{1/a},(\mbox{sgn}\, n) |n|^{1/b} \big) \Big)
\mbox{ if $0< a, b \leq 1$,} \\
\hspace{-0.1in} \cup_{m \in \mbb Z} \Big( \mbb R \times [-1,1]^2
+ \big( 0,(\mbox{sgn} \, m) |m|^{1/a},0 \big) \Big)
\mbox{\hspace{0.41in} if $0 < a \leq 1$ and $b=0$,} \\
\hspace{-0.1in} \cup_{n \in \mbb Z} \Big( \mbb R \times [-1,1]^2
+ \big( 0,0,(\mbox{sgn} \, n) |n|^{1/b} \big) \Big)
\mbox{\hspace{0.528in} if $a=0$ and $0 < b \leq 1$,} \\
\end{array} \right.
\end{displaymath}
where $\mbox{sgn} \, m= m/|m|$ if $m \not= 0$, and $\mbox{sgn} \, 0= 1$. We
note that
\begin{displaymath}
\int_{B(x_0,R)} \chi_{\Omega_{a,b}}(x) dx \lct R^{1+a+b}
\end{displaymath}
for all $x_0 \in \mbb R^3$ and $R \geq 1$, so that
$A_\alpha(\chi_{\Omega_{a,b}}) \lct 1$ if $a+b=\alpha-1$. Also,
$\Omega_{1,1}= \mbb R^3$.

The restriction conjecture (in its global form) in $\mbb R^3$ asserts that
the estimate
\begin{displaymath}
\| E f \|_{L^p(\mbb R^3)} \leq C(p,q,S) \| f \|_{L^q(S)}
\end{displaymath}
holds whenever $p>3$, $(2/p)+(1/q) \leq 1$, and $f \in L^q(S)$. This 
estimate immediately implies that the Fourier transform of any function
$f \in L^{(3/2)-\epsilon}(\mbb R^3)$ can be restricted to $S$. So long as
this conjecture remains unsolved, it is natural to ask if there is a set
$\Omega$ in $\mbb R^3$ of infinite Lebesgue measure such that the Fourier
transform of any function $f \in L^{(3/2)-\epsilon}(\Omega)$ can be
restricted to $S$. Taking $\Omega=\Omega_{1/4,1/4}$, part (i) of the
following corollary tells us that this is indeed the case.

\begin{coro}
\label{global}
Let $S \subset \mbb R^3$ be a compact $C^\infty$ surface with strictly
positive second fundamental form.

{\rm (i)} Suppose $3/2 \leq \alpha < 5/2$, $p_0=4(4\alpha+3)/(10\alpha+3)$,
and $p > 2(4\alpha+3)/(2\alpha+3)$. Then
\begin{displaymath}
\| E f \|_{L^p(\Omega_{a,b})} \leq C(\alpha,p,S) \| f \|_{L^{p_0'}(S)}
\end{displaymath}
for all $f \in L^{p_0'}(S)$ provided $a+b=\alpha-1$, where $p_0'$ is the
exponent conjugate to $p_0$.

{\rm (ii)} Suppose $5/2 \leq \alpha \leq 3$, $1 \leq p_0 < 13/(2+2\alpha)$,
and $p > 13/4$. Then
\begin{displaymath}
\| E f \|_{L^p(\Omega_{a,b})} \leq C(\alpha,p_0,p,S) \| f \|_{L^{p_0'}(S)}
\end{displaymath}
for all $f \in L^{p_0'}(S)$ provided $a+b=\alpha-1$.

{\rm (iii)} We have
\begin{displaymath}
\| E f \|_{L^p(\mbb R^3)} \leq C(p,q,S) \| f \|_{L^q(S)}
\end{displaymath}
whenever $p>13/4$, $(2/p)+(1/q) < 1$, and $f \in L^q(S)$.
\end{coro}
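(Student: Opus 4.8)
The plan is to derive all three parts of Corollary \ref{global} from Theorem \ref{mainjj}, using three ingredients: the elementary fact recorded at the start of this section that $A_\alpha(\chi_{\Omega_{a,b}}) \lct 1$ whenever $a+b=\alpha-1$ (so that the weight-dependent factors $A_{\alpha,p}(\chi_{\Omega_{a,b}})$ and ${\mathcal A}_{\alpha,p}(\chi_{\Omega_{a,b}})$ occurring in Theorem \ref{mainjj} are $\lct 1$ for these weights); a routine passage from the mixed-norm local estimates of Theorem \ref{mainjj} to ordinary single-exponent local restriction estimates; and Tao's $\epsilon$-removal lemma in the weighted form discussed above, which is available precisely because each $\chi_{\Omega_{a,b}}$ --- and, in particular, $\chi_{\mbb R^3}=\chi_{\Omega_{1,1}}$ --- is an admissible weight there.

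For part (i) I would fix $3/2 \leq \alpha < 5/2$ and $a+b=\alpha-1$, put $H=\chi_{\Omega_{a,b}}$, and feed this into Theorem \ref{mainjj}(i). With $p_1=2(4\alpha+3)/(2\alpha+3)$ and $A_{\alpha,p_1}(H)\lct 1$ this yields
\begin{displaymath}
\int_{B(0,R)} |Ef(x)|^{p_1}\chi_{\Omega_{a,b}}(x)\, dx
\leq C_\epsilon R^\epsilon \|f\|_{L^2(S)}^3 \|f\|_{L^\infty(S)}^{p_1-3}.
\end{displaymath}
Evaluating on $f=\chi_F$ and using the identity $p_0'=2p_1/3$ turns this into the restricted-type local bound $\|E\chi_F\|_{L^{p_1}(B_R,\,\chi_{\Omega_{a,b}}dx)} \leq C_\epsilon R^\epsilon \sigma(F)^{1/p_0'}$. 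Interpolating against the trivial restricted-type bound at $(1,\infty)$ (namely $\|E\chi_F\|_{L^\infty}\leq\sigma(F)$), then invoking the monotonicity of the extension estimate in the $L^q(S)$-exponent (valid because $\sigma(S)<\infty$) and a standard real-interpolation step exploiting the open range of exponents now available, upgrades this to the strong-type local estimate $\|Ef\|_{L^p(B_R,\,\chi_{\Omega_{a,b}}dx)} \leq C_\epsilon R^\epsilon \|f\|_{L^{p_0'}(S)}$ for every $p>p_1$. Tao's weighted $\epsilon$-removal lemma then deletes the $R^\epsilon$ and replaces $B(0,R)$ by $\Omega_{a,b}$, which is part (i). Part (ii) is proved in exactly the same way, starting from Theorem \ref{mainjj}(iii) (with $p=13/4$ and ${\mathcal A}_{\alpha,13/4}(H)\lct 1$): for each admissible $\gamma\in[2,(11/2)-\alpha)$ one obtains restricted type at $(13/(2\gamma),13/4)$, hence --- letting $\gamma$ vary and again using monotonicity in $q$ --- restricted type at $(p_0',13/4)$ for every $p_0$ with $1\leq p_0<13/(2+2\alpha)$, since that range of $p_0$ is precisely $\{p_0'>13/(11-2\alpha)\}$; one then finishes as in part (i).

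For part (iii), the estimate in the subrange $q>13/5$ is exactly the instance $\alpha=3$, $a=b=1$ of part (ii) (there $\Omega_{a,b}=\Omega_{1,1}=\mbb R^3$ and $H\equiv 1$, a weight of dimension $3$), giving $\|Ef\|_{L^p(\mbb R^3)}\lct\|f\|_{L^q(S)}$ for all $p>13/4$ and $q>13/5$. To reach every admissible $q$, I would interpolate this family with Guth's global bound (\ref{3pt25global}) (the case $q=\infty$) and with the trivial bound $\|Ef\|_{L^\infty(\mbb R^3)}\leq\|f\|_{L^1(S)}$. Representing admissible pairs as points $(1/q,1/p)$, the family above fills the rectangle $\{1/q<5/13,\ 1/p<4/13\}$ of the first quadrant; convex combinations with the point $(1,0)$ --- the crucial chord being the segment from $(5/13,4/13)$ to $(1,0)$, which lies exactly on the line $2/p+1/q=1$ --- together with monotonicity in $q$, then produce every pair with $p>13/4$ and $2/p+1/q<1$. (Interpolating Guth's $L^\infty$ bound against only Stein--Tomas and the trivial estimate reaches merely the region below the chord from $(0,4/13)$ to $(1/2,1/4)$, which lies strictly under the line $1/p=4/13$; it is the corner $(5/13,4/13)$, supplied exactly by Theorem \ref{mainjj}(iii) at $\alpha=3$, that pushes $1/p$ up to $4/13$ at the intermediate values of $q$.)

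The exponent bookkeeping and the restricted-to-strong-type interpolations are routine; the delicate point is the weighted $\epsilon$-removal lemma itself. One must verify that Tao's argument, in the form presented in \cite{bg:bgmethod}, genuinely survives the replacement of Lebesgue measure by $\chi_{\Omega_{a,b}}\, dx$ using nothing beyond the dimensional bound $\int_{B(x_0,R)}\chi_{\Omega_{a,b}}(x)\, dx \lct R^{1+a+b}$; and, for part (iii), one must recognise that the weighted estimate of Theorem \ref{mainjj}(iii) at $\alpha=3$ is precisely the input that drives the global restriction threshold down to $p>13/4$ for every admissible $q$, rather than only along the chord obtained by interpolating Guth's $L^\infty$ bound against Stein--Tomas.
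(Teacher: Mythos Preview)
Your outline is correct, and for part (iii) it coincides with the paper's argument (apply part (ii) at $\alpha=3$, $a=b=1$, then interpolate with the trivial $L^1\to L^\infty$ bound; the digression about Guth's $L^\infty$ estimate and the Stein--Tomas chord is unnecessary but harmless). For parts (i) and (ii), however, you take a genuinely different route from the paper.

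The paper does not pass through restricted-type estimates at all. Instead it first proves Theorem~\ref{dualform}, which converts the mixed-norm estimate of Theorem~\ref{mainjj} directly into the dual bound $\|\mathcal R f\|_{L^{p_0}(S)}\lct R^\epsilon\|f\|_{L^{p'}(\chi_{B_R}H\,dx)}$ via a level-set decomposition of $\mathcal R f$ (inserting $g=\overline{\mathcal R f}\,\chi_{A_l}$ into the duality pairing and summing over dyadic level sets). The paper then \emph{runs the $\epsilon$-removal argument in this dual form}: it upgrades the estimate from a single ball to sparse unions of balls using translation-invariance of $A_\alpha$, invokes Tao's $c$-cube covering lemma, performs a dyadic decomposition of a step function $f$ supported on $\Omega_{a,b}$, and closes with the lattice averaging trick over $\mathcal L_\tau$. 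In particular, the paper does not quote a weighted $\epsilon$-removal lemma as a black box; it verifies each step survives the weight $\chi_{\Omega_{a,b}}$ explicitly.

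Your approach is cleaner in that it bypasses Theorem~\ref{dualform}: the identity $p_0'=2p_1/3$ lets you read off restricted type at $(p_0',p_1)$ straight from Theorem~\ref{mainjj}, and Marcinkiewicz interpolation against the trivial endpoint then gives strong type on an open segment, from which monotonicity in $q$ recovers the stated exponent. This is a legitimate shortcut. What it costs you is that the weighted $\epsilon$-removal step is now a genuine black box in your write-up, whereas the paper's dual-side argument makes the dependence on the structure of $\Omega_{a,b}$ transparent at each stage. If you want your proof to stand on its own, you would still need to check (as the paper does) that Tao's sparse-balls lemma and the $c$-cube covering go through with $\chi_{\Omega_{a,b}}\,dx$ in place of Lebesgue measure; this is where the specific lattice form of $\Omega_{a,b}$ is actually used.
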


Part (iii) of Corollary \ref{global} is a modest improvement on Guth's
global restriction estimate from \cite{guth:poly}; in Guth's theorem, the
norm on the right-hand side of the inequality is an $L^\infty$ norm. At any
rate, part (iii) of Corollary \ref{global} proves a restriction theorem in
$\mbb R^3$ for $p > 3.25$ in the full conjectured range of exponents up to
the sharp line $(2/p)+(1/q)= 1$. Also, part (i) of Corollary \ref{global}
proves the global version of (\ref{3ptfourteen}):
\begin{displaymath}
\| E f \|_{L^p(\Omega_2)}= \| E f \|_{L^p(\Omega_{1,0})}
\lct \| f \|_{L^{44/21}(S)} \hspace{0.44in} (p > 22/7).
\end{displaymath}

\section{Applications in geometric measure theory}

Fourier restriction theory has well-known and important implications to an
area of study that lies on the boundary between harmonic analysis and
geometric measure theory, and revolves around Falconer's distance set
conjecture.

Let $K$ be a compact subset of $\mbb R^n$, $n \geq 2$. The distance set of
$K$ is defined as
\begin{displaymath}
\Delta(K)= \{ |x-y| : x, y \in K \}.
\end{displaymath}
Falconer's conjecture asserts that if $K$ has Hausdorff dimension greater
than $n/2$, then $\Delta(K)$ has positive (one-dimensional) Lebesgue
measure.

Falconer initiated the study of the connection between Hausdorff dimension
and distance sets in~\cite{f:distsets} and used Fourier analysis (via
potential theory) to show that if the Hausdorff dimension of $K$ is greater
than $(n+1)/2$, then $\Delta(K)$ has positive Lebesgue measure. The Fourier
analytic approach to studying this problem was developed further by Mattila
in~\cite{pm:distsets}, where getting information about the Lebesgue measure
of $\Delta(K)$ was linked to obtaining favorable decay estimates as
$R \to \infty$ on integrals of the form
\begin{displaymath}
\int |\widehat{\mu}(R \, \xi)|^2 d\sigma_{n-1}(\xi)
\end{displaymath}
for $\mu \in M(\mbb R^n)$, where $\sigma_{n-1}$ is the surface measure on
the unit sphere $\mbb S^{n-1} \subset \mbb R^n$ and $M(\mbb R^n)$ is the
space of all complex Borel measures on $\mbb R^n$. Of course, favorable
decay estimates on such integrals cannot be obtained for general
$\mu \in M(\mbb R^n)$. In fact, on one end of the spectrum we have Dirac
measures, for which the above integral is equal to
$\sigma_{n-1}(\mbb S^{n-1})$ for all $R$. On the other end of the spectrum
we have the absolutely continuous measures with Schwartz densities, whose
Fourier transforms decay like $C_N R^{-N}$ for any positive integer $N$.

Suppose $K \subset \mbb R^n$ is compact and has Hausdorff dimension
$\alpha$. It is well known (e.g., see \cite{tw:book}) that if
$\alpha < \beta < n$, then $K$ supports a probability measure $\mu$ that
satisfies
\begin{displaymath}
\mu(B(x,r)) \leq C \, r^\beta
\end{displaymath}
for a suitable constant $C$ and all $x \in \mbb R^n$ and $r > 0$. It is also
well known that this condition implies that the measure's
$\alpha$-dimensional energy $I_\alpha(\mu)$, defined as
\begin{equation}
\label{defofenergy}
I_\alpha(\mu)= \int \int \frac{1}{|x-y|^\alpha} d\mu(x) d\mu(y),
\end{equation}
is finite. The $\alpha$-dimensional energy has the following Fourier
representation:
\begin{displaymath}
I_\alpha(\mu)= c_\alpha \int_{\mbb R^n} |\widehat{\mu}(\eta)|^2
|\eta|^{\alpha-n} d\eta,
\end{displaymath}
where $c_\alpha$ is a constant that depends only on $\alpha$ and $n$.
Moving to polar coordinates, we see that
\begin{displaymath}
\int_{\mbb R^n} |\widehat{\mu}(\eta)|^2 |\eta|^{\alpha-n} d\eta
= \int_0^\infty
  \Big( \int |\widehat{\mu}(R \, \xi)|^2 d\sigma_{n-1}(\xi) \Big)
  R^{\alpha-1} dR,
\end{displaymath}
and so we expect the finiteness of the $\alpha$-dimensional energy of $\mu$
to lead to some control over
$\int |\widehat{\mu}(R \, \xi)|^2 d\sigma_{n-1}(\xi)$ as $R \to \infty$.
Naturally, the tighter the control we have over these integrals, the better
the result we obtain on Falconer's conjecture by using the methods of
\cite{pm:distsets}.

In view of Mattila's work, Bourgain brought restriction theory into the
picture in~\cite{jb:dist}, where he showed that Falconer's $(n+1)/2$ result
follows from the Tomas-Stein restriction estimate, and used the better
restriction estimates that were available in dimensions two and three to
improve on Falconer's result. Bourgain showed that if $K \subset \mbb R^2$
has Hausdorff dimension greater than $13/9$, then $\Delta(K)$ has positive
Lebesgue measure, and that if $K \subset \mbb R^3$ has Hausdorff dimension
greater than $1091/546=1.998...$, then $\Delta(K)$ has positive Lebesgue
measure.

The next improvement came in~\cite{w:circdecay}, where Wolff showed that if
$K \subset \mbb R^2$ has dimension greater than $4/3$, then $\Delta(K)$ has
positive Lebesgue measure. Wolff got this result in the plane by obtaining
a sharp (up to the endpoint\footnote{It is not known whether the estimate
(\ref{l2wolff}) holds without the $R^\epsilon$ factor.}) decay estimate on
the circular means $\int |\widehat{\mu}(R \, \xi)|^2 d\sigma_1(\xi)$. Wolff
proved that if $1 \leq \alpha < 2$, and $\mu$ is a positive measure in
$M(\mbb R^2)$ with support in the unit disc and finite $\alpha$-dimensional
energy, then
\begin{equation}
\label{l2wolff}
\int |\widehat{\mu}(R \, \xi)|^2 d\sigma_1(\xi)
\leq C_\epsilon \frac{R^\epsilon}{R^{\frac{\alpha}{2}}} I_\alpha(\mu)
\end{equation}
for all $R \geq 1$. Wolff also considered the $L^q$ circular means
$\int |\widehat{\mu}(R \, \xi)|^q d\sigma_1(\xi)$ and observed that for
$q>2$ one cannot do better than interpolating between the above $L^2$
estimate and the trivial $L^\infty$ estimate. The case $1 \leq q < 2$,
however, presented a different challenge. The only estimate in this case
came from applying H\"{o}lder's inequality and using (\ref{l2wolff}), and it
remains an open problem to determine whether or not better estimates are
available. As was also explained in~\cite{w:circdecay}, estimates for the
$L^1$ means are particularly interesting as they are related to the open
problem of evaluating the infimum of the Hausdorff dimension of
$\beta$-sets. We refer the reader to~\cite{w:circdecay}
(and~\cite{bv:randomised}) for more details.

Using Tao's bilinear restriction estimate~\cite{tt:paraboloid}, Erdo\v{g}an
studied the same problem in higher dimensions and proved in
\cite{mbe:birestfal} that if \footnote{There are available estimates for the
$L^2$ spherical means of the Fourier transforms of such measures in the
complementary range $\alpha \in (0,n/2) \cup ((n+2)/2,n)$, which are known
to be sharp (up to the endpoint) only for $0 < \alpha \leq (n-1)/2$. In
dimension $n=2$, however, these estimates are known to be sharp (again up to
the endpoint) for all $0 < \alpha < 2$; see~\cite{pm:distsets},
~\cite{w:circdecay}, and~\cite{ps:sphericalavg}.}
$n/2 \leq \alpha \leq (n+2)/2$, and $\mu \in M(\mbb R^n)$ is positive with
$\mbox{supp} \, \mu$ contained in the unit ball and $I_\alpha(\mu)< \infty$,
then
\begin{equation}
\label{l2erdogan}
\int |\widehat{\mu}(R \, \xi)|^2 d\sigma_{n-1}(\xi) \leq C_\epsilon
\frac{R^\epsilon}{R^{\frac{\alpha}{2}+\frac{n}{4}-\frac{1}{2}}}
I_\alpha(\mu)
\end{equation}
for all $R \geq 1$. This estimate gives the currently best known result on
Falconer's conjecture: if $K \subset \mbb R^n$ has Hausdorff dimension
greater that $(n/2)+(1/3)$, then $\Delta(K)$ has positive Lebesgue measure.
Once again, for $1 \leq q < 2$, the best known estimate on the $L^q$
spherical means is the one we get from H\"{o}lder's inequality and
(\ref{l2erdogan}). For example, in dimension $n=3$, we have
\begin{displaymath}
\Big( \int |\widehat{\mu}(R \, \xi)|^q d\sigma_2(\xi) \Big)^{1/q}
\leq C_\epsilon \frac{R^\epsilon}{R^{\frac{\alpha}{4}+\frac{1}{8}}}
\sqrt{I_\alpha(\mu)}
\end{displaymath}
for all $R \geq 1$, provided $3/2 \leq \alpha \leq 5/2$.

As a corollary to Theorem \ref{mainjj}, we obtain the following result
concerning the $L^q$ norm of $\widehat{\mu}(R \, \cdot)$ on $S$ for
$R \geq 1$.

\begin{coro}
\label{decaysphm}
Let $S \subset \mbb R^3$ be a compact $C^\infty$ surface with strictly
positive second fundamental form, and $\mu$ be a positive measure in
$M(\mbb R^3)$ with support in the closed unit ball and finite
$\alpha$-dimensional energy.

{\rm (i)} Suppose $3/2 \leq \alpha < 5/2$, $p=2(4\alpha+3)/(2\alpha+3)$, and
$p_0=2p/(2p-3)=4(4\alpha+3)/(10\alpha+3)$. Then to every $\epsilon > 0$
there is a constant $C_\epsilon(\alpha,S)$ such that
\begin{displaymath}
\| \widehat{\mu}(R \, \cdot) \|_{L^{p_0}(S)}
\leq C_\epsilon(\alpha,S) R^\epsilon R^{-\alpha/p} \sqrt{I_\alpha(\mu)}
\end{displaymath}
for all $R \geq 1$.

{\rm (ii)} Suppose $3/2 \leq \alpha < 2$. Then to every $\epsilon > 0$
there is a constant $C_\epsilon(\alpha,S)$ such that
\begin{displaymath}
\| \widehat{\mu}(R \cdot) \|_{L^2(S)}
\leq C_\epsilon(\alpha,S) R^\epsilon R^{-(\alpha/4)-(1/8)}
     \sqrt{I_\alpha(\mu)}
\end{displaymath}
for all $R \geq 1$.

{\rm (iii)} Suppose $5/2 \leq \alpha < 13/5$ and
$1 \leq p_0 < 13/(2+2\alpha)$. Then to every $\epsilon >0$ there is a
constant $C_\epsilon(\alpha,p_0,S)$ such that
\begin{displaymath}
\| \widehat{\mu}(R \, \cdot) \|_{L^{p_0}(S)} \leq
C_\epsilon(\alpha,p_0,S) R^\epsilon R^{-4\alpha/13} \sqrt{I_\alpha(\mu)}
\end{displaymath}
for all $R \geq 1$.
\end{coro}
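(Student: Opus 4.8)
The plan is to deduce all three parts from the corresponding parts of Theorem~\ref{mainjj} by dualizing and replacing $\mu$ by a weight on $\mbb R^3$. Fix $R\geq 1$ and, after scaling, assume $\mu$ is a probability measure. By duality and Fubini,
\begin{displaymath}
\|\widehat\mu(R\,\cdot)\|_{L^{p_0}(S)}
=\sup_{\|f\|_{L^{p_0'}(S)}\leq 1}\Big|\int_{\mbb R^3}Ef(Rx)\,d\mu(x)\Big|
=\sup_{\|f\|_{L^{p_0'}(S)}\leq 1}\Big|\int_{\mbb R^3}Ef\,d\mu_R\Big|,
\end{displaymath}
where $\mu_R$ is the push-forward of $\mu$ under $x\mapsto Rx$ (supported in $B(0,R)$). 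Since $\widehat{Ef}$ is supported in a fixed compact set, $Ef=Ef*\varrho$ for some Schwartz $\varrho$ with $\widehat\varrho\equiv 1$ there, whence $|Ef(y)|^p\lct(|Ef|^p*|\varrho|)(y)$ by H\"older. Using this and then H\"older against the probability measure $\mu_R$,
\begin{displaymath}
\Big|\int Ef\,d\mu_R\Big|
\leq\Big(\int|Ef|^p\,d\mu_R\Big)^{1/p}
\lct\Big(\int_{\mbb R^3}|Ef(z)|^p H(z)\,dz\Big)^{1/p},
\qquad H\daf|\varrho|*\mu_R.
\end{displaymath}
Since $H$ is a scale-$1$ mollification of a probability measure it is slowly varying, so $A_\alpha(H)\gct\|H\|_{L^\infty}$; consequently, if one applies Theorem~\ref{mainjj} to $H/\|H\|_{L^\infty}$ and multiplies back, the maximum in the definition of $A_{\alpha,p}$ (resp.\ ${\mathcal A}_{\alpha,p}$) is attained at its first slot, and one obtains $\int|Ef|^p H\lct R^\epsilon A_\alpha(H)\,\|f\|_{L^2(S)}^3\|f\|_{L^\infty(S)}^{p-3}$ (with the obvious analogues, including the extra factor $R^{(\alpha-3/2)/4}$, in parts~(ii) and~(iii)).

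For part~(ii) the right-hand side carries no $L^\infty$ norm, and taking $\|f\|_{L^2(S)}\leq 1$ the chain above closes directly. For parts~(i) and~(iii) the factor $\|f\|_{L^\infty(S)}^{p-3}$ (resp.\ $\|f\|_{L^\infty(S)}^{p-\gamma}$) is not controlled by $\|f\|_{L^{p_0'}(S)}$, and I would remove it by a restricted-type/layer-cake argument. Set $E_\lambda=\{\xi\in S:|\widehat\mu(R\xi)|>\lambda\}$ and test against $f=\overline{\operatorname{sgn}\widehat\mu(R\cdot)}\,\chi_{E_\lambda}$, for which $\|f\|_{L^2(S)}=\sigma(E_\lambda)^{1/2}$ and $\|f\|_{L^\infty(S)}=1$; the chain gives $\lambda\,\sigma(E_\lambda)\lct R^\epsilon\big(A_\alpha(H)\,\sigma(E_\lambda)^{3/2}\big)^{1/p}$, i.e.\ $\sigma(E_\lambda)^{1-3/(2p)}\lct\lambda^{-1}R^\epsilon A_\alpha(H)^{1/p}$ (with $\gamma/2$ in place of $3/2$ in part~(iii)). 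The exponents of Theorem~\ref{mainjj} are arranged precisely so that $1-3/(2p)=1/p_0$ — this is what forces $p_0=2p/(2p-3)$ in part~(i) and $p_0=13/(13-2\gamma)$ in part~(iii), the constraint $\gamma<11/2-\alpha$ there being exactly $p_0<13/(2+2\alpha)$. As $|\widehat\mu(R\xi)|\leq\mu(\mbb R^3)=1$ one has $E_\lambda=\emptyset$ for $\lambda>1$, so integrating $\|\widehat\mu(R\cdot)\|_{L^{p_0}(S)}^{p_0}=p_0\int_0^1\lambda^{p_0-1}\sigma(E_\lambda)\,d\lambda$ (using $\sigma(E_\lambda)\leq\sigma(S)$ for small $\lambda$) loses only a factor $\log R$, absorbed into $R^\epsilon$. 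Thus $\|\widehat\mu(R\cdot)\|_{L^{p_0}(S)}\lct R^\epsilon A_\alpha(H)^{1/p}$ (times $R^{(\alpha-3/2)/12}$ in part~(ii)); for $p_0$ below the range produced this way one interpolates with the trivial $L^\infty$ bound, using $\sigma(S)<\infty$.

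Finally one must bound $A_\alpha(H)$, and the only delicate point is that $I_\alpha(\mu)<\infty$ does \emph{not} give a Frostman bound $\mu(B(x,r))\lct r^\alpha$. I would use $\mu(B(x,r))\leq r^\alpha U^\alpha_\mu(x)$, with $U^\alpha_\mu(x)=\int|x-y|^{-\alpha}d\mu(y)$ and $\int U^\alpha_\mu\,d\mu=I_\alpha(\mu)$, to split $\mu=\sum_{j}\mu_j$, $\mu_j=\mu|_{\{2^j\leq U^\alpha_\mu<2^{j+1}\}}$: then $\mu_j$ is Frostman at level $\alpha$ with constant $C_j\lct 2^j$, its mass $m_j=\mu_j(\mbb R^3)$ satisfies $\sum_j m_j=1$ and $\sum_j 2^j m_j\lct I_\alpha(\mu)$, and $H_j=|\varrho|*(\mu_j)_R$ satisfies $A_\alpha(H_j)\lct\min(C_jR^{-\alpha},m_j)$. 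Running the argument above for each $\mu_j$ (keeping the mass factor $m_j^{1-1/p}$ from H\"older), summing by the triangle inequality, and observing that only about $\log R$ values of $j$ contribute (for $j$ well beyond $\log R$ one has $m_j\lct 2^{-j}I_\alpha(\mu)$, so those tails are negligible), one is left with $\sum_j\min(2^jR^{-\alpha},m_j)^{1/p}m_j^{1-1/p}$, which is split into the range $2^jR^{-\alpha}\leq m_j$ — handled by H\"older in $j$ together with $\sum 2^j m_j\lct I_\alpha(\mu)$ — and its complement, where $m_j\leq(m_j2^j)^{1/2}R^{-\alpha/2}$ and $R^{-\alpha/2}\leq R^{-\alpha/p}$ (valid since $p\geq 2$) suffice. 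Both ranges give $\lct R^\epsilon R^{-\alpha/p}\sqrt{I_\alpha(\mu)}$, and the analogous bookkeeping works in parts~(ii) and~(iii). I expect this potential-theoretic decomposition of $\mu$, and the summation over $j$ that extracts the factor $\sqrt{I_\alpha(\mu)}$ with the sharp power of $R$, to be the main technical burden — all the Fourier-analytic content sits in Theorem~\ref{mainjj}.
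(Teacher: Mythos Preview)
Your approach is essentially the paper's, just with the packaging unwound. The paper isolates the three ingredients you use as separate lemmas: the mollification step (your $H=|\varrho|*\mu_R$) is Lemma~\ref{bdmubyh}, where the convolving kernel is chosen with \emph{compactly supported} Fourier transform so that $H$ is genuinely supported in $B(0,2R)$ and Theorem~\ref{mainjj} applies directly (with your Schwartz $\varrho$ the tails of $H$ outside $B(0,2R)$ need an extra word); the layer-cake removal of $\|f\|_{L^\infty}$ is carried out once in Theorem~\ref{dualform}, by exactly the level-set argument you sketch; and your potential-based decomposition $\mu_j=\mu|_{\{2^j\leq U^\alpha_\mu<2^{j+1}\}}$ is precisely Wolff's Lemma~\ref{soilemma}, which the paper quotes from \cite{w:circdecay} rather than reproving. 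The paper then assembles these slightly differently from you: it first applies H\"older with exponent $\bar p$ to each $\mu_j$, uses $\|\mu_j\|\,{\mathcal C}_{\alpha,R}(\mu_j)\lct I_\alpha(\mu)$ to get a bound uniform in $j$, sums the $O(\log R)$ pieces, and invokes $\|\mu\|^2\lct I_\alpha(\mu)$ to convert $\|\mu\|^{1-2/\bar p}I_\alpha(\mu)^{1/\bar p}$ into $\sqrt{I_\alpha(\mu)}$. Your summation via $\sum_j\min(2^jR^{-\alpha},m_j)^{1/p}m_j^{1-1/p}$ is a different bookkeeping of the same estimate; it works, though the Case~1 range (where $2^jR^{-\alpha}\leq m_j$) needs the observation that $U^\alpha_\mu\gct 1$ on $\mathrm{supp}\,\mu$ to bound $j$ from below, and then H\"older in $j$ over the resulting $O(\log R)$ indices.
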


The estimates in parts (i) and (iii) of Corollary \ref{decaysphm} are new
and form one of the main results of this paper. This is the first time an
estimate on the $L^1$ spherical means of $\widehat{\mu}$ goes beyond what is
known on the $L^2$ spherical means.

Part (iii) is, in fact, true for $5/2 \leq \alpha < 3$. We state it as
above, because in the regime $13/5 \leq \alpha < 3$ it becomes inferior to
the known estimate $\| \widehat{\mu}(R \, \cdot) \|_{L^2(S)} \lct$
$R^{(1-\alpha)/2 } \sqrt{I_\alpha(\mu)}$ (see \cite{ps:sphericalavg} or
\cite{tw:book}).

As we mentioned before the statement of the corollary, the result of part
(ii) of Corollary \ref{decaysphm} has previously been obtained in
\cite{mbe:birestfal} by using Tao's bilinear restriction estimate from
\cite{tt:paraboloid}. It gives the best known result on Falconer's
conjecture in $\mbb R^3$, and now has a proof that is based on polynomial
partitioning.

After this paper was written, the author learned about the paper
\cite{rogerluca}, which obtains new results about the decay rate of
$\| \widehat{\mu}(R \, \cdot) \|_{L^2(\mbb S^{n-1})}$. In dimension $n=3$,
the results of \cite{rogerluca} are better than 
Corollary
\ref{decaysphm} when $\alpha > (8+\sqrt{85})/7 \approx 2.46$ and $S$ is the
unit sphere $\mbb S^2$.

\section{An estimate for exponential sums}

We can impose a slightly different condition on the measure than having a
finite $\alpha$-dimensional energy. Namely,
\begin{equation}
\label{condforexpsum}
\sup_{x \in \mbb R^3} \sup_{r>0} \frac{\mu(B(x,r))}{r^\alpha} < \infty.
\end{equation}
This condition on the measure is morally stronger than having finite
$\alpha$-dimensional energy. In fact, a result that the author learned from
Wolff's paper \cite{w:circdecay} (which is also stated below as Lemma
\ref{soilemma}) says that, for $R \geq 1$, a positive compactly supported
measure $\mu \in M(\mbb R^3)$ with finite $\alpha$-dimensional energy can be
decomposed as a sum of $O(1+\log R)$ measures $\mu_j$ satisfying
\begin{equation}
\label{morallystronger}
\sup_{x \in \mbb R^3} \sup_{r \geq R^{-1}} \frac{\mu(B(x,r))}{r^\alpha}
< \infty.
\end{equation}
Under (\ref{condforexpsum}), $\mu$ itself satisfies (\ref{morallystronger})
with a uniform bound for all $R \geq 1$.

The next corollary says that under (\ref{condforexpsum}), Theorem
\ref{mainjj} gives $L^p(\mu)$ bounds on exponential sums of the form
$\sum_{l=1}^N a_l e^{2\pi i R w_l \cdot x}$, where $w_1, \ldots, w_N \in S$
are $R^{-1}$-separated.

\begin{coro}
\label{expsum}
Let $S \subset \mbb R^3$ be a compact $C^\infty$ surface with strictly
positive second fundamental form, and $\mu$ be a positive measure in
$M(\mbb R^3)$ with support in the closed unit ball. Also, let
\begin{displaymath}
{\mathcal C}_\alpha(\mu)
= \sup_{x \in \mbb R^3} \sup_{r>0} \frac{\mu(B(x,r))}{r^\alpha}.
\end{displaymath}

{\rm (i)} Suppose $3/2 \leq \alpha < 5/2$ and $p=2(4\alpha+3)/(2\alpha+3)$.
Then to every $\epsilon > 0$ there is a constant $C_\epsilon(\alpha,S)$ such
that
\begin{displaymath}
\int \Big| \sum_{l=1}^N a_l e^{2\pi i R w_l \cdot x} \Big|^p d\mu(x)
\leq C_\epsilon(\alpha,S) \frac{R^\epsilon R^{2p}}{R^{\alpha+3}}
     {\mathcal C}_\alpha(\mu) \Big( \sum_{l=1}^N |a_l|^2 \Big)^{3/2}
                              \Big( \max_l |a_l| \Big)^{p-3}
\end{displaymath}
whenever $R \geq 1$, $w_1, \ldots, w_N \in S$ are $R^{-1}$-separated, and
$a_1, \ldots, a_N \in \mbb C$.

{\rm (ii)} Suppose $5/2 \leq \alpha \leq 3$,
$2 \leq \gamma < (11/2) - \alpha$, and $p=13/4$. Then to every
$\epsilon > 0$ there is a constant $C_\epsilon(\alpha,\gamma,S)$ such that
\begin{eqnarray*}
\lefteqn{\int \Big| \sum_{l=1}^N a_l e^{2\pi i R w_l \cdot x} \Big|^p
         d\mu(x)} \\
& \leq & C_\epsilon(\alpha,\gamma,S)
         \frac{R^\epsilon R^{2p}}{R^{\alpha+\gamma}}
         {\mathcal C}_{\alpha}(\mu)
         \Big( \sum_{l=1}^N |a_l|^2 \Big)^{\gamma/2}
         \Big( \max_l |a_l| \Big)^{p-\gamma}
\end{eqnarray*}
whenever $R \geq 1$, $w_1, \ldots, w_N \in S$ are $R^{-1}$-separated, and
$a_1, \ldots, a_N \in \mbb C$.
\end{coro}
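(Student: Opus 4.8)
The plan is to deduce Corollary \ref{expsum} from Theorem \ref{mainjj} by choosing $f$ to be (essentially) a sum of bumps on $S$ centered at the points $w_l$, so that $Ef$ reproduces the exponential sum $\sum_l a_l e^{2\pi i Rw_l\cdot x}$ after the appropriate rescaling. First I would set up the dictionary between the two sides. Given $R^{-1}$-separated points $w_1,\dots,w_N\in S$, let $\tau_l=B(w_l,cR^{-1})\cap S$ be disjoint caps, and for a parameter $\lambda>0$ define $f=\sum_l a_l\lambda\,\sigma(\tau_l)^{-1}\chi_{\tau_l}$, so that each wave packet $E(\chi_{\tau_l}\sigma/\sigma(\tau_l))(x)$ is, for $|x|\le R$, morally $e^{-2\pi i w_l\cdot x}$ times a slowly varying factor of size $1$. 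Then on the ball $B(0,R)$ one has $|Ef(Rx)|\sim\lambda\bigl|\sum_l a_l e^{-2\pi i Rw_l\cdot x}\bigr|$ up to the usual Schwartz tails, which are harmless after paying an $R^\epsilon$. (The sign in the exponent and the factor $R$ are cosmetic; one rescales $x\mapsto Rx$ at the end.)

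Next I would compute the two norms of this $f$ and the weight. We have $\|f\|_{L^\infty(S)}\lesssim \lambda R^2\max_l|a_l|$ since $\sigma(\tau_l)\sim R^{-2}$, and $\|f\|_{L^2(S)}^2=\lambda^2\sum_l|a_l|^2\sigma(\tau_l)^{-1}\sim \lambda^2 R^2\sum_l|a_l|^2$, so $\|f\|_{L^2(S)}\lesssim\lambda R\,(\sum_l|a_l|^2)^{1/2}$. For the weight, I would take $H(x)=R^{-3}\,(d\mu_R)(x)$ where $\mu_R$ is the pushforward of $\mu$ under $x\mapsto Rx$ — more precisely, I would first regularize $\mu$ at scale $R^{-1}$ by convolving with an $L^1$-normalized bump $\phi_{R^{-1}}$, put $H=\phi_{R^{-1}}*\mu$, and then rescale. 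Condition (\ref{condforexpsum}) gives $\int_{B(x_0,r)}H\lesssim \mathcal C_\alpha(\mu)\,r^\alpha$ for $r\ge R^{-1}$, hence after the parabolic rescaling by $R$ the rescaled weight $\tilde H$ satisfies $A_\alpha(\tilde H)\lesssim \mathcal C_\alpha(\mu)$ uniformly for $R\ge1$ (the supremum over $r\ge1$ of $R^{-\alpha}\int_{B(x_0,R)}H$ is exactly this), and $\|\tilde H\|_\infty\le1$ after dividing by the constant $\mathcal C_\alpha(\mu)$; so $\tilde H/\mathcal C_\alpha(\mu)$ is a weight of dimension $\alpha$. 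Plugging into Theorem \ref{mainjj}(i) for part (i) (with $p=2(4\alpha+3)/(2\alpha+3)$, noting $A_{\alpha,p}(\tilde H)\lesssim A_\alpha(\tilde H)$ since $A_\alpha\gtrsim1$ in the relevant regime, so the $\max$ picks the first term), respectively Theorem \ref{mainjj}(iii) for part (ii), and undoing the rescaling, I would collect the powers of $R$: the change of variables contributes $R^3$, matching against the $R^{-\alpha}$ from the weight and the $\lambda$'s and $R$'s from the norms yields exactly the claimed factor $R^{2p}/R^{\alpha+3}$ (resp.\ $R^{2p}/R^{\alpha+\gamma}$) together with $(\sum|a_l|^2)^{3/2}(\max|a_l|)^{p-3}$ (resp.\ the $\gamma$-version), after the $\lambda$'s cancel. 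One then chooses $\lambda$ to normalize, e.g.\ $\lambda=R^{-2}$, though any choice works since the estimate is homogeneous of degree $p$ in $(a_l)$.

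The technical heart, and the step I expect to need the most care, is the passage from ``$Ef$ on $B(0,R)$'' to ``the exact exponential sum'': the wave packet $E(\chi_{\tau_l}\sigma/\sigma(\tau_l))$ is only approximately the pure exponential $e^{-2\pi i w_l\cdot x}$, with an error from the curvature of $S$ over the cap (of size $O(R^{-1}\cdot R)=O(1)$ in phase, hence a genuine obstruction to a pointwise identity) and Schwartz tails outside $B(0,R)$. The clean way around this is not to demand a pointwise match but to run the comparison through a standard localization: one writes $e^{-2\pi i w_l\cdot x}=E(\chi_{\tau_l}\sigma/\sigma(\tau_l))(x)\cdot(\text{unimodular smooth factor})^{-1}+(\text{tail})$ on $B(0,R)$ is false, so instead one uses that $\sum_l a_l e^{-2\pi i w_l\cdot x}$ restricted to $B(0,1)$ (before rescaling) can be realized as $E$ of a function supported on an $R^{-1}$-neighborhood of $\{w_l\}$ after a harmless mollification, using that on $B(0,1)$ the extension operator of a single $R^{-1}$-cap is within $O(R^{-1})$ in $C^0$ of the corresponding plane wave — this is where one pays nothing because $R^{-1}$-cap $\times$ unit ball has phase variation $o(1)$. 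Concretely: rescale first so the relevant ball is $B(0,1)$ and the caps have radius $R^{-1}$ sitting in a surface which on that ball is flat to order $R^{-2}$; then the substitution of $E(\text{bump})$ for the plane wave costs a factor $1+O(R^{-1})$ pointwise plus rapidly decaying tails, and summing in $l$ with the triangle inequality and the separation of the $\tau_l$ keeps everything under control after an $R^\epsilon$. I would present this comparison as a short self-contained lemma, and then the corollary follows by feeding it into Theorem \ref{mainjj} and bookkeeping the exponents as above. A final remark: the regularization of $\mu$ at scale $R^{-1}$ is precisely what Lemma \ref{soilemma} / condition (\ref{morallystronger}) is designed for, so under the stronger hypothesis (\ref{condforexpsum}) no logarithmic loss appears, which is why the statement is clean.
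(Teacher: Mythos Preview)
Your overall strategy---replace the exponential sum by an extension, build a weight from $\mu$, apply Theorem \ref{mainjj}, and track exponents---matches the paper's, and your treatment of the weight (regularize $\mu$ at scale $R^{-1}$, rescale, check $A_\alpha\lesssim\mathcal C_\alpha(\mu)$) is exactly Lemma \ref{bdmubyh}. But there is a genuine gap in the comparison step, and you essentially flag it yourself. With $f=\sum_l a_l\lambda\,\sigma(\tau_l)^{-1}\chi_{\tau_l}$ on $S$, one has
\[
E\chi_{\tau_l}(y)=e^{-2\pi i w_l\cdot y}\int_{\tau_l}e^{-2\pi i y\cdot(\xi-w_l)}\,d\sigma(\xi),
\]
and for $|y|\le R$ the inner integral is \emph{not} close to $\sigma(\tau_l)$: the phase $y\cdot(\xi-w_l)$ is $O(1)$, not $o(1)$, and the resulting amplitude depends on $l$ through the tangent plane at $w_l$, so it cannot be factored out of the sum. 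Your proposed fix (``rescale so the ball is $B(0,1)$ and the phase error becomes $O(R^{-1})$'') does not rescue this: the integral is against $\mu$ on the unit ball, but the frequencies are $Rw_l$, so in any coordinates in which the physical ball has radius $1$ the frequency separation is $\sim 1$ and the phase variation across a cap of matching size remains $O(1)$. There is no pointwise inequality $\bigl|\sum_l a_l e^{-2\pi i Rw_l\cdot x}\bigr|\lesssim|Ef(Rx)|$ for such an $f$.

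The paper sidesteps this with a \emph{thickening} step. Instead of bumps on $S$, take $\phi\in C_0^\infty(\mathbb R^3)$ with $|\widehat\phi|\ge 1$ on $B(0,1)$ and set $F=\sum_l\psi_l$ with $\psi_l(\xi)=a_lR^3\phi(R(\xi-w_l))$, a function supported in the $R^{-1}$-neighborhood of $S$ in $\mathbb R^3$. Then $\widehat F(Rx)=\widehat\phi(x)\sum_l a_l e^{-2\pi i Rw_l\cdot x}$ \emph{exactly}, so on $\mathrm{supp}\,\mu\subset B(0,1)$ the exponential sum is pointwise dominated by $|\widehat F(Rx)|$ with no error at all. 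One then transfers Theorem \ref{mainjj} from $Ef$ to $\widehat F$ by foliating the $R^{-1}$-neighborhood of $S$ into parallel copies of $S$ (the paper does this explicitly for the sphere via radial slices) and applying Minkowski plus H\"older across the foliation; this yields the intermediate estimate (\ref{thick}) with an extra factor $R^{\bar\gamma/2}/R^{\bar p}$. Finally $\|F\|_{L^2}^2\lesssim R^3\sum_l|a_l|^2$ and $\|F\|_{L^\infty}\lesssim R^3\max_l|a_l|$ by the $R^{-1}$-separation of the $w_l$, and the arithmetic gives $R^{2\bar p}/R^{\alpha+\bar\gamma}$. The thickening-plus-foliation is the missing idea; once you insert it in place of your cap-bump construction, the rest of your outline goes through.
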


Parts (i) and (ii) of Corollary \ref{expsum} are sharp (up to the
$R^\epsilon$ factor) in the following sense. Take $\mu$ to be the
restriction of an Ahlfors $\alpha$-regular measure $\nu$ to the unit ball,
$a_l=1$ for all $l$, and $N \sim R^2$. Being Ahlfors $\alpha$-regular means
that there are positive constants $C_1$ and $C_2$ such that
$C_1 r^\alpha \leq \nu(B(x,r)) \leq C_2 r^\alpha$ for all $x \in \mbb R^3$
and $r > 0$. Then ${\mathcal C}_\alpha(\mu) \leq C_2$ and Corollary
\ref{expsum} implies that
\begin{displaymath}
\frac{R^{2p}}{R^\alpha} \lct \int_{B(0,cR^{-1})}
\Big| \sum_{l=1}^N e^{2\pi i R w_l \cdot x} \Big|^p d\mu(x)
\leq \int \Big| \sum_{l=1}^N e^{2\pi i R w_l \cdot x} \Big|^p d\mu(x)
\lct R^\epsilon \frac{R^{2p}}{R^\alpha},
\end{displaymath}
where $c$ is an appropriately small constant. For example, when $\alpha=3/2$
this becomes
\begin{displaymath}
R^{9/2}
\lct \int \Big| \sum_{l=1}^N e^{2\pi i R w_l \cdot x} \Big|^3 d\mu(x)
\lct R^\epsilon R^{9/2}.
\end{displaymath}

\section{Preliminaries for the proofs of the corollaries}

The global restriction estimates of Corollary \ref{global} will be proved by
combining the local estimates of Theorem \ref{mainjj} with Tao's
$\epsilon$-removal lemma from \cite{tt:removal}. To apply the
$\epsilon$-removal lemma, however, it will be convenient to free the
estimates in parts (i) and (iii) of Theorem \ref{mainjj} from the
$L^\infty$-norm of $f$. It will also be convenient to write the estimates in
their dual form. This is the goal of the following theorem; which will also
be important to the proof of Corollary \ref{decaysphm}.

\begin{thm}
\label{dualform}
Let $S \subset \mbb R^3$ be a compact $C^\infty$ surface with strictly
positive second fundamental form, and $H$ be a weight of dimension $\alpha$
with $A_\alpha(H) \leq 1$.

{\rm (i)} Suppose $3/2 \leq \alpha < 5/2$, $p=2(4 \alpha +3)/(2 \alpha +3)$,
and $p_0=4(4 \alpha +3)/(10 \alpha +3)$. Then to every $\epsilon > 0$
there is a constant $C_\epsilon(\alpha,S)$ such that
\begin{displaymath}
\| {\mathcal R} f \|_{L^{p_0}(S)}
\leq C_\epsilon(\alpha,S) R^\epsilon \| f \|_{L^{p'}(\chi_{B(0,R)}Hdx)}
\end{displaymath}
whenever $R \geq 1$ and $f \in L^{p'}(\chi_{B(0,R)}Hdx)$, where
${\mathcal R} f = \widehat{f H} \big|_S$ and $p'$ is the exponent conjugate
to $p$.

{\rm (ii)} Suppose $5/2 \leq \alpha \leq 3$, $p=13/4$, and
$1 \leq p_0 < 13/(2+2\alpha)$. Then to every $\epsilon > 0$ there is a
constant $C_\epsilon(\alpha,p_0,S)$ such that
\begin{displaymath}
\| {\mathcal R} f \|_{L^{p_0}(S)}
\leq C_\epsilon(\alpha,p_0,S) R^\epsilon \| f \|_{L^{p'}(\chi_{B(0,R)}Hdx)}
\end{displaymath}
whenever $R \geq 1$ and $f \in L^{p'}(\chi_{B(0,R)}Hdx)$, where
${\mathcal R} f = \widehat{f H} \big|_S$ and $p'=13/9$.
\end{thm}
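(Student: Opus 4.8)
The plan is to obtain Theorem \ref{dualform} from Theorem \ref{mainjj} by a duality argument, after first removing the $L^\infty$-norm of $f$ from parts (i) and (iii) of Theorem \ref{mainjj}. Both parts of Theorem \ref{dualform} follow the same scheme, so I describe part (i); part (ii) is identical in structure, taking $\gamma$ close to $2$ in part (iii) of Theorem \ref{mainjj} so that $p_0 = (p/\gamma)' $ ranges over $[1, 13/(2+2\alpha))$, and using $p'=13/9$.

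The first step is a standard ``$\epsilon$-removal of the $L^\infty$ norm'' (dyadic pigeonholing on the size of $f$): writing $\| f \|_{L^2(S)}^3 \| f \|_{L^\infty(S)}^{p-3}$ as the right-hand side of part (i) of Theorem \ref{mainjj}, one decomposes $f = \sum_k f_k$ where $f_k$ is the restriction of $f$ to the set where $|f| \sim 2^k$, applies Theorem \ref{mainjj}(i) to each $f_k$ (for which $\| f_k \|_{L^\infty(S)} \sim 2^k$ and $\| f_k \|_{L^2(S)}^2 \sim 2^{2k} \sigma(\{|f|\sim 2^k\})$), and sums. Since $p > 3$, the powers of $2^k$ combine favorably and one recovers, after summing a geometric-type series (absorbing the loss into $R^\epsilon$), the clean bound
\begin{displaymath}
\int_{B(0,R)} |Ef(x)|^p H(x)\,dx \leq C_\epsilon(\alpha,S) R^\epsilon A_{\alpha,p}(H) \| f \|_{L^p(S)}^p.
\end{displaymath}
Under the hypothesis $A_\alpha(H) \leq 1$ we have $A_{\alpha,p}(H) \leq 1$ as well (since $1-p/4 \geq 0$ fails only when $p>4$, which does not occur here, so in fact $A_{\alpha,p}(H)=\max[A_\alpha(H),A_\alpha(H)^{1-p/4}]\le 1$), so the factor $A_{\alpha,p}(H)$ disappears. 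This says precisely that $E : L^p(S) \to L^p(\chi_{B(0,R)} H\,dx)$ with norm $\lesssim_\epsilon R^\epsilon$.

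The second step is to dualize. The adjoint of $E : L^p(S) \to L^p(H\,dx)$, where the target is equipped with the measure $H\,dx$ on $B(0,R)$, is the operator $g \mapsto \widehat{gH}\big|_S = {\mathcal R}g$ acting $L^{p'}(\chi_{B(0,R)} H\,dx) \to L^{p'}(S)$: indeed, for $f$ on $S$ and $g$ on $\mbb R^3$,
\begin{displaymath}
\int_{\mbb R^3} Ef(x)\, \overline{g(x)} H(x)\,dx = \int_S f(\xi)\, \overline{\widehat{gH}(\xi)}\, d\sigma(\xi),
\end{displaymath}
by Fubini and the definition of $E$. Hence the operator norm bound on $E$ gives $\| {\mathcal R} f \|_{L^{p'}(S)} \leq C_\epsilon R^\epsilon \| f \|_{L^{p'}(\chi_{B(0,R)} H dx)}$. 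The final move is to notice that the stated target exponent $p_0 = 4(4\alpha+3)/(10\alpha+3)$ satisfies $p_0 \leq p'$: a direct check gives $1/p_0 - 1/p' = 1/p_0 + 1/p - 1 = (10\alpha+3)/(4(4\alpha+3)) + (2\alpha+3)/(2(4\alpha+3)) - 1 \geq 0$. Since $\sigma$ is a finite measure on the compact surface $S$, Hölder's inequality upgrades the $L^{p'}(S)$ bound to an $L^{p_0}(S)$ bound at the cost of a constant $\sigma(S)^{1/p_0 - 1/p'}$, which is absorbed into $C_\epsilon(\alpha,S)$. This completes part (i); part (ii) is the same with part (iii) of Theorem \ref{mainjj} in place of part (i), the role of $p-3$ played by $p-\gamma$ with $\gamma \downarrow 2$, and ${\mathcal A}_{\alpha,p}(H) \leq 1$ following again from $A_\alpha(H) \leq 1$ and $p = 13/4 < 4$.

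I expect the only genuinely delicate point to be the first step: verifying that the dyadic decomposition over level sets of $|f|$ really does sum without logarithmic loss (beyond the harmless $R^\epsilon$) and that the weight factor behaves correctly under this decomposition — in particular that one may apply Theorem \ref{mainjj} to each $f_k$ with the \emph{same} weight $H$ and simply add the resulting inequalities. The tail of small $|f|$ (where $|f| \lesssim R^{-N}$, say) is controlled trivially since $E$ maps $L^2(S) \to L^2(B(0,R))$ with a polynomial-in-$R$ norm and $H \leq 1$; the finitely many relevant dyadic scales then contribute a factor of at most $O(\log R) \leq C_\epsilon R^\epsilon$. Everything else — duality, Fubini, the exponent comparison $p_0 \le p'$, and the disappearance of the $A_{\alpha,p}(H)$ / ${\mathcal A}_{\alpha,p}(H)$ factors — is routine.
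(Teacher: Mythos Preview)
Your argument has a genuine gap at the ``final move.'' The inequality $p_0 \le p'$ that you assert is false. Your own computation gives
\[
\frac{1}{p_0}-\frac{1}{p'} \;=\; \frac{10\alpha+3}{4(4\alpha+3)}+\frac{2(2\alpha+3)}{4(4\alpha+3)}-\frac{4(4\alpha+3)}{4(4\alpha+3)}
\;=\; \frac{-2\alpha-3}{4(4\alpha+3)} \;<\; 0,
\]
so in fact $p_0>p'$ for every $\alpha\ge 3/2$. Thus one cannot pass from an $L^{p'}(S)$ bound on $\mathcal{R}f$ to an $L^{p_0}(S)$ bound by H\"older; the target exponent $p_0$ is \emph{stronger} than what straight duality of an $L^p\!\to\! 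L^p$ extension estimate yields. The same problem occurs in part (ii): with $p=13/4$ one has $p'=13/9$, while the theorem requires all $p_0$ up to $13/(2+2\alpha)>13/9$.

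The reason the paper reaches $p_0$ is precisely that it does \emph{not} collapse the mixed norm $\|g\|_{L^2}^{\bar\gamma}\|g\|_{L^\infty}^{\bar p-\bar\gamma}$ to $\|g\|_{L^{\bar p}}^{\bar p}$ before dualizing. Your first step discards information: for a single level-set piece $g_k$ with $|g_k|\sim 2^k$ on a set of measure $|E_k|$, one has $\|g_k\|_2^3\|g_k\|_\infty^{p-3}\sim 2^{kp}|E_k|^{3/2}$, which is actually of size $\|g_k\|_{L^{2p/3}}^p$ rather than $\|g_k\|_{L^p}^p$. Carrying this through would give the forward bound $E:L^{2p/3}(S)\to L^p(Hdx)$, whose dual exponent is exactly $ (2p/3)'=2p/(2p-3)=p_0$. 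The paper implements this without a dyadic decomposition on $g$: it pairs $\mathcal{R}f$ against $g=\overline{\mathcal{R}f}\,\chi_{A_\ell}$ on level sets $A_\ell=\{|\mathcal{R}f|\sim 2^\ell\lambda\}$, uses the mixed-norm extension bound on $g$, and reads off the distributional inequality $\sigma(\{|\mathcal{R}f|>\lambda\})\lesssim (R^\epsilon\|f\|_{L^{p'}}/\lambda)^{p_0}$, which integrates (with a harmless $\log R$ from the range $\lambda\le\|f\|_{L^1}$) to the strong $L^{p_0}$ bound. In short, the essential idea you are missing is that the $L^2$--$L^\infty$ interpolation built into Theorem~\ref{mainjj} must be exploited \emph{after} dualizing, not before.
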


\begin{proof}
Since $A_\alpha(H) \leq 1$, parts (i) and (iii) of Theorem \ref{mainjj} can
be combined as
\begin{displaymath}
\int_{B(0,R)} |E g(x)|^{\bar{p}} H(x)dx \lct R^\epsilon
\| g \|_{L^2(S)}^{\bar{\gamma}}
\| g \|_{L^\infty(S)}^{\bar{p}-\bar{\gamma}}
\end{displaymath}
for all $g \in L^\infty(S)$, where
\begin{displaymath}
\bar{p}= \left\{ \begin{array}{ll}
                 2(4\alpha+3)/(2\alpha+3) &
                 \mbox{if $\frac{3}{2} \leq \alpha < \frac{5}{2}$,} \\ \\
                 13/4 & \mbox{if $\frac{5}{2} \leq \alpha \leq 3$,}
                 \end{array} \right.
\hspace{0.25in} \mbox{and} \hspace{0.25in}
\bar{\gamma}= \left\{ \begin{array}{ll}
                      3      &
                    \mbox{if $\frac{3}{2} \leq \alpha < \frac{5}{2}$,} \\ \\
                      \gamma & \mbox{if $\frac{5}{2} \leq \alpha \leq 3$.}
                 \end{array} \right.
\end{displaymath}
By the duality relation of the Fourier transform and H\"{o}lder's
inequality, the above estimate tells us that
\begin{eqnarray}
\label{dualityrelation}
\Big| \int {\mathcal R} f(\xi) g(\xi) d\sigma(\xi) \Big|
& \leq & \| f \|_{L^{\bar{p}'}(\chi_{B(0,R)}Hdx)}
         \| Eg \|_{L^{\bar{p}}(\chi_{B(0,R)}Hdx)} \nonumber \\
& \lct & R^{\epsilon/\bar{p}} \| f \|_{L^{\bar{p}'}(\chi_{B(0,R)}Hdx)}
         \| g \|_{L^2(S)}^{\bar{\gamma}/\bar{p}}
         \| g \|_{L^\infty(S)}^{1-(\bar{\gamma}/\bar{p})}
\end{eqnarray}
for all $f \in L^{\bar{p}'}(\chi_{B(0,R)}Hdx)$ and $g \in L^\infty(S)$.

We will now use (\ref{dualityrelation}) to prove the theorem. We will do
this by estimating the $\sigma$-measure of the sets
$\{ \xi \in S : |{\mathcal R}f(\xi)| > \lambda \}$ for
$0 < \lambda \leq \| f \|_{L^1(\chi_{B(0,R)}Hdx)}$. For such $\lambda$ and
for $l \in \mbb N$, we set
\begin{displaymath}
A_l= A_l(\lambda)=\{ \xi \in S : 2^{l-1} \lambda < |{\mathcal R}f(\xi)|
                                 \leq 2^l \lambda \}.
\end{displaymath}
Clearly,
$\{ \xi \in S : |{\mathcal R}f(\xi)| > \lambda \} \subset \cup_l A_l$.
Inserting $\overline{{\mathcal R}f(\xi)} \, \chi_{A_l}(\xi)$ for $g(\xi)$ in
(\ref{dualityrelation}), we obtain
\begin{displaymath}
\Big( \int_{A_l} |{\mathcal R}f(\xi)|^2 d\sigma(\xi)
\Big)^{1-\frac{\bar{\gamma}}{2\bar{p}}}
\lct R^{\epsilon/\bar{p}} \| f \|_{L^{\bar{p}'}(\chi_{B(0,R)}Hdx)}
     (2^l \lambda)^{1-(\bar{\gamma}/\bar{p})},
\end{displaymath}
which implies
\begin{displaymath}
\sigma(A_l)
\lct R^{2\epsilon/(2\bar{p}-\bar{\gamma})}
\| f \|_{L^{\bar{p}'}(\chi_{B(0,R)}Hdx)}^{2\bar{p}/(2\bar{p}-\bar{\gamma})}
(2^l \lambda)^{-2\bar{p}/(2\bar{p}-\bar{\gamma})}.
\end{displaymath}
Since $3 < \bar{p} \leq 13/4$ and $2 \leq \bar{\gamma} \leq 3$, it
follows that
\begin{displaymath}
\sigma(\{ \xi \in S : |{\mathcal R}f(\xi)| > \lambda \})
\leq \sum_l \sigma(A_l)
\lct \left(
\frac{R^{\epsilon/\bar{p}} \| f\|_{L^{\bar{p}'}(\chi_{B(0,R)}Hdx)}}{\lambda}
\right)^{p_0},
\end{displaymath}
where $p_0=2\bar{p}/(2\bar{p}-\bar{\gamma})$. Of course, we also have the
trivial bound
\begin{displaymath}
\sigma(\{ \xi \in S : |{\mathcal R}f(\xi)| > \lambda \}) \leq \sigma(S)
\lct 1.
\end{displaymath}

We can now bound our integral. We let
\begin{displaymath}
\lambda_0=R^{\epsilon/\bar{p}} \| f \|_{L^{\bar{p}'}(\chi_{B(0,R)}Hdx)}
\hspace{0.25in} \mbox{and} \hspace{0.25in}
\lambda_1=\| f \|_{L^1(\chi_{B(0,R)}Hdx)},
\end{displaymath}
and we observe that
\begin{displaymath}
\int_0^{\lambda_1} \sigma(\{ \xi \in S : |{\mathcal R}f(\xi) > \lambda \})
\lambda^{p_0-1} d\lambda \lct \int_0^{\lambda_0} \lambda^{p_0-1} d\lambda
= \frac{\lambda_0^{p_0}}{p_0}
\end{displaymath}
if $\lambda_1 \leq \lambda_0$, and
\begin{eqnarray*}
\int_0^{\lambda_1} \sigma(\{ \xi \in S : |{\mathcal R}f(\xi) > \lambda \})
\lambda^{p_0-1} d\lambda
& \lct & \int_0^{\lambda_0} \lambda^{p_0-1} d\lambda
+ \lambda_0^{p_0} \int_{\lambda_0}^{\lambda_1} \frac{d\lambda}{\lambda} \\
& \lct & \lambda_0^{p_0} \Big( 1+\log \frac{\lambda_1}{\lambda_0} \Big)
\end{eqnarray*}
if $\lambda_1 > \lambda_0$. But, by H\"{o}lder's inequality,
$\lambda_1 \leq |B(0,R)|^{1/\bar{p}} R^{-\epsilon/\bar{p}} \lambda_0 \lct
R \lambda_0$, so
\begin{displaymath}
\int_0^{\lambda_1} \sigma(\{ \xi \in S : |{\mathcal R}f(\xi) > \lambda \})
\lambda^{p_0-1} d\lambda \lct \lambda_0^{p_0} (1 + \log R).
\end{displaymath}
Thus
\begin{displaymath}
\Big( \int |{\mathcal R}f(\xi)|^{p_0} d\sigma(\xi) \Big)^{1/p_0}
\lct R^\epsilon \| f \|_{L^{\bar{p}'}(\chi_{B(0,R)}Hdx)}.
\end{displaymath}

When $3/2 \leq \alpha < 5/2$, we have $\bar{p}=2(4\alpha+3)/(2\alpha+3)$ and
$\bar{\gamma}=3$, so that $p_0=4(4\alpha+3)/(10\alpha+3)$. This proves part
(i) of the theorem.

When $5/2 \leq \alpha \leq 3$, we have $\bar{p}=13/4$ and
$2 \leq \bar{\gamma}=\gamma < (11/2)-\alpha$, so that
$13/9 \leq p_0 < 13/(2+2\alpha)$. This proves part (ii).
\end{proof}

We now prove a lemma that is important for our results concerning the decay
of the $L^q(S)$ means of Fourier transforms of measures, as well as for our
estimate on exponential sums. This lemma will allow
us to bound $\| Ef(R \, \cdot) \|_{L^p(\mu)}$ by $\| Eg \|_{L^p(Hdx)}$ with
$|g| \leq |f|$, and for an appropriate weight $H$ that is supported in the
ball $B(0,2R)$.

\begin{lemma}
\label{bdmubyh}
Suppose $\mu \in M(\mbb R^3)$ is positive and supported in $B(0,1)$,
$0 < \alpha \leq 3$, $R \geq 1$, and
\begin{displaymath}
{\mathcal C}_{\alpha,R}(\mu)
= \sup_{x \in \mbb R^3} \sup_{r \geq R^{-1}} \frac{\mu(B(x,r))}{r^\alpha}.
\end{displaymath}
Then there is a weight $H$ (which depends on $R$) of dimension $\alpha$ such
that
\\
{\rm (i)} $A_\alpha(H) \leq |B(0,1)|$
\\
{\rm (ii)} to every function $f \in L^1(S)$ there is a function
$g \in L^1(S)$ such that $|g| \leq |f|$ and
\begin{displaymath}
\int |Ef(R x)|^p d\mu(x)
\leq C_p \frac{{\mathcal C}_{\alpha,R}(\mu)}{R^\alpha}
     \int_{B(0,2R)} |Eg(y)|^p H(y) dy
\end{displaymath}
for $p \geq 1$, where $C_p$ is a constant that only depends on $p$.
\end{lemma}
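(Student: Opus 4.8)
The plan is to build the weight $H$ by spreading the measure $\mu$ out over scale $R^{-1}$ and then rescaling by $R$. Concretely, let $\phi$ be a fixed nonnegative Schwartz function with $\phi \geq 1$ on $B(0,1)$ and $\widehat{\phi}$ supported in $B(0,1)$ (a standard construction), and set $\psi = \phi_{R^{-1}}$ in the notation of the paper, so that $\psi$ is an $L^1$-normalized bump at scale $R^{-1}$. Define the measure $\nu = \mu * \psi$ on $\mbb R^3$; it is absolutely continuous with a nonnegative density, supported in $B(0,2)$ since $\mu$ lives in $B(0,1)$ and $\psi$ is essentially concentrated in $B(0,R^{-1}) \subset B(0,1)$ (the Schwartz tails can be absorbed by choosing $\phi$ decaying fast enough, or handled by a routine truncation). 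Now define $H(y) = c\, R^{-\alpha}\, {\mathcal C}_{\alpha,R}(\mu)^{-1}\, \nu(y/R)$ after rescaling by $R$, with $c$ chosen so that $\|H\|_{L^\infty} \leq 1$; this rescaled density is supported in $B(0,2R)$ as required. The key point is to verify the two claimed properties.

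For the Frostman-type bound $A_\alpha(H) \leq |B(0,1)|$: I would show first that the density of $\nu$ satisfies a pointwise bound of the form $(\mu*\psi)(x) \lesssim {\mathcal C}_{\alpha,R}(\mu)\, R^{3-\alpha}$ by decomposing the convolution into dyadic annuli $|x-z| \sim 2^j R^{-1}$ for $j \geq 0$, using that $\psi(x-z) \lesssim R^3 (1+R|x-z|)^{-N}$ and $\mu(B(x, 2^j R^{-1})) \leq {\mathcal C}_{\alpha,R}(\mu) (2^j R^{-1})^\alpha$ (valid since $2^j R^{-1} \geq R^{-1}$), and summing the resulting geometric-type series in $j$. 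More importantly, for the ball condition itself, I would bound $\int_{B(x_0,\rho)} \nu$ for $\rho \geq 1$: Fubini gives $\int_{B(x_0,\rho)} (\mu*\psi)(x)\,dx = \int \big(\int_{B(x_0,\rho)} \psi(x-z)\,dx\big) d\mu(z) \leq \int_{\mbb R^3} \psi \, d\mu = \mu(\mbb R^3)$, but to get the $\rho^\alpha$ gain one instead writes $\int_{B(x_0,\rho)}(\mu*\psi)\,dx \leq \int \psi(x-z)\,\chi_{|z - x_0| \leq \rho + 1}\, dx\, d\mu(z) + (\text{tails}) \lesssim \mu(B(x_0, 2\rho)) \lesssim {\mathcal C}_{\alpha,R}(\mu)\, \rho^\alpha$ for $\rho \geq 1 \geq R^{-1}$. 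After rescaling by $R$ and inserting the normalization constant, this is exactly $\int_{B(x_0', \rho')} H \leq |B(0,1)| \, \rho'^\alpha$ for $\rho' \geq R$; for $1 \leq \rho' < R$ one uses instead the $L^\infty$ bound $\|H\|_\infty \leq 1$ together with $\int_{B(x_0',\rho')} H \leq |B(0,\rho')| = |B(0,1)| \rho'^3 \leq |B(0,1)|\rho'^\alpha$ only if $\alpha \geq 3$, so here one must be more careful and use the density bound $H(y) \lesssim {\mathcal C}_{\alpha,R}(\mu)^{-1} R^{-\alpha} \cdot {\mathcal C}_{\alpha,R}(\mu) R^{3-\alpha} \cdot \ldots$ — this small-scale case is where the bookkeeping is most delicate and is the main obstacle.

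For property (ii): since $\widehat{\psi} = \widehat{\phi}(R \cdot)$ is supported in $B(0,R)$, and $|Ef(Rx)|$ has Fourier support (in $x$) essentially in a $cR$-ball because $f$ lives on the compact surface $S$ and we dilate by $R$, the function $x \mapsto |Ef(Rx)|^p$ — or rather a suitable majorant of it — has Fourier transform supported in a bounded dilate of $B(0,R)$, so convolving with $\psi$ (whose Fourier transform is $\geq 1$ there after the right normalization of $\phi$) only increases it pointwise up to constants. Quantitatively: $\int |Ef(Rx)|^p d\mu(x) \leq \int \big(|Ef(R\cdot)|^p * \psi\big)(x)\, d\mu(x) \cdot C_p$; here one uses that for a function $F$ with $\widehat{F}$ supported in $B(0, cR)$ one has $|F(x)|^p \lesssim (|F|^p * \psi)(x)$ pointwise — this is a standard consequence of the reproducing formula $F = F * \eta_{R^{-1}}$ for appropriate $\eta$, and an application of it to $|F|^{p}$ via writing $|F|^p$ as a product and using that $|F|$ is (after taking absolute values and a harmless widening of the support) again band-limited; more cleanly, one applies the local constancy / Bernstein argument to get $\sup_{B(x,R^{-1})} |F|^p \lesssim$ average of $|F|^p$ over $cB(x,R^{-1})$. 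Then Fubini moves the $\mu$-integral past $\psi$: $\int (|Ef(R\cdot)|^p * \psi)(x)\, d\mu(x) = \int |Ef(Ry)|^p (\psi * \mu)(y)\, dy = \int |Ef(Ry)|^p \nu(y)\, dy$, and a change of variables $y \mapsto y/R$ turns this into $R^{-3}\int |Ef(y)|^p \nu(y/R)\, dy$. Finally, comparing with the definition of $H$ and with the claimed right-hand side, one reads off that it suffices to take $g = f$ — or, if one wants the literal statement with $|g| \leq |f|$ (to match later applications where one truncates $f$), one simply sets $g = f$, the inequality $|g| \leq |f|$ being trivially an equality. The constant works out to $C_p {\mathcal C}_{\alpha,R}(\mu) R^{-\alpha}$ as stated, the $R^{-3}$ from the change of variables combining with the $R^{3-\alpha}$-type normalization inside $H$.
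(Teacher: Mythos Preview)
Your overall strategy---mollify $\mu$ at scale $R^{-1}$ and rescale by $R$ to produce the weight $H$---is the right idea, and the weight you end up with is essentially the one the paper constructs. But there is a real gap in your execution of part (ii), and the paper's device is precisely what closes it.

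You take $g=f$ and appeal to the reproducing formula $F = F * \eta_{R^{-1}}$ (with $\widehat{\eta}\equiv 1$ on the Fourier support of $F(x)=Ef(Rx)$) to obtain $|F|^p \lct |F|^p * |\eta_{R^{-1}}|$. The difficulty is that such an $\eta$ \emph{cannot} have compact support on the physical side, so after Fubini and rescaling your weight $\int|\eta(Rx-z)|\,d\mu(x)$ has Schwartz tails outside $B(0,2R)$. Since $|Ef|$ has no decay at infinity, you cannot simply discard the contribution from $|z|>2R$; the resulting error term does not fit the stated form of the lemma, and absorbing it would require an additional argument you have not supplied. Your sentence conflating ``$\widehat{\psi}$ supported in $B(0,R)$'' with ``$\widehat{\psi}\geq 1$ there'' is a symptom of this tension: no single kernel can simultaneously be compactly supported in space (to force the cutoff to $B(0,2R)$) and identically $1$ on the Fourier side (to make the reproducing formula exact).

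The paper sidesteps this by setting $g=f/\phi$, where $\phi$ is a Schwartz function with $|\phi|\geq 1$ on $S$ and $\mbox{supp}\,\widehat{\phi}\subset B(0,1)$. Then one has the \emph{exact} identity $Ef=\widehat{\phi}*Eg$, so H\"{o}lder gives
\[
|Ef(Rx)|^p\leq\|\widehat{\phi}\|_{L^1}^{p-1}\int|Eg(y)|^p\,|\widehat{\phi}(Rx-y)|\,dy
\]
with a \emph{compactly supported} kernel $\widehat{\phi}$. The resulting weight
\[
H(y)=\|\widehat{\phi}\|_{L^\infty}^{-1}\,{\mathcal C}_{\alpha,R}(\mu)^{-1}R^\alpha\int|\widehat{\phi}(Rx-y)|\,d\mu(x)
\]
is then genuinely supported in $B(0,R+1)\subset B(0,2R)$, and both $\|H\|_{L^\infty}\leq 1$ and $A_\alpha(H)\leq\|\widehat{\phi}\|_{L^1}/\|\widehat{\phi}\|_{L^\infty}\leq|B(0,1)|$ follow from two-line computations using only $\mu(B(\cdot,r))\leq{\mathcal C}_{\alpha,R}(\mu)\,r^\alpha$ for $r\geq R^{-1}$---no dyadic decomposition is needed, and no separate ``small-radius'' case arises (so the difficulty you flag in the $A_\alpha$ verification simply disappears).
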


\begin{proof}
We let $\phi$ be a Schwartz function on $\mbb R^3$ such that
$|\phi| \geq 1$ on $S$ and $\mbox{supp} \, \widehat{\phi} \subset B(0,1)$,
and define the function $g$ on $S$ by $g=f/\phi$. Then $|g| \leq |f|$ and
\begin{displaymath}
\int |E f(R x)|^p d\mu(x) =
\int \big| \widehat{\phi} \ast \widehat{g d\sigma}(R x) \big|^p d\mu(x)
= \int \big| \widehat{\phi} \ast (E g)(R x) \big|^p d\mu(x).
\end{displaymath}
By H\"{o}lder's inequality, it follows that
\begin{eqnarray*}
\int |E f(R x)|^p d\mu(x)
& \leq & \| \widehat{\phi} \|_{L^1}^{p-1}
         \int |E g|^p \ast \big| \widehat{\phi} \, \big|(R x) d\mu(x) \\
&  =   & \| \widehat{\phi} \|_{L^1}^{p-1}
         \int \int |E g(y)|^p \big| \widehat{\phi}(Rx-y) \big| dy d\mu(x).
\end{eqnarray*}
Interchanging the order of integration, we arrive at the inequality
\begin{displaymath}
\int |E f(R x)|^p d\mu(x)
\leq \| \widehat{\phi} \|_{L^1}^{p-1} \| \widehat{\phi} \|_{L^\infty}
     {\mathcal C}_{\alpha,R}(\mu) R^{-\alpha} \int |E g(y)|^p H(y) dy,
\end{displaymath}
where $H(y)= \| \widehat{\phi} \|_{L^\infty}^{-1}
{\mathcal C}_{\alpha,R}(\mu)^{-1} R^\alpha \int |\widehat{\phi}(Rx-y)|
d\mu(x)$. Since $\widehat{\phi}$ as $\mu$ is supported in $B(0,1)$, it
follows that $H$ is supported in $B(0,1+R) \subset B(0,2R)$, and hence
\begin{displaymath}
\int |E f(R x)|^p d\mu(x)
\leq \| \widehat{\phi} \|_{L^1}^{p-1} \| \widehat{\phi} \|_{L^\infty}
     {\mathcal C}_{\alpha,R}(\mu) R^{-\alpha}
     \int_{B(0,2R)} |E g(y)|^p H(y) dy.
\end{displaymath}

It remains to show that $H$ is a weight of dimension $\alpha$ and
$A_\alpha(H) \leq |B(0,1)|$. Clearly,
\begin{displaymath}
H(y)= \| \widehat{\phi} \|_{L^\infty}^{-1} {\mathcal C}_{\alpha,R}(\mu)^{-1}
R^\alpha \int_{B(R^{-1} y,R^{-1})} |\widehat{\phi}(Rx-y)| d\mu(x) \leq 1
\end{displaymath}
for all $y$, where we have used the fact that
$\mu(B(R^{-1} y,R^{-1})) \leq {\mathcal C}_{\alpha,R}(\mu) R^{-\alpha}$.
Also,
\begin{eqnarray*}
\int_{B(x_0,r)} H(y) dy
& = & \| \widehat{\phi} \|_{L^\infty}^{-1} {\mathcal C}_{\alpha,R}(\mu)^{-1}
  R^\alpha \int \int \chi_{B(x_0,r)}(y) |\widehat{\phi}(Rx-y)| d\mu(x) dy \\
& = & \| \widehat{\phi} \|_{L^\infty}^{-1} {\mathcal C}_{\alpha,R}(\mu)^{-1}
  R^\alpha \int |\widehat{\phi}(u)| \int \chi_{B(x_0,r)}(Rx-u) d\mu(x) du \\
& = & \| \widehat{\phi} \|_{L^\infty}^{-1} {\mathcal C}_{\alpha,R}(\mu)^{-1}
   R^\alpha \int |\widehat{\phi}(u)|\, \mu(B((u+x_0) R^{-1},r R^{-1})) du \\
& \leq & \| \widehat{\phi} \|_{L^\infty}^{-1} \| \widehat{\phi} \|_{L^1}
         \, r^\alpha
\end{eqnarray*}
provided $r \geq 1$. Thus $A_\alpha(H) \leq
\| \widehat{\phi} \|_{L^\infty}^{-1} \| \widehat{\phi} \|_{L^1} \leq
|B(0,1)|$.
\end{proof}

\section{Proofs of the corollaries}

We are now in position to prove our global restriction estimates.

\begin{proof}[Proof of Corollary \ref{global}]
Let $3/2 \leq \alpha \leq 3$, and $H$ be a weight of dimension $\alpha$ with
$A_\alpha(H) \leq 1$. We combine parts (i) and (ii) of Theorem
\ref{dualform} as
\begin{equation}
\label{stop0}
\| {\mathcal R} f \|_{L^{p_0}(S)}
\lct R^\epsilon \| f \|_{L^s(\chi_{B(0,R)} H dx)}
\end{equation}
with the understanding that $p_0=4(4\alpha+3)/(10\alpha+3)$ and
$s'=2(4\alpha+3)/(2\alpha+3)$ if $3/2 \leq \alpha < 5/2$, and
$13/9 \leq p_0 < 13/(2+2\alpha)$ and $s'=13/4$ if $5/2 \leq \alpha \leq 3$.
We note that in both cases $1 < s \leq p_0 \leq 2$.

Following \cite{tt:removal}, we would like to upgrade (\ref{stop0}) to
become valid for all functions $f \in L^s(\chi_V H dx)$ whenever $V$ is a
union of a sparse family of balls. This means $V= \cup_{l=1}^N B(x_l,R)$
with $|x_l - x_m| \geq (RN)^C$ if $l \not= m$, where $C$ is a suitably large
constant.

For $f \in L^s(\chi_V H dx)$ and $1 \leq l \leq N$, we let $f_l$ be the
restriction of $f$ to $B(x_l,R)$, and we define the function
$g_l \in L^s(\chi_{B(0,R)} H dx)$ by $g_l(x)=f_l(x+x_l)$. Then
\begin{displaymath}
{\mathcal R} f_l(\xi)= \widehat{f_l H}(\xi) =
e^{-2 \pi i \xi \cdot x_l} \Big( g_l H(\cdot+x_l) \widehat{\Big) \;} (\xi)
\end{displaymath}
for all $\xi \in S$. Since $1 < p_0 \leq 2$ and the balls $B(x_l,R)$ are
sparse, Lemma 3.2 of \cite{tt:removal} as reformulated in \cite{bg:bgmethod}
(see inequality (11) on page 1289 of \cite{bg:bgmethod}) now tells us that
\begin{displaymath}
\| {\mathcal R} f \|_{L^{p_0}(S)}
= \Big\| \sum_{l=1}^N {\mathcal R} f_l \Big\|_{L^{p_0}(S)} \lct
\Big( \sum_{l=1}^N
\Big\| \Big( g_l H(\cdot+x_l) \widehat{\Big) \;} \Big\|_{L^{p_0}(S)}^{p_0}
\Big)^{1/p_0}.
\end{displaymath}
Since $A_\alpha(H(\cdot+x_l))=A_\alpha(H)$, (\ref{stop0}) gives
\begin{displaymath}
\Big\| \Big( g_l H(\cdot+x_l) \widehat{\Big) \;} \Big\|_{L^{p_0}(S)}
\lct R^\epsilon \| g_l \|_{L^s(\chi_{B(0,R)}H(\cdot+x_l)dx)}.
\end{displaymath}
But $\| g_l \|_{L^s(\chi_{B(0,R)}H(\cdot+x_l)dx)}=
\| f \|_{L^s(\chi_{B(x_l,R)}Hdx)}$, so
\begin{eqnarray*}
\| {\mathcal R} f \|_{L^{p_0}(S)}
& \lct & R^\epsilon \Big( \sum_{l=1}^N
         \| f \|_{L^s(\chi_{B(x_l,R)}Hdx)}^{p_0} \Big)^{1/{p_0}} \\
& \leq & R^\epsilon \Big( \sum_{l=1}^N
         \| f \|_{L^s(\chi_{B(x_l,R)}Hdx)}^s \Big)^{1/s} \\
&  =   & R^\epsilon \| f \|_{L^s(\chi_V Hdx)},
\end{eqnarray*}
where we have used the fact that $p_0 \geq s$.

The next step is to upgrade (\ref{stop0}) to become valid for all
$f \in L^s(\chi_E Hdx)$ whenever $E$ is a finite union of $c$-cubes. For
this we need the following lemma.

\begin{alphlemma}[Tao~\cite{tt:removal}]
\label{ccubes}
Suppose $E$ is the union of $c$-cubes and $0 < \delta < 1$. Then there are
$O(\delta^{-1} |E|^\delta)$ sets $V_k$ that cover $E$ such that each $V_k$
is a union of a sparse collections of balls of radius
$O(|E|^{C^{1/\delta}})$.
\end{alphlemma}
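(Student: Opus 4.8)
This is Tao's clustering lemma, and the plan is to follow the argument of \cite{tt:removal}. First I would normalise so that $E$ is a union of $N$ cubes (so $N \sim |E|$), set $k_0 = \lceil 1/\delta \rceil$, fix a large absolute constant $A$ (to be taken $\geq 2C$, where $C$ is the exponent appearing in the notion of a sparse family of balls), and introduce the rapidly growing tower of scales $\rho_0 = 1$ and $\rho_j = (N\rho_{j-1})^A$ for $1 \leq j \leq k_0$. A one-line induction gives $\rho_j \leq N^{(2A)^j}$, so $\rho_{k_0} \leq |E|^{C^{1/\delta}}$ after enlarging $C$ once and for all; this is the scale at which the balls in the conclusion will live.

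Next I would introduce, for each $j$, the \emph{scale-$\rho_j$ clusters} of $E$: the connected components obtained by joining two cubes of $E$ whenever they are within distance $\rho_j$. Each scale-$\rho_j$ cluster has at most $N$ cubes, hence sits inside a ball of radius $O(N\rho_j)$, and distinct scale-$\rho_j$ clusters are at least $\rho_j$-separated. For $Q \in E$, write $N_j(Q)$ for the number of cubes in the scale-$\rho_j$ cluster of $Q$, so that $1 \leq N_0(Q) \leq N_1(Q) \leq \cdots \leq N_{k_0}(Q) \leq N$. Applying the pigeonhole principle to the telescoping product $\prod_{j=1}^{k_0} N_j(Q)/N_{j-1}(Q) = N_{k_0}(Q)/N_0(Q) \leq N$ produces, for each $Q$, an index $j(Q) \in \{1,\dots,k_0\}$ with $N_{j(Q)}(Q) \leq N^{1/k_0} N_{j(Q)-1}(Q) \leq |E|^\delta N_{j(Q)-1}(Q)$. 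I then split $E$ into the $k_0$ groups $G_j = \{Q \in E : j(Q) = j\}$.

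The heart of the matter is to further split each $G_j$ into $O(|E|^\delta)$ sparse families. Fix $j$ and a scale-$\rho_j$ cluster $C$. The scale-$\rho_{j-1}$ clusters contained in $C$ partition its cubes, and if $Q \in G_j \cap C$ lies in the subcluster $C'_Q$ then $|C'_Q| = N_{j-1}(Q) \geq |E|^{-\delta} N_j(Q) = |E|^{-\delta}|C|$; since the subclusters are disjoint subsets of $C$, at most $|E|^\delta$ of them can meet $G_j$. Hence I can label the scale-$\rho_{j-1}$ clusters by elements of $\{1,\dots,\lceil |E|^\delta\rceil\}$ so that inside every scale-$\rho_j$ cluster the subclusters meeting $G_j$ receive pairwise distinct labels — the relevant subclusters form a disjoint union of cliques of size $\leq |E|^\delta$, so a greedy labelling does it. For each label $t$ let $V_{j,t}$ be the union of the scale-$\rho_{j-1}$ clusters of label $t$ that meet $G_j$, each thickened to a ball of radius $O(N\rho_{j-1}) \leq \rho_{k_0} \leq |E|^{C^{1/\delta}}$. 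Two distinct balls in $V_{j,t}$ come from subclusters lying in different scale-$\rho_j$ clusters (since same-$C$ subclusters got distinct labels) and are therefore at least $\rho_j = (N\rho_{j-1})^A$ apart; as $V_{j,t}$ consists of at most $N$ balls of radius $O(N\rho_{j-1})$, this separation dominates $\bigl(N \cdot O(N\rho_{j-1})\bigr)^C$ once $A \geq 2C$, so $V_{j,t}$ is sparse. Every $Q$ lies in $C'_Q \subset V_{j(Q),t}$ for the label $t$ of $C'_Q$, so the $V_{j,t}$ cover $E$, and there are $O(k_0 |E|^\delta) = O(\delta^{-1}|E|^\delta)$ of them.

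The difficulty is not any individual estimate — once the tower $\rho_j = (N\rho_{j-1})^A$ with $A \geq 2C$ is in place, the diameter, separation and size bounds are all immediate — but rather finding the correct combinatorial device. The crucial realisation is that one should pigeonhole the \emph{cube-by-cube} cluster-growth ratios $N_j(Q)/N_{j-1}(Q)$: a scale $j(Q)$ of slow growth for $Q$ simultaneously forces the subclusters inside a given scale-$\rho_j$ cluster that meet $G_j$ to be few (so $|E|^\delta$ labels suffice) and, because the jump from $\rho_{j-1}$ to $\rho_j$ is so violent, makes those subclusters sparsely separated. As in \cite{tt:removal}, I would dispatch the routine geometric verifications in a sentence and devote the write-up to the labelling step.
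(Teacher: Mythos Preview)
The paper does not prove this lemma; it is quoted verbatim from Tao~\cite{tt:removal} and used as a black box in the $\epsilon$-removal argument. Your outline is correct and is exactly Tao's original argument: the tower of scales $\rho_j=(N\rho_{j-1})^A$, the pigeonholing on the cluster-growth ratios $N_j(Q)/N_{j-1}(Q)$, and the greedy labelling of the slow-growth subclusters are all as in \cite{tt:removal}, so there is nothing further to compare.
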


Given a function $f \in L^s(\chi_E Hdx)$, then writing $f=\sum_k f_k$ with
$f_k$ supported in $V_k$ and using Minkowski's inequality, we see that
\begin{displaymath}
\| {\mathcal R} f \|_{L^{p_0}(S)} \lct \delta^{-1}
|E|^\delta |E|^{\epsilon C^{1/\delta}} \| f \|_{L^s(\chi_E Hdx)}.
\end{displaymath}
Taking $\delta \sim 1/\log(1/\epsilon)$, this becomes
\begin{equation}
\label{estoncubes}
\| {\mathcal R} f \|_{L^{p_0}(S)}
\lct |E|^{C/\log(1/\epsilon)} \| f \|_{L^s(\chi_E Hdx)}
\end{equation}
provided $A_\alpha(H) \lct 1$.

We now take $H$ to be the characteristic function of $2\Omega_{a,b}$, where
$2\Omega_{a,b}$ is the same as $\Omega_{a,b}$ but with the cylinder
$\mbb R \times [-1,1]^2$ replaced by $\mbb R \times [-2,2]^2$, and proceed
as in \cite{bg:bgmethod}. As we mentioned before, the argument in
\cite{bg:bgmethod} is based on \cite{tt:removal}.

Suppose $1 \leq r < s$. Let $\tilde{E}$ be a subset of $2\Omega_{a,b}$ which
is a (possibly infinite) union of $c$-cubes, where $c$ is a constant that
will be determined later, and let
$f \in L^1(2\Omega_{a,b}) \cap L^r(2\Omega_{a,b})$ be a function that is
constant on each of the $c$-cubes of $\tilde{E}$, vanishes on
$2\Omega_{a,b} \setminus \tilde{E}$, and satisfies $\| f \|_{L^r} \leq 1$.
For $k \in \mbb Z$, we set $E_k= \{ 2^{-k-1} \leq |f| < 2^{-k} \}$ and
$f_k= \chi_{E_k} f$. Then each $E_k$ is a finite union of $c$-cubes, and
$2^{-kr} |E_k| \lct 1$ for all $k$. We note that, since $|E_k| \geq c^3$,
the last inequality implies that the set of $k$ for which
$E_k \not= \emptyset$ is bounded from below by a constant that depends on
$c$ and $r$. Applying (\ref{estoncubes}) with $H=\chi_{2\Omega_{a,b}}$,
$f=f_k$, and $E=E_k$, we get
\begin{eqnarray*}
\lefteqn{\| {\mathcal R} f_k \|_{L^{p_0}(S)}
         \; \lct \; |E_k|^{C/\log(1/\epsilon)}
                    \| f_k \|_{L^s(2\Omega_{a,b})}} \\
& & \; \lct \; 2^{-k} |E_k|^{C/\log(1/\epsilon)} |E_k|^{1/s}
    \; \lct \; 2^{-k} 2^{k r \, C/\log(1/\epsilon)} 2^{kr/s}.
\end{eqnarray*}
Summing over $k$, we arrive at
\begin{equation}
\label{cnstonetilde}
\| {\mathcal R} f \|_{L^{p_0}(S)} \lct 1
\end{equation}
provided
\begin{displaymath}
\frac{C}{\log(1/\epsilon)} + \frac{1}{s} < \frac{1}{r}.
\end{displaymath}

For $\tau \in [-c/2,c/2]^3$, we now let ${\mathcal L}_\tau$ be the
intersection of the lattice $c \, \mbb Z^3 + \tau$ with $\Omega_{a,b}$.
We suppose $\{ \lambda_n \}$ is a sequence in
$l^1({\mathcal L}_\tau) \cap l^r({\mathcal L}_\tau)$, and let $\chi_n$ be
the characteristic function of $[-c/2,c/2]^3+n$. If
$f=\sum_n \lambda_n \chi_n$, then (\ref{cnstonetilde}) tells us that
\begin{displaymath}
\| {\mathcal R} f \|_{L^{p_0}(S)}
\lct \Big( \sum_n |\lambda_n|^r \Big)^{1/r}.
\end{displaymath}
But $\widehat{f}(\xi)= \widehat{\chi_0}(\xi)
\sum_n \lambda_n e^{-2 \pi i \xi \cdot n}$, so, choosing $c$ small enough
for $|\widehat{\chi_0}|$ to be positive on $S$, we get
\begin{displaymath}
\left( \int \Big| \sum_{n \in {\mathcal L}_\tau}
\lambda_n e^{-2 \pi i \xi \cdot n} \Big|^{p_0} d\sigma(\xi) \right)^{1/p_0}
\lct \left( \sum_{n \in {\mathcal L}_\tau} |\lambda_n|^r \right)^{1/r}.
\end{displaymath}
Averaging over $\tau$ and letting $p$ be the exponent conjugate to $r$, we
get the estimate in part (i) of the corollary if $3/2 \leq \alpha < 5/2$,
and the estimate in part (ii) if $5/2 \leq \alpha \leq 3$.

When $\alpha=3$ and $a=b=1$, the estimate in part (ii) of the corollary
becomes
\begin{displaymath}
\| Ef \|_{L^p(\mbb R^3)}= \| Ef \|_{L^p(\Omega_{1,1})}
\lct \| f \|_{L^{p_0'}(S)}
\end{displaymath}
whenever $p > 13/4$ and $1 \leq p_0 < 13/8$. Interpolating this with the
trivial estimate $\| Ef \|_{L^\infty(\mbb R^3)} \leq \| f \|_{L^1(S)}$, we
prove part (iii) of the corollary.
\end{proof}

For the proof of Corollary \ref{decaysphm}, we will also need the following
lemma from \cite{w:circdecay} that connects the
${\mathcal C}_{\alpha,R}(\mu)$ of Lemma \ref{bdmubyh} to the
$\alpha$-dimensional energy of $\mu$.

\begin{alphlemma}[Wolff~\cite{w:circdecay}]
\label{soilemma}
Let $\mu \in M(\mbb R^3)$ be a positive measure with support in $B(0,1)$,
$0 < \alpha < 3$, and $R \geq 1$. Then we can decompose $\mu$ as a sum of
$O(1+\log R)$ measures $\mu_j$ so that for each $j$,
\begin{displaymath}
\| \mu_j \| \, {\mathcal C}_{\alpha,R}(\mu_j) \lct I_\alpha(\mu)
\end{displaymath}
with an implicit constant that depends only on $\alpha$.
\end{alphlemma}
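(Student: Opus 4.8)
The plan is to split $\mu$ according to the dyadic size of its maximal $\alpha$-dimensional density at scales $\geq R^{-1}$. We may assume $0 < \| \mu \| < \infty$ and $I_\alpha(\mu) < \infty$. Set $\lambda_0 = I_\alpha(\mu)/\| \mu \|$ and define $f(x) = \sup_{r \geq R^{-1}} \frac{\mu(B(x,r))}{r^\alpha}$; since $r \mapsto \mu(B(x,r))$ is non-decreasing and right-continuous the supremum may be taken over rational radii, so $f$ is measurable, and plainly $0 \leq f \leq \| \mu \| R^\alpha$. I would then take $\mu_{-1}$ to be the restriction of $\mu$ to $\{ f \leq \lambda_0 \}$ and, for each integer $m \geq 0$, take $\mu_m$ to be the restriction of $\mu$ to $E_m = \{ 2^m \lambda_0 < f \leq 2^{m+1} \lambda_0 \}$. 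These are restrictions of $\mu$ to disjoint sets that cover $\mbb R^3$, so $\mu = \mu_{-1} + \sum_{m \geq 0} \mu_m$, and each piece is supported in $B(0,1)$.

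It then remains to verify three facts. First, that only $O(1+\log R)$ of the pieces are nonzero: because $\mbox{supp} \, \mu \subset B(0,1)$ forces $|x - y| \leq 2$ on $\mbox{supp} \, \mu \times \mbox{supp} \, \mu$, we have $I_\alpha(\mu) \geq 2^{-\alpha} \| \mu \|^2$, hence $\| \mu \|/\lambda_0 \leq 2^\alpha$, and together with $f \leq \| \mu \| R^\alpha$ this makes $\mu_m = 0$ unless $2^m < 2^\alpha R^\alpha$, leaving at most $O(1 + \log R)$ admissible values of $m$ (with the constant depending on $\alpha$). Second, the density bound: if a ball $B(x,r)$ has $r \geq R^{-1}$ and $\mu_m(B(x,r)) > 0$, then $B(x,r)$ contains a point $y \in E_m$, and $B(x,r) \subset B(y,2r)$ with $2r \geq R^{-1}$, so $\mu_m(B(x,r)) \leq \mu(B(y,2r)) \leq f(y)(2r)^\alpha \leq 2^{\alpha+1} \, 2^m \lambda_0 \, r^\alpha$; therefore ${\mathcal C}_{\alpha,R}(\mu_m) \leq 2^{\alpha+1} 2^m \lambda_0$, and similarly ${\mathcal C}_{\alpha,R}(\mu_{-1}) \leq 2^{\alpha+1}\lambda_0$. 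Third, the mass bound: for each $x$ and each $r \geq R^{-1}$ we have $\int |x-y|^{-\alpha} d\mu(y) \geq r^{-\alpha}\mu(B(x,r))$, hence $\int |x-y|^{-\alpha} d\mu(y) \geq f(x)$; integrating this over the set $\{ f > 2^m \lambda_0 \}$, which contains $E_m$, yields $I_\alpha(\mu) \geq 2^m \lambda_0 \, \mu(E_m) = 2^m \lambda_0 \| \mu_m \|$, so $\| \mu_m \| \leq I_\alpha(\mu)/(2^m \lambda_0)$; and trivially $\| \mu_{-1} \| \leq \| \mu \| = I_\alpha(\mu)/\lambda_0$. Multiplying the density and mass bounds gives $\| \mu_j \| \, {\mathcal C}_{\alpha,R}(\mu_j) \leq 2^{\alpha+1} I_\alpha(\mu)$ for every piece $\mu_j$, which is the assertion.

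The one genuine idea is the duality between the second and third facts: the same function $f$ bounds the density of the $m$-th piece from above and, at the same time, forces that piece to have small mass because the region where $f$ exceeds $2^m \lambda_0$ is expensive in energy; this is precisely why the product $\| \mu_j \| {\mathcal C}_{\alpha,R}(\mu_j)$ lands at the level of $I_\alpha(\mu)$ with no loss. I expect the mass bound, and in particular the observation that $\int |x-y|^{-\alpha} d\mu(y) \geq f(x)$, to be the crux; the rest is bookkeeping — the measurability of $f$, the harmless constant $2^\alpha$ that arises from passing between the density at the center of a ball and the density at a point of $E_m$, and the use of the compact support of $\mu$ (through $I_\alpha(\mu) \geq 2^{-\alpha}\| \mu \|^2$) to keep the number of pieces logarithmic in $R$.
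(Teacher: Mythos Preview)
The paper does not give its own proof of this lemma; it is quoted from Wolff~\cite{w:circdecay} and used as a black box. Your argument is correct and is essentially the standard proof: decompose $\mu$ according to the dyadic level set of the truncated maximal density $f(x)=\sup_{r\ge R^{-1}}r^{-\alpha}\mu(B(x,r))$, use $f(x)\le \int |x-y|^{-\alpha}\,d\mu(y)$ to control the mass of each piece by Chebyshev, and use containment $B(x,r)\subset B(y,2r)$ for $y\in E_m\cap B(x,r)$ to control the density; the compact support gives $I_\alpha(\mu)\gct\|\mu\|^2$, which caps the number of nonempty levels at $O(1+\log R)$. All steps check, and the constants depend only on $\alpha$, as required.
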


\begin{proof}[Proof of Corollary \ref{decaysphm}]
As in the proof of Theorem \ref{dualform}, we let
\begin{displaymath}
\bar{p}= \left\{ \begin{array}{ll}
                 2(4\alpha+3)/(2\alpha+3) &
                 \mbox{if $\frac{3}{2} \leq \alpha < \frac{5}{2}$,} \\ \\
                 13/4 & \mbox{if $\frac{5}{2} \leq \alpha \leq 3$.}
                 \end{array} \right.
\end{displaymath}

Writing $\mu= \sum_j \mu_j$ as in Lemma \ref{soilemma}, we see by
H\"{o}lder's inequality that
\begin{eqnarray*}
\int |Ef(R x)| d\mu(x)
&   =  & \sum_j \int |Ef(R x)| d\mu_j(x) \\
& \leq & \sum_j \| \mu_j \|^{1-(1/\bar{p})}
         \Big( \int |Ef(R x)|^{\bar{p}} d\mu_j(x) \Big)^{1/\bar{p}} \\
& \leq & \| \mu \|^{1-(2/\bar{p})} \sum_j
     \Big( \| \mu_j \| \int |Ef(R x)|^{\bar{p}} d\mu_j(x) \Big)^{1/\bar{p}}
\end{eqnarray*}
for all $f \in L^1(S)$, where we have used the fact that $\bar{p} >2$. But
by Lemma \ref{bdmubyh} and Lemma \ref{soilemma}, for each such $f$ there is
a function $g$ with $|g| \leq |f|$ such that
\begin{eqnarray*}
\| \mu_j \| \int |Ef(R x)|^{\bar{p}} d\mu_j(x)
& \lct & \| \mu_j \| \, {\mathcal C}_{\alpha,R}(\mu_j) R^{-\alpha}
         \int_{B(0,2R)} |Eg(y)|^{\bar{p}} H(y) dy \\
& \lct & I_\alpha(\mu) R^{-\alpha} \int_{B(0,2R)} |Eg(y)|^{\bar{p}} H(y) dy.
\end{eqnarray*}
Summing over $j$, we get
\begin{eqnarray*}
\lefteqn{\int |Ef(R x)| d\mu(x)} \\
& \lct & (1+\log R) \| \mu \|^{1-(2/\bar{p})} I_\alpha(\mu)^{1/\bar{p}}
         R^{-\alpha/\bar{p}}
         \Big( \int_{B(0,2R)} |Eg(y)|^{\bar{p}} H(y) dy \Big)^{1/\bar{p}}.
\end{eqnarray*}
Since $\mbox{supp} \, \mu \subset B(0,1)$, we have
$\| \mu \|^2 \lct I_\alpha(\mu)$, and the above estimate becomes
\begin{displaymath}
\int |Ef(R x)| d\mu(x) \lct R^\epsilon
I_\alpha(\mu)^{1/2} R^{-\alpha/\bar{p}}
\Big( \int_{B(0,2R)} |Eg(y)|^{\bar{p}} H(y) dy \Big)^{1/\bar{p}}.
\end{displaymath}
By part (i) of Lemma \ref{bdmubyh}, we know that $A_\alpha(H) \lct 1$, so
Theorem \ref{dualform} (in its dual form) tells us that
\begin{displaymath}
\Big( \int_{B(0,2R)} |Eg(y)|^{\bar{p}} H(y) dy \Big)^{1/\bar{p}}
\lct R^\epsilon \| g \|_{L^{p_0'}(S)}.
\end{displaymath}
Thus
\begin{equation}
\label{lpbarineq}
\Big| \int \widehat{\mu}(R \, \xi) f(\xi) d\sigma(\xi) \Big|
\lct R^{2\epsilon} \frac{\sqrt{I_\alpha(\mu)}}{R^{\alpha/\bar{p}}} \,
\| f \|_{L^{p_0'}(S)}
\end{equation}
for all $f \in L^{p_0'}(S)$ and $R \geq 1$. By duality, (\ref{lpbarineq})
proves parts (i) and (iii) of Corollary~\ref{decaysphm}.

If we use part (ii) of Theorem \ref{mainjj} instead of Theorem
\ref{dualform}, and follow the same steps as in the proof of
(\ref{lpbarineq}), we arrive at the inequality
\begin{displaymath}
\Big| \int \widehat{\mu}(R \, \xi) f(\xi) d\sigma(\xi) \Big|
\lct R^{2\epsilon} \frac{\sqrt{I_\alpha(\mu)}}{R^{\alpha/3}}
R^{\frac{1}{12}(\alpha - \frac{3}{2})} \| f \|_{L^2(S)}
\end{displaymath}
for all $f \in L^2(S)$ and $R \geq 1$. Inserting
$\overline{\widehat{\mu}(R \, \xi)}$ for $f(\xi)$, the inequality becomes
\begin{displaymath}
\Big( \int |\widehat{\mu}(R \, \xi)|^2 d\sigma(\xi) \Big)^{1/2}
\lct R^{2\epsilon} \frac{\sqrt{I_\alpha(\mu)}}{R^{(\alpha/4)+(1/8)}},
\end{displaymath}
which is part (ii) of Corollary \ref{decaysphm}.
\end{proof}

We now move to prove our result on exponential sums.

\begin{proof}[Proof of Corollary \ref{expsum}]
Let $\bar{p}$ and $\bar{\gamma}$ be as in the proof of Theorem
\ref{dualform}. Since
${\mathcal C}_{\alpha,R}(\mu) \leq {\mathcal C}_\alpha(\mu)$ for all
$R \geq 1$, parts (i) and (iii) of Theorem \ref{mainjj} together with
Lemma \ref{bdmubyh} tell us that
\begin{displaymath}
\int |Ef(Rx)|^{\bar{p}} d\mu(x) \lct \frac{R^\epsilon}{R^\alpha}
{\mathcal C}_\alpha(\mu) \| f \|_{L^2(S)}^{\bar{\gamma}}
\| f \|_{L^\infty(S)}^{\bar{p}-\bar{\gamma}}
\end{displaymath}
for all $f \in L^\infty(S)$. If $F$ is an $L^\infty$ function on the
$1/R$-neighborhood of $S$, then the above estimate implies that
\begin{equation}
\label{thick}
\int |\widehat{F}(Rx)|^{\bar{p}} d\mu(x) \lct {\mathcal C}_\alpha(\mu)
\frac{R^\epsilon R^{\bar{\gamma}/2}}{R^{\alpha+\bar{p}}}
\| F \|_{L^2}^{\bar{\gamma}} \| F \|_{L^\infty}^{\bar{p}-\bar{\gamma}}.
\end{equation}
Proving this is a standard argument (e.g., see Proposition 4.3 of
\cite{taovv:bilinear}). For example, if $S$ is the unit sphere, then
\begin{eqnarray*}
\widehat{F}(Rx)
& = & \int_{1-\frac{1}{R} \leq |\xi| \leq 1+\frac{1}{R}}
      e^{-2 \pi i Rx \cdot \xi} F(\xi) d\xi \\
& = & \int_{1-\frac{1}{R}}^{1+\frac{1}{R}} \int_S
      e^{-2 \pi i Rx \cdot r \theta} F(r \theta) d\sigma(\theta) r^2 dr \\
& = & \int_{1-\frac{1}{R}}^{1+\frac{1}{R}} E \big( F(r \, \cdot) \big) (rRx)
      r^2 dr,
\end{eqnarray*}
so that
\begin{eqnarray*}
\big\| \widehat{F}(R \, \cdot) \big\|_{L^{\bar{p}}(\mu)}
& \leq & \int_{1-\frac{1}{R}}^{1+\frac{1}{R}}
   \big\| E \big( F(r \, \cdot) \big)(rR \, \cdot) \big\|_{L^{\bar{p}}(\mu)}
         r^2 dr \\
& \lct & {\mathcal C}_\alpha(\mu)^{\frac{1}{\bar{p}}}
         \frac{R^{\frac{\epsilon}{\bar{p}}}}{R^{\frac{\alpha}{\bar{p}}}}
         \int_{1-\frac{1}{R}}^{1+\frac{1}{R}}
         \| F(r \, \cdot) \|_{L^2(S)}^{\frac{\bar{\gamma}}{\bar{p}}}
         \| F(r \, \cdot) \|_{L^\infty(S)}^{1-\frac{\bar{\gamma}}{\bar{p}}}
         r^2 dr \\
& \lct & {\mathcal C}_\alpha(\mu)^{\frac{1}{\bar{p}}}
         \frac{R^{\frac{\epsilon}{\bar{p}}}}{R^{\frac{\alpha}{\bar{p}}}}
         \| F \|_{L^\infty}^{1-\frac{\bar{\gamma}}{\bar{p}}}
         \frac{R^{\frac{\bar{\gamma}}{2\bar{p}}}}{R}
\Big( \int_{1-\frac{1}{R}}^{1+\frac{1}{R}} \| F(r \, \cdot) \|_{L^2(S)}^2
      r^2 dr \Big)^{\frac{\bar{\gamma}}{2\bar{p}}} \\
&   =  & {\mathcal C}_\alpha(\mu)^{\frac{1}{\bar{p}}}
         \frac{R^{\frac{\epsilon}{\bar{p}}+\frac{\bar{\gamma}}{2\bar{p}}}}
              {R^{\frac{\alpha}{\bar{p}}+1}}
         \| F \|_{L^2}^{\frac{\bar{\gamma}}{\bar{p}}}
         \| F \|_{L^\infty}^{1-\frac{\bar{\gamma}}{\bar{p}}},
\end{eqnarray*}
where we have used H\"{o}lder's inequality and the fact that
$\bar{\gamma} < 2\bar{p}$.

Now suppose $w_1, \ldots, w_N \in S$ are such that
$|w_l-w_{l'}| \sim R^{-1}$. Let $\phi$ be a $C_0^\infty$ function on
$\mbb R^3$ with the property that $|\widehat{\phi}| \geq 1$ on the unit
ball. Then
\begin{eqnarray*}
\int \Big| \sum_{l=1}^N a_l e^{-2 \pi i R w_l \cdot x} \Big|^{\bar{p}}
d\mu(x)
& \leq & \int \Big| \sum_{l=1}^N a_l e^{-2 \pi i w_l \cdot (Rx)}
              \widehat{\phi_{R^{-1}}}(Rx) \Big|^{\bar{p}} d\mu(x) \\
&   =  & \int \Big| \sum_{l=1}^N \widehat{\psi_l}(Rx) \Big|^{\bar{p}}
         d\mu(x),
\end{eqnarray*}
where
$\widehat{\psi_l}(y)=a_l e^{-2\pi i w_l\cdot y} \widehat{\phi_{R^{-1}}}(y)$,
i.e.\ $\psi_l(\xi)= a_l \phi_{R^{-1}}(\xi-w_l)= a_l R^3 \phi(R(\xi - w_l))$.
Applying (\ref{thick}) with $F=\sum_{l=1}^N \psi_l$, we get
\begin{displaymath}
\int \Big| \sum_{l=1}^N a_l e^{-2 \pi i w_l \cdot x} \Big|^{\bar{p}} d\mu(x)
\lct {\mathcal C}_\alpha(\mu)
     \frac{R^\epsilon R^{\bar{\gamma}/2}}{R^{\alpha+\bar{p}}}
     \| F \|_{L^2}^{\bar{\gamma}}
     \| F \|_{L^\infty}^{\bar{p}-\bar{\gamma}}.
\end{displaymath}
Since $\phi$ is compactly supported and the $w_l$ are $R^{-1}$-separated, we
have
\begin{displaymath}
\| F \|_{L^2}^2 \lct R^3 \sum_{l=1}^N |a_l|^2
\hspace{0.25in} \mbox{and} \hspace{0.25in}
\| F \|_{L^\infty} \lct R^3 \max_l |a_l|.
\end{displaymath}
Thus
\begin{displaymath}
\int \Big| \sum_{l=1}^N a_l e^{-2 \pi i w_l \cdot x} \Big|^{\bar{p}} d\mu(x)
\lct {\mathcal C}_\alpha(\mu)
     \frac{R^\epsilon R^{2\bar{p}}}{R^{\alpha+\bar{\gamma}}}
     \Big( \sum_{l=1}^N |a_l|^2 \Big)^{\bar{\gamma}/2}
     \Big( \max_l |a_l| \Big)^{\bar{p}-\bar{\gamma}}.
\end{displaymath}
\end{proof}

\section{The wave packet decomposition}

The wave packet decomposition is an important tool for studying the
restriction problem. It translates the geometric condition (having strictly
positive second fundamental form) imposed on the surface $S$ into a way of
writing a function $f \in L^2(S)$ as a sum of simpler functions which are
are almost orthogonal to each other and whose Fourier transforms are
essentially supported on tubes. This idea was originated by Bourgain in
\cite{jb:besitype} and was further developed by several authors (see
\cite{tw:conesub}, \cite{tt:paraboloid}, \cite{tt:parkcity}, and
\cite{guth:poly}). Our presentation of this topic is tightly based on
\cite{guth:poly}. More precisely, we follow the proof of Proposition 2.6 in
\cite{guth:poly}, but we organize the arguments and state the results in a
slightly different manner.

\begin{prop}
\label{flatwave}
Suppose $\Theta$ is a closed ball in $\mbb R^n$ of radius $\rho \leq 1$,
$\delta > 0$, and $\{ D \}$ is a countable collection of closed balls in
$\mbb R^n$ of radius $\rho^{-1-\delta}$ satisfying
\begin{displaymath}
1 \leq \sum_D \chi_{(3/4)D^0} \leq C
\end{displaymath}
for some constant $C$. Also, suppose that $\Phi \in C^L((4/3)\Theta)$
satisfies $|\nabla^l \Phi| \leq C_L \rho^{-l}$ for $0 \leq l \leq L$ and
some constant $C_L$, $N > n$ is a positive integer, and
$L \geq n(4+\delta+2/\delta)+N(1+2/\delta)$. Then to every function
$f \in L^2(\mbb R^n)$ with $\mbox{\rm supp } \!\! f \subset \Theta$ there is
a sequence $\{ f_D \}$ in $L^2(\mbb R^n)$ with the following properties.
\\
{\rm (i)} Each $f_D$ is supported in $(4/3)\Theta$, $f= \sum_D f_D$ in
$L^1((4/3)\Theta)$, and
\begin{displaymath}
\sum_D \int |f_D|^2 d\omega \leq \| f \|_{L^2(\Theta)}^2.
\end{displaymath}
{\rm (ii)} We have
\begin{displaymath}
\sum_{D : \, z \not\in D} |\widehat{\Phi f_D}(z)|
\lct \rho^N \| f \|_{L^1(\Theta)}
\end{displaymath}
for all $z \in \mbb R^n$, where the implicit constant depends only on $L$,
$C_L$, $C$, and $n$.
\\
{\rm (iii)} If $D_1$ and $D_2$ are disjoint, then
\begin{displaymath}
\Big| \int \Phi f_{D_1} \overline{f_{D_2}} \; d\omega \Big|
\lct \rho^N \| f \|_{L^1(\Theta)}^2,
\end{displaymath}
where the implicit constant depends only on $L$, $C_L$, and $n$.
\end{prop}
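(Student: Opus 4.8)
The plan is to obtain the $f_D$ by cutting $\widehat{\Phi f}$ into pieces localized in physical space to the balls $D$ via a partition of unity on the Fourier side, while a fixed spatial cutoff keeps every piece supported in $(4/3)\Theta$. First I would fix a smooth $\zeta$ with $0\le\zeta\le1$, $\zeta\equiv1$ on $\Theta$, $\mathrm{supp}\,\zeta\subset(4/3)\Theta$ and $|\nabla^l\zeta|\lct\rho^{-l}$ (possible since the radii of $\Theta$ and $(4/3)\Theta$ differ by $\sim\rho$), and, using the covering hypothesis $1\le\sum_D\chi_{(3/4)D^0}\le C$, a partition of unity $\{\eta_D\}$ with $\eta_D\ge0$, $\mathrm{supp}\,\eta_D\subset(3/4)D$, $\sum_D\eta_D\equiv1$ and $|\nabla^l\eta_D|\lct\rho^{(1+\delta)l}$, the implicit constant depending on $C$ and $n$. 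Then I set
\begin{displaymath}
f_D := \zeta\cdot(\check{\eta}_D * f), \qquad\text{equivalently}\qquad \widehat{f_D}=\widehat{\zeta}*(\eta_D\widehat{f}),
\end{displaymath}
so that $\widehat{\Phi f_D}$ is morally $\eta_D\widehat{\Phi f}$.

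Property (i) is then nearly immediate. The support statement is clear from $\mathrm{supp}\,\zeta\subset(4/3)\Theta$. Since at every point only $O(C)$ of the $\eta_D$ are nonzero, $\sum_D\eta_D\widehat{f}\to\widehat{f}$ pointwise and dominated by $|\widehat{f}|\in L^2$, hence in $L^2$; so $\sum_D\check{\eta}_D*f\to f$ in $L^2$ and $\sum_D f_D=\zeta f=f$ in $L^2((4/3)\Theta)\subset L^1((4/3)\Theta)$. Finally, since $|\zeta|\le1$,
\begin{displaymath}
\sum_D\int|f_D|^2\,d\omega \le \sum_D\|\eta_D\widehat{f}\|_{L^2}^2 = \int\Big(\sum_D\eta_D^2\Big)|\widehat{f}|^2 \le \int|\widehat{f}|^2 = \|f\|_{L^2(\Theta)}^2,
\end{displaymath}
using $\sum_D\eta_D^2\le(\sum_D\eta_D)^2=1$.

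The analytic heart of (ii) and (iii) is that $\Phi\zeta^k$ (for $k=1,2$) is a $C^L$ function supported in $(4/3)\Theta$ with $|\nabla^l(\Phi\zeta^k)|\lct\rho^{-l}$ for $l\le L$; integrating by parts $L$ times gives $|\widehat{\Phi\zeta^k}(w)|\lct\rho^n(1+\rho|w|)^{-L}$, the constant depending only on $L$, $C_L$, $n$. For (ii), expanding the Fourier integral,
\begin{displaymath}
\widehat{\Phi f_D}(z) = \int\!\int f(y)\,e^{-2\pi i y\cdot\xi}\,\eta_D(\xi)\,\widehat{\Phi\zeta}(z-\xi)\,d\xi\,dy,
\end{displaymath}
so $|\widehat{\Phi f_D}(z)|\le\|f\|_{L^1(\Theta)}\int\eta_D(\xi)|\widehat{\Phi\zeta}(z-\xi)|\,d\xi$; when $z\notin D$ and $\xi\in\mathrm{supp}\,\eta_D\subset(3/4)D$ we have $|z-\xi|\ge\tfrac14\rho^{-1-\delta}$, so summing over such $D$ and using $\sum_D\eta_D\le1$,
\begin{displaymath}
\sum_{D:\,z\notin D}|\widehat{\Phi f_D}(z)| \lct \|f\|_{L^1(\Theta)}\int_{|w|\ge\frac14\rho^{-1-\delta}}\rho^n(\rho|w|)^{-L}\,dw \;\sim\; \rho^{\delta(L-n)}\|f\|_{L^1(\Theta)} \lct \rho^N\|f\|_{L^1(\Theta)}.
\end{displaymath}
For (iii), if $D_1,D_2$ are disjoint then $\mathrm{dist}((3/4)D_1,(3/4)D_2)\ge\tfrac12\rho^{-1-\delta}$; writing $\int\Phi f_{D_1}\overline{f_{D_2}}\,d\omega=\int\big(\widehat{\Phi\zeta^2}*(\eta_{D_1}\widehat{f})\big)\,\overline{\eta_{D_2}\widehat{f}}\,d\xi$ by Parseval, using that $\eta_{D_j}\widehat{f}$ is supported in $(3/4)D_j$, the decay estimate, and $\|\eta_{D_j}\widehat{f}\|_{L^1}\le|(3/4)D_j|\,\|\widehat{f}\|_{L^\infty}\lct\rho^{-n(1+\delta)}\|f\|_{L^1(\Theta)}$,
\begin{displaymath}
\Big|\int\Phi f_{D_1}\overline{f_{D_2}}\,d\omega\Big| \lct \rho^{n+\delta L}\,\rho^{-2n(1+\delta)}\,\|f\|_{L^1(\Theta)}^2 = \rho^{\delta L-n-2n\delta}\,\|f\|_{L^1(\Theta)}^2 \lct \rho^N\|f\|_{L^1(\Theta)}^2;
\end{displaymath}
here only $\|\eta_{D_j}\|_{L^\infty}\le1$ and $\mathrm{supp}\,\eta_{D_j}\subset(3/4)D_j$ enter, so the constant depends only on $L$, $C_L$, $n$.

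I expect no conceptual obstacle; the only real work is the bookkeeping. Shrinking the balls to $(3/4)D$ is exactly what produces the separations $\tfrac14\rho^{-1-\delta}$ and $\tfrac12\rho^{-1-\delta}$ used above, and the single point that needs care is checking that all the polynomial-in-$\rho^{-1}$ losses — the factor $|D|\sim\rho^{-n(1+\delta)}$, the overlap constant $C$ hidden in the partition of unity, and the $L$-dependent constants coming from the integrations by parts — are absorbed by the one gain $\rho^{\delta L}$. This is exactly what the hypothesis $L\ge n(4+\delta+2/\delta)+N(1+2/\delta)$ delivers, with room to spare, since it forces both $\delta(L-n)\ge N$ and $\delta L\ge N+n+2n\delta$.
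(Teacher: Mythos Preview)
Your proposal is correct and is essentially the paper's own proof: the paper also sets $f_D=\psi_\Theta\cdot(\check{\phi}_D*f)$ with the same spatial cutoff and partition of unity, and proves (ii) and (iii) by the same Fourier-decay estimate $|\widehat{\Phi\psi_\Theta}(w)|\lct\rho^n(1+\rho|w|)^{-L}$ combined with the $\tfrac14\rho^{-1-\delta}$ separation. The only cosmetic difference is that in (ii) the paper sums the individual bounds over dyadic distance shells (picking up the dependence on the overlap constant~$C$), whereas you collapse the sum using $\sum_D\eta_D\le 1$ directly.
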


\begin{proof}
We start by letting $\psi_\Theta \in C^\infty(\mbb R^n)$ be such that
$0 \leq \psi_\Theta \leq 1$, $\psi_\Theta= 1$ on $(5/4) \Theta$,
$\psi_\Theta$ has support in $(4/3) \Theta$, and
$|\nabla^l \psi_\Theta| \leq c_l \rho^{-l}$ for all $l$, where the $c_l$ are
constants that depend only on $l$ and $n$. We also let
$\Psi_\Theta= \Phi \, \psi_\Theta$. Clearly,
\begin{displaymath}
|\nabla^l \Psi_\Theta| \lct \rho^{-l}
\end{displaymath}
for $0 \leq l \leq L$, and
\begin{displaymath}
|\widehat{\Psi_\Theta}(z)| \lct \frac{|\Theta|}{(1 + \rho |z|)^L}
\end{displaymath}
for all $z \in \mbb R^n$, where the implicit constants depend only on $L$,
$C_L$, and $n$.

We then let $\{ \phi_D \}$ be a  partition of unity of $\mbb R^n$
subordinate to the open cover $\{ (3/4)D^0 \}$, and define the functions
$f_D$ by
\begin{displaymath}
f_D= \psi_\Theta (\varphi_D \ast f),
\end{displaymath}
where $\varphi_D$ is the inverse Fourier transform of $\phi_D$; in other
words, $\varphi_D$ is given by
$\varphi_D(\omega)=\widehat{\phi_D}(-\omega)$.

(i) We clearly have $\mbox{\rm supp } \!\! f_D \subset (4/3)\Theta$ for all
$D$. Since $\sum_D \phi_D = 1$, it follows that $\sum_D \phi_D \widehat{f}$
converges to $\widehat{f}$ in $L^2(\mbb R^n)$, so $\sum_D \varphi_D \ast f$
converges to $f$ in $L^2(\mbb R^n)$ (by Plancherel), and so
\begin{displaymath}
\sum_D f_D= \sum_D \psi_\Theta (\varphi_D \ast f)
          = \psi_\Theta \sum_D \varphi_D \ast f
\end{displaymath}
converges to $\psi_\Theta f= f$ in $L^1(\mbb R^n)$ (by Cauchy-Schwarz).
Also,
\begin{eqnarray*}
\lefteqn{\sum_D \int |f_D|^2 d\omega
\; \leq \, \sum_D \int |\varphi_D \ast f|^2 d\omega
\; = \, \sum_D \int |\phi_D|^2 |\widehat{f}|^2 dz} \\
& & \leq \; \int \Big( \sum_D |\phi_D| \Big)^2 |\widehat{f}|^2 dz
\; = \; \int |\widehat{f}|^2 dz \; = \; \int |f|^2 d\omega,
\end{eqnarray*}
where we have used Plancherel's theorem.

(ii) We have
\begin{displaymath}
\widehat{\Phi f_D}
= \widehat{\Psi_\Theta} \ast \big(\phi_D \widehat{f} \, \big),
\end{displaymath}
so
\begin{eqnarray*}
|\widehat{\Phi f_D}(z)|
&   =  & \Big| \int \widehat{\Psi_\Theta}(z-y) \phi_D(y) \widehat{f}(y) dy
         \Big| \\
& \lct & \int_{\mbox{\tiny supp} \, \phi_D}
         \frac{|\Theta|}{(1 + \rho |z-y|)^L} |\phi_D(y)| \,
         \| f \|_{L^1(\Theta)} dy \\
& \leq & \frac{\| f \|_{L^1(\Theta)} |\Theta| \, |\mbox{supp} \, \phi_D|}
              {\big( 1+\rho \, \mbox{dist}(z,\mbox{supp} \, \phi_D) \big)^L}
\end{eqnarray*}
for all $z \in \mbb R^n$. Since $|\Theta| \sim \rho^n$ and
$\mbox{supp} \, \phi_D$ is contained in $(3/4)D$ (which is a ball of radius
$(3/4)\rho^{-1-\delta})$), it follows that
\begin{displaymath}
|\widehat{\Phi f_D}(z)|
\lct \frac{\| f \|_{L^1(\Theta)} \rho^{-n \delta}}
          {\big( 1+\rho \, \mbox{dist}(z,\mbox{supp} \, \phi_D) \big)^L}
\end{displaymath}
for all $z \in \mbb R^n$. If $z \not\in D$, then
$\mbox{dist}(z,\mbox{supp} \, \phi_D) > \rho^{-1-\delta}/4$, and it follows
that
\begin{eqnarray}
|\widehat{\Phi f_D}(z)|
\label{nodeponC}
& \lct & \frac{\| f \|_{L^1(\Theta)} \rho^{-n \delta}}
              {\big( \rho+\rho \,\mbox{dist}(z,\mbox{supp} \,\phi_D) \big)^N
               \big( 1+ \rho \rho^{-1-\delta}/4 \big)^{L-N}} \nonumber \\
& \leq & \frac{4^{L-N} \| f \|_{L^1(\Theta)}
               \rho^{-n \delta - N + \delta (L-N)}}
              {\big( 1 + \mbox{dist}(z,\mbox{supp} \, \phi_D) \big)^N}
         \nonumber \\
& \leq & \frac{4^L \| f \|_{L^1(\Theta)} \rho^N}
              {\big( 1 + \mbox{dist}(z,\mbox{supp} \, \phi_D) \big)^N}
\end{eqnarray}
provided $L \geq n+N+2N/\delta$, where the implicit constants depend only on
$L$, $C_L$, and $n$. Letting
\begin{displaymath}
{\mathcal D}_j = \{ D : 2^j \rho^{-1-\delta} <
\mbox{dist}(z,\mbox{supp} \,\phi_D) \leq 2^{j+1} \rho^{-1-\delta} \}
\end{displaymath}
for $j=-2,-1, \ldots$, we then see that
\begin{eqnarray*}
\sum_{D : \, z \not\in D} |\widehat{\Phi f_D}(z)|
& \lct & \rho^N \| f \|_{L^1(\Theta)} \sum_{D : \, z \not\in D}
         \big( 1+ \mbox{dist}(z,\mbox{supp} \, \phi_D) \big)^{-N} \\
& \leq & \rho^N \| f \|_{L^1(\Theta)} \sum_{j=-2}^\infty
         \sum_{D \in {\mathcal D}_j}
         \big( 1+ \mbox{dist}(z,\mbox{supp} \, \phi_D) \big)^{-N} \\
& \leq & \rho^N \| f \|_{L^1(\Theta)} \sum_{j=-2}^\infty
         \sum_{D \in {\mathcal D}_j}
         \big( 1+ 2^j \rho^{-1-\delta} \big)^{-N} \\
& \lct & \rho^N \| f \|_{L^1(\Theta)},
\end{eqnarray*}
where the implicit constants depend only on $L$, $C_L$, $C$, and $n$.

(iii) Suppose $D_1$ and $D_2$ are disjoint. Then, by Plancherel's theorem,
\begin{displaymath}
\int \Phi f_{D_1} \overline{f_{D_2}} \, d \omega
= \int \Phi \, f_{D_1} \, \overline{\psi_\Theta} \,
       \overline{(\varphi_{D_2} \ast f)} \, d\omega \\
= \int \big( \Phi \, \overline{\psi_\Theta} f_{D_1}\widehat{\big) \,}
       \;\; \overline{\phi_{D_2} \widehat{f} \,} \, dz.
\end{displaymath}
The estimates in part (ii) that lead to (\ref{nodeponC}) apply to
$\big( \Phi \, \overline{\psi_\Theta} f_{D_1}\widehat{\big) \,}$ (since
$\Phi \, \overline{\psi_\Theta}$ has the same smoothness and decay
properties as $\Phi$), so (by (\ref{nodeponC}))
\begin{displaymath}
\Big| \big( \Phi \, \overline{\psi_\Theta} f_{D_1}\widehat{\big)\,}(z) \Big|
\lct \rho^{N'} \| f \|_{L^1(\Theta)}
\end{displaymath}
for all $x \in D_1^c$ (and hence for all $x \in D_2$) provided
$L \geq n+N'+2N'/\delta$, and so
\begin{eqnarray*}
\lefteqn{\Big| \int \Phi f_{D_1} \overline{f_{D_2}} \, d\omega \Big|
\; \lct \; \rho^{N'} \| f \|_{L^1(\Theta)}
           \int_{\mbox{\tiny supp} \, \phi_{D_2}}
           |\phi_{D_2}| \, \| f \|_{L^1(\Theta)} dz} \\
& & \leq \; \rho^{N'} \| f \|_{L^1(\Theta)}^2 |D_2|
   \; \sim  \; \rho^{N'} \| f \|_{L^1(\Theta)}^2 \rho^{-n(1+\delta)}
   \; \lct  \; \rho^N \| f \|_{L^1(\Theta)}^2
\end{eqnarray*}
provided $N' \geq N+n(1+\delta)$.
\end{proof}

We now consider a $C^L$ function $h: \Omega \to \mbb R$ defined on some open
set $\Omega \subset \mbb R^n$, and we assume the following bounds on the
first and second order partial derivatives of $h$:
\begin{equation}
\label{hxxyyxy}
\left\{ \begin{array}{ll}
        \frac{1}{2} \leq \partial_i^2 h(\omega) \leq \frac{3}{2} &
        \mbox{ for $i=1,\ldots, n$,} \\ \\
        \big| \partial_i \partial_j h(\omega) \big| \leq \frac{1}{4(n-1)} &
        \mbox{ if $i \not= j$,} \\ \\
        | \nabla h(\omega) | \leq \frac{7}{4} &
        \end{array} \right.
\end{equation}
for all $\omega \in \Omega$.

We let $B$ be a closed ball in $\Omega$ of center $\omega_0$ and radius
$r \leq 1/12$, $\theta$ be the graph of $h$ over $B$, and $3\theta$ be the
graph of $h$ over $3B$. We are going to show that in an appropriate
orthonormal system of coordinates, $\theta$ is contained in the graph of a
$C^L$ function $h_0$, defined in a ball of radius $\sim r$, such that both
$h_0$ and $\nabla h_0$ vanish at the center of the ball, and the graph of
$h_0$ is contained in $3\theta$.

Let ${\mathcal T}_{(\omega_0,h(\omega_0))} \theta$ be the tangent plane to
$\theta$ at $(\omega_0,h(\omega_0))$. If $\omega$ is a point on the boundary
of $B$ and
$(\Delta \omega, \Delta h)=(\omega-\omega_0,h(\omega)-h(\omega_0))$, then
Pythagoras' theorem shows that the projection of $(\Delta \omega, \Delta h)$
onto ${\mathcal T}_{(\omega_0,h(\omega_0))} \theta$ has length
\begin{displaymath}
\rho_0= \left( r^2 + (\Delta h)^2 -
        \frac{\big( \Delta h-\nabla h(\omega_0) \cdot \Delta \omega \big)^2}
             {1+|\nabla h(\omega_0)|^2} \right)^{1/2}.
\end{displaymath}
By (\ref{hxxyyxy}), $|\Delta h| \leq (7/4)r$, so
$\rho_0 \leq (\sqrt{65}/4)r$. On the other hand, Taylor's theorem and
(\ref{hxxyyxy}) tell us that
\begin{displaymath}
|\Delta h - \nabla h(\omega_0) \cdot \Delta \omega|
\leq \frac{1}{2} \Big( \frac{3}{2} |\Delta \omega|^2
                       + \frac{1}{4} |\Delta \omega|^2 \Big) < r^2,
\end{displaymath}
so $\rho_0 >r \sqrt{1-r^2}$, and so
$\sqrt{15} \; r < 4 \rho_0 \leq \sqrt{65} \; r$ (because $r < 3r \leq 1/4$).
This shows that if we let $\theta'$ and $(3\theta)'$ be the projections of
$\theta$ and $3 \theta$, respectively, onto
${\mathcal T}_{(\omega_0,h(\omega_0))} \theta$, and if we dilate $\theta'$
around $(\omega_0,h(\omega_0))$ by a factor of $4/3$, then the resulting set
will be contained in $(3 \theta)'$. More precisely, letting $\Theta$ be the
ball in ${\mathcal T}_{(\omega_0,h(\omega_0))} \theta$ of center
$(\omega_0,h(\omega_0))$ and radius $\rho = (4/\sqrt{15}) \rho_0$ (note that
$r < \rho \leq \sqrt{13/3} \; r$), we have
\begin{displaymath}
\theta' \subset \Theta \subset (4/3) \Theta \subset (3 \theta)'
\end{displaymath}
(because $(4/3)\rho \leq (4/3)(\sqrt{13/3} \,) r < (\sqrt{15}/4)(3r)$).
Therefore, in an appropriate orthonormal system of coordinates, $\theta$ is
contained in the graph of a $C^L$ function $h_0: (4/3)\Theta \to \mbb R$
such that both $h_0$ and $\nabla h_0$ vanish at the center of $\Theta$, and
the graph of $h_0$ is contained in $3\theta$. For the rest of this section,
all the implicit constants will depend on the $C^L$ norm of $h_0$.

Let $S_0$ be the graph of $h_0$. We know that
$\theta \subset S_0 \subset 3\theta$. Let $f$ be a function in $L^2(S_0)$
with support in $\theta$. We would now like to obtain the wave packet
decomposition of $f$. We start by replacing the function $f(\omega)$ in
Proposition \ref{flatwave} by the function
$f(\omega,h_0(\omega)) J(\omega)$, where
$J(\omega)=\sqrt{1+|\nabla h_0(\omega)|^2}$. To each member of the
collection $\{ D \}$, we associate a tube $T \subset \mbb R^{n+1}$ defined
by $T= D \times \mbb R$. We alert the reader that this definition of $T$ is
in the new system of coordinates that comes with the function $h_0$; in the
original system of coordinates the tube $T$ is perpendicular to the tangent
plane ${\mathcal T}_{(\omega_0,h(\omega_0))} \theta$. We then define the
function $f_T \in L^2(S_0)$ by
\begin{displaymath}
f_T(\omega,h_0(\omega))=
\frac{\big( f(\cdot,h_0(\cdot)) \, J \big)_D(\omega)}{J(\omega)}.
\end{displaymath}

Next, we apply parts (i) and (ii) of Proposition \ref{flatwave} with
$\Phi(\omega)= e^{-2 \pi i z_{n+1} h_0(\omega)}$. Since
$|\partial_j h_0| \lct \rho$ for $j=1,\ldots,n$, we need
$|z_{n+1}| \leq \rho^{-2}$ in order to satisfy the requirement
$|\nabla^l \Phi| \leq C_L \rho^{-l}$. To free the condition
$|z_{n+1}| \leq \rho^{-2}$ from depending on the choice of the orthonormal
coordinates, we require $|(z,z_{n+1})| \leq \rho^{-2} \sim r^{-2}$.

We know that
$f(\cdot,h_0(\cdot)) \, J = \sum_D \big( f(\cdot,h_0(\cdot)) \, J \big)_D$
in $L^1((4/3)\Theta)$, so $f= \sum_T f_T$ in $L^1(S_0)$, and
\begin{eqnarray*}
\sum_T \int |f_T|^2 d\sigma & = &
\sum_D \int \frac{\big|\big( f(\cdot,h_0(\cdot))\, J \big)_D(\omega)\big|^2}
                 {J(\omega)} d\omega \\
& \leq & \sum_D \int \big| \big( f(\cdot,h_0(\cdot))\, J \big)_D(\omega)
                     \big|^2 d\omega \\
& \leq & \int |(f(\omega,h_0(\omega)) J(\omega)|^2 d\omega \\
& \lct & \int |f|^2 d\sigma.
\end{eqnarray*}
We also have
\begin{displaymath}
Eg(z,z_{n+1})= \Big( \Phi \; g(\cdot,h_0(\cdot)) \, J \widehat{\Big) \;}(z)
\end{displaymath}
for all $g \in L^1(S_0)$, so
\begin{eqnarray*}
\lefteqn{\sum_{T : \, (z,z_{n+1}) \not\in T} |Ef_T(z,z_{n+1})|
         \; = \; \sum_{D : \, z \not\in D}
\Big( \Phi \, \big( f(\cdot,h_0(\cdot)) \, J \big)_D \widehat{\Big) \;}} \\
& & \lct \; \rho^N \| f(\cdot,h_0(\cdot)) \, J \|_{L^1(\Theta)}
    \; = \; \rho^N \| f \|_{L^1(\theta)} \; = \; \rho^N \| f \|_{L^1(S_0)}.
\end{eqnarray*}

The functions $f_T$ are almost orthogonal in $L^2(S_0)$. To see this, we
apply part (iii) of Proposition \ref{flatwave} with
$\Phi(\omega)= 1/J(\omega)$ to get
\begin{eqnarray*}
\lefteqn{\int f_{T_1} \overline{f_{T_2}} \, d\sigma
         \; =  \; \int \Phi(\omega) \,
                  \big( f(\cdot,h_0(\cdot))\, J \big)_{D_1}(\omega) \;
\overline{\big( f(\cdot,h_0(\cdot)) \, J \big)_{D_2}(\omega)} \; d\omega} \\
&  & \lct \; \rho^N \| f(\cdot,h_0(\cdot)) \, J \|_{L^1(\Theta)}^2
     \; = \; \rho^N \| f \|_{L^1(\theta)}^2
     \; = \; \rho^N \| f \|_{L^1(S_0)}^2.
\end{eqnarray*}

We summarize the above discussion in the following proposition, which is a
reformulation of Proposition 2.6 in \cite{guth:poly}.

\begin{prop}
\label{wave}
Suppose $S$ is a compact $C^L$ surface in $R^{n+1}$ given as the graph of
a function $h$ that satisfies {\rm (\ref{hxxyyxy})}, $\delta > 0$, $N > n/2$
is a positive integer, and
\begin{displaymath}
L \geq n \Big( 4 + 2\delta + \frac{1}{\delta} \Big)
       + (2N) \Big( 1 + \frac{1}{\delta} \Big).
\end{displaymath}
Let  $\theta$ be a cap on $S$ of center $\xi_0$ and radius
$r=R^{-1/2} \leq 1/12$, and $v(\theta)$ be the unit normal vector of $S$ at
$\xi_0$.

Then there is a countable collection $\tilde{\mbb T}(\theta)=\{ T \}$ of
finitely overlapping tubes in $\mbb R^{n+1}$ of radius $R^{(1/2)+\delta}$,
which are parallel to $v(\theta)$, such that the following holds. To every
function $f \in L^2(S)$ with $\mbox{\rm supp} \, f \subset \theta$ there is
a sequence $\{ f_T \}$ in $L^2(S)$ with the following properties.
\\
{\rm (i)} Each $f_T$ is supported in $3\theta$,
$f= \sum_{T \in \tilde{\mbb T}(\theta)} f_T$ in
$L^1(S)$, and
\begin{displaymath}
\sum_{T \in \tilde{\mbb T}(\theta)} \int |f_T|^2 d\sigma
\lct \| f \|_{L^2(S)}^2,
\end{displaymath}
where the implicit constant depends only on the $C^1$ norm of $h$.
\\
{\rm (ii)} We have
\begin{displaymath}
\sum_{T \in \tilde{\mbb T}(\theta) : \, x \not\in T} |E f_T(x)|
\lct R^{-N} \| f \|_{L^1(S)}
\end{displaymath}
for all $x \in \mbb R^{n+1}$ with $|x| \leq R$, where the implicit constant
depends only on $L$, $N$, $n$, and the $C^L$ norm of $h$.
\\
{\rm (iii)} If $T_1, T_2 \in \tilde{\mbb T}(\theta)$ are disjoint, then
\begin{displaymath}
\Big| \int f_{T_1} \overline{f_{T_2}} \; d\sigma \Big|
\lct R^{-N} \| f \|_{L^1(S)}^2,
\end{displaymath}
where the implicit constant depends only on $L$, $N$, $n$, and the $C^L$
norm of $h$.
\\
{\rm (iv)} Let $\mbb T(\theta)=\{ T \in \tilde{\mbb T}(\theta) : T \cap
B(0,R) \not=\emptyset \}$. Then
\begin{displaymath}
\Big| Ef(x) - \sum_{T \in \mbb T(\theta)} Ef_T(x) \Big|
\lct R^{-N} \| f \|_{L^1(\theta)}
\end{displaymath}
for all $x \in B(0,R)$.
\end{prop}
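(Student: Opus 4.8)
Since Proposition~\ref{wave} already presumes that $S$ is the graph of a function $h$ satisfying (\ref{hxxyyxy}), the proof is just a matter of packaging the construction developed in the paragraphs preceding the statement. First I would fix the coordinate change described there: in a suitable orthonormal system the cap $\theta$ (the graph of $h$ over a ball $B$ of center $\omega_0$ and radius $r$) lies inside the graph $S_0$ of a $C^L$ function $h_0\colon(4/3)\Theta\to\mbb R$, where $\Theta$ is the ball of center (the image of) $\xi_0$ and radius $\rho$ with $r<\rho\le\sqrt{13/3}\,r$, both $h_0$ and $\nabla h_0$ vanish at the center of $\Theta$, and $\theta\subset S_0\subset 3\theta$. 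With $J(\omega)=\sqrt{1+|\nabla h_0(\omega)|^2}\ge1$ (so surface measure on $S_0$ is $J\,d\omega$ in these coordinates), I would transfer a given $f\in L^2(S)$ supported in $\theta$ to the flat function $\tilde f(\omega)=f(\omega,h_0(\omega))\,J(\omega)$, which is supported in the projection $\theta'\subset\Theta$ of $\theta$.

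The key step is to apply Proposition~\ref{flatwave} to $\tilde f$, with $\Theta$ in the role of $\Theta$ and with $2\delta$ and $2N$ in the roles of $\delta$ and $N$. This is legitimate: $2N>n$ because $N>n/2$; $\mbox{supp}\,\tilde f\subset\Theta$ and $\rho\le1$; and the hypothesis $L\ge n(4+2\delta+1/\delta)+(2N)(1+1/\delta)$ of Proposition~\ref{wave} is exactly the requirement on $L$ in Proposition~\ref{flatwave} after those substitutions. One is free to choose the covering family $\{D\}$; take balls of radius $\rho^{-1-2\delta}$ with $1\le\sum_D\chi_{(3/4)D^0}\le C$. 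Since $\rho>r=R^{-1/2}$ these have radius $\rho^{-1-2\delta}<R^{(1/2)+\delta}$, so, enlarging each tube $T=D\times\mbb R$ (the product taken in the new coordinates) to a concentric tube of radius exactly $R^{(1/2)+\delta}$ if necessary, I obtain a finitely overlapping family $\tilde{\mbb T}(\theta)$ of such tubes; in the original coordinates they are normal to ${\mathcal T}_{(\omega_0,h(\omega_0))}\theta$, hence parallel to $v(\theta)$. Finally, define $f_T\in L^2(S_0)$ by $f_T(\omega,h_0(\omega))=(\tilde f)_D(\omega)/J(\omega)$; since $\mbox{supp}\,(\tilde f)_D\subset(4/3)\Theta$ and the graph of $h_0$ over $(4/3)\Theta$ is $S_0\subset 3\theta\subset S$, each $f_T$ may be viewed as a function on $S$ supported in $3\theta$.

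Conclusions (i)--(iii) now fall out of the corresponding parts of Proposition~\ref{flatwave} via the computations already displayed in the text. For (i): $f=\sum_T f_T$ in $L^1(S)$ is the pullback of $\tilde f=\sum_D(\tilde f)_D$ in $L^1((4/3)\Theta)$, and $\sum_T\int|f_T|^2\,d\sigma\lct\|f\|_{L^2(S)}^2$ follows from $\sum_D\int|(\tilde f)_D|^2\le\|\tilde f\|_{L^2}^2$ and $J\ge1$. For (ii): apply Proposition~\ref{flatwave}(ii) with $\Phi(\omega)=e^{-2\pi i z_{n+1}h_0(\omega)}$; since $|\nabla h_0|\lct\rho$, the required bound $|\nabla^l\Phi|\lct\rho^{-l}$ holds as soon as $|z_{n+1}|\le\rho^{-2}$, and the coordinate-free version $|(z,z_{n+1})|\le\rho^{-2}\sim r^{-2}=R$ is met by every $x$ with $|x|\le R$; combining this with the identity that writes $Eg$ on $S_0$ as the Fourier transform of $\Phi\cdot g(\cdot,h_0(\cdot))\,J$, and with $\rho^{2N}\sim R^{-N}$, yields the stated tail bound. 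For (iii): apply Proposition~\ref{flatwave}(iii) with $\Phi=1/J$.

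For (iv), observe that for each fixed $x$ the map $g\mapsto Eg(x)=\int e^{-2\pi i x\cdot\xi}g(\xi)\,d\sigma(\xi)$ is continuous on $L^1(S)$, so $f=\sum_{T\in\tilde{\mbb T}(\theta)}f_T$ in $L^1(S)$ gives $Ef(x)=\sum_{T\in\tilde{\mbb T}(\theta)}Ef_T(x)$ for every $x$; subtracting the sum over $\mbb T(\theta)=\{T\in\tilde{\mbb T}(\theta):T\cap B(0,R)\ne\emptyset\}$ leaves only $\sum_{T:\,T\cap B(0,R)=\emptyset}Ef_T(x)$, and when $x\in B(0,R)$ each such $T$ satisfies $x\notin T$, so part (ii) bounds the remainder by $R^{-N}\|f\|_{L^1(S)}=R^{-N}\|f\|_{L^1(\theta)}$. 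There is no conceptual difficulty here; the only point that genuinely needs care is the bookkeeping converting the $\rho$-scale data of Proposition~\ref{flatwave} into the $R$-scale quantities ($\rho\sim R^{-1/2}$, $\rho^{-1-2\delta}\sim R^{(1/2)+\delta}$, $\rho^{2N}\sim R^{-N}$) and the verification that the $C^L$-smoothness hypothesis on $h$ in Proposition~\ref{wave} is precisely what Proposition~\ref{flatwave} consumes.
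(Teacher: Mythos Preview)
Your proposal is correct and follows essentially the same route as the paper: the paper's proof is precisely the construction in the paragraphs preceding the statement of Proposition~\ref{wave}, namely the coordinate change to $h_0$ over $(4/3)\Theta$, the application of Proposition~\ref{flatwave} to $f(\cdot,h_0(\cdot))J$ with $\delta\to 2\delta$ and $N\to 2N$, the choices $\Phi=e^{-2\pi i z_{n+1}h_0}$ for (ii) and $\Phi=1/J$ for (iii), and the observation that (iv) is immediate from (ii). Your treatment is slightly more explicit on two bookkeeping points the paper leaves implicit (that $\rho^{-1-2\delta}<R^{(1/2)+\delta}$ so the tubes can be taken of radius exactly $R^{(1/2)+\delta}$, and the $L^1$-continuity of $E$ used in deriving (iv)), but there is no substantive difference.
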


We note that in applying Proposition \ref{flatwave} to get Proposition
\ref{wave}, we have replaced $\delta$ by $2\delta$ and $N$ by $2N$. We also
note that part (iv) of Proposition \ref{wave} is an immediate consequence of
part(ii).

For each $T \in \tilde{\mbb T}(\theta)$, the function $f_T$ is called a wave
packet. The equality $f= \sum_{T \in \tilde{\mbb T}(\theta)} f_T$ (which
holds in $L^1(S)$) is called the wave packet decomposition of $f$. The
functions that we shall be dealing with for the rest of the paper are
defined on a surface $S \subset \mbb R^3$. This means that we shall be using
Proposition \ref{flatwave} in $\mbb R^2$, and Proposition \ref{wave} in
$\mbb R^3$.

\section{Guth's polynomial partitioning method}

In this section, and for the rest of the paper, we make the following
assumption on the surface $S$.

\begin{assume}
\label{graphofh}
The surface $S$ is the graph of a function $h: B^2(0,1) \to \mbb R$ that
satisfies the following conditions.
\\
{\rm (i)} There is an integer $L \geq 3$ such that $h \in C^L(B^2(0,1))$.
\\
{\rm (ii)} We have $h(0)= \nabla h(0)= 0$.
\\
{\rm (iii)} For all $\omega \in B^2(0,1)$, both eigenvalues of the Hessian
            $\partial^2 h(\omega)$ lie in the open interval $(3/4,5/4)$.
\\
{\rm (iv)} We have $\| \nabla^l h \|_{L^\infty(B^2(0,1))} < 10^{-9}$ for
           $3 \leq l \leq L$.
\end{assume}

Once we have proved that to every $\epsilon > 0$ there is a positive integer
$L_\epsilon$ such that Theorem \ref{mainjj} holds for all surfaces that
satisfy (i)--(iv) with $L \geq L_\epsilon$, the result for general
$C^\infty$ compact surfaces with strictly positive second fundamental form
would follow by a standard parabolic scaling argument. We refer the reader
to the last paragraph of Subsection 2.3 in \cite{guth:poly} for a very nice
outline of this argument.

The information we have about the eigenvalues of $\partial^2 h$ give the
following bounds on the second order partial derivatives of $h$:
\begin{displaymath}
\left\{ \begin{array}{ll}
        \frac{1}{2} < \partial_i^2 h(\omega) < \frac{3}{2} &
        \mbox{ for $i=1, 2$,} \\ \\
        \big| \partial_i \partial_j h(\omega) \big| < \frac{1}{4} &
        \mbox{ if $i \not= j$}
        \end{array} \right.
\end{displaymath}
for all $\omega \in B^2(0,1)$. These bounds tell us that
$\| \partial^2 h(\omega) \| < 7/4$ for all $\omega \in B^2(0,1)$, where
$\| \partial^2 h(\omega) \|$ is the operator norm of $\partial^2 h(\omega)$.
Condition (ii) then implies that
\begin{equation}
\label{bdongrad}
|\nabla h(\omega)| < (7/4) |\omega|
\hspace{0.25in} \forall \; \omega \in B^2(0,1).
\end{equation}
Thus $h$ satisfies (\ref{hxxyyxy}) (with $n=2$) on some open set $\Omega$
that contains the closed unit ball $B^2(0,1)$. Thus Proposition \ref{wave}
applies to functions on $S$. For this reason, in this and the next four 
sections, we will let $N > 1$ and $\delta$ be, respectively, a positive 
integer and a positive number that satisfy the standing hypothesis
\begin{equation}
\label{LdeltaN}
L \geq 2 \Big( 4 + 2\delta + \frac{1}{\delta} \Big)
       + (2N) \Big( 1 + \frac{1}{\delta} \Big).
\end{equation}
At the end of the argument, it will be clear that to find the positive
integer $L_\epsilon$ that was mentioned above (following the statement of
Assumption \ref{graphofh}), we need to take $N$ to be a large absolute
constant, say $N=1000$, and $\delta$ to be small relative to $\epsilon$, say
$\delta=\epsilon^2$.

Let $P$ be a polynomial in $n$ real variables of degree $D$, and $Z(P)$ be
the zero set of $P$. A connected component of $\mbb R^n \setminus Z(P)$ is
called a cell. If $\{ O_i \}$ are all the cells of $\mbb R^n\setminus Z(P)$,
then  $|\{ i \}| \leq C_n D^n$ for some constant $C_n$ that only depends on
the dimension $n$. (For a proof of this bound on the number of cells, we
refer the reader to Milnor \cite{m:nofcells}.) A line in $\mbb R^n$, 
however, can intersect at most $D+1$ cells. This relationship between lines 
and polynomials lies at the heart of what is now referred to in incidence 
geometry and harmonic analysis as the polynomial method.

The polynomial method helped resolve a number of longstanding combinatorial
problems in incidence geometry concerning the intersection patterns of lines
in Euclidean space as well as in vector spaces over finite fields. In
harmonic analysis, the combinatorial issues concern the intersection pattern
of tubes with a fixed radius $\rho$. A tube can enter much more than $D+1$
of the cells $\{ O_i \}$. To go around this difficulty, Guth defined in
\cite{guth:poly} the cell-wall $W$ as the $\rho$-neighborhood of $Z(P)$ and
considered the sets $O_i'= O_i \setminus W$. Guth then observed that if a
tube enters $O_i'$, then its core line will enter $O_i$, so the tube can
enter at most $D+1$ of the modified cells $\{ O_i' \}$.

Another property of polynomials that lies at the heart of the polynomial
method (when working in Euclidean space rather than in vector spaces over
finite fields) is that given a degree $D$ and a non-negative integrable
function $F$, one can use the topology of $\mbb R^n$ to find a polynomial
$P$ of degree at most $D$ such that the integrals $\int_{O_i} F dx$ are
essentially equal.

\begin{alphthm}[Corollary 1.7 in \cite{guth:poly}]
\label{hamsand}
Let $F$ be a non-negative function in $L^1(\mbb R^n)$. Then to every $D$
there is a non-zero polynomial $P$ of degree at most $D$ such that $P$ is a
product of non-singular polynomials\footnote{Recall that a polynomial $Q$ is
non-singular if $\nabla Q(x) \not= 0$ for all $x \in Z(Q)$.},
$\mbb R^n \setminus Z(P)$ is a disjoint union of $\sim D^n$ cells $O_i$, and
the integrals $\int_{O_i} F(x) dx$ agree up to a factor of $2$.
\end{alphthm}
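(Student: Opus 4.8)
The plan is to construct $P$ by iterating the polynomial ham sandwich theorem, as in the Guth--Katz polynomial partitioning method~\cite{gk:erdos}. Recall the Stone--Tukey theorem in the form we need: if $\nu_1,\dots,\nu_N$ are finite Borel measures on $\mbb R^n$ that are absolutely continuous with respect to Lebesgue measure (in our application each $\nu_m$ will be $F\,dx$ restricted to some set), then there is a nonzero polynomial $Q$ on $\mbb R^n$ with $\binom{n+\deg Q}{n}\geq N+1$, hence $\deg Q\leq C_n N^{1/n}$ for $N\geq 1$, such that $\int_{\{Q>0\}}d\nu_m=\int_{\{Q<0\}}d\nu_m$ for every $m=1,\dots,N$. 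This is the Borsuk--Ulam theorem applied to the odd continuous map from the unit sphere of the coefficient space $\mbb R^{\binom{n+\deg Q}{n}}$ to $\mbb R^N$ sending a coefficient vector $a$ to the vector with entries $\int_{\{Q_a>0\}}d\nu_m-\int_{\{Q_a<0\}}d\nu_m$; continuity of this map uses that $Z(Q_a)$ has Lebesgue measure zero for $a\neq 0$, and oddness uses $Q_{-a}=-Q_a$.

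We may assume $\int F>0$, since otherwise $F=0$ almost everywhere and the conclusion is trivial with $P(x)=x_1$. Now run a dyadic bisection of the measure $F\,dx$. At stage $1$, apply Stone--Tukey to $F\,dx$ alone to get a nonzero $P_1$ of degree at most $C_n$ with $\{P_1>0\}$ and $\{P_1<0\}$ each of $F$-mass $\tfrac12\int F$. Inductively, suppose after stage $j-1$ we have nonzero $P_1,\dots,P_{j-1}$ such that the $2^{j-1}$ sign classes $\{x:\operatorname{sgn}P_i(x)=\epsilon_i,\ 1\leq i\leq j-1\}$, $\epsilon\in\{\pm1\}^{j-1}$, each have $F$-mass $2^{-(j-1)}\int F$; applying Stone--Tukey to these $2^{j-1}$ restricted measures simultaneously produces a nonzero $P_j$ of degree at most $C_n 2^{(j-1)/n}$ bisecting every one of them, so after stage $j$ the $2^{j}$ sign classes each have $F$-mass $2^{-j}\int F$. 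Stop after stage $k$ and put $P=P_1\cdots P_k$. Then $\deg P=\sum_{j=1}^k\deg P_j\leq C_n'2^{k/n}$, so choosing $k$ to be the largest integer with $C_n'2^{k/n}\leq D$ gives $\deg P\leq D$ and $2^k\sim D^n$, with implicit constants depending only on $n$.

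Let $O_1,\dots,O_{2^k}$ be the $2^k$ sign classes of $(P_1,\dots,P_k)$. They are open, pairwise disjoint, and their union is $\mbb R^n\setminus\bigcup_j Z(P_j)=\mbb R^n\setminus Z(P)$; each is nonempty because its $F$-mass is $2^{-k}\int F>0$; there are $2^k\sim D^n$ of them; and since each $O_i$ is a union of connected components of $\mbb R^n\setminus Z(P)$, of which there are at most $C_n D^n$ by Milnor's bound, these counts are consistent. Moreover all the integrals $\int_{O_i}F\,dx$ are equal. It remains to make $P$ a product of non-singular polynomials. For this, replace each $P_j$ by $\tilde P_j=P_j+\eta_j$ with $\eta_j$ a generically chosen small constant: by Sard's theorem almost every real number is a regular value of $P_j$, so $\tilde P_j$ is non-singular for almost every small $\eta_j$ (and $\deg\tilde P_j=\deg P_j$); and since the symmetric difference between the old and new sign classes is contained, up to a null set, in $\{|P_j|\leq|\eta_j|\}$, whose $F$-mass tends to $0$ as $\eta_j\to0$, we may choose $|\eta_j|$ small enough that this $F$-mass is below $\tfrac1{10k}2^{-k}\int F$. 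Carrying this out for $j=1,\dots,k$ changes the $F$-mass of each resulting sign class by at most $\tfrac1{10}2^{-k}\int F$ in total, so any two of them still agree up to a factor strictly less than $2$; and $\tilde P=\tilde P_1\cdots\tilde P_k$ is a nonzero polynomial of degree $\leq D$ that is a product of non-singular polynomials, as required.

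The main obstacle is the tension between the \emph{exact} balancing that Stone--Tukey provides, through a bisecting polynomial that may be singular, and the non-singularity demanded of $P$: the perturbation restoring non-singularity also perturbs the balance. What makes it work is that the number of bisection stages $k$ depends only on $D$ and $n$, so finitely many arbitrarily small perturbations suffice to recover non-singularity while keeping the imbalance within the permitted factor of $2$. The only other point requiring care is the degree accounting that lands $\deg P$ at $\leq D$ while still producing $\sim D^n$ cells.
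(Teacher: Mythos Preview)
The paper does not prove this statement; it is quoted as Corollary~1.7 of \cite{guth:poly} and used as a black box. Your argument is correct and is essentially the standard Guth--Katz polynomial partitioning proof that Guth records in \cite{guth:poly}: iterate the polynomial ham sandwich theorem to obtain $2^k\sim D^n$ sign classes of equal $F$-mass, sum the degrees via a geometric series to land at $\deg P\leq D$, and then perturb each factor by a generic small constant (Sard) to force non-singularity while disturbing the balance by at most a fixed fraction. Your handling of the one genuine tension---exact bisection versus non-singularity---is the same as in the original, and your bookkeeping (the $\tfrac{1}{10k}$ budget per factor, the observation that each perturbed sign class still has positive $F$-mass and hence is nonempty) is clean.

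One terminological point worth flagging, though it does not affect correctness: the paper defines a ``cell'' as a \emph{connected component} of $\mbb R^n\setminus Z(P)$, whereas the $O_i$ your construction produces are sign classes, which may be disconnected. You note this yourself. In Guth's original and in every application in the present paper, what is actually used is that the $O_i$ partition $\mbb R^n\setminus Z(P)$ into $\sim D^n$ open sets with comparable $F$-mass and that a line meets at most $D+1$ of them; both properties hold for sign classes, so the distinction is harmless.
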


Given a function $f \in L^1(S)$, our goal is to estimate the $L^p(Hdx)$ norm
of $Ef$ over the ball $B_R$ in $\mbb R^3$ of center 0 and radius $R$. We 
shall think of $B_R$ as lying in physical space and of the surface $S$ as 
lying in frequency space. In physical space, $B_R$ inherits from $\mbb R^3$ 
a partition into cells $O_i'$ and a cell-wall $W$ coming from a polynomial
$P$, as described above. The only condition we impose on $P$ for now is that
it is a product of non-singular polynomials in three real variables; we will
not use Theorem \ref{hamsand} until we arrive at the proof of Theorem
\ref{biltobr}. In frequency space, we cover $S$ by a collection
$\{ \theta \}$ of finitely-overlapping caps each of radius $R^{-1/2}$, and
we write $f=\sum_\theta f_\theta$ with $f_\theta$ supported in $\theta$ and
such that
$(\mbox{supp} \, f_\theta) \cap (\mbox{supp} \, f_{\theta'})= \emptyset$ if
$\theta \not= \theta'$. Applying Proposition \ref{wave} of the previous
section to each $f_\theta$, we obtain a wave packet decomposition of $f$:
\begin{displaymath}
f= \sum_\theta \sum_{T \in \tilde{\mbb T}(\theta)} \big( f_{\theta} \big)_T
\end{displaymath}
with the equality holding in $L^1(S)$.

We point out that the covering $\{ \theta \}$ of $S$ by $R^{-1/2}$-caps will
be fixed throughout the argument, so, in order to simplify the notation, we
write $f_T$ for $\big( f_{\theta} \big)_T$.

Going back to physical space, applying part (iv) of Proposition \ref{wave}
to each $f_\theta$ and summing shows that
\begin{equation}
\label{wave(ii)all}
Ef(x)= \sum_{T \in \mbb T} Ef_T(x) + O \big( R^{-N} \| f \|_{L^1(S)} \big)
\end{equation}
for all $x \in B_R$, where
\begin{displaymath}
\mbb T= \cup_\theta \mbb T(\theta).
\end{displaymath}
Recall from Proposition \ref{wave} that each $T \in \mbb T$ is a tube of
radius $R^{(1/2)+\delta}$, so the cell-wall is
\begin{displaymath}
W= N_{R^{(1/2)+\delta}} Z(P).
\end{displaymath}

If $\mbb T_i$ is a subset of $\mbb T$, we set
\begin{displaymath}
f_i= \sum_{T \in \mbb T_i} f_T.
\end{displaymath}
We shall often denote a function on $S$ which is supported in a cap $\tau$
by $f_\tau$. In this case, we shall write $f_{\tau,i}$ for $(f_\tau)_i$ and
$f_{\tau,T}$ for $(f_\tau)_T$, so that
\begin{displaymath}
f_{\tau,i}= \sum_{T \in \mbb T_i} f_{\tau,T}.
\end{displaymath}
If we happen to have $f= \sum_\tau f_\tau$, then\footnote{One can easily see
from the proof of Proposition \ref{flatwave} that $(f+g)_T=f_T+g_T$.}
\begin{equation}
\label{eglemma}
f_i = \sum_{T \in \mbb T_i} f_T = \sum_{T \in \mbb T_i} \sum_\tau f_{\tau,T}
= \sum_\tau \sum_{T \in \mbb T_i} f_{\tau,T} = \sum_\tau f_{\tau,i}.
\end{equation}

Since the $f_{\tau,T}$ are almost orthogonal (by part (iii) of Proposition
\ref{wave}), one expects $\sum_{T \in \mbb T_i} \| f_{\tau,T} \|_{L^2(S)}^2$
to be smaller than $\| f_\tau \|_{L^2(S)}^2$. The next lemma makes this
precise.

\begin{alphlemma}[Lemma 2.7 in \cite{guth:poly}]
\label{Lemma2.7}
Suppose $\{ \mbb T_i \}_{i \in I}$ is a family of subsets of $\mbb T$, $k$
is a positive integer, and $\tau \subset S$. If each tube
$T \in \cup_{i \in I} \mbb T_i$ belongs to at most $k$ of the subsets
$\{ \mbb T_i \}_{i \in I}$, then
\begin{displaymath}
\sum_{i \in I} \int_{3 \theta} |f_{\tau,i}|^2 d\sigma(\xi)
\lct k \int_{10 \theta} |f_\tau|^2 d\sigma(\xi)
\end{displaymath}
for all $\theta$. Also,
\begin{displaymath}
\sum_{i \in I} \int_S |f_{\tau,i}|^2 d\sigma(\xi)
\lct k \int_S |f_\tau|^2 d\sigma(\xi).
\end{displaymath}
\end{alphlemma}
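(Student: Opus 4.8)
The plan is to expand the square $|f_{\tau,i}|^2=\sum_{T,T'\in\mbb T_i}f_{\tau,T}\,\overline{f_{\tau,T'}}$, reorganize the wave packets according to which $R^{-1/2}$-cap they belong to, and then split the resulting double sum into a near-diagonal part that is governed by the $L^2$ almost-orthogonality in Proposition~\ref{wave}(i) and an off-diagonal remainder that is negligible by Proposition~\ref{wave}(iii). Two background facts will be used repeatedly: the caps $\{\theta\}$ are finitely overlapping, and for each fixed $\theta$ the tubes of $\tilde{\mbb T}(\theta)$ are finitely overlapping, so each tube meets only $O(1)$ of the others; moreover all tubes occurring in $\mbb T=\cup_\theta\mbb T(\theta)$ meet $B(0,R)$, whence $|\mbb T(\theta)|\lct R^{O(1)}$.

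First I would reduce the local inequality to a single cap. Grouping the tubes of $\mbb T_i$ by their caps, write $f_{\tau,i}=\sum_{\theta'}g_i^{\theta'}$ with $g_i^{\theta'}:=\sum_{T\in\mbb T_i\cap\tilde{\mbb T}(\theta')}f_{\tau,T}$; each $g_i^{\theta'}$ is supported in $3\theta'$ by Proposition~\ref{wave}(i). For a given $\theta$, only $O(1)$ caps $\theta'$ satisfy $3\theta'\cap 3\theta\neq\emptyset$, and every such $\theta'$ lies in $10\theta$; so Cauchy--Schwarz gives
\[
\int_{3\theta}|f_{\tau,i}|^2\,d\sigma\;\lct\;\sum_{\theta'\,:\,3\theta'\cap 3\theta\neq\emptyset}\int_S|g_i^{\theta'}|^2\,d\sigma .
\]
For the global inequality the same decomposition is used with no localization, the sum running over all caps $\theta'$.

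Next I would fix $\theta'$ and expand $\int_S|g_i^{\theta'}|^2\,d\sigma=\sum_{T,T'\in\mbb T_i\cap\tilde{\mbb T}(\theta')}\int_S f_{\tau,T}\,\overline{f_{\tau,T'}}\,d\sigma$, splitting according to whether $T$ and $T'$ intersect. Because each tube of $\tilde{\mbb T}(\theta')$ meets only $O(1)$ of the others, the intersecting pairs contribute $\lct\sum_{T\in\mbb T_i\cap\tilde{\mbb T}(\theta')}\|f_{\tau,T}\|_{L^2(S)}^2$ by Cauchy--Schwarz. For disjoint $T,T'\in\tilde{\mbb T}(\theta')$, Proposition~\ref{wave}(iii) applied to the piece $(f_\tau)_{\theta'}$ of $f_\tau$ supported in $\theta'$ bounds each term by $\lct R^{-N}\|(f_\tau)_{\theta'}\|_{L^1(S)}^2\lct R^{-N}|\theta'|\,\|(f_\tau)_{\theta'}\|_{L^2(S)}^2$. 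Since $|\theta'|\sim R^{-1}$ and $|\mbb T(\theta')|\lct R^{O(1)}$, summing over the at most $R^{O(1)}$ disjoint pairs and over the at most $k|\mbb T(\theta')|\lct R^{O(1)}$ indices $i$ with $\mbb T_i\cap\tilde{\mbb T}(\theta')\neq\emptyset$ leaves an off-diagonal contribution $\lct R^{O(1)-N}\int_{\theta'}|f_\tau|^2\,d\sigma$, which is absorbed into the main term because $N$ is a large absolute constant.

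It remains to sum the near-diagonal part over $i$. As each tube of $\cup_{i\in I}\mbb T_i$ lies in at most $k$ of the $\mbb T_i$, and by Proposition~\ref{wave}(i) applied to $(f_\tau)_{\theta'}$,
\[
\sum_{i\in I}\sum_{T\in\mbb T_i\cap\tilde{\mbb T}(\theta')}\|f_{\tau,T}\|_{L^2(S)}^2\;\leq\;k\sum_{T\in\tilde{\mbb T}(\theta')}\|f_{\tau,T}\|_{L^2(S)}^2\;\lct\;k\,\|(f_\tau)_{\theta'}\|_{L^2(S)}^2\;\leq\;k\int_{\theta'}|f_\tau|^2\,d\sigma .
\]
Summing over the $O(1)$ caps $\theta'\subset 10\theta$ and using finite overlap once more yields the first inequality, and summing instead over all caps $\theta'$ yields the second. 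The only genuinely delicate point is the bookkeeping for the off-diagonal remainder: one must check that the large --- but merely polynomial in $R$ --- number of cross terms and of relevant indices $i$ is beaten by the gain $R^{-N}$, and this is precisely where the standing hypothesis that $N$ is a large absolute constant is used. Everything else is routine Cauchy--Schwarz together with the finite-overlap properties of the caps and tubes.
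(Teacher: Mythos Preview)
The paper does not give its own proof of this lemma; it is quoted verbatim from \cite{guth:poly} (hence the \texttt{alphlemma} environment) and used as a black box. Your argument is the standard one and is correct in substance: group wave packets by their parent cap, use finite overlap of caps to reduce to a single $\theta'$, expand the square, control the near-diagonal by Proposition~\ref{wave}(i) together with the $k$-fold overlap hypothesis, and kill the off-diagonal by Proposition~\ref{wave}(iii).

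One small bookkeeping point: you write that the number of relevant indices $i$ is $\lct k|\mbb T(\theta')|\lct R^{O(1)}$, thereby absorbing $k$ into the polynomial loss. The lemma is stated for an arbitrary positive integer $k$, so this absorption is not justified in general. The fix is immediate: for each fixed disjoint pair $(T,T')\in\tilde{\mbb T}(\theta')^2$, the number of indices $i$ with $T,T'\in\mbb T_i$ is at most $k$ (since $T$ lies in at most $k$ of the $\mbb T_i$), so the total off-diagonal contribution is $\lct k\,R^{O(1)-N}\int_{\theta'}|f_\tau|^2\,d\sigma$, which is dominated by the main term $k\int_{\theta'}|f_\tau|^2\,d\sigma$ once $N$ is large. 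With that adjustment the proof is complete.
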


We now fix a specific family $\{ \mbb T_i \}_{i \in I}$ of subsets of
$\mbb T$. We define
\begin{displaymath}
\mbb T_i= \{ T \in \mbb T : T \cap O_i' \not= \emptyset \}.
\end{displaymath}
Recalling Guth's motivation for introducing the modified cells $O_i'$, we
know that a tube $T$ of radius $R^{(1/2)+\delta}$ can enter at most $D+1$ of
these cells. So, a tube $T \in \mbb T$ can belong to at most $D+1$ of the
sets $\mbb T_i$. For later reference, we state this fact in the following
lemma.

\begin{alphlemma}[Lemma 3.2 in \cite{guth:poly}]
\label{fundthmalg}
A tube $T \in \mbb T$ can belong to at most $D+1$ of the sets $\mbb T_i$.
\end{alphlemma}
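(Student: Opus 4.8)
The plan is to carry out Guth's ``core line'' argument. Write $\rho = R^{(1/2)+\delta}$, so that every $T \in \mbb T$ is a tube of radius $\rho$ about some line $\ell = \ell(T)$ (its core line), and the cell-wall is $W = N_\rho Z(P)$. The key step is to show that if $T \cap O_i' \not= \emptyset$, then $\ell \cap O_i \not= \emptyset$. Granting this, the set of indices $i$ with $T \in \mbb T_i$ injects into the set of cells met by $\ell$, and the number of cells of $\mbb R^3 \setminus Z(P)$ that a single line meets is at most $D+1$; that is exactly the assertion of the lemma.

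For the key step, fix $i$ with $T \in \mbb T_i$ and choose $x \in T \cap O_i'$. Since $x \notin W$ we have $\mbox{dist}(x, Z(P)) > \rho$, while $x \in T$ provides a point $y \in \ell$ with $|x - y| \leq \rho$. Every point $z$ on the segment $[x,y]$ then satisfies $|z - x| \leq |x - y| \leq \rho < \mbox{dist}(x, Z(P))$, so $z \notin Z(P)$; hence the connected set $[x,y]$ lies in a single connected component of $\mbb R^3 \setminus Z(P)$. Since $x \in O_i$, that component is $O_i$, and therefore $y \in \ell \cap O_i$, as required.

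It remains to bound the number of cells met by a line $\ell$. If $P$ vanishes identically on $\ell$, then $\ell \subset Z(P)$, so $T \subset N_\rho \ell \subset N_\rho Z(P) = W$, whence $T \cap O_i' \subset W \cap (O_i \setminus W) = \emptyset$ for every $i$; thus $T$ lies in none of the $\mbb T_i$ and the bound is trivial. Otherwise, parametrizing $\ell$ affinely by a single real variable, the restriction $P|_\ell$ is a non-zero polynomial of degree at most $D$ in one variable, so it has at most $D$ roots; these split $\ell$ into at most $D+1$ relatively open intervals, each of which is connected and disjoint from $Z(P)$ and hence contained in a single cell. So $\ell$ meets at most $D+1$ distinct cells $O_i$, and combining this with the previous paragraph we conclude that $T$ belongs to at most $D+1$ of the sets $\mbb T_i$.

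This is Lemma 3.2 of \cite{guth:poly} and the argument is elementary, so I do not expect any genuine obstacle. The only points needing a little care are the degenerate case $\ell \subset Z(P)$ treated above, and the observation that the tube radius is chosen equal to the width of the cell-wall, which is precisely what forces the segment $[x,y]$ off $Z(P)$ once its endpoint $x$ lies outside $W$.
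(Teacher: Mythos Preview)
Your proof is correct and follows exactly the core-line argument sketched in the paper (and in \cite{guth:poly}): if $T$ meets $O_i'$ then its core line meets $O_i$, and a line meets at most $D+1$ cells. Your write-up is in fact more careful than the paper's, which simply states the observation without proof; the handling of the degenerate case $\ell\subset Z(P)$ and the segment argument are fine additions.
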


The integral of $|E f|^p H$ on the cells $O_i' \cap B_R$ will be controlled
using induction. To control the integral of $|E f|^p H$ on $W \cap B_R$, we
cover $B_R$ with $\sim R^{3 \delta}$ balls $B_j$ of radius $R^{1-\delta}$.
If $B_j \cap W \not= \emptyset$, then the tubes of $\mbb T$ will be
separated into two groups: the tubes that are tangent to $Z(P)$ in $B_j$,
and the tubes that are transverse to $Z(P)$ in $B_j$. Here are the details.

Let $Z_0(P)$ be the set of all non-singular points of $Z(P)$. We denote by
$\mbb T_{j,\mbox{\rm \tiny tang}}$ the set of all $T \in \mbb T$ satisfying
\begin{displaymath}
\left\{ \begin{array}{l}
        T \cap W \cap B_j \not= \emptyset \\
        \mbox{Angle}(v(T),T_zZ(P)) \leq R^{-(1/2)+2\delta}
        \;\; \forall \; z \in Z_0(P) \cap 2B_j \cap 10T,
        \end{array} \right.
\end{displaymath}
where $v(T)$ is the unit vector in the direction of the tube $T$, and by
$\mbb T_{j,\mbox{\rm \tiny trans}}$ the set of all $T \in \mbb T$ satisfying
\begin{displaymath}
\left\{ \begin{array}{l}
        T \cap W \cap B_j \not= \emptyset \\
        \exists \; z \in Z_0(P) \cap 2B_j \cap 10T
        \mbox{ such that Angle}(v(T),T_zZ(P)) > R^{-(1/2)+2\delta}.
        \end{array} \right.
\end{displaymath}
Any tube $T \in \mbb T$ that intersects $W \cap B_j$ lies in exactly one of
$\mbb T_{j,\mbox{\tiny tang}}$ and $\mbb T_{j,\mbox{\tiny trans}}$. For a
proof of this fact, we refer the reader to the paragraph immediately
following Definitions 3.3 and 3.4 in \cite{guth:poly}. More importantly, we
have the following two remarkable results of \cite{guth:poly}:

\begin{alphlemma}[Lemma 3.5 in \cite{guth:poly}]
\label{Lemma3.5}
If $P$ has degree at most $D$, then a tube $T \in \mbb T$ can belong to at
most $\mbox{\rm Poly}(D)$ different sets
$\mbb T_{j,\mbox{\rm \tiny trans}}$.
\end{alphlemma}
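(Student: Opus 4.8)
The plan is to prove the lemma one tube at a time. Fix $T \in \mbb T$, set $v = v(T)$, and work in coordinates $(t,w) \in \mbb R \times \mbb R^2$ adapted to $T$, with $v$ pointing in the direction of increasing $t$, so that $10T$ is a cylinder whose axis is parallel to the $t$-axis and whose cross-sections $\{t = \mathrm{const}\}$ are discs of radius $\sim R^{(1/2)+\delta}$. Suppose $T \in \mbb T_{j,\,\mathrm{trans}}$. By definition there is a witness $z_j \in Z_0(P) \cap 2B_j \cap 10T$ with $\mbox{Angle}(v, T_{z_j}Z(P)) > R^{-(1/2)+2\delta}$. Since the unit normal to $Z(P)$ at a non-singular point $z$ is $\nabla P(z)/|\nabla P(z)|$, this angle condition says precisely that $|\partial_v P(z_j)| = |\nabla P(z_j)|\,\sin\mbox{Angle}(v, T_{z_j}Z(P)) \gct R^{-(1/2)+2\delta}\,|\nabla P(z_j)| > 0$. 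In particular $\partial_t P(z_j) \neq 0$, so the implicit function theorem writes $Z(P)$ near $z_j$ as a graph $t = \phi_j(w)$, and as long as the angle persists above the threshold we get the slope bound $|\nabla\phi_j| = |\nabla_w P|/|\partial_v P| \lct R^{(1/2)-2\delta}$.

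Next I would globalize. Consider the semialgebraic set $Y = \{P = 0\} \cap \{(\partial_v P)^2 > \tfrac14 R^{-1+4\delta}|\nabla P|^2\} \cap (10T)^\circ$; on $Y$ both $\nabla P$ and $\partial_v P$ are nonzero, so $Y$ is a smooth surface on which the projection $(t,w)\mapsto w$ is an immersion, and --- this is the crucial claim, addressed below --- each connected component $Y'$ of $Y$ is a single-valued graph $t = \phi(w)$ with $|\nabla\phi| \lct R^{(1/2)-2\delta}$; call such a $Y'$ a \emph{transverse sheet}. Every witness $z_j$ lies in some transverse sheet $Y_j$, since the angle there exceeds $R^{-(1/2)+2\delta}$, comfortably above the threshold defining $Y$. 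Because the $w$-coordinates of points of $10T$ fill a disc of radius only $\sim R^{(1/2)+\delta}$, the $t$-extent of the graph $Y_j$ is $\lct R^{(1/2)+\delta}\cdot R^{(1/2)-2\delta} = R^{1-\delta}$, so $Y_j$ sits inside a ball of radius $\lct R^{1-\delta}$. Since the sets $2B_{j'}$ have radius $2R^{1-\delta}$ and bounded overlap, a fixed transverse sheet meets $2B_{j'}$ for only $O(1)$ indices $j'$; as $z_j \in Y_j \cap 2B_j$, at most $O(1)$ of the indices $j$ we are counting correspond to a given transverse sheet. Hence the number of $j$ with $T \in \mbb T_{j,\,\mathrm{trans}}$ is $\lct$ the number of transverse sheets.

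The last step is to bound the number of transverse sheets --- i.e. the number of connected components of $Y$ --- by $\mbox{Poly}(D)$: the set $Y$ is defined by $O(1)$ polynomial equalities and inequalities of degree $\lct D$ in $\mbb R^3$, so the classical bounds on the number of connected components of such a set (Oleinik--Petrovsky, Thom, Milnor; see also Basu--Pollack--Roy) give $\lct D^{O(1)}$ components, with a bound independent of $R$; combined with the previous paragraph this proves the lemma. The hard part is the crucial claim: that each component $Y'$ has $t$-extent $\lct R^{1-\delta}$, equivalently is a genuine single-valued graph over a planar region rather than merely a surface on which the $w$-projection is a \emph{local} diffeomorphism --- one must exclude the possibility that $Y'$ spirals along the long direction of the tube. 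I expect to handle this by using that $Y'$, being connected and semialgebraic of complexity depending only on $D$, is quasiconvex with a $D$-dependent constant (its geodesic diameter is controlled by $\mbox{Poly}(D)$ times the diameter of its projection to the $w$-plane), which upgrades the $t$-extent bound at the cost of an extra, harmless factor $\mbox{Poly}(D)$ in the per-sheet count. An alternative I would try is to bypass the sheet count with a B\'ezout argument on a generic line $\ell'$ parallel to $v$ inside $T$: since $P|_{\ell'}\not\equiv 0$, the line $\ell'$ meets $Z(P)$ in $\leq D$ points, and one would show that $T\in\mbb T_{j,\,\mathrm{trans}}$ forces one of them into a bounded dilate of $B_j$; the delicate point there is that the graph $t=\phi_j$ through $z_j$ extends, staying non-singular and transverse, all the way across the cross-section to the fibre $\ell'$.
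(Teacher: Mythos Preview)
The paper does not prove this lemma; it is quoted without proof from Guth's paper \cite{guth:poly} (hence the ``alphlemma'' environment and the attribution in the header). So there is no in-paper proof to compare your attempt against --- your write-up is a reconstruction of Guth's argument, and the right comparison is with \cite{guth:poly} itself.

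Your outline is very much in the spirit of Guth's proof, and you have correctly located the one genuine difficulty. A few comments on the gaps you flag. First, the ``crucial claim'' that each connected component $Y'$ of the transverse locus is a \emph{single-valued} graph is stronger than what is true or needed: nothing rules out $Y'$ covering its $w$-image with multiplicity (the projection is only a local diffeomorphism). What you actually need is that the $t$-extent of $Y'$ is $\lesssim \mbox{Poly}(D)\cdot R^{1-\delta}$, so that each sheet meets only $\mbox{Poly}(D)$ of the $2B_j$. The quasiconvexity statement you invoke (``geodesic diameter $\lesssim \mbox{Poly}(D)$ times the diameter of the $w$-projection'') is not a standard off-the-shelf result for semialgebraic sets, and as written it is not clear it holds with polynomial dependence on $D$; this step would need a real argument. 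One way to get the needed bound directly is to observe that any vertical line $\ell_{w_0}$ meets $Y'$ in at most $D$ points (they are among the zeros of $P|_{\ell_{w_0}}$), which caps the number of ``turns'' a sheet can make over a simply connected patch of the $w$-disc; combined with a bound on the topology of the $w$-image (again Milnor--Thom type), this yields $t$-extent $\lesssim \mbox{Poly}(D)\cdot R^{1-\delta}$.

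Your alternative B\'ezout approach is in fact closer to how Guth actually argues in \cite{guth:poly}. The ``delicate point'' you identify --- that the transverse graph through $z_j$ must extend across the full cross-section of the tube to reach a fixed line $\ell'$ --- is exactly the issue Guth handles: one follows the graph with slope $\leq R^{1/2-2\delta}$ across the cross-section of radius $O(R^{1/2+\delta})$, so the $t$-drift is $O(R^{1-\delta})$, and the places where the graph can terminate before reaching $\ell'$ lie on $Z(P)\cap Z(\partial_v P)$, a curve of degree $\leq D(D-1)$ whose contribution is itself $\mbox{Poly}(D)$. Either route can be completed, but in both cases the sentence you would need to add is precisely the one you singled out as hard; your proposal is an accurate outline rather than a finished proof.
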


\begin{alphlemma}[Lemma 3.6 in \cite{guth:poly}]
\label{Lemma3.6}
If $P$ has degree at most $D$, then, for each $j$, the number of different
$\theta$ such that
$\mbb T_{j,\mbox{\rm \tiny tang}} \cap \mbb T(\theta) \not= \emptyset$ is at
most $D^2 R^{(1/2)+O(\delta)}$.
\end{alphlemma}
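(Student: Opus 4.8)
The plan is to follow \cite{guth:poly}: rescale $B_j$ to the unit ball and then run Wolff's hairbrush argument in the version adapted to tubes that stay tangent to an algebraic surface over a long stretch. Fix $j$. For every cap $\theta$ with $\mbb T_{j,\mathrm{tang}} \cap \mbb T(\theta) \ne \emptyset$ pick a representative tube $T_\theta \in \mbb T_{j,\mathrm{tang}} \cap \mbb T(\theta)$, let $N$ be the number of such caps, and note that the tubes $T_\theta$ point in pairwise $R^{-1/2}$-separated directions. Apply the dilation of $\mbb R^3$ sending $B_j$ (radius $R^{1-\delta}$) to $B(0,1)$: each $T_\theta$ becomes a tube $\tilde T_\theta$ of radius $\rho := R^{-1/2+2\delta}$ and length $\sim 1$, the separation of directions is unchanged, and $Z(P) \cap 2B_j$ becomes $Z \cap B(0,3)$ with $Z$ a real algebraic surface of degree $\le D$; the tangency condition becomes $\mathrm{Angle}(v(\tilde T_\theta), T_z Z) \lct \rho$ for all non-singular $z \in Z \cap 10\tilde T_\theta \cap B(0,3)$. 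It suffices to prove $N \lct D^2 \rho^{-1} R^{O(\delta)}$.

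Two facts about $Z$, both consequences of $\deg Z \le D$, would be recorded first. One: $Z$ has bounded area, $\mathrm{area}_2(Z \cap B(0,3)) \lct D$, so its $t$-neighborhood has $3$-volume $\lct D t$ in $B(0,3)$. Two: a line meets $Z$ tangentially along at most $O(D)$ arcs, once the singular locus of $Z$ and any lines contained in $Z$ are set aside (the latter being controlled through the hypothesis that $P$ is a product of non-singular factors). A short geometric step then shows that the part of each $\tilde T_\theta$ carrying a tangency constraint lies in $N_{O(\rho R^{O(\delta)})}(Z)$; the complementary part of the tube can be discarded, at the cost of recounting some directions at a coarser scale, which loses only a factor $R^{O(\delta)}$. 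After this reduction every $\tilde T_\theta$ sits inside $N_{O(\rho R^{O(\delta)})}(Z) \cap B(0,3)$, a set of volume $\lct D \rho R^{O(\delta)}$.

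The core of the proof is the hairbrush dichotomy. Fixing a tube $\tilde T_0$ from the collection, look at the hairbrush $\mathcal H(\tilde T_0) = \{\tilde T_\theta : \tilde T_\theta \cap \tilde T_0 \ne \emptyset,\ \mathrm{Angle}(\tilde T_\theta,\tilde T_0) \gct \eta\}$ for a dyadic parameter $\eta$. Each tube of $\mathcal H(\tilde T_0)$ meets $\tilde T_0$ at a point that, by the previous paragraph, lies within $O(\rho R^{O(\delta)})$ of $Z$, hence inside one of the $O(D)$ tangency arcs of $\tilde T_0$; after shrinking the arcs to a common flat scale the tangent plane of $Z$ is essentially constant along each, so the tubes of $\mathcal H(\tilde T_0)$ crossing a given arc have directions within $O(\rho)$ of a fixed $2$-plane. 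Thus $\mathcal H(\tilde T_0)$ splits into $O(D)$ \emph{planar brushes} — families of $\rho$-tubes through a common $\rho$-ball with $R^{-1/2}$-separated directions inside a $2$-plane — and C\'{o}rdoba's $L^2$ inequality in the plane controls the overlap within each. Feeding this bookkeeping into Wolff's argument (either extract a single tube whose hairbrush is a positive fraction of the whole collection, or iterate the bush/hairbrush alternative) and combining with the volume bound $\lct D\rho R^{O(\delta)}$ of the second paragraph, one arrives at $N \lct D^2 \rho^{-1} R^{O(\delta)} = D^2 R^{(1/2)+O(\delta)}$, which is the claim.

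The main obstacle I expect is precisely this last, hairbrush, step: the reduction that lets one treat the tubes crossing a fixed arc of $\tilde T_0$ as a planar brush must be made honest in the presence of the curvature of $Z$ at scales between $\rho$ and $1$ and of the singular set of $Z$, and the pigeonholing that produces a large hairbrush from the $N$ representatives must be checked to survive all the preliminary reductions. This is exactly the point at which Wolff's hairbrush argument has to be upgraded to its algebraic form, using $\deg Z \le D$ rather than direction-separation alone; everything else is either classical (C\'{o}rdoba's planar estimate) or elementary algebraic geometry (the area and tangency-arc bounds for a bounded-degree surface).
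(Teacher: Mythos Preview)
The paper does not contain a proof of this lemma: it is stated as Lemma~3.6 of \cite{guth:poly} and simply cited, so there is no ``paper's own proof'' to compare your attempt against. Your sketch is a faithful outline of Guth's original argument---rescale $B_j$ to the unit ball, use that tangent tubes lie in an $O(\rho)$-neighborhood of a degree-$D$ surface of controlled area, and run a hairbrush argument exploiting that tubes through a common point must have directions lying near the tangent plane of $Z$ there. The step you flag as the main obstacle (making the planar-brush reduction rigorous and closing the pigeonholing) is exactly where the work lies in \cite{guth:poly}; what you have written is an honest plan rather than a proof, but the plan is the correct one.
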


We let
\begin{displaymath}
f_{\tau,j,\mbox{\tiny tang}}
= \sum_{T \in \mbb T_{j,\mbox{\tiny tang}}} f_{\tau,T}
\hspace{0.25in} \mbox{and} \hspace{0.25in}
f_{j,\mbox{\tiny tang}}= \sum_\tau f_{\tau,j,\mbox{\tiny tang}},
\end{displaymath}
and
\begin{displaymath}
f_{\tau,j,\mbox{\tiny trans}}
= \sum_{T \in \mbb T_{j,\mbox{\tiny trans}}} f_{\tau,T}
\hspace{0.25in} \mbox{and} \hspace{0.25in}
f_{j,\mbox{\tiny trans}}= \sum_\tau f_{\tau,j,\mbox{\tiny trans}}.
\end{displaymath}
Recall that $S$ is covered by $\sim K^2$ caps $\tau$ of diameter $1/K$. If
$I$ is any subset of these caps, we let
\begin{displaymath}
f_{I,j,\mbox{\tiny trans}}= \sum_{\tau \in I} f_{\tau,j,\mbox{\tiny trans}}.
\end{displaymath}
The contribution to $\int_{B_j \cap W}|Ef(x)|^p H(x)dx$ coming from
the transverse tubes will be controlled by using induction to estimate
\begin{displaymath}
\sum_I \int_{B_j \cap W} |Ef_{I,j,\mbox{\tiny trans}}(x)|^p H(x)dx,
\end{displaymath}
where the sum runs over all $I \subset \{ \tau \}$. To control the
contribution coming from the tangential tubes, we make a further definition.

We say that two caps $\tau_1$ and $\tau_2$ are non-adjacent if the distance
between them is $\geq 1/K$. We define
\begin{displaymath}
\mbox{Bil}_{P,\delta} E f_{j,\mbox{\tiny tang}} =
\sum_{\tau_1, \tau_2 \, \mbox{\tiny non-adjacent}}
|E f_{\tau_1,j,\mbox{\tiny tang}}|^{1/2}
|E f_{\tau_2,j,\mbox{\tiny tang}}|^{1/2}.
\end{displaymath}
We call the function $\mbox{Bil}_{P,\delta} E f_{j,\mbox{\tiny tang}}$ the
tangential part of $Ef$ with respect to the polynomial $P$ and the parameter
$\delta$. Of course, this function also depends on $R$, the ball $B_j$, and
the decomposition $\{ \tau \}$ of $S$, but $R$, $B_j$ and $\{ \tau \}$ will
often appear elsewhere in the estimates involving
$\mbox{Bil}_{P,\delta} E f_{j,\mbox{\tiny tang}}$, so to simplify the
notation we only emphasize the dependence of this function on $P$ and
$\delta$ (see the statement of Theorem \ref{biltobr} below).

It is not clear how controlling the tangential part of $Ef$ on $B_j \cap W$
can lead to controlling the contribution to
$\int_{B_j \cap W}|Ef(x)|^p H(x)dx$ coming from the tangential tubes. One of
the important ideas of \cite{guth:poly} is that controlling
$\mbox{Bil}_{P,\delta} E f_{j,\mbox{\tiny tang}}$ on $B_j \cap W$ actually
leads to controlling the contribution of the tangential tubes to the $L^p$
norm of the {\it broad part} of $Ef$ on $B_j \cap W$.

The definition of the broad part of $Ef$ and its relation to
$\mbox{Bil}_{P,\delta} E f_{j,\mbox{\tiny tang}}$ is the subject of our next
section.

Guth  estimated the $L^p(dx)$ norm of
$\mbox{Bil}_{P,\delta} E f_{j,\mbox{\tiny tang}}$ over $B_j \cap W$ by using
C\'{o}rdoba's $L^4$ argument (see Lemma 3.10 in \cite{guth:poly}, or Lemma
\ref{Lemma3.10} below), and used this estimate to derive an estimate on the
broad part of $Ef$, before going back to $Ef$ itself.

In the next section, we formulate a general theorem, Theorem \ref{biltobr}, 
saying that if one has a favorable bound on the $L^p(Hdx)$ norm of
$\mbox{Bil}_{P,\delta} E f_{j,\mbox{\tiny tang}}$ over $B_j \cap W$, then
one gets a favorable estimate on the broad part of $Ef$ over $B_R$.

In Section 11, we establish bounds on various $L^p(Hdx)$ norms of
$\mbox{Bil}_{P,\delta} E f_{j,\mbox{\tiny tang}}$ over $B_j \cap W$. We then
insert these bounds into Theorem \ref{biltobr} and arrive at our desired
estimates on the $L^p(Hdx)$ norm of the broad part of $Ef$ on $B_R$.

\section{The broad part and its interaction with the zero set of $P$}

We are now in good shape to present Guth's definition of the broad part of
$Ef$. We let $m$ and $K$ be constants, and we think of $K$ as being rather
large. We consider a covering $\{ B^2(\omega,r) \}$ of $B^2(0,1)$ such that
the centers $\omega$ are $K^{-1}$-separated, and the radius $r$ satisfies
the inequalities $1/K \leq r \leq \sqrt{m}/K$. If a point $\omega_0$ belongs
to $M$ of these balls, then the centers of the $M$ balls lie in
$B^2(\omega_0,\sqrt{m}/K)$. Since the $\omega$ are $1/K$-separated, it
follows that $M \leq c \, m$ for some absolute constant $c$. Letting $\tau$
be the graph of $h$ over $B^2(\omega,r)$ (i.e., $\tau$ is a cap on $S$ of
center $(\omega,h(\omega))$ and radius $r$), we obtain a covering of $S$ by
caps $\tau$ such that each point of $S$ lies in at most $c \, m$ different
caps. We shall refer to $\{ \tau \}$ as a decomposition of $S$ of
multiplicity $m$. We write $f= \sum_\tau f_\tau$ with
$\mbox{supp} \, f_\tau \subset \tau$, but we do not insist that
$(\mbox{supp} \, f_\tau) \cap (\mbox{supp} \, f_{\tau'})= \emptyset$ if
$\tau \not= \tau'$, as we did with the decomposition
$f= \sum_\theta f_\theta$ above.

Suppose $0 < \beta \leq 1$. We say the point $x \in \mbb R^3$ is
$\beta$-broad for $E f$ if
\begin{displaymath}
\max_\tau |E f_\tau(x)| < \beta |E f(x)|.
\end{displaymath}
Since $|Ef(x)| \lct K^2 \max_\tau |E f_\tau(x)|$, a necessary condition for
the existence of $\beta$-broad points is that $\beta \gct K^{-2}$. Also, if
$f=f_\tau$ for some $\tau$, then (since $\beta \leq 1$) no point of physical
space can satisfy the above inequality, so another necessary condition for
the existence of $\beta$-broad points is that $f$ is not supported in just
one of the caps $\tau$.

We now define the $\beta$-broad part of $Ef$ to be the function
$\mbox{Br}_\beta E f : \mbb R^3 \to [0,\infty)$ given by
\begin{displaymath}
\mbox{Br}_\beta E f(x) = \left\{
\begin{array}{ll}
|E f(x)| & \mbox{ if $x$ is $\beta$-broad for $E f$,} \\
0        & \mbox{ otherwise.}
\end{array} \right.
\end{displaymath}
Clearly,
\begin{displaymath}
|E f(x)|^p
\leq \mbox{Br}_\beta E f(x)^p + \frac{1}{\beta^p} \sum_\tau |E f_\tau(x)|^p
\end{displaymath}
for all $x \in \mbb R^3$ and $p > 0$. Guth's strategy in \cite{guth:poly} is
to estimate $\int_{B_R} \mbox{Br}_\beta E f(x)^p dx$ (for appropriate
$\beta$) by using polynomial partitioning and induction, and estimate
$\int_{B_R} |E f_\tau(x)|^p dx$ by using parabolic scaling and induction.

Recall from (\ref{wave(ii)all}) that $Ef \sim \sum_{T \in \mbb T} Ef_T$ on
$B_R$. Since $Ef_T$ is essentially supported on the tube $T$, to estimate
$\int_{O_i' \cap B_R} |Ef(x)|^p H(x)dx$, one may replace the function $f$ by
the function $f_i=\sum_{T \in \mbb T_i} f_T$. We remind the reader that
\begin{displaymath}
\mbb T_i= \{ T \in \mbb T : T \cap O_i' \not= \emptyset \}.
\end{displaymath}
The next lemma says that this is also true for the broad part of $Ef$. We
include the proof of this lemma, as well as the proof of Lemma
\ref{Lemma3.8} below, because of their importance to the flow of the
argument. Also, because of some minor differences between the statements
here and the corresponding statements in \cite{guth:poly}; for example, the
form of the error terms, and the conditions on $K$ and $R$.

\begin{alphlemma}[Lemma 3.7 in \cite{guth:poly}]
\label{Lemma3.7}
Suppose $\epsilon, K >0$, $K^{-\epsilon} \leq \beta \leq 1$, and
$R \geq C K^\epsilon$. If $x \in O_i'$ and $C$ is sufficiently large, then
\begin{displaymath}
\mbox{\rm Br}_\beta E f(x) \leq \, \mbox{\rm Br}_{2\beta} E f_i(x) +
O \Big( R^{-N+1} \sum_\tau \| f_\tau \|_{L^1(S)} \Big).
\end{displaymath}
\end{alphlemma}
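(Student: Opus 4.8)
The plan is to deduce the lemma from the single pointwise estimate
\begin{displaymath}
|Ef_\tau(x) - Ef_{\tau,i}(x)| \lct R^{-N} \| f_\tau \|_{L^1(S)}
\qquad (x \in O_i' \cap B_R),
\end{displaymath}
valid for every cap $\tau$, and then to run a short case analysis that converts the loss of a factor $\beta^{-1}$ into the extra power of $R$ appearing in the error term. (The restriction $x \in B_R$ is harmless: this is the only regime in which the lemma gets used, and it is precisely the regime covered by the hypothesis $|x| \leq R$ in Proposition \ref{wave}.)

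To prove the pointwise estimate I would decompose $f_\tau = \sum_\theta f_{\tau,\theta}$ over the $R^{-1/2}$-caps $\theta$; these pieces have disjoint supports, so $\sum_\theta \|f_{\tau,\theta}\|_{L^1(S)} = \|f_\tau\|_{L^1(S)}$. Applying part (iv) of Proposition \ref{wave} to each $f_{\tau,\theta}$ and summing gives $Ef_\tau(x) = \sum_{T \in \mbb T} Ef_{\tau,T}(x) + O(R^{-N}\|f_\tau\|_{L^1(S)})$. Now $\sum_{T \in \mbb T} Ef_{\tau,T}(x) - Ef_{\tau,i}(x) = \sum_{T \in \mbb T \setminus \mbb T_i} Ef_{\tau,T}(x)$, and by the very definition $\mbb T_i = \{T \in \mbb T : T \cap O_i' \neq \emptyset\}$, every tube $T \in \mbb T \setminus \mbb T_i$ misses $O_i'$ and hence does not contain the point $x \in O_i'$. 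Thus this sum is bounded by $\sum_{T \in \mbb T :\, x \notin T} |Ef_{\tau,T}(x)|$, which is $\lct R^{-N}\|f_\tau\|_{L^1(S)}$ by part (ii) of Proposition \ref{wave}, again applied cap-by-cap over $\theta$ and summed. This gives the pointwise estimate. Summing it over $\tau$ and using $f = \sum_\tau f_\tau$ together with $f_i = \sum_\tau f_{\tau,i}$ from (\ref{eglemma}), I also get $|Ef(x) - Ef_i(x)| \lct R^{-N}\sum_\tau \|f_\tau\|_{L^1(S)}$; write $\mathcal E = R^{-N}\sum_\tau\|f_\tau\|_{L^1(S)}$.

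For the case analysis I would fix $x \in O_i' \cap B_R$. If $x$ is not $\beta$-broad for $Ef$ there is nothing to prove, so suppose $\max_\tau |Ef_\tau(x)| < \beta|Ef(x)|$. If $x$ is $2\beta$-broad for $Ef_i$, then $\mbox{Br}_{2\beta}Ef_i(x) = |Ef_i(x)| \geq |Ef(x)| - C\mathcal E$, and since $\mathcal E \leq R^{-N+1}\sum_\tau\|f_\tau\|_{L^1(S)}$ this is the claimed inequality. The remaining sub-case — $x$ broad for $Ef$ but \emph{not} for $Ef_i$ — is the only delicate point, and I expect it to be the main obstacle: here $\mbox{Br}_{2\beta}Ef_i(x) = 0$, so one must show that $|Ef(x)|$ is itself negligible. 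Picking $\tau_0$ with $|Ef_{\tau_0,i}(x)| \geq 2\beta|Ef_i(x)|$ and combining the pointwise estimate, the broadness of $Ef$, and $|Ef(x)| \leq |Ef_i(x)| + C\mathcal E$ (using $\beta \leq 1$) yields $2\beta|Ef_i(x)| \lct \beta|Ef_i(x)| + \mathcal E$, hence $|Ef_i(x)| \lct \beta^{-1}\mathcal E$ and then $|Ef(x)| \lct \beta^{-1}\mathcal E$. At this point the hypotheses enter decisively: $\beta^{-1} \leq K^\epsilon$ since $\beta \geq K^{-\epsilon}$, and $K^\epsilon \leq C^{-1}R$ since $R \geq CK^\epsilon$, so $|Ef(x)| \lct R\,\mathcal E \lct R^{-N+1}\sum_\tau\|f_\tau\|_{L^1(S)}$, which absorbs $\mbox{Br}_\beta Ef(x) = |Ef(x)|$ into the error term (taking $C$ large makes the implicit constant $\leq 1$, as in the statement). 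The wave-packet bookkeeping of the second step is routine; the real content is isolating this last sub-case and observing that it is exactly what forces the $R^{-N+1}$, rather than $R^{-N}$, in the error.
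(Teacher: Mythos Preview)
Your proof is correct and is essentially the same as the paper's: both rest on the pointwise estimate $|Ef_\tau(x)-Ef_{\tau,i}(x)|\lct R^{-N}\|f_\tau\|_{L^1(S)}$ and the chain $\beta^{-1}\le K^\epsilon\le C^{-1}R$. The only difference is organizational---the paper assumes $|Ef(x)|>R^{-N+1}\sum_\tau\|f_\tau\|_{L^1(S)}$ and then proves directly that $x$ is $2\beta$-broad for $Ef_i$, whereas you run the contrapositive, showing that if $x$ is $\beta$-broad for $Ef$ but not $2\beta$-broad for $Ef_i$ then $|Ef(x)|$ must fall below that threshold.
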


\begin{proof}[Proof {\rm (\cite{guth:poly})}]
From (\ref{wave(ii)all}), we know that
\begin{displaymath}
E f_\tau(x)= \sum_{T \in \mbb T} E f_{\tau,T}(x)
             + O \big( R^{-N} \| f_\tau \|_{L^1(S)} \big).
\end{displaymath}
The point $x$ is in $O_i'$. If $x \in T$, then $T$ must intersect $O_i'$,
and it follows that $T \in \mbb T_i$. If $x \not\in T$ and
$T \in \mbb T(\theta)$, then part (ii) of Proposition \ref{wave} tells us
that $|E f_{\tau,T}(x)| \lct R^{-N} \| f_\tau \|_{L^1(\theta)}$, so
\begin{displaymath}
\sum_{T \not\in \mbb T_i} |E f_{\tau,T}(x)|
=\sum_\theta \sum_{T \in \mbb T(\theta) \setminus \mbb T_i}|E f_{\tau,T}(x)|
\lct \sum_\theta R^{-N} \| f_\tau \|_{L^1(\theta)}
= R^{-N} \| f_\tau \|_{L^1(S)},
\end{displaymath}
and so
\begin{displaymath}
E f_\tau(x)
= \sum_{T \in \mbb T_i} E f_{\tau,T}(x)
  + O \big( R^{-N} \| f_\tau \|_{L^1(S)} \big)
= E f_{\tau,i}(x) + O \big( R^{-N} \| f_\tau \|_{L^1(S)} \big).
\end{displaymath}
Summing over $\tau$ and using (\ref{eglemma}), we get
\begin{displaymath}
E f(x)= E f_i(x) + O \Big( R^{-N} \sum_\tau \| f_\tau \|_{L^1(S)} \Big).
\end{displaymath}
Since we can assume that
$|E f(x)| > R^{-N+1} \sum_\tau \| f_\tau \|_{L^1(S)}$, it follows that
\begin{displaymath}
|E f_i(x)| > \frac{1}{2} R^{-N+1} \sum_\tau \| f_\tau \|_{L^1(S)}.
\end{displaymath}
(We have $R \geq C K^\epsilon \geq C$, so for the last inequality we need
$C'/C \leq 1/2$, where $C'$ is the implicit constant in the error term in
the last equality.) We can also assume that $x$ is $\beta$-broad for $E f$.
Under these assumptions, it remains to show that $x$ is $(2 \beta)$-broad
for $E f_i$. In other words, we have to show that for each $\tau$,
\begin{displaymath}
|E f_{\tau,i}(x)| \leq 2 \beta |E f_i(x)|.
\end{displaymath}
The equality before the last tells us that
\begin{displaymath}
|E f_{\tau,i}(x)|
\leq |E f_\tau(x)| + O \big( R^{-N} \| f_\tau \|_{L^1(S)} \big)
\leq \beta |E f(x)| + O \big( R^{-N} \| f_\tau \|_{L^1(S)} \big).
\end{displaymath}
The last equality then tells us that
\begin{displaymath}
|E f_{\tau,i}(x)|
\leq \beta |E f_i(x)|
     + O \Big( R^{-N} \sum_\tau \| f_\tau \|_{L^1(S)} \Big)
\end{displaymath}
(recall that $\beta \leq 1$). Thus
\begin{eqnarray*}
|E f_{\tau,i}(x)|
& \leq & \beta |E f_i(x)| + \frac{1}{R} \,
         O \Big( R^{-N+1} \sum_\tau \| f_\tau \|_{L^1(S)} \Big) \\
& \leq & \beta |E f_i(x)| + \frac{K^{-\epsilon}}{C} \,
         O \Big( R^{-N+1} \sum_\tau \| f_\tau \|_{L^1(S)} \Big) \\
& \leq & \beta |E f_i(x)| + \frac{K^{-\epsilon}}{C} (2 C''|E f_i(x)|) \\
& =    & 2 \beta |E f_i(x)|
\end{eqnarray*}
provided $C$ is sufficiently large, where $C''$ is the implicit constant in
the error term.
\end{proof}

The next lemma connects the broad part of $Ef$, $\mbox{Br}_\beta E f$, to 
the tangential part of $Ef$, 
$\mbox{Bil}_{P,\delta} E f_{j,\mbox{\tiny tang}}$. We remind the reader that 
$N$ and $\delta$ satisfy (\ref{LdeltaN}).

\begin{alphlemma}[Lemma 3.8 in \cite{guth:poly}]
\label{Lemma3.8}
Suppose $0 < \epsilon \leq 2$, $K \geq \sqrt[98]{10}$,
$K^{-\epsilon} \leq \beta \leq 1$, $\beta m \leq 10^{-5}$, and
$R \geq C K^\epsilon$. If $x \in B_j \cap W$ and $C$ is sufficiently large,
then
\begin{eqnarray*}
\mbox{\rm Br}_\beta E f(x)
& \leq & \frac{5}{4}
         \sum_I \mbox{\rm Br}_{2\beta} E f_{I,j,\mbox{\rm \tiny trans}}(x)
 + K^{100} \, \mbox{\rm Bil}_{P,\delta} E f_{j,\mbox{\rm \tiny tang}}(x) \\
& & + \; O \Big( R^{-N+1} \sum_\tau \| f_\tau \|_{L^1(S)} \Big).
\end{eqnarray*}
\end{alphlemma}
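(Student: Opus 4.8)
The plan is to follow Guth's proof of Lemma 3.8 in \cite{guth:poly}, adjusting the constants so as to land on the factors $5/4$ and $K^{100}$ and the error term $O(R^{-N+1}\sum_\tau\|f_\tau\|_{L^1(S)})$ in the present formulation. Fix $x\in B_j\cap W$. As in the proof of Lemma \ref{Lemma3.7}, we may assume $x$ is $\beta$-broad for $Ef$ and that $|Ef(x)|>R^{-N+1}\sum_\tau\|f_\tau\|_{L^1(S)}$ (otherwise $\mbox{Br}_\beta Ef(x)=0$, or the stated error term already dominates). The first step is the wave-packet reduction, carried out exactly as in Lemma \ref{Lemma3.7}: by (\ref{wave(ii)all}) and part (ii) of Proposition \ref{wave}, $Ef_\tau(x)=\sum_{T\in\mbb T,\,x\in T}Ef_{\tau,T}(x)+O(R^{-N}\|f_\tau\|_{L^1(S)})$ for each cap $\tau$ of the multiplicity-$m$ decomposition of $S$; since $x\in W\cap B_j$, every tube through $x$ meets $W\cap B_j$ and hence lies in exactly one of $\mbb T_{j,\mbox{tang}}$, $\mbb T_{j,\mbox{trans}}$, and reinserting the tubes not through $x$ (again by part (ii) of Proposition \ref{wave}) gives $Ef_\tau(x)=Ef_{\tau,j,\mbox{tang}}(x)+Ef_{\tau,j,\mbox{trans}}(x)+O(R^{-N}\|f_\tau\|_{L^1(S)})$ for all $\tau$.

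Next I would isolate a dominant tangential cap. Let $\tau^*$ achieve $A:=\max_\tau|Ef_{\tau,j,\mbox{tang}}(x)|$, let ${\mathcal N}$ be the set of caps adjacent to $\tau^*$ (distance $<1/K$), and let ${\mathcal N}^c$ be the remaining caps; since the caps have radius $\le\sqrt m/K$ with $K^{-1}$-separated centres, $|{\mathcal N}|\lct m$ and the ${\mathcal N}$-caps all lie within $O(\sqrt m/K)\ll1$ of $\tau^*$, so ${\mathcal N}^c\ne\emptyset$. Summing the last identity over $\tau$ and recombining the tangential and transverse pieces of each $\tau\in{\mathcal N}$ into $Ef_\tau(x)$, I get $Ef(x)=G_1+G_2+G_3+O(R^{-N}\sum_\tau\|f_\tau\|_{L^1(S)})$ with $G_1=\sum_{\tau\in{\mathcal N}}Ef_\tau(x)$, $G_2=\sum_{\tau\in{\mathcal N}^c}Ef_{\tau,j,\mbox{tang}}(x)$, $G_3=Ef_{{\mathcal N}^c,j,\mbox{trans}}(x)$. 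Then $|G_1|\le|{\mathcal N}|\beta|Ef(x)|\le\frac1{10}|Ef(x)|$ by $\beta$-broadness and $\beta m\le10^{-5}$; and, writing $B=\max_{\tau\in{\mathcal N}^c}|Ef_{\tau,j,\mbox{tang}}(x)|\le A$ and noting that $A^{1/2}B^{1/2}$ is one of the non-negative summands of $\mbox{Bil}_{P,\delta}Ef_{j,\mbox{tang}}(x)$ (coming from the non-adjacent pair $\tau^*$ and the maximizer in ${\mathcal N}^c$), I get $|G_2|\le K^2B\le K^2A^{1/2}B^{1/2}\le K^2\,\mbox{Bil}_{P,\delta}Ef_{j,\mbox{tang}}(x)$.

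The last step absorbs $G_3=Ef_{{\mathcal N}^c,j,\mbox{trans}}(x)$, and here there are two cases. If $x$ is $2\beta$-broad for $Ef_{{\mathcal N}^c,j,\mbox{trans}}$, then $|G_3|=\mbox{Br}_{2\beta}Ef_{{\mathcal N}^c,j,\mbox{trans}}(x)\le\sum_I\mbox{Br}_{2\beta}Ef_{I,j,\mbox{trans}}(x)$ (all summands being $\ge0$), and substituting the three bounds into $|Ef(x)|\le|G_1|+|G_2|+|G_3|+O(R^{-N}\sum\|f_\tau\|)$ and absorbing $\frac1{10}|Ef(x)|$ gives the lemma, since $\frac{10}9<\frac54$ and $\frac{10}9K^2\le K^{100}$. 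If $x$ is not $2\beta$-broad for $Ef_{{\mathcal N}^c,j,\mbox{trans}}$, choose $\tau^{**}\in{\mathcal N}^c$ with $|Ef_{\tau^{**},j,\mbox{trans}}(x)|\ge2\beta|G_3|$; since $\tau^{**}\notin{\mathcal N}$ we have $|Ef_{\tau^{**},j,\mbox{tang}}(x)|\le B\le\mbox{Bil}_{P,\delta}Ef_{j,\mbox{tang}}(x)$, so the identity of the first step and $\beta$-broadness give $2\beta|G_3|\le|Ef_{\tau^{**}}(x)|+B+O(R^{-N}\|f_{\tau^{**}}\|_{L^1(S)})<\beta|Ef(x)|+\mbox{Bil}_{P,\delta}Ef_{j,\mbox{tang}}(x)+O(R^{-N}\|f_{\tau^{**}}\|_{L^1(S)})$. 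Dividing by $2\beta$ and using $\beta\ge K^{-\epsilon}\ge K^{-2}$ (as $\epsilon\le2$) together with $K^\epsilon\le R$ (as $R\ge CK^\epsilon$) bounds $|G_3|$ by $\frac12|Ef(x)|+\frac{K^2}2\mbox{Bil}_{P,\delta}Ef_{j,\mbox{tang}}(x)+O(R^{-N+1}\sum_\tau\|f_\tau\|_{L^1(S)})$; feeding this back into $|Ef(x)|\le|G_1|+|G_2|+|G_3|+O(\cdots)$, the coefficient of $|Ef(x)|$ on the right is $\frac1{10}+\frac12<1$, so $|Ef(x)|\lct K^2\mbox{Bil}_{P,\delta}Ef_{j,\mbox{tang}}(x)+O(R^{-N+1}\sum_\tau\|f_\tau\|_{L^1(S)})\le K^{100}\mbox{Bil}_{P,\delta}Ef_{j,\mbox{tang}}(x)+O(R^{-N+1}\sum_\tau\|f_\tau\|_{L^1(S)})$ for $K$ large, and the non-negative term $\frac54\sum_I\mbox{Br}_{2\beta}Ef_{I,j,\mbox{trans}}(x)$ is adjoined for free.

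The two wave-packet reductions and the estimate $|{\mathcal N}|\lct m$ are routine and essentially as in \cite{guth:poly}. The hard part will be the bookkeeping that produces exactly the constants $5/4$, $K^{100}$ and the exponent $-N+1$ in the error term: this forces one to invoke $\beta m\le10^{-5}$, $K^{-\epsilon}\le\beta$, $\epsilon\le2$ and $R\ge CK^\epsilon$ at precisely the right places. The one conceptually delicate point is that possible cancellation between the tangential and transverse parts of a single cap is harmless — it is neutralised by recombining, for the caps near $\tau^*$, both parts back into $Ef_\tau(x)$, which is small because $x$ is $\beta$-broad and $\beta m$ is tiny, while the tangential parts of the remaining far caps are dominated by $\mbox{Bil}_{P,\delta}Ef_{j,\mbox{tang}}(x)$ via $B\le A^{1/2}B^{1/2}$ and $B\le\mbox{Bil}_{P,\delta}Ef_{j,\mbox{tang}}(x)$.
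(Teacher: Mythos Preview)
Your argument is essentially correct but follows a genuinely different route from the paper's proof. The key divergence is in how the index set is selected. The paper defines $I=\{\tau:|Ef_{\tau,j,\mbox{\tiny tang}}(x)|\le K^{-100}|Ef(x)|\}$ by a \emph{threshold} on the tangential pieces; after disposing of the case where $I^c$ contains two non-adjacent caps (which immediately gives $|Ef(x)|<K^{100}\,\mbox{Bil}_{P,\delta}Ef_{j,\mbox{\tiny tang}}(x)$), it shows \emph{directly} that $x$ is $2\beta$-broad for $Ef_{I,j,\mbox{\tiny trans}}$, so no case split on broadness is needed. You instead define ${\mathcal N}^c$ \emph{geometrically}, as the caps non-adjacent to the single dominant tangential cap $\tau^*$, and then must split into the cases where $x$ is or is not $2\beta$-broad for $Ef_{{\mathcal N}^c,j,\mbox{\tiny trans}}$, handling the non-broad case by feeding the offending transverse piece back through the tangential/transverse identity.

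What each approach buys: the paper's level-set choice of $I$ is tailored so that the tangential contribution $|Ef_{I,j,\mbox{\tiny tang}}(x)|\le K^{-98}|Ef(x)|$ is tiny and the $2\beta$-broadness of $Ef_{I,j,\mbox{\tiny trans}}$ is automatic, at the price of an implicit dichotomy on the structure of $I^c$. Your geometric choice is more concrete and explains transparently why the bilinear term appears (via $B\le A^{1/2}B^{1/2}$), but it forces an extra broadness dichotomy and an appeal to $\beta\ge K^{-\epsilon}\ge K^{-2}$ to clear the denominator $1/\beta$ in the non-broad case. One minor bookkeeping caution: your bound $|G_2|\le K^2B$ should really carry an absolute constant (the number of caps is $\lct K^2$), and in your Case~2 the final coefficient of $\mbox{Bil}$ is $\lct K^2$; with $K\ge\sqrt[98]{10}$ only $K^{98}\ge10$ is guaranteed, so absorbing these constants into $K^{100}$ is tight. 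In the paper's setting $K=e^{\epsilon^{-10}}$, so this is harmless, but for the precise statement as written you should track that constant.
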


\begin{proof}[Proof {\rm (\cite{guth:poly})}]
We can assume that $|E f(x)| > R^{-N+1} \sum_\tau \| f_\tau \|_{L^1(S)}$
and $x$ is $\beta$-broad for $E f$. Let $I$ be the set of the caps $\tau$
such that $|E f_{\tau,j,\mbox{\tiny tang}}(x)| \leq K^{-100} |E f(x)|$. We
can also assume that $I^c$ does not contain two non-adjacent caps. Then
$I^c$ consists of at most $10^4 m$ caps. In fact, since the centers of the
caps are $K^{-1}$-separated, and the radius of each cap is at most
$\sqrt{m} K^{-1}$, we have $|I^c| \lct (\sqrt{m} K^{-1})^2 / (K^{-1})^2 =m$.
Since $x$ is $\beta$-broad for $E f$, and $\beta m \leq 10^{-5}$, it follows
that
\begin{displaymath}
\sum_{\tau \in I^c} |E f_\tau(x)| \leq 10^4 m \beta |E f(x)|
\leq \frac{1}{10} |E f(x)|,
\end{displaymath}
so that $|E f(x)| \leq (10/9) |E f_I(x)|$. If $x \in T$, then $T$ must
intersect $B_j \cap W$, and it follows that $T$ belongs to
$\mbb T_{j,\mbox{\tiny trans}}$ or $\mbb T_{j,\mbox{\tiny tang}}$. If
$x \not\in T$ and $T \in \mbb T(\theta)$, then
$|E f_{\tau,T}(x)| \lct R^{-N} \| f_\tau \|_{L^1(\theta)}$, so
\begin{eqnarray*}
\sum_{T \not\in
            \mbb T_{j,\mbox{\tiny trans}} \cup \mbb T_{j,\mbox{\tiny tang}}}
|E f_{\tau,T}(x)|
& = & \sum_\theta \sum_{T \in \mbb T(\theta) \setminus
          (\mbb T_{j,\mbox{\tiny trans}} \cup \mbb T_{j,\mbox{\tiny tang}})}
           |E f_{\tau,T}(x)| \\
& \lct & \sum_\theta R^{-N} \| f_\tau \|_{L^1(\theta)} \\
& \lct & R^{-N} \| f_\tau \|_{L^1(S)},
\end{eqnarray*}
and so (recall that
$T_{j,\mbox{\tiny tang}} \cap T_{j,\mbox{\tiny trans}}= \emptyset$)
\begin{eqnarray}
\label{equalitybeforethelast}
E f_\tau(x)
& = & \!\!\!\!\! \sum_{T \in T_{j,\mbox{\tiny trans}}} E f_{\tau,T}(x)
      + \sum_{T \in T_{j,\mbox{\tiny tang}}} E f_{\tau,T}(x)
      + O \big( R^{-N} \| f_\tau \|_{L^1(S)} \big) \nonumber \\
& = & E f_{\tau,j,\mbox{\tiny trans}}(x) + E f_{\tau,j,\mbox{\tiny tang}}(x)
      + O \big( R^{-N} \| f_\tau \|_{L^1(S)} \big).
\end{eqnarray}
Summing over $\tau \in I$, we see that
\begin{displaymath}
E f_I(x)= E f_{I,j,\mbox{\tiny trans}}(x) + E f_{I,j,\mbox{\tiny tang}}(x)
          + O \Big( R^{-N} \sum_\tau \| f_\tau \|_{L^1(S)} \Big).
\end{displaymath}
By the definition of $I$,
\begin{displaymath}
|E f_{I,j,\mbox{\tiny tang}}(x)| \leq
\sum_{\tau \in I} |E f_{\tau,j,\mbox{\tiny tang}}(x)| \leq
\sum_{\tau \in I} K^{-100} |E f(x)| \leq K^{-98} |E f(x)|,
\end{displaymath}
so
\begin{displaymath}
\frac{9}{10} |E f(x)| \leq |E f_I(x)|
\leq |E f_{I,j,\mbox{\tiny trans}}(x)| + K^{-98} |E f(x)|
     + O \Big( R^{-N} \sum_\tau \| f_\tau \|_{L^1(S)} \Big),
\end{displaymath}
and so
\begin{displaymath}
\frac{4}{5} |E f(x)| \leq |E f_{I,j,\mbox{\tiny trans}}(x)|
+ \frac{C'}{R} \Big( R^{-N+1} \sum_\tau \| f_\tau \|_{L^1(S)} \Big)
\end{displaymath}
provided $K^{-98} \leq 10^{-1}$, where $C'$ is the implicit constant in the
error term of the inequality before the last.

It remains to prove that $x$ is $(2\beta)$-broad for
$E f_{I,j,\mbox{\tiny trans}}(x)$. Since
$|E f(x)| > R^{-N+1} \sum_\tau \| f_\tau \|_{L^1(S)}$, we see that
\begin{displaymath}
|E f_{I,j,\mbox{\tiny trans}}(x)|
> \frac{1}{2} R^{-N+1} \sum_\tau \| f_\tau \|_{L^1(S)}.
\end{displaymath}
(We have $R \geq C K^\epsilon \geq C$, so for the last inequality we need
$C'/C \leq 3/10$.) Also, for $\tau \in I$,
$|E f_{\tau,j,\mbox{\tiny tang}}(x)| \leq K^{-100} |E f(x)|$, so
(\ref{equalitybeforethelast}) tells us that
\begin{eqnarray*}
|E f_{\tau,j,\mbox{\tiny trans}}(x)|
& \leq & |E f_\tau(x)| + |E f_{\tau,j,\mbox{\tiny tang}}(x)|
         + O \big( R^{-N} \| f_\tau \|_{L^1(S)} \big) \\
& \leq & (\beta + K^{-100}) |E f(x)|
         + O \big( R^{-N} \| f_\tau \|_{L^1(S)} \big),
\end{eqnarray*}
where we have also used the fact that $x$ is $\beta$-broad for $Ef$. Since
\begin{displaymath}
\frac{4}{5} |E f(x)| \leq |E f_{I,j,\mbox{\tiny trans}}(x)|
+ \frac{C'}{R} \Big( R^{-N+1} \sum_\tau \| f_\tau \|_{L^1(S)} \Big),
\end{displaymath}
$K^{-\epsilon} \leq \beta$, and $K^{-98} \leq 1/10$, we get
\begin{eqnarray*}
|E f_{\tau,j,\mbox{\tiny trans}}(x)|
& \leq & \frac{5}{4} \big( \beta+\frac{\beta^{2/\epsilon}}{10} \big)
         |E f_{I,j,\mbox{\tiny trans}}(x)| + \frac{C''}{R}
         \Big( R^{-N+1} \sum_\tau \| f_\tau \|_{L^1(S)} \Big) \\
& \leq & \frac{55}{40} \beta
         |E f_{I,j,\mbox{\tiny trans}}(x)| + \frac{2C''K^{-\epsilon}}{C}
         |E f_{I,j,\mbox{\tiny trans}}(x)| \\
& \leq & 2 \beta |E f_{I,j,\mbox{\tiny trans}}(x)|
\end{eqnarray*}
provided $2C''/C \leq 25/40$. Thus $x$ is $(2\beta)$-broad for
$|E f_{I,j,\mbox{\tiny trans}}(x)|$, as desired.
\end{proof}

We are now in position to state the main result of this section. For
$(R,K,m,b) \in [1,\infty)^4$, we let $\Lambda(R,K,m,b)$ be the set of all
functions $f \in L^1(S)$ such that $f= \sum_\tau f_\tau$ for some
decomposition $\{ \tau \}$ of $S$ of multiplicity $m$ with
$\mbox{supp} \, f_\tau \subset \tau$ and
\begin{equation}
\label{baverages}
\int_{B(\xi_0,R^{-1/2}) \cap S} |f_\tau(\xi)|^2 d\sigma(\xi)
\leq \frac{1}{R^{(b+1)/2}}
\end{equation}
for all $\xi_0 \in S$. Since $S$ can be covered by $\sim R$ of such balls
$B(\xi_0,R^{-1/2})$, (\ref{baverages}) tells us that
\begin{equation}
\label{bbdonL2(S)}
\int |f_\tau(\xi)|^2 d\sigma(\xi) \lct \frac{1}{R^{(b-1)/2}}.
\end{equation}

During the proof of Theorem \ref{mainjj} in the last section of the paper,
we are going to use two different values of $b$ depending on whether we
allow the final estimate to involve both $\| f \|_{L^2(S)}$ and
$\| f \|_{L^\infty(S)}$, or insist on only involving $\| f \|_{L^2(S)}$.

\begin{remark}
\label{implaterref}
It is important to note that all constants in the next theorem are allowed
to depend on $L$ and $\alpha$, but are uniform for all functions $h$
satisfying conditions (i)--(iv) of Assumption \ref{graphofh}, and all
weights $H$ of dimension $\alpha$. This fact will be crucial to the
induction argument that we will later use (see Theorem \ref{parabscaling})
to move from estimates on the broad part of $Ef$ to estimates on $Ef$
itself.
\end{remark}

\begin{thm}
\label{biltobr}
Let $3< p \leq 4$, $b \geq 1$, $\epsilon > 0$,
$0 \leq q_1 \leq 1 \leq 2 q_0$, $0< q_2 < q_0$, and $H$ be a weight of 
dimension $\alpha$. Also, let $\delta=\epsilon^2$, 
$\delta_{\mbox{\rm \tiny deg}}= \epsilon^4$, and 
$\delta_{\mbox{\rm \tiny trans}}= \epsilon^6$.

Suppose that
\begin{eqnarray}
\label{estonbil}
\lefteqn{\int_{B_j \cap W}
\mbox{\rm Bil}_{P,\delta} E f_{j,\mbox{\rm \tiny tang}}(x)^p H(x) dx}
\nonumber \\
& \leq & C_{\epsilon,K} R^{O(\delta)} R^{q_2\epsilon} A_\alpha(H)^{q_1}
        \Big( \sum_\tau \| f_\tau \|_{L^2(S)}^2 \Big)^{(3/2)+\epsilon}
\end{eqnarray}
whenever $R \geq C$, $K \geq 100$, $m \geq 1$, $f \in \Lambda(R,K,m,b)$,
$P$ is a polynomial of degree at most $D=R^{\delta_{\mbox{\rm \tiny deg}}}$,
and $P$ is a product of non-singular polynomial on $\mbb R^3$.

Then there is a constant $c_0$, which is independent of $q_0$, $q_1$, $b$,
and $p$, such that if $\epsilon \leq \min[ c_0,(p-3)/2]$, then there is a
$K=K(\epsilon)$ such that
\begin{eqnarray}
\label{underbil}
\lefteqn{\int_{B_R} \mbox{\rm Br}_\beta E f(x)^p H(x) dx} \nonumber \\
& \leq & C_\epsilon R^{q_0\epsilon} A_\alpha(H)^{q_1}
         \Big( \sum_\tau \| f_\tau \|_{L^2(S)}^2 \Big)^{(3/2)+\epsilon}
         R^{\delta_{\mbox{\rm \tiny trans}} \log(K^\epsilon \beta m)}
\end{eqnarray}
for all $\beta \geq K^{-\epsilon}$, $m \geq 1$, $R \geq 1$, and
$f \in \Lambda(R,K,m,b)$. Moreover,
$\lim_{\epsilon \to 0} K(\epsilon)=~\!\!\infty$.
\end{thm}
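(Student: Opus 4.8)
The plan is to run Guth's polynomial partitioning induction on the broad part, adapted to carry the weight $H$ and the parameter $b$. I would argue by induction on $R$ (for $R$ in a fixed discrete set, say powers of $2$) simultaneously with a secondary induction on the ``size'' of $f$, measured by $\sum_\tau \|f_\tau\|_{L^2(S)}^2$; the base case $R \le C$ is trivial by choosing $C_\epsilon$ large. For the inductive step, I first reduce $\int_{B_R} \mathrm{Br}_\beta Ef(x)^p H(x)\,dx$ to a sum over the $\sim R^{3\delta}$ balls $B_j$ of radius $R^{1-\delta}$, and within each $B_j$ split the integral into the contribution from the cells $O_i'$ and the contribution from the cell-wall $W$. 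For the cell contribution, I invoke Theorem \ref{hamsand} to choose a polynomial $P$ of degree $D = R^{\delta_{\mathrm{deg}}}$ (a product of non-singular factors) so that the integrals $\int_{O_i'} \mathrm{Br}_\beta Ef(x)^p H(x)\,dx$ are all comparable; then Lemma \ref{Lemma3.7} lets me replace $f$ by $f_i$ on each cell, and Lemma \ref{Lemma2.7} together with Lemma \ref{fundthmalg} controls the $\ell^2$-sums of the $f_{\tau,i}$ by $k = D+1$ times those of $f_\tau$. Because each $\|f_{\tau,i}\|_{L^2(S)}^2$ is a definite factor smaller than $\sum_\tau \|f_\tau\|^2$, I can close the cell estimate by the secondary induction on size, with the gain in size beating the loss of $D^{O(1)}$ cells provided $\epsilon$ is small; crucially $A_\alpha(H)$ is translation-invariant, so the weight hypothesis transfers verbatim to each cell (Remark \ref{implaterref}).

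For the cell-wall, I use Lemma \ref{Lemma3.8} to bound $\mathrm{Br}_\beta Ef$ on $B_j \cap W$ by $\tfrac54 \sum_I \mathrm{Br}_{2\beta} Ef_{I,j,\mathrm{trans}} + K^{100}\,\mathrm{Bil}_{P,\delta} Ef_{j,\mathrm{tang}}$ plus a negligible error. The tangential term is handled directly by the hypothesis \eqref{estonbil}: summing over $j$ costs $R^{O(\delta)}$, and the factor $K^{100p}$ is absorbed into $C_\epsilon$ (here is exactly where $K = K(\epsilon)$ must be chosen large, and where the conclusion $\lim_{\epsilon\to 0}K(\epsilon)=\infty$ comes from — one needs $K^\epsilon$ to dominate various fixed powers appearing in Lemmas \ref{Lemma3.7}, \ref{Lemma3.8} and in the tangential estimate). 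The transverse term is handled by induction on $R$: each $Ef_{I,j,\mathrm{trans}}$ lives on $B_j$ of radius $R^{1-\delta}$, so the inductive hypothesis applies with $R$ replaced by $R^{1-\delta}$, producing a factor $R^{(1-\delta)q_0\epsilon}$; summing over $I$ and $j$ uses Lemma \ref{Lemma3.5} (a tube lies in $\mathrm{Poly}(D)$ sets $\mathbb T_{j,\mathrm{trans}}$) and Lemma \ref{Lemma2.7} to bound $\sum_{I,j}\big(\sum_\tau \|f_{\tau,I,j,\mathrm{trans}}\|_{L^2(S)}^2\big)^{3/2+\epsilon}$ by $\mathrm{Poly}(D) \cdot R^{O(\delta)}$ times $\big(\sum_\tau\|f_\tau\|_{L^2(S)}^2\big)^{3/2+\epsilon}$. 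The exponent bookkeeping is the delicate part: the transverse recursion contributes $R^{(1-\delta)q_0\epsilon}$ times $R^{O(\delta)}\mathrm{Poly}(D) = R^{O(\delta)}$, and one needs this to be at most $R^{q_0\epsilon}$ after also accounting for the $R^{\delta_{\mathrm{trans}}\log(K^\epsilon\beta m)}$ factor; since $\delta_{\mathrm{deg}} = \epsilon^4$ and $\delta_{\mathrm{trans}}=\epsilon^6$ are much smaller than $\delta = \epsilon^2$, the geometric-series gain $\sum_k (1-\delta)^k$ in the $R$-exponent overwhelms the accumulated $R^{O(\delta)}$ losses once $c_0$ is chosen appropriately.

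The role of the $\mathrm{Br}_{2\beta}$ (versus $\mathrm{Br}_\beta$) on the right-hand sides of Lemmas \ref{Lemma3.7} and \ref{Lemma3.8} must be tracked: each level of the $R$-recursion doubles $\beta$, but the recursion has depth $O(\log R / \log(1/(1-\delta))) = O(\delta^{-1}\log R)$, so $\beta$ is multiplied by at most $2^{O(\delta^{-1}\log R)} = R^{O(\delta^{-1})}$ — this is harmless because the broad estimate only requires $\beta \ge K^{-\epsilon}$ and the final bound's dependence on $\beta$ is through the benign factor $R^{\delta_{\mathrm{trans}}\log(K^\epsilon\beta m)}$; I would verify that $\delta_{\mathrm{trans}}$ times the accumulated $\log$ of the $\beta$-doublings stays within the budget. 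I expect the main obstacle to be precisely this exponent accounting — showing that all the $R^{O(\delta)}$ and $\mathrm{Poly}(D)$ factors picked up at every one of the $\sim \delta^{-1}\log R$ recursion levels, together with the $\beta$-doubling and the $K^{100p}$ per level, still sum to something absorbed by the single clean factor $R^{q_0\epsilon}A_\alpha(H)^{q_1}\big(\sum_\tau\|f_\tau\|_{L^2(S)}^2\big)^{3/2+\epsilon}R^{\delta_{\mathrm{trans}}\log(K^\epsilon\beta m)}$ — and in identifying the universal constant $c_0$ and the function $K(\epsilon)$ that make this work; the weight $H$ itself is essentially passive here, entering only through the translation-invariance of $A_\alpha$ and the hypothesis \eqref{estonbil}, so the novelty over \cite{guth:poly} is modest and confined to carrying $A_\alpha(H)^{q_1}$ and the parameter $b$ (which only enters via \eqref{baverages}–\eqref{bbdonL2(S)} and does not affect the broad-part recursion) unchanged through the argument.
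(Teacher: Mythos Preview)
Your overall approach matches the paper's: double induction (on $R$ and on $\sum_\tau\|f_\tau\|_{L^2}^2$), polynomial partitioning via Theorem \ref{hamsand}, the cellular/algebraic dichotomy, and the transverse/tangential split via Lemmas \ref{Lemma3.7}--\ref{Lemma3.8}. Two points of your write-up need correction, however.

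First, the order of decompositions is reversed. In the paper one applies Theorem \ref{hamsand} on the \emph{whole} ball $B_R$ with $F=\chi_{B_R}(\mbox{Br}_\beta Ef)^pH$ to get equidistributed cells $O_i$, then splits $B_R$ into $\cup_i O_i'$ and the wall $W$; the balls $B_j$ of radius $R^{1-\delta}$ are introduced \emph{only} to cover the wall. The cells are handled globally, not inside each $B_j$.

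Second, and more importantly, your concern about accumulated losses over ``depth $O(\delta^{-1}\log R)$'' reflects a misreading of the induction. This is \emph{strong} induction, and the entire content is that a \emph{single} step closes. The factor $R^{\delta_{\text{trans}}\log(K^\epsilon\beta m)}$ is part of the induction hypothesis precisely to absorb the $\beta\mapsto 2\beta$, $m\mapsto 2m$ shifts: applying the hypothesis at the next scale with $4\beta m$ in place of $\beta m$ costs exactly $R^{(\log 4)\delta_{\text{trans}}}$, and one only has to verify that this, together with the $\mbox{Poly}(D)$ and $R^{O(\delta)}$ picked up, is dominated by the one-step gain ($D^{-2\epsilon}$ in the cellular case, $R^{-\delta q_0\epsilon}$ in the transverse case). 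There is no geometric series to sum. Likewise the $K^{100p}$ does not appear ``per level'': it arises once, when the tangential term dominates, and is swallowed by $C_\epsilon$ since $K=K(\epsilon)$ is fixed (the paper takes $K=e^{\epsilon^{-10}}$). The role of the hypothesis $q_2<q_0$ is exactly to ensure the tangential contribution \eqref{estonbil} fits inside the target $R^{q_0\epsilon}$.
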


Our proof of Theorem \ref{biltobr}, which is the subject of the next
section, follows to a large extent the proof of Theorem 3.1 in Guth's paper
\cite{guth:poly}. There are three main differences between our theorem and
Guth's theorem. First, we are working in the weighted setting. Second, in
\cite{guth:poly}, (\ref{baverages}) is replaced by
\begin{displaymath}
\int_{B(\xi_0,R^{-1/2}) \cap S} |f_\tau(\xi)|^2 d\sigma(\xi) \leq
\frac{1}{R}.
\end{displaymath}
Third, the conclusion of Theorem \ref{biltobr} is conditional on
(\ref{estonbil}): if (\ref{estonbil}) holds, then (\ref{underbil}) holds. 

\section{Proof of Theorem \ref{biltobr}}

We alert the reader that in order to guarantee the independence of $c_0$
from $q_0$, $q_1$, $b$, and $p$, we will be careful to check that all the
constants (implicit and explicit) that appear in this proof are independent
of these parameters.

We may assume that the constant $C$ is large enough to satisfy the
requirements of Lemmas \ref{Lemma3.7} and \ref{Lemma3.8}. Lemma
\ref{Lemma3.8} also requires that $\beta m \leq 10^{-5}$. To meet this
requirement, we set
\begin{displaymath}
K=K(\epsilon)= e^{\epsilon^{-10}}
\end{displaymath}
and notice that
\begin{displaymath}
R^{\delta_{\mbox{\tiny trans}} \log(K^\epsilon \beta m)}
\geq R^{\epsilon^6 \log(K^\epsilon 10^{-6})} \geq R^{\epsilon^{-4}}
\end{displaymath}
if $\beta m \geq 10^{-6}$. So can assume that $\beta m \leq 10^{-6}$. In
fact,
\begin{eqnarray*}
\int_{B_R} H(x) dx
&  =   & \Big( \int_{B_R} H(x) dx \Big)^{q_1}
         \Big( \int_{B_R} H(x) dx \Big)^{1-q_1} \\
& \leq & A_\alpha(H)^{q_1} R^{\alpha q_1} |B(0,1)|^{1-q_1} R^{3-3q_1} \\
& \leq & 5 A_\alpha(H)^{q_1} R^3
\end{eqnarray*}
(because $|B(0,1)| \leq 5$, $0 \leq q_1 \leq 1$ and $\alpha \leq 3$) and
\begin{displaymath}
\| f \|_{L^2(S)} \leq \sum_\tau \| f_\tau \|_{L^2(S)}
\lct K \Big( \sum_\tau \| f_\tau \|_{L^2(S)}^2 \Big)^{1/2}
\end{displaymath}
(because the cardinality of $\{ \tau \}$ is $\sim K^2$), so
\begin{eqnarray}
\label{baseind}
\lefteqn{\int_{B_R} \mbox{\rm Br}_\beta E f(x)^p H(x) dx} \nonumber \\
& \lct & A_\alpha(H)^{q_1}
         \Big( \sum_\tau \| f_\tau \|_{L^2(S)}^2 \Big)^{p/2} R^3
         \nonumber \\
& \lct & A_\alpha(H)^{q_1}
        \Big( \sum_\tau \| f_\tau \|_{L^2(S)}^2 \Big)^{\frac{3}{2}+\epsilon}
      \Big( \sum_\tau \| f_\tau \|_{L^2(S)}^2 \Big)^{\frac{p-3}{2}-\epsilon}
         R^3 \\
& \lct & A_\alpha(H)^{q_1}
         \Big( \sum_\tau \| f_\tau \|_{L^2(S)}^2 \Big)^{(3/2)+\epsilon}
         R^{\delta_{\mbox{\tiny trans}} \log(K^\epsilon \beta m)} \nonumber
\end{eqnarray}
provided $p > 3 + 2 \epsilon$, $\beta m \geq 10^{-6}$, and
$R^3 \leq R^{\epsilon^{-4}}$, where we have used (\ref{bbdonL2(S)}) and the
fact that $(p-3)/2-\epsilon \leq 1/2$.

The theorem will be proved by induction. We see from (\ref{baseind})
(and (\ref{bbdonL2(S)})) that (\ref{underbil}) holds for
$R \leq CK^\epsilon$. So we assume that $R \geq CK^\epsilon$ and that the
theorem is true for all radii in the interval $[1,R/2]$. We also see from
(\ref{baseind}) that (\ref{underbil}) holds if
\begin{displaymath}
\sum_\tau \| f_\tau \|_{L^2(S)}^2
\leq R^{-3 \big( \frac{p-3}{2}-\epsilon \big)^{-1}},
\end{displaymath}
so we also assume that the theorem is true for all functions
$g \in \Lambda(R,K,m,b)$ with  $\sum_\tau \| g_\tau \|_{L^2(S)}^2 \leq
(1/2) \sum_\tau \| f_\tau \|_{L^2(S)}^2$.

In the discussion leading to the statement of Theorem \ref{biltobr}, $P$ was
a general polynomial on $\mbb R^3$ that was only required to be a product
of non-singular polynomials (see the paragraph following Theorem
\ref{hamsand}). Now we pick a specific polynomial $P$. We apply Theorem
\ref{hamsand} with $n=3$, $F= \chi_{B_R} (\mbox{Br}_\beta E f)^p H$, and
$D=R^{\delta_{\mbox{\tiny deg}}}$ to get a polynomial $P$ of degree at most
$D$ such that $P$ is a product of non-singular polynomials,
$\mbb R^3 \setminus Z(P)$ is a disjoint union of $\sim D^3$ cells $O_i$, and
\begin{displaymath}
\int_{O_i \cap B_R} \mbox{Br}_\beta E f(x)^p H(x) dx
\sim D^{-3} \int_{B_R} \mbox{Br}_\beta E f(x)^p H(x) dx.
\end{displaymath}
Let the modified cells $O_i'$ and the cell-wall $W$ be as defined above,
i.e.\ $W= N_{R^{(1/2)+\delta}} Z(P)$ and $O_i'=O_i \setminus W$. Then
\begin{eqnarray*}
\lefteqn{\int_{B_R} \!\! \mbox{Br}_\beta E f(x)^p H(x) dx} \\
& = & \sum_i \int_{B_R \cap O_i'} \mbox{Br}_\beta E f(x)^p H(x) dx +
      \int_{B_R \cap W} \mbox{Br}_\beta E f(x)^p H(x) dx.
\end{eqnarray*}

\subsection{The cellular case}

Suppose the cellular term dominates. Then there are $\sim D^3$ different
cells $O_i'$ such that
\begin{equation}
\label{fairdist}
\int_{B_R \cap O_i'} \mbox{Br}_\beta E f(x)^p H(x) dx \sim D^{-3}
\int_{B_R} \mbox{Br}_\beta E f(x)^p H(x) dx.
\end{equation}
We write $f_i=\sum_\tau f_{\tau,i}$, as in (\ref{eglemma}), and use
Lemma \ref{Lemma2.7} (applied to a single subset $\mbb T_i \subset \mbb T$)
to see that
\begin{displaymath}
\int_{B(\xi_0,R^{-1/2}) \cap S} |f_{\tau,i}|^2 d\sigma(\xi)
\leq c \int_{B(\xi_0,R^{-1/2}) \cap S} |f_\tau|^2 d\sigma(\xi)
\leq \frac{c}{R^{(b+1)/2}}
\end{displaymath}
for some absolute constant $c$. Using part (i) of Proposition \ref{wave}, we
notice that the supports of the $f_{\tau,i}$ are contained in neighborhoods
$\tau'$ of $\tau$, which, after choosing $C$ sufficiently large (in order
for $R^{-1/2}$ to be sufficiently small), define a decomposition
$\{ \tau' \}$ of $S$ of multiplicity $2m$. Letting
$g_{\tau,i}=c^{-1/2} f_{\tau,i}$ and $g_i=c^{-1/2} f_i$, we now see that
$g_i \in \Lambda(R,K,2m,b)$.

By Lemma \ref{fundthmalg} and Lemma \ref{Lemma2.7} (applied with $k=2D$), we
know that
\begin{displaymath}
\sum_{i \in I} \int |f_{\tau,i}|^2 d\sigma(\xi)
\lct D \int |f_\tau|^2 d\sigma(\xi),
\end{displaymath}
so that
\begin{displaymath}
\sum_{i \in I} \sum_\tau \int |f_{\tau,i}|^2 d\sigma(\xi)
\lct D \sum_\tau \int |f_\tau|^2 d\sigma(\xi).
\end{displaymath}
From among the $\sim D^3$ indices $i \in I$ that satisfy (\ref{fairdist}),
we can therefore pick a particular index $i$ that also satisfies
\begin{displaymath}
\sum_\tau \int |f_{\tau,i}|^2 d\sigma(\xi)
\lct D^{-2} \sum_\tau \int |f_\tau|^2 d\sigma(\xi).
\end{displaymath}
For sufficiently large $D$, we therefore have
\begin{displaymath}
\sum_\tau \int |g_{\tau,i}|^2 d\sigma(\xi)
\leq \frac{1}{2} \sum_\tau \int |f_\tau|^2 d\sigma(\xi).
\end{displaymath}
Lemma \ref{Lemma3.7}, now tells us that
\begin{eqnarray*}
\lefteqn{\int_{B_R} \mbox{Br}_\beta E f(x)^p H(x) dx \; \lct \;
         D^3 \int_{B_R \cap O_i'} \mbox{Br}_\beta E f(x)^p H(x) dx} \\
& & \lct \; D^3 \int_{B_R} \mbox{Br}_{2\beta} E g_i(x)^p H(x) dx + D^3
            \Big( R^{-N+1} \sum_\tau \| f_\tau \|_{L^1(S)} \Big)^p
            A_\alpha(H)^{q_1} R^3 \\
& & \lct \; D^3 \int_{B_R} \mbox{Br}_{2\beta} E g_i(x)^p H(x) dx + R^{-N+7}
            A_\alpha(H)^{q_1}
            \Big( \sum_\tau \| f_\tau \|_{L^2(S)}^2 \Big)^{(3/2)+\epsilon},
\end{eqnarray*}
where we have used the assumption $3 < p \leq 4$ as well as
(\ref{bbdonL2(S)}). By induction on
$\sum_\tau \int |f_\tau|^2 d\sigma(\xi)$, we can apply (\ref{underbil}) to
$g_i$. We get
\begin{eqnarray*}
\lefteqn{\int_{B_R} \mbox{Br}_{2\beta} E g_i(x)^p H(x) dx} \\
& \leq & C_\epsilon R^{q_0\epsilon} A_\alpha(H)^{q_1}
     \Big( \sum_\tau \int |g_{\tau,i}|^2 d\sigma(\xi) \Big)^{(3/2)+\epsilon}
         R^{\delta_{\mbox{\tiny trans}} \log(4K^\epsilon \beta m)},
\end{eqnarray*}
so that
\begin{eqnarray*}
\lefteqn{\int_{B_R} \mbox{Br}_\beta E f(x)^p H(x) dx \; \lct \;
         D^3 \int_{B_R \cap O_i'} \mbox{Br}_\beta E f(x)^p H(x) dx} \\
& \leq & C' D^3 C_\epsilon R^{q_0\epsilon} A_\alpha(H)^{q_1}
  \Big( D^{-2} \sum_\tau \int |f_\tau|^2 d\sigma(\xi) \Big)^{(3/2)+\epsilon}
        R^{\delta_{\mbox{\tiny trans}} \log(4K^\epsilon \beta m)} \\
&      & + \; C' R^{-N+7} A_\alpha(H)^{q_1}
           \Big( \sum_\tau \| f_\tau \|_{L^2(S)}^2 \Big)^{(3/2)+\epsilon} \\
& = & \Big( C' D^{-2\epsilon} R^{(\log 4) \delta_{\mbox{\tiny trans}}} \Big)
      C_\epsilon R^{q_0\epsilon} A_\alpha(H)^{q_1}
      \Big( \sum_\tau \int |f_\tau|^2 d\sigma(\xi) \Big)^{(3/2)+\epsilon} \\
&   & \times \; R^{\delta_{\mbox{\tiny trans}} \log(K^\epsilon \beta m)}
      + \; C' R^{-N+7} A_\alpha(H)^{q_1}
           \Big( \sum_\tau \| f_\tau \|_{L^2(S)}^2 \Big)^{(3/2)+\epsilon}
\end{eqnarray*}
To close the induction, it just suffices to prove that
\begin{displaymath}
C' D^{-2\epsilon} R^{(\log 4) \delta_{\mbox{\tiny trans}}} + C' R^{-N+7}
\leq 1.
\end{displaymath}
Since
\begin{displaymath}
D^{-2\epsilon} R^{(\log 4) \delta_{\mbox{\tiny trans}}}
= R^{-2 \epsilon \delta_{\mbox{\tiny deg}}}
  R^{(\log 4) \delta_{\mbox{\tiny trans}}}
= R^{-2 \epsilon^3 + (\log 4) \epsilon^6},
\end{displaymath}
it follows that the exponent of $R$ in both terms is negative and the
induction closes.

\subsection{The algebraic case}

Returning to the decomposition
\begin{eqnarray*}
\lefteqn{\int_{B_R} \!\! \mbox{Br}_\beta E f(x)^p H(x) dx} \\
& = & \sum_i \int_{B_R \cap O_i'} \mbox{Br}_\beta E f(x)^p H(x) dx +
      \int_{B_R \cap W} \mbox{Br}_\beta E f(x)^p H(x) dx,
\end{eqnarray*}
we now assume that the contribution from the cell-wall $W$ dominates. By
Lemma \ref{Lemma3.8}, we know that
\begin{eqnarray*}
\int_{B_R} \mbox{\rm Br}_\beta E f(x)^p H(x) dx
& \lct & \sum_{j,I} \int_{B_j \cap W} \mbox{Br}_{2\beta}
         E f_{I,j,\mbox{\tiny trans}}(x)^p H(x) dx \\
&      & + \; K^{100p} \sum_j \int_{B_j \cap W}
             \mbox{Bil}_{P,\delta} E f_{j,\mbox{\tiny tang}}(x)^p H(x) dx \\
&      & + \; O \Big( R^{-N+1} \sum_\tau \| f_\tau \|_{L^1(S)} \Big)^p
              A_\alpha(H)^{q_1} R^3.
\end{eqnarray*}

If the final $O$-term dominates, then the conclusion holds trivially (by
using the assumption $3 < p \leq 4$ and (\ref{bbdonL2(S)}) as before). 

If the tangential term dominates, then, recalling that 
$|\{ j \}| \lct R^{3\delta}$, we see that the conclusion holds by
(\ref{estonbil}). 

So we may assume that
\begin{equation}
\label{8thline}
\int_{B_R} \mbox{Br}_\beta E f(x)^p H(x) dx
\lct \sum_{j,I} \int_{B_j \cap W}
     \mbox{Br}_{2\beta} E f_{I,j,\mbox{\tiny trans}}(x)^p H(x) dx.
\end{equation}

Each ball $B_j$ has radius $R^{1-\delta}$. By induction on the radius, we
can therefore apply (\ref{underbil}) to each integral on the right-hand
side as soon as we verify that
$f_{I,j,\mbox{\tiny trans}} \in \Lambda(R^{1-\delta},K,2m,b)$. (The
multiplicity of the new decomposition of $S$ is $2m$ for the same reason
given above concerning the decomposition $\{ \tau' \}$ associated with the
function $g_i$.) By Lemma \ref{Lemma2.7} (applied to a single subset
$\mbb T_{j,\mbox{\tiny trans}} \subset \mbb T$), we have
\begin{displaymath}
\int_{B(\xi_0,R^{-(1-\delta)/2}) \cap S} |f_{\tau,j,\mbox{\tiny trans}}|^2
d\sigma(\xi) \lct \int_{B(\xi_0,R^{-(1-\delta)/2}) \cap S} |f_\tau|^2
d\sigma(\xi)
\end{displaymath}
for all $\xi_0 \in S$. Using (\ref{baverages}), we see that there is an
absolute constant $c$ such that
\begin{displaymath}
\int_{B(\xi_0,R^{-(1-\delta)/2}) \cap S} |f_{\tau,j,\mbox{\tiny trans}}|^2
d\sigma(\xi) \leq \frac{c R^\delta}{R^{(b+1)/2}}
\leq \frac{c R^{(b+1)\delta/2}}{R^{(b+1)/2}}
= \frac{c}{R^{(1-\delta)(b+1)/2}},
\end{displaymath}
where we have used the assumption that $b \geq 1$. Therefore,
\begin{displaymath}
c^{-1/2} f_{I,j,\mbox{\tiny trans}} \in \Lambda(R^{1-\delta},K,2m,b)
\end{displaymath}
and we may apply (\ref{underbil}) to each of the integrals on the
right-hand side of (\ref{8thline}) to get
\begin{eqnarray*}
\lefteqn{\int_{B_j} (\mbox{Br}_{2\beta}
         E f_{I,j,\mbox{\tiny trans}}(x)^p H(x) dx} \\
& \leq & c^2 C_\epsilon R^{(1-\delta)q_0\epsilon} A_\alpha(H)^{q_1}
         \Big( \sum_\tau \int |f_{\tau,j,\mbox{\tiny trans}}|^2 d\sigma(\xi)
         \Big)^{(3/2)+\epsilon} \\
&      & \times \;
         R^{(1-\delta)\delta_{\mbox{\tiny trans}} \log(4K^\epsilon\beta m)}.
\end{eqnarray*}

From Lemma \ref{Lemma3.5}, we know that a given tube in $\mbb T$ lies in
$\mbb T_{j,\mbox{\tiny trans}}$ for at most $\mbox{Poly}(D)$ values of $j$,
so Lemma \ref{Lemma2.7} (applied with $k=\mbox{Poly}(D)$) implies that
\begin{displaymath}
\sum_j \int |f_{\tau,j,\mbox{\tiny trans}}|^2 d\sigma(\xi)
\lct \mbox{Poly}(D) \int |f_\tau|^2 d\sigma(\xi),
\end{displaymath}
and hence
\begin{displaymath}
\Big( \sum_j \sum_{\tau \in I} \int |f_{\tau,j,\mbox{\tiny trans}}|^2
      d\sigma(\xi) \Big)^{\frac{3}{2}+\epsilon} \lct \mbox{Poly}(D)
\Big( \sum_\tau \int |f_\tau|^2 d\sigma(\xi) \Big)^{\frac{3}{2}+\epsilon},
\end{displaymath}
and hence
\begin{displaymath}
\sum_j \Big( \sum_{\tau \in I} \int |f_{\tau,j,\mbox{\tiny trans}}|^2
             d\sigma(\xi) \Big)^{\frac{3}{2}+\epsilon} \lct \mbox{Poly}(D)
\Big( \sum_\tau \int |f_\tau|^2 d\sigma(\xi) \Big)^{\frac{3}{2}+\epsilon}
\end{displaymath}
with the implicit constant independent of the cardinality of $\{ j \}$, and
hence
\begin{displaymath}
\sum_{j,I} \Big( \sum_{\tau \in I} \int |f_{\tau,j,\mbox{\tiny trans}}|^2
               d\sigma(\xi) \Big)^{\frac{3}{2}+\epsilon} \lct \mbox{Poly}(D)
\Big( \sum_\tau \int |f_\tau|^2 d\sigma(\xi) \Big)^{\frac{3}{2}+\epsilon}
\end{displaymath}
with the implicit constant depending on the cardinality of $\{ I \}$ (which
only depends on $K$, which is acceptable). Thus
\begin{eqnarray*}
\lefteqn{\int_{B_R} \mbox{Br}_\beta E f(x)^p H(x) dx} \\
& \leq & C' \, \mbox{Poly}(D) \, C_\epsilon R^{(1-\delta)q_0\epsilon}
         A_\alpha(H)^{q_1}
\Big( \sum_\tau \int |f_\tau|^2 d\sigma(\xi) \Big)^{\frac{3}{2}+\epsilon} \\
&   & \times \; R^{\delta_{\mbox{\tiny trans}} \log(4 K^\epsilon \beta m)}
      \\
& = & \Big( C' \, \mbox{Poly}(D) R^{-\delta q_0\epsilon}
            R^{(\log4) \delta_{\mbox{\tiny trans}}} \Big) C_\epsilon
      R^{q_0\epsilon} A_\alpha(H)^{q_1} \\
&   & \times \;
\Big( \sum_\tau \int |f_\tau|^2 d\sigma(\xi) \Big)^{\frac{3}{2}+\epsilon}
      R^{\delta_{\mbox{\tiny trans}} \log(K^\epsilon \beta m)}.
\end{eqnarray*}
To close the induction, we just have to prove that
\begin{displaymath}
C' \, \mbox{Poly}(D) R^{-\delta q_0\epsilon}
R^{(\log4) \delta_{\mbox{\tiny trans}}} \leq 1.
\end{displaymath}
But
\begin{displaymath}
C' \mbox{Poly}(D) R^{-\delta q_0\epsilon}
R^{(\log4) \delta_{\mbox{\tiny trans}}}
\leq R^{C'' \delta_{\mbox{\tiny deg}} - \delta q_0\epsilon
        + (\log 4) \delta_{\mbox{\tiny trans}}}
\leq R^{C'' \! \epsilon^4 - (1/2) \epsilon^3 + (\log 4) \epsilon^6},
\end{displaymath}
where we have used the assumption that $q_0 \geq 1/2$, so the induction
closes provided $\epsilon$ is sufficiently small.

\section{Estimates on the broad part}

This section and the next form the bulk of the proof of Theorem
\ref{mainjj}. In this section, we use Theorem \ref{biltobr} to estimate
various $L^p$ norms of the broad part of $Ef$ on the ball $B_R$ with respect
to the measure $H(x)dx$, where $H$ is a weight of dimension $\alpha$, as
defined in the Introduction. In view of the conditional formulation of 
Theorem \ref{biltobr}, this will be achieved by estimating the tangential 
part of $Ef$.

Following \cite{guth:poly}, we cover $B_j \cap W$ with cubes $Q$ of side
length $R^{1/2}$. For each cube $Q$, we let
$\mbb T_{j,\mbox{\tiny tang}, Q}$ be the set of tubes in
$\mbb T_{j,\mbox{\tiny tang}}$ that intersect $Q$. We know that
\begin{displaymath}
E f_{\tau,j,\mbox{\tiny tang}}
= \sum_{T \in \mbb T_{j,\mbox{\tiny tang}}} E f_{\tau,T}
= \sum_{T \in \mbb T_{j,\mbox{\tiny tang},Q}} E f_{\tau,T}
  + \sum_{T \in \mbb T_{j,\mbox{\tiny tang}}
          \setminus T_{j,\mbox{\tiny tang},Q}} E f_{\tau,T}.
\end{displaymath}

Let $x \in Q$. If $x \in T$, then $T$ must intersect $Q$, and it follows
that $T \in \mbb T_{j,\mbox{\tiny tang}, Q}$. If $x \not\in T$ and
$T \in \mbb T(\theta)$, then
$|E f_{\tau,T}(x)| \lct R^{-N} \| f_\tau \|_{L^1(\theta)}$, so
\begin{eqnarray*}
\sum_{T \in \mbb T_{j,\mbox{\tiny tang}}
      \setminus T_{j,\mbox{\tiny tang},Q}} |E f_{\tau,T}(x)|
& = & \sum_\theta
      \sum_{T \in (\mbb T(\theta) \cap \mbb T_{j,\mbox{\tiny tang}})
            \setminus T_{j,\mbox{\tiny tang},Q}} |E f_{\tau,T}(x)| \\
& \lct & \sum_\theta R^{-N} \| f_\tau \|_{L^1(\theta)} \\
&   =  & R^{-N} \| f_\tau \|_{L^1(S)},
\end{eqnarray*}
and so
\begin{displaymath}
E f_{\tau,j,\mbox{\tiny tang}}(x)
= \sum_{T \in \mbb T_{j,\mbox{\tiny tang},Q}} E f_{\tau,T}(x)
  + O \big( R^{-N} \| f_\tau \|_{L^1(S)} \big).
\end{displaymath}

Because of the definition of $T_{j,\mbox{\tiny tang}}$, it turns out that
all the tubes in $T_{j,\mbox{\tiny tang},Q}$ are nearly coplanar (provided
$\delta$ is sufficiently small for the radius of the tubes to be smaller
than the radius of $B_j$, i.e.\ provided
$R^{(1/2)+\delta} \leq R^{1-\delta}$). This led Guth to use the C\'{o}rdoba
$L^4$ argument and obtain the following bilinear estimate on $Q$.

\begin{alphlemma}[Lemma 3.10 in \cite{guth:poly}]
\label{Lemma3.10}
Suppose $0 < \delta \leq 1/4$. It $\tau_1$ and $\tau_2$ are non-adjacent
caps, then
\begin{eqnarray*}
\lefteqn{\int_Q |E f_{\tau_1,j,\mbox{\rm \tiny tang}}|^2
                |E f_{\tau_2,j,\mbox{\rm \tiny tang}}|^2 dx} \\
& \lct & R^{O(\delta)} R^{-1/2}
         \Big( \sum_{T_1 \in \mbb T_{j,\mbox{\rm \tiny tang},Q}}
               \| f_{\tau_1,T_1} \|_{L^2(S)}^2 \Big)
         \Big( \sum_{T_2 \in \mbb T_{j,\mbox{\rm \tiny tang},Q}}
               \| f_{\tau_2,T_2} \|_{L^2(S)}^2 \Big) \\
&      & + \;\;
O\Big( R^{-N+2} \big( \sum_\tau \| f_\tau \|_{L^2(S)}^2 \big)^2 \Big).
\end{eqnarray*}
\end{alphlemma}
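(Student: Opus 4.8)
The plan is to reproduce C\'{o}rdoba's $L^4$ argument in the bilinear, coplanar form used by Guth in \cite{guth:poly}. First I would localize to the cube $Q$: choose a Schwartz weight $\eta_Q$ with $|\eta_Q| \geq 1$ on $Q$ and $\widehat{\eta_Q}$ supported in a ball of radius $O(R^{-1/2})$, so that $\int_Q |E f_{\tau_1,j,\mbox{\tiny tang}}|^2 |E f_{\tau_2,j,\mbox{\tiny tang}}|^2 dx \leq \big\| \eta_Q\, E f_{\tau_1,j,\mbox{\tiny tang}}\, E f_{\tau_2,j,\mbox{\tiny tang}} \big\|_{L^2}^2$. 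Using the displayed identity recorded just before the statement of the lemma, which writes $E f_{\tau_i,j,\mbox{\tiny tang}}$ on $Q$ as $\sum_{T \in \mbb T_{j,\mbox{\tiny tang},Q}} E f_{\tau_i,T}$ up to an error $O(R^{-N} \| f_{\tau_i} \|_{L^1(S)})$ furnished by part (ii) of Proposition \ref{wave}, I would expand the product and discard the cross terms involving that error: since $|Q| \sim R^{3/2}$, $\| f_\tau \|_{L^1(S)} \lct \| f_\tau \|_{L^2(S)}$, and $\| E f_{\tau,T} \|_{L^\infty} \lct R^{-1/2} \| f_{\tau,T} \|_{L^2(S)}$ (Cauchy-Schwarz, using that $f_{\tau,T}$ is supported in a set of $\sigma$-measure $\sim R^{-1}$), those cross terms are bounded by the stated $O\big( R^{-N+2} (\sum_\tau \| f_\tau \|_{L^2(S)}^2)^2 \big)$. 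This reduces the lemma to estimating $\big\| \eta_Q \sum_{T_1,T_2} E f_{\tau_1,T_1} E f_{\tau_2,T_2} \big\|_{L^2}^2$, with $T_1,T_2$ ranging over $\mbb T_{j,\mbox{\tiny tang},Q}$.

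Next I would group the wave packets by the $R^{-1/2}$-cap $\theta$ to which they belong. Writing $g_{\theta_1,\theta_2} = \eta_Q \big( \sum_{T_1 \in \mbb T(\theta_1) \cap \mbb T_{j,\mbox{\tiny tang},Q}} E f_{\tau_1,T_1} \big) \big( \sum_{T_2 \in \mbb T(\theta_2) \cap \mbb T_{j,\mbox{\tiny tang},Q}} E f_{\tau_2,T_2} \big)$, the Fourier transform of $g_{\theta_1,\theta_2}$ is supported in an $O(R^{-1/2})$-neighborhood of $\theta_1 + \theta_2$ (up to reflection), because each $E f_{\tau_i,T_i}$ has frequency support in $\theta_i$ and $\widehat{\eta_Q}$ in $B(0,O(R^{-1/2}))$. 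Plancherel's theorem together with the bounded overlap of the frequency caps $\{ \theta_1 + \theta_2 \}$ then yields $\big\| \sum_{\theta_1,\theta_2} g_{\theta_1,\theta_2} \big\|_{L^2}^2 \lct \Lambda \sum_{\theta_1,\theta_2} \| g_{\theta_1,\theta_2} \|_{L^2}^2$, where $\Lambda$ is the largest number of pairs $(\theta_1',\theta_2')$ for which $\theta_1' + \theta_2'$ meets a fixed $\theta_1 + \theta_2$. Finally, since every tube of $\mbb T(\theta)$ has radius $R^{(1/2)+\delta}$ and $Q$ has side $R^{1/2}$, at most $O(1)$ tubes of $\mbb T(\theta) \cap \mbb T_{j,\mbox{\tiny tang},Q}$ meet $Q$; hence on $Q$ one bounds $\big| \sum_{T_i \in \mbb T(\theta_i)} E f_{\tau_i,T_i} \big|^2 \lct \sum_{T_i} | E f_{\tau_i,T_i} |^2$ and then each $| E f_{\tau_i,T_i} |$ by $\| E f_{\tau_i,T_i} \|_{L^\infty}$, so that $\| g_{\theta_1,\theta_2} \|_{L^2}^2 \lct |Q| \sum_{T_1,T_2} \| E f_{\tau_1,T_1} \|_\infty^2 \| E f_{\tau_2,T_2} \|_\infty^2 \lct R^{3/2} \sum_{T_1,T_2} \big( R^{-1} \| f_{\tau_1,T_1} \|_{L^2(S)}^2 \big) \big( R^{-1} \| f_{\tau_2,T_2} \|_{L^2(S)}^2 \big) = R^{-1/2} \sum_{T_1,T_2} \| f_{\tau_1,T_1} \|_{L^2(S)}^2 \| f_{\tau_2,T_2} \|_{L^2(S)}^2$. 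Summing over $\theta_1,\theta_2$ and collapsing the $T_i$-sums gives the claimed estimate, with $R^{O(\delta)}$ replaced by $\Lambda$.

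Thus the whole argument hinges on proving $\Lambda \lct R^{O(\delta)}$, and this is the step I expect to be the main obstacle. Here I would use the near-coplanarity of the tangential tubes recorded just before the lemma: every $T \in \mbb T_{j,\mbox{\tiny tang},Q}$ has direction $v(T)$ within $R^{-(1/2)+2\delta}$ of the tangent plane to $Z(P)$ along $2B_j$, and since $v(T)$ is the unit normal to $S$ at the center of the cap $\theta$ associated with $T$, the positive-definiteness of $\partial^2 h$ (which makes the Gauss map of $S$ a diffeomorphism) forces the caps $\theta$ that occur to lie in an $R^{-(1/2)+O(\delta)}$-neighborhood of a single curve on $S$. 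Hence, for $i=1,2$, the caps $\theta_i$ range over a thin strip $\Gamma_i \subset \tau_i$, and the difference $\theta_1 - \theta_1'$ of two caps of $\Gamma_1$ points, up to $O(R^{-1/2})$, along the tangent to $\Gamma_1$; since $\tau_1$ and $\tau_2$ are non-adjacent (separation $\geq 1/K$), the curvature of $S$ makes the tangent directions of $\Gamma_1$ and $\Gamma_2$ at the relevant points quantitatively non-parallel, so the relation $\theta_1 + \theta_2 = \theta_1' + \theta_2'$ can hold only when $\theta_1', \theta_2'$ agree with $\theta_1, \theta_2$ up to $R^{O(\delta)}$ caps. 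This gives $\Lambda \lct R^{O(\delta)}$. The remaining bookkeeping — collecting the $R^{O(\delta)}$ losses from the coplanarity tolerance $R^{-(1/2)+2\delta}$ and from the overlap count, and checking that the discarded tails are absorbed into $O\big( R^{-N+2} (\sum_\tau \| f_\tau \|_{L^2(S)}^2)^2 \big)$ — is routine.
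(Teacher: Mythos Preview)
Your proposal is correct and follows exactly the C\'{o}rdoba $L^4$ argument that Guth uses in \cite{guth:poly}; note that the present paper does not supply its own proof of this lemma but merely quotes it from \cite{guth:poly}. The localization to $Q$, the frequency-orthogonality via Plancherel after grouping wave packets by $\theta$, and the crucial overlap bound $\Lambda \lct R^{O(\delta)}$ coming from the near-coplanarity of the tangential tubes (so that the relevant $\theta$'s lie on a curve and the non-adjacency of $\tau_1,\tau_2$ gives transversality of the two curve pieces) are precisely the ingredients of Guth's proof.
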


To upgrade the estimate in Lemma \ref{Lemma3.10} from an estimate on $Q$ to
an estimate on $B_j \cap W$, Guth then considered the square function
\begin{displaymath}
S_{\tau,j,\mbox{\tiny tang}}=\Big( \sum_{T \in \mbb T_{j,\mbox{\tiny tang}}}
\big( \chi_{7T} R^{-1/2} \| f_{\tau,T} \|_{L^2(S)} \big)^2 \Big)^{1/2},
\end{displaymath}
where $7T$ is the tube with the same core line as $T$ but seven times the
radius. If $T \cap Q \not= \emptyset$, then $Q \subset 7T$. So
\begin{displaymath}
S_{\tau,j,\mbox{\tiny tang}}^2
\geq \sum_{T \in \mbb T_{j,\mbox{\tiny tang},Q}}
     \big( \chi_Q R^{-1/2} \| f_{\tau,T} \|_{L^2(S)} \big)^2
= \frac{\chi_Q}{R} \sum_{T \in \mbb T_{j,\mbox{\tiny tang},Q}}
  \| f_{\tau,T} \|_{L^2(S)}^2,
\end{displaymath}
and so
\begin{displaymath}
S_{\tau_1,j,\mbox{\tiny tang}}^2 S_{\tau_2,j,\mbox{\tiny tang}}^2
\geq \frac{\chi_Q}{R^2} \Big( \sum_{T_1 \in \mbb T_{j,\mbox{\tiny tang},Q}}
                              \| f_{\tau_1,T_1} \|_{L^2(S)}^2 \Big)
     \Big( \sum_{T_2 \in \mbb T_{j,\mbox{\tiny tang},Q}}
           \| f_{\tau_2,T_2} \|_{L^2(S)}^2 \Big),
\end{displaymath}
which gives
\begin{eqnarray*}
\lefteqn{\int_Q S_{\tau_1,j,\mbox{\tiny tang}}^2
                S_{\tau_2,j,\mbox{\tiny tang}}^2 dx} \\
& \geq & \frac{|Q|}{R^2} \Big( \sum_{T_1 \in \mbb T_{j,\mbox{\tiny tang},Q}}
                               \| f_{\tau_1,T_1} \|_{L^2(S)}^2 \Big)
         \Big( \sum_{T_2 \in \mbb T_{j,\mbox{\tiny tang},Q}}
               \| f_{\tau_2,T_2} \|_{L^2(S)}^2 \Big) \\
&  =   & R^{-1/2} \Big( \sum_{T_1 \in \mbb T_{j,\mbox{\tiny tang},Q}}
                               \| f_{\tau_1,T_1} \|_{L^2(S)}^2 \Big)
         \Big( \sum_{T_2 \in \mbb T_{j,\mbox{\tiny tang},Q}}
               \| f_{\tau_2,T_2} \|_{L^2(S)}^2 \Big).
\end{eqnarray*}
Lemma \ref{Lemma3.10} now implies that
\begin{eqnarray*}
\lefteqn{\int_Q |E f_{\tau_1,j,\mbox{\tiny tang}}|^2
         |E f_{\tau_2,j,\mbox{\tiny tang}}|^2 dx} \\
& \lct & R^{O(\delta)} \int_Q S_{\tau_1,j,\mbox{\tiny tang}}^2
         S_{\tau_2,j,\mbox{\tiny tang}}^2 dx +
O\Big( R^{-N+2} \big( \sum_\tau \| f_\tau \|_{L^2(S)}^2 \big)^2 \Big).
\end{eqnarray*}
Summing over all the $Q$ covering $B_j \cap W$, and expanding the definition
of the square function, this becomes
\begin{eqnarray*}
\lefteqn{\int_{B_j \cap W} |E f_{\tau_1,j,\mbox{\tiny tang}}|^2
         |E f_{\tau_2,j,\mbox{\tiny tang}}|^2 dx} \\
& \lct & R^{O(\delta)} \int_{B_j \cap W} S_{\tau_1,j,\mbox{\tiny tang}}^2
         S_{\tau_2,j,\mbox{\tiny tang}}^2 dx
   + O\Big( R^{-N+4} \big( \sum_\tau \| f_\tau \|_{L^2(S)}^2 \big)^2\Big) \\
& \lct & R^{O(\delta)}
         \sum_{T_1,T_2 \in \mbb T_{j,\mbox{\tiny tang}}} R^{-2}
         \| f_{\tau_1,T_1} \|_{L^2(S)}^2 \| f_{\tau_2,T_2} \|_{L^2(S)}^2
         \int_{B_j \cap W} \chi_{7T_1} \chi_{7T_2} dx \\
&      & + \;
      O\Big( R^{-N+4} \big( \sum_\tau \| f_\tau \|_{L^2(S)}^2 \big)^2 \Big).
\end{eqnarray*}
Since $T_1$ comes from the wave packet decomposition of $f_{\tau_1}$ and
$T_2$ comes from the wave packet decomposition of $f_{\tau_2}$, the angle
between $v(T_1)$ and $v(T_2)$ is $\gct K^{-1}$. So
\begin{displaymath}
\int_{\mbb R^3} \chi_{7T_1} \chi_{7T_2} dx
\lct \frac{R^{(1/2)+\delta}}{K^{-1}} R^{(1/2)+\delta} R^{(1/2)+\delta}
= K R^{(3/2)+3\delta},
\end{displaymath}
Inserting this bound in the last inequality, Guth obtained
\begin{eqnarray}
\label{startpttang}
\lefteqn{\int_{B_j \cap W} |E f_{\tau_1,j,\mbox{\tiny tang}}|^2
         |E f_{\tau_2,j,\mbox{\tiny tang}}|^2 dx} \nonumber \\
& \lct & R^{O(\delta)} R^{-1/2}
         \Big( \sum_{T_1 \in \mbb T_{j,\mbox{\tiny tang}}}
               \| f_{\tau_1,T_1} \|_{L^2(S)}^2 \Big)
         \Big( \sum_{T_2 \in \mbb T_{j,\mbox{\tiny tang}}}
               \| f_{\tau_2,T_2} \|_{L^2(S)}^2 \Big) \\
&      & + \; \nonumber
      O\Big( R^{-N+4} \big( \sum_\tau \| f_\tau \|_{L^2(S)}^2 \big)^2 \Big).
\end{eqnarray}

We are now in position to state our first estimate on the broad part of
$Ef$.

\begin{thm}
\label{thejapp}
Suppose $3/2 < \alpha \leq 3$, $H$ is a weight of dimension $\alpha$,
$b \geq 1$, and $p=2(4\alpha+3b)/(2\alpha+2b+1)$.

Then there is a constant $c_0$, which is independent of $b$, such that to
every $0 < \epsilon \leq \min[c_0,(p-3)/2]$ there are constants
$K=K(\epsilon)$ and $C_\epsilon$ so that
$\lim_{\epsilon \to 0} K(\epsilon)= \infty$ and
\begin{displaymath}
\int_{B_R} \mbox{\rm Br}_{K^{-\epsilon}} E f(x)^p H(x) dx
\leq C_\epsilon \, R^{(b+1)\epsilon/2} A_\alpha(H)^{1-(p/4)}
     \| f \|_{L^2(S)}^{3+2\epsilon}
\end{displaymath}
whenever $R \geq 1$, $f \in L^2(S)$, and
\begin{displaymath}
\int_{B(\xi_0,R^{-1/2}) \cap S} |f(\xi)|^2 d\sigma(\xi)
\leq \frac{1}{R^{(b+1)/2}}
\end{displaymath}
for all $\xi_0 \in S$.
\end{thm}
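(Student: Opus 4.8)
The plan is to verify the hypotheses of Theorem \ref{biltobr} (with the right choice of exponents $q_0,q_1,q_2$) by establishing the required bound \eqref{estonbil} on the $L^p(Hdx)$ norm of $\mbox{Bil}_{P,\delta} E f_{j,\mbox{\tiny tang}}$ over $B_j \cap W$. Once \eqref{estonbil} is in hand, Theorem \ref{biltobr} produces \eqref{underbil}, and the conclusion of the present theorem follows by taking $\beta = K^{-\epsilon}$ and $m=1$ in \eqref{underbil} (so that $R^{\delta_{\mbox{\tiny trans}}\log(K^\epsilon\beta m)}=R^{\delta_{\mbox{\tiny trans}}\log K^0}$ is harmless, or more carefully, absorbing $\log(K^\epsilon)$ into the $R^{(b+1)\epsilon/2}$ factor), and bounding $\big(\sum_\tau\|f_\tau\|_{L^2(S)}^2\big)^{(3/2)+\epsilon}\lct \|f\|_{L^2(S)}^{3+2\epsilon}$ via the almost-orthogonality of the $f_\tau$ (Lemma \ref{Lemma2.7}), keeping in mind that $|\{\tau\}|\sim K^2$ contributes an acceptable constant. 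Matching $p=2(4\alpha+3b)/(2\alpha+2b+1)$ against the exponent $q_1=1-p/4$ asserted in the theorem requires checking that the weight power coming out of the tangential estimate is exactly $A_\alpha(H)^{1-p/4}$.

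\textbf{The tangential estimate.} The core work is to pass from Guth's bilinear $L^2$ bound \eqref{startpttang} on $B_j\cap W$ to an $L^p(Hdx)$ bound on $\mbox{Bil}_{P,\delta} E f_{j,\mbox{\tiny tang}}$. First I would interpolate: combine the $L^2$ information in \eqref{startpttang} with the (trivial, or near-trivial) pointwise/$L^\infty$ control on $\mbox{Bil}_{P,\delta} E f_{j,\mbox{\tiny tang}}$, which comes from the fact that each $Ef_{\tau,j,\mbox{\tiny tang}}$ is supported (up to rapidly decaying tails) in the $R^{(1/2)+\delta}$-neighborhood of the algebraic variety $Z(P)$, together with the count in Lemma \ref{Lemma3.6} of how many caps $\theta$ can contribute tangentially to a fixed $B_j$ (at most $D^2 R^{(1/2)+O(\delta)}$). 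This Kakeya-type input is where the dimensionality of $S$ enters. Then — and this is the new ingredient relative to \cite{guth:poly} — I would use the dimensional hypothesis on $H$ in the form $\int_{B(x_0,R)} H \le A_\alpha(H) R^\alpha$ to estimate the contribution of $Hdx$: the tangential tubes gather within a fixed distance of a two-dimensional (algebraic) object inside each $R^{1-\delta}$-ball $B_j$, and the $H$-measure of such a neighborhood is controlled by $A_\alpha(H)$ times the appropriate power of $R$; the exponent $\alpha$ appearing there is precisely what converts the clean exponent $13/4$ in Guth's argument into $2(4\alpha+3b)/(2\alpha+2b+1)$. Carrying this out with the parameter $b$ (so that \eqref{baverages} reads $\le R^{-(b+1)/2}$ rather than $\le R^{-1}$) produces the $R^{(b+1)\epsilon/2}$ factor in the final estimate.

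\textbf{Main obstacle.} The hardest part is the bookkeeping in the interpolation/summation that converts \eqref{startpttang} — a per-pair, per-$B_j$ bilinear $L^2$ bound on the square-function proxy — into a single $L^p(Hdx)$ bound of exactly the shape \eqref{estonbil}, with the correct powers $(3/2)+\epsilon$ of $\sum_\tau\|f_\tau\|_{L^2(S)}^2$, the correct power $1-p/4$ of $A_\alpha(H)$, and only an $R^{O(\delta)}$ loss. One must: (i) reduce from the full bilinear sum over non-adjacent $(\tau_1,\tau_2)$ to a controlled number of pairs, absorbing the $K$-dependent combinatorial factors into the $C_{\epsilon,K}$ allowed by \eqref{estonbil}; (ii) interpolate the $L^2$ bound \eqref{startpttang} against an $L^\infty$ (or large-$L^q$) bound to reach the exponent $p$, using the flatness/coplanarity of $\mbb T_{j,\mbox{\tiny tang},Q}$ and Lemma \ref{Lemma3.6}; (iii) re-sum over the $\sim R^{3\delta}$ balls $B_j$; and (iv) feed in $A_\alpha(H)$ at the one place where the measure of a neighborhood of $Z(P)$ is taken — and verify that the resulting power of $A_\alpha(H)$ simplifies to $1-p/4$ for the stated $p$. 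I expect steps (ii) and (iv), where the value of $p$ is pinned down, to be the delicate ones; once those are settled, the remaining error terms ($O(R^{-N+\cdots})$ contributions from the wave-packet tails) are negligible by the standing hypothesis \eqref{LdeltaN} on $L$ and $N$, and the conclusion of Theorem \ref{thejapp} follows by invoking Theorem \ref{biltobr} with $q_0$ chosen (as permitted) with $q_0\ge 1/2$, $q_1=1-p/4$, $q_2$ small, and $\epsilon\le\min[c_0,(p-3)/2]$.
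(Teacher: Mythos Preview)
Your overall architecture is right: verify \eqref{estonbil} and then invoke Theorem \ref{biltobr}. But the mechanism you propose for the tangential estimate is not the one that works, and the details you sketch would not produce the exponents you need.

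The paper does \emph{not} interpolate between an $L^2$ bound and an $L^\infty$ bound, and it does \emph{not} use the $H$-measure of a neighborhood of $Z(P)$. What actually happens is much simpler. Since $3<p<4$, one applies H\"older's inequality with exponents $4/p$ and $4/(4-p)$ directly to
\[
\int_{B_j\cap W}|Ef_{\tau_1,j,\mbox{\tiny tang}}|^{p/2}|Ef_{\tau_2,j,\mbox{\tiny tang}}|^{p/2}\,H\,dx,
\]
pairing the unweighted bilinear $L^4$ estimate \eqref{startpttang} (C\'ordoba's argument) against the trivial bound $\int_{B_j}H^{4/(4-p)}\le\int_{B_R}H\le A_\alpha(H)R^\alpha$ (using $\|H\|_{L^\infty}\le1$). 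This is exactly where $A_\alpha(H)^{1-p/4}$ appears---no geometry of $Z(P)$ is needed. The parameter $b$ then enters through Lemma \ref{Lemma3.6}: only $\lct R^{(1/2)+O(\delta)}$ caps $\theta$ contribute tangentially, so combining with the hypothesis $\int_\theta|f_\tau|^2\le R^{-(b+1)/2}$ gives $\sum_{T\in\mbb T_{j,\mbox{\tiny tang}}}\|f_{\tau,T}\|_{L^2}^2\lct R^{O(\delta)}R^{-b/2}$. One splits $J^{p/4}=J^{(p-3-2\epsilon)/4}J^{(3+2\epsilon)/4}$, using this $R^{-b}$ bound on the first factor and the trivial bound $J\lct\|f_{\tau_1}\|_2^2\|f_{\tau_2}\|_2^2$ on the second. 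The net $R$-exponent $-p/8+\alpha(1-p/4)-b(p-3-2\epsilon)/4$ vanishes (modulo the $b\epsilon/2$ term) precisely when $p=2(4\alpha+3b)/(2\alpha+2b+1)$.

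Two smaller corrections: in Theorem \ref{biltobr} one must take $q_2=b/2$ and $q_0=(1/4)+(b/2)$, not ``$q_2$ small''; and in the final step one chooses the decomposition $f=\sum_\tau f_\tau$ with \emph{disjoint} supports so that $\sum_\tau\|f_\tau\|_2^2=\|f\|_2^2$ exactly and $m=1$, whence $R^{\delta_{\mbox{\tiny trans}}\log(K^\epsilon\beta m)}=1$ at $\beta=K^{-\epsilon}$.
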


\begin{proof}
In view of Theorem \ref{biltobr}, we have to establish (\ref{estonbil}). To
guarantee the independence of $c_0$ from $b$, we will again be careful to
check that all constants appearing in this proof are independent of this
parameter.

Starting with (\ref{startpttang}) and noticing that $3 < p < 4$, we see by
H\"{o}lder's inequality that
\begin{eqnarray}
\label{fornewremark}
\lefteqn{\int_{B_j \cap W}
         \mbox{Bil}_{P,\delta} Ef_{j,\mbox{\tiny tang}}(x)^p H(x)dx}
         \nonumber \\
& \lct & \!\!\!\!\!\!\!\!\!\!\!\!\!\!\!\!
         \sum_{\tau_1, \tau_2 \, \mbox{\tiny non-adjacent}}
         \int_{B_j \cap W} |E f_{\tau_1,j,\mbox{\tiny tang}}|^{p/2}
                           |E f_{\tau_2,j,\mbox{\tiny tang}}|^{p/2} H(x)dx
        \nonumber \\
& \leq & \!\!\!\!\!\!\!\!\!\!\!\!\!\!\!\!
         \sum_{\tau_1, \tau_2 \, \mbox{\tiny non-adjacent}} \left(
         \int_{B_j \cap W} |E f_{\tau_1,j,\mbox{\tiny tang}}|^2
         |E f_{\tau_2,j,\mbox{\tiny tang}}|^2 dx \right)^{\frac{p}{4}}
         \Big( A_\alpha(H) R^\alpha \Big)^{1-\frac{p}{4}} \\
& \lct & R^{O(\delta)} A_\alpha(H)^{1-\frac{p}{4}}
         R^{-\frac{p}{8}+\alpha-\alpha\frac{p}{4}}
         \sum_{\tau_1,\tau_2} J^{\frac{p}{4}}
         \nonumber \\
&      & + \; O\Big( \big( A_\alpha(H) R^\alpha \big)^{1-\frac{p}{4}}
         R^{(-N+4)(p/4)}
         \big( \sum_\tau \| f_\tau \|_{L^2(S)}^2 \big)^{\frac{p}{2}} \Big),
         \nonumber
\end{eqnarray}
where
\begin{displaymath}
J= \Big( \sum_{T_1 \in \mbb T_{j,\mbox{\tiny tang}}}
         \| f_{\tau_1,T_1} \|_{L^2(S)}^2 \Big)
   \Big( \sum_{T_2 \in \mbb T_{j,\mbox{\tiny tang}}}
         \| f_{\tau_2,T_2} \|_{L^2(S)}^2 \Big).
\end{displaymath}
Following \cite{guth:poly}, we will bound $J$ in two different ways.

By part (i) of Proposition \ref{wave}, we have
\begin{eqnarray*}
\lefteqn{\sum_{T_1 \in \mbb T_{j,\mbox{\tiny tang}}}
         \| f_{\tau_1,T_1} \|_{L^2(S)}^2} \\
& & = \;
\sum_\theta \sum_{T_1 \in \mbb T(\theta) \cap \mbb T_{j,\mbox{\tiny tang}}}
\| f_{\tau_1,T_1} \|_{L^2(S)}^2
\; \lct \; \sum_\theta \| f_{\tau_1} \|_{L^2(\theta)}^2
           \; \lct \; \| f_{\tau_1} \|_{L^2(S)}^2.
\end{eqnarray*}
Likewise, $\sum_{T_2 \in \mbb T_{j,\mbox{\tiny tang}}}
\| f_{\tau_2,T_2} \|_{L^2(S)}^2 \lct \| f_{\tau_2} \|_{L^2(S)}^2$.
This gives the bound
\begin{displaymath}
J \lct \| f_{\tau_1} \|_{L^2(S)}^2 \| f_{\tau_2} \|_{L^2(S)}^2.
\end{displaymath}

On the other hand, Lemma \ref{Lemma3.6} tells us that
$\mbb T_{j,\mbox{\tiny tang}}$ contains tubes in only
$R^{O(\delta)} R^{1/2}$ different directions, so
\begin{eqnarray*}
\sum_{T_1 \in \mbb T_{j,\mbox{\tiny tang}}} \| f_{\tau_1,T_1} \|_{L^2(S)}^2
& \lct & \sum_{R^{(1/2)+O(\delta)} \mbox{\tiny caps } \theta} \;\;\;
         \sum_{T_1 \in \mbb T(\theta)} \| f_{\tau_1,T_1} \|_{L^2(S)}^2 \\
& \lct & \sum_{R^{(1/2)+O(\delta)} \mbox{\tiny caps } \theta}
         \int_\theta |f_{\tau_1}|^2 d\sigma(\xi) \\
& \lct & \frac{R^{(1/2)+O(\delta)}}{R^{(b+1)/2}}
         \; = \; R^{O(\delta)} R^{-b/2},
\end{eqnarray*}
where on the second line we used part (i) of Proposition \ref{wave} and on
the third line the fact that
$\int_\theta |f_\tau|^2 d\sigma(\xi) \lct R^{-(b+1)/2}$. Likewise with
$\tau_1, T_1$ replaced by $\tau_2, T_2$. Thus
\begin{displaymath}
J \lct R^{O(\delta)} R^{-b}.
\end{displaymath}

Putting the two bounds we now have on $J$ together, we see that
\begin{eqnarray*}
\lefteqn{\sum_{\tau_1,\tau_2} J^{\frac{p}{4}}
         \lct \Big( \sum_{\tau_1,\tau_2} J \Big)^{\frac{p}{4}}
         = \Big( \sum_{\tau_1,\tau_2} J \Big)^{(p-3-2\epsilon)/4}
           \Big( \sum_{\tau_1,\tau_2} J \Big)^{(3+2\epsilon)/4}} \\
& & \; \lct \; R^{O(\delta)} R^{-b(p-3-2\epsilon)/4}
               \Big( \sum_{\tau_1,\tau_2}
               \| f_{\tau_1} \|_{L^2(S)}^2 \| f_{\tau_2} \|_{L^2(S)}^2
               \Big)^{(3+2\epsilon)/4} \\
& & \; = \; R^{O(\delta)} R^{-b(p-3-2\epsilon)/4}
               \Big( \sum_\tau \| f_\tau \|_{L^2(S)}^2
               \Big)^{(3/2)+\epsilon}
\end{eqnarray*}
provided $p-3-2\epsilon \geq 0$. We note that since
$p - 3 - 2 \epsilon < 1 - 2 \epsilon$, all the implicit constants remain
independent of $b$. Therefore,
\begin{eqnarray*}
\lefteqn{\int_{B_j \cap W} \mbox{Bil}_{P,\delta}
         Ef_{j,\mbox{\tiny tang}}(x)^p H(x)dx} \\
& \lct & R^{O(\delta)} A_\alpha(H)^{1-\frac{p}{4}}
         R^{-\frac{p}{8}+\alpha-\alpha\frac{p}{4}}
         R^{-b \frac{p-3-2\epsilon}{4}}
         \Big( \sum_\tau \| f_\tau \|_{L^2(S)}^2\Big)^{\frac{3}{2}+\epsilon}
         \\
&      & + \; O\Big( \big( A_\alpha(H) R^\alpha \big)^{1-\frac{p}{4}}
         R^{(-N+4)(p/4)}
         \big( \sum_\tau \| f_\tau \|_{L^2(S)}^2 \big)^{\frac{p}{2}} \Big).
\end{eqnarray*}
Recalling from (\ref{bbdonL2(S)}) that
$\int |f_\tau|^2 d\sigma \lct R^{-(b-1)/2} \leq 1$, we have
\begin{eqnarray*}
\Big( \sum_\tau \| f_\tau \|_{L^2(S)}^2 \Big)^{\frac{p}{2}}
&  =   & \Big( \sum_\tau \| f_\tau \|_{L^2(S)}^2
         \Big)^{\frac{p-3-2\epsilon}{2}}
         \Big( \sum_\tau \| f_\tau \|_{L^2(S)}^2
         \Big)^{\frac{3}{2}+\epsilon} \\
& \lct & \Big( \sum_\tau \| f_\tau \|_{L^2(S)}^2
         \Big)^{\frac{3}{2}+\epsilon}
\end{eqnarray*}
provided $p-3-2\epsilon \geq 0$. Thus
\begin{eqnarray*}
\lefteqn{\int_{B_j \cap W} \mbox{Bil}_{P,\delta}
         Ef_{j,\mbox{\tiny tang}}(x)^p H(x)dx} \\
& \lct & R^{O(\delta)} A_\alpha(H)^{1-\frac{p}{4}}
         R^{\frac{b \epsilon}{2}} R^{\alpha+\frac{3b}{4}}
         R^{-\frac{p}{8}(1+2\alpha+2b)}
        \Big( \sum_\tau \| f_\tau \|_{L^2(S)}^2\Big)^{\frac{3}{2}+\epsilon}
        \\
&   =  & R^{O(\delta)} R^{b\epsilon/2} A_\alpha(H)^{1-(p/4)}
        \Big( \sum_\tau \| f_\tau \|_{L^2(S)}^2\Big)^{(3/2)+\epsilon}.
\end{eqnarray*}
We note that the implicit constant in the $R^{O(\delta)}$ factor does not 
depend on $b$.

Invoking Theorem \ref{biltobr} (with $q_2=b/2$, $q_1=1-(p/4)$, and
$q_0=(1/4)+(b/2)$), we conclude that to every sufficiently small $\epsilon$ 
there are constants $K=K(\epsilon)$ and $C_\epsilon$ such that
$\lim_{\epsilon \to 0} K(\epsilon)= \infty$ and
\begin{eqnarray*}
\lefteqn{\int_{B_R} \mbox{Br}_{\beta} E f(x)^p H(x) dx} \\
& \leq & C_\epsilon \, R^{\epsilon/4} R^{b\epsilon/2}
         A_\alpha(H)^{1-(p/4)}
         \Big( \sum_\tau \| f_\tau \|_{L^2(S)}^2 \Big)^{(3/2)+\epsilon}
         R^{\delta_{\mbox{\tiny trans}} \log(K^\epsilon \beta m)}
\end{eqnarray*}
for all $\beta \geq K^{-\epsilon}$, $m \geq 1$, $R \geq 1$, and
$f \in \Lambda(R,K,m,b)$.

Now suppose $f \in L^2(S)$ satisfies
\begin{displaymath}
\int_{B(\xi_0,R^{-1/2}) \cap S} |f(\xi)|^2 d\sigma(\xi)
\leq \frac{1}{R^{(b+1)/2}}
\end{displaymath}
for all $\xi_0 \in S$. Writing $f=\sum_\tau f_\tau$ with
$\mbox{supp} \, f_\tau \subset \tau$ and
$(\mbox{supp} \, f_\tau) \cap (\mbox{supp} \, f_{\tau'})= \emptyset$ if
$\tau \not= \tau'$, we see that $f \in \Lambda(R,K,m,b)$ and
$\sum_\tau \| f_\tau \|_{L^2(S)}^2= \| f \|_{L^2(S)}^2$. Applying the above
estimate with $\beta= K^{-\epsilon}$, we obtain the desired result.
\end{proof}

\begin{remark}
\label{soandso}
In dimension $n=2$, one needs to bound the broad part of $Ef$ by
$\big( \sum_\tau \| f_\tau \|_{L^2(S)}^2 \big)^{2+\epsilon}$ rather than
$\big( \sum_\tau \| f_\tau \|_{L^2(S)}^2 \big)^{(3/2)+\epsilon}$ for the
induction argument in the proof of Theorem \ref{biltobr} to work. In view of
the argument leading to (\ref{fornewremark}), however, one sees that
replacing $(3/2)+\epsilon$ by $2+\epsilon$ requires $p/4 > 1$. But when
$p/4 > 1$, one will not have the right exponent to apply H\"{o}lder's
inequality in the second paragraph of the proof of Theorem \ref{thejapp},
and as a result will not be able to exploit the dimensionality of $H$ and
arrive at the desired estimate.
\end{remark}

During the proof of the next theorem, we will need an estimate on
\begin{displaymath}
\int_{B_j} |E f_{\tau,j,\mbox{\tiny tang}}(x)|^2 H(x)dx.
\end{displaymath}
In \cite{guth:poly}, where the function $H$ was not present, this was
obtained via the standard local restriction estimate
\begin{displaymath}
\int_{B(0,R)} |E f(x)|^2 dx \lct R \, \| f \|_{L^2(S)}^2
\end{displaymath}
which holds for all $f \in L^2(S)$. One could still use this estimate here,
because $\| H \|_{L^\infty} \leq 1$, but then one would be repeating word
for word the argument from \cite{guth:poly} and would end up with the same
estimate on the broad part of $Ef$ as in that paper. In order to improve
matters -- via the induction argument of the next section -- we have to
involve the factor $A_\alpha(H)$ in our estimate, and we, therefore, have to
update the above local restriction estimate accordingly.

\begin{lemma}
\label{gentrace}
Suppose $0 < \alpha \leq 3$ and $H$ is a weight of dimension $\alpha$. Then
\begin{displaymath}
\int_{B(0,R)} |E f(x)|^2 H(x) dx \lct A_\alpha(H) R \, \| f \|_{L^2(S)}^2
\end{displaymath}
for all $R \geq 1$ and $f \in L^2(S)$.
\end{lemma}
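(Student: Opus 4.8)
The plan is to reduce the weighted estimate to the unweighted local restriction estimate $\int_{B(0,R)} |Ef(x)|^2\,dx \lct R\|f\|_{L^2(S)}^2$ by exploiting the fact that $Ef$ is, up to a rapidly decaying error, locally constant at scale $R^{1/2}$ in every direction except the one normal to the cap; more efficiently, one can avoid a wave-packet argument altogether and instead dominate $|Ef|^2$ on the ball $B(0,R)$ by an average of translates of a fixed bump. Concretely, first I would fix a Schwartz function $\varphi$ on $\mbb R^3$ with $\varphi \geq 1$ on $B(0,1)$ and $\widehat\varphi$ supported in $B(0,1)$, and write $g=f/\varphi|_S$ so that $|g|\leq|f|$ and $Ef = \widehat\varphi * Eg$. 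Then $|Ef(x)|^2 \leq \|\widehat\varphi\|_{L^1} \,(|Eg|^2 * |\widehat\varphi|)(x)$ by Cauchy--Schwarz, exactly as in the proof of Lemma \ref{bdmubyh}.

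Next I would integrate against $H$ and use Fubini:
\begin{displaymath}
\int_{B(0,R)} |Ef(x)|^2 H(x)\,dx
\lct \int |Eg(y)|^2 \Big( \int_{B(0,R)} |\widehat\varphi(x-y)| H(x)\,dx \Big) dy.
\end{displaymath}
Since $\widehat\varphi$ is Schwartz, $|\widehat\varphi(x-y)| \lct (1+|x-y|)^{-M}$ for any $M$; decomposing the inner integral dyadically into annuli $|x-y|\sim 2^k$ and using $\int_{B(y,2^k)} H(x)\,dx \leq A_\alpha(H)\,2^{k\alpha}$ (valid for $2^k\geq 1$; for the unit-scale piece one uses $\|H\|_{L^\infty}\leq 1$), the inner integral is $\lct A_\alpha(H)$ uniformly in $y$, provided $M > \alpha$, which holds since $\alpha\leq 3$ and $\varphi$ is Schwartz. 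This leaves
\begin{displaymath}
\int_{B(0,R)} |Ef(x)|^2 H(x)\,dx \lct A_\alpha(H) \int_{\mbb R^3} |Eg(y)|^2 \,dy,
\end{displaymath}
but $\int_{\mbb R^3}|Eg|^2$ is infinite, so this crude bound is not quite enough. To fix this I would instead keep track of the localization: because $\widehat\varphi$ decays rapidly, the weight $\int_{B(0,R)}|\widehat\varphi(x-y)|H(x)\,dx$ is $\lct A_\alpha(H)$ and is supported, up to rapidly decaying tails, in $B(0,2R)$, so one can replace $\int_{\mbb R^3}|Eg|^2$ by $\int_{B(0,CR)}|Eg|^2$ plus a harmless error, and then invoke the standard unweighted estimate $\int_{B(0,CR)}|Eg(y)|^2\,dy \lct R\,\|g\|_{L^2(S)}^2 \leq R\,\|f\|_{L^2(S)}^2$.

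The main obstacle is the last bookkeeping step: one must check that the rapidly decaying tail of $\widehat\varphi$ outside $B(0,2R)$ contributes at most $O(R^{-M}\|f\|_{L^2(S)}^2)$ to the integral, which requires a crude a priori bound such as $\|Eg\|_{L^\infty} \leq \|g\|_{L^1(S)} \lct \|f\|_{L^2(S)}$ together with the fact that $H$ has at most polynomial growth ($\int_{B(0,R)}H \leq A_\alpha(H)R^\alpha \leq A_\alpha(H)R^3$); since $M$ can be taken arbitrarily large this is routine but is the only place care is needed. Everything else is Cauchy--Schwarz, Fubini, the dyadic decomposition using the defining property of $A_\alpha(H)$, and the cited unweighted local restriction estimate, so the proof is short.
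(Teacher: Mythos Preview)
Your approach is correct, but you have confused yourself by forgetting a property you yourself imposed. You chose $\widehat\varphi$ to be \emph{supported in $B(0,1)$}, not merely Schwartz. Consequently the inner weight
\[
\int_{B(0,R)} |\widehat\varphi(x-y)|\,H(x)\,dx
\]
vanishes identically for $|y|>R+1$, and is bounded by $\|\widehat\varphi\|_{L^\infty}\int_{B(y,1)}H \leq \|\widehat\varphi\|_{L^\infty}\,A_\alpha(H)$ for all $y$. There are no ``rapidly decaying tails'' to control, no dyadic decomposition is needed, and no error term arises: you get directly
\[
\int_{B(0,R)}|Ef|^2 H\,dx \lct A_\alpha(H)\int_{B(0,R+1)}|Eg(y)|^2\,dy \lct A_\alpha(H)\,R\,\|g\|_{L^2(S)}^2 \leq A_\alpha(H)\,R\,\|f\|_{L^2(S)}^2
\]
by the cited unweighted local estimate. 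So your ``main obstacle'' is not an obstacle at all.

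The paper's proof takes a somewhat different route. Rather than writing $Ef$ as a convolution on the physical side, it multiplies $Ef$ by a bump $\widehat{\eta_{R^{-1}}}$ that is $\geq 1$ on $B(0,R)$, so that the product equals $\widehat{F}$ with $F=\eta_{R^{-1}}*fd\sigma$ compactly supported at unit scale. Then $|\widehat F|^2=|\widehat{F*F}|$, and since $F*F$ has fixed compact support one can write $F*F=\phi\,(F*F)$ for a Schwartz $\phi$ and extract $A_\alpha(H)$ from the resulting convolution just as you do. The paper is then left with $\int|F(\xi)|^2\,d\xi$, which it bounds directly by $R\,\|f\|_{L^2(S)}^2$ via Cauchy--Schwarz; this computation is essentially a proof of the unweighted local estimate that you instead quote. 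Your version is a little more economical in that it cites rather than reproves that estimate, and avoids the $F*F$ maneuver; the paper's version, on the other hand, never needs the existence of a Schwartz $\varphi$ with $|\varphi|\geq 1$ on $S$ and $\widehat\varphi$ compactly supported (though that is standard and is used elsewhere in the paper, in Lemma~\ref{bdmubyh}).
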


\begin{proof}
We let $\eta$ be a $C_0^\infty$ function on $\mbb R^3$ such that
$|\widehat{\eta}| \geq 1$ on $B(0,1)$. Then
\begin{displaymath}
\int_{B(0,R)} |E f(x)|^2 H(x)dx
\leq \int |E f(x) \widehat{\eta_{R^{-1}}}(x)|^2 H(x)dx
= \int |\widehat{F}(x)|^2 H(x) dx,
\end{displaymath}
where $F=\eta_{R^{-1}} \ast f d\sigma$. Since $\eta$ is compactly supported
and $R \geq 1$, $F \ast F$ is supported in a ball $B(0,C)$. We let $\phi$ be
a Schwartz function on $\mbb R^3$ such that $\phi=1$ on $B(0,C)$. Then
$F \ast F= \phi \, (F \ast F)$, and, accordingly,
\begin{eqnarray*}
\int |\widehat{F}(x)|^2 H(x) dx
& = & \int \Big| \int \widehat{\phi}(x-y) \widehat{F \ast F}(y) dy \Big|
      H(x) dx \\
& \leq & \int |\widehat{F \ast F}(y)| \int |\widehat{\phi}(x-y)| H(x) dx dy.
\end{eqnarray*}
To estimate the inner integral, we let $B_l=B(y,2^l)$, and observe that
\begin{eqnarray*}
\int |\widehat{\phi}(x-y)| H(x) dx
&    = & \int_{B_0} |\widehat{\phi}(x-y)| H(x) dx + \sum_{l=1}^\infty
         \int_{B_l \setminus B_{l-1}} |\widehat{\phi}(x-y)| H(x) dx \\
& \leq & \int_{B_0} \frac{C_N H(x) dx}{(1+|x-y|)^N} + \sum_{l=1}^\infty
         \int_{B_l \setminus B_{l-1}} \frac{C_N H(x) dx}{(1+|x-y|)^N} \\
& \lct & A_\alpha(H).
\end{eqnarray*}
Therefore,
\begin{displaymath}
\int_{B(0,R)} |E f(x)|^2 H(x)dx
\lct A_\alpha(H) \int |\widehat{F \ast F}(y)| dy
= A_\alpha(H) \int |F(\xi)|^2 d\xi.
\end{displaymath}

By the definition of $F$, and the Cauchy-Schwarz inequality, we have
\begin{displaymath}
|F(\xi)|^2 \leq \Big( \int |\eta_{R^{-1}}(\xi-\zeta)| d\sigma(\zeta) \Big)
\Big( \int |\eta_{R^{-1}}(\xi-\zeta)| |f(\zeta)|^2 d\sigma(\zeta) \Big).
\end{displaymath}
Clearly,
\begin{displaymath}
\int |\eta_{R^{-1}}(\xi-\zeta)| d\sigma(\zeta)
= R^3 \int |\eta(R(\xi-\zeta))| d\sigma(\zeta)
\lct R^3 \sigma(B(\xi,R^{-1})) \lct R,
\end{displaymath}
so
\begin{displaymath}
\int |F(\xi)|^2 d\xi \lct
R \int |f(\zeta)|^2 \int |\eta_{R^{-1}}(\xi-\zeta)| d\xi d\sigma(\zeta)
= R \, \| \eta \|_{L^1} \| f \|_{L^2(S)}^2,
\end{displaymath}
and so
\begin{displaymath}
\int_{B(0,R)} |E f(x)|^2 H(x)dx \lct A_\alpha(H) R \, \| f \|_{L^2(S)}^2
\end{displaymath}
as claimed.
\end{proof}

This brings us to our second estimate on the broad part of $Ef$.

\begin{thm}
\label{themapp}
Suppose $0 < \alpha \leq 3$, $H$ is a weight of dimension $\alpha$, and
$p=13/4$.

Then there is a constant $c$, with $0 < c \leq (p-3)/2$, such that to every
$0 < \epsilon < c$ there are constants $K=K(\epsilon)$ and $C_\epsilon$ so
that $\lim_{\epsilon \to 0} K(\epsilon)= \infty$ and
\begin{displaymath}
\int_{B_R} \mbox{\rm Br}_{K^{-\epsilon}} E f(x)^p H(x) dx
\leq C_\epsilon \, R^\epsilon A_\alpha(H)^{2-(p/2)}
     \| f \|_{L^2(S)}^{3+2\epsilon}
\end{displaymath}
whenever $R \geq 1$, $f \in L^2(S)$, and
\begin{displaymath}
\int_{B(\xi_0,R^{-1/2}) \cap S} |f(\xi)|^2 d\sigma(\xi) \leq \frac{1}{R}
\end{displaymath}
for all $\xi_0 \in S$.
\end{thm}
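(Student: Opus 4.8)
The plan is to deduce Theorem \ref{themapp} from Theorem \ref{biltobr} by the same reduction that gave Theorem \ref{thejapp}, so the whole task is to verify the conditional hypothesis (\ref{estonbil}) when $p=13/4$ and $b=1$ (the normalization in the statement is (\ref{baverages}) with $b=1$). The parameters will be $q_0=1$, $q_1=2-p/2=3/8$, and $q_2=1/2$; these satisfy $0\le q_1\le 1\le 2q_0$ and $0<q_2<q_0$, and since $3<p\le 4$ and $(p-3)/2=1/8$ the remaining hypotheses of Theorem \ref{biltobr} hold, with $c=\min[c_0,1/8]$. Once (\ref{estonbil}) is proved, Theorem \ref{biltobr} yields (\ref{underbil}); applying it to a genuine partition $f=\sum_\tau f_\tau$ of $S$ (so the multiplicity is $m=1$ and $\sum_\tau\|f_\tau\|_{L^2(S)}^2=\|f\|_{L^2(S)}^2$) with $\beta=K^{-\epsilon}$ makes the last factor $R^{\delta_{\mbox{\tiny trans}}\log(K^\epsilon\beta m)}=R^{\delta_{\mbox{\tiny trans}}\log 1}=1$ and gives exactly Theorem \ref{themapp}, the power of $A_\alpha(H)$ being $q_1=2-(p/2)$.

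For (\ref{estonbil}) all implicit constants below may depend on $\epsilon$ and $K$, which is allowed. As in (\ref{fornewremark}) we first bound $(\mbox{Bil}_{P,\delta}Ef_{j,\mbox{\tiny tang}})^p\lct\sum_{\tau_1,\tau_2}|Ef_{\tau_1,j,\mbox{\tiny tang}}|^{p/2}|Ef_{\tau_2,j,\mbox{\tiny tang}}|^{p/2}$, the sum being over non-adjacent pairs. Fix such a pair and abbreviate $g_i=f_{\tau_i,j,\mbox{\tiny tang}}$. The new idea is to interpolate the integrand between the ``bilinear $L^4$'' and ``bilinear $L^2$'' quantities: writing $|Eg_i|^{13/8}=(|Eg_i|^2)^{5/8}|Eg_i|^{3/8}$, splitting the weight as $H=H^{5/8}H^{3/8}$, and applying H\"older with exponents $8/5$ and $8/3$ gives
\[
\int_{B_j\cap W}|Eg_1|^{13/8}|Eg_2|^{13/8}H\,dx\lct\Big(\int_{B_j\cap W}|Eg_1|^2|Eg_2|^2\,dx\Big)^{5/8}\Big(\int_{B_j}|Eg_1|\,|Eg_2|\,H\,dx\Big)^{3/8},
\]
where in the first factor we used $H\le 1$. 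The first factor is controlled by Guth's bilinear estimate (\ref{startpttang}); the second by Cauchy--Schwarz followed by the weighted local restriction estimate Lemma \ref{gentrace} on $B_j$ (which has radius $R^{1-\delta}\le R$, after a harmless translation of the weight), together with $\|g_i\|_{L^2(S)}^2\lct a_i:=\sum_{T\in\mbb T_{j,\mbox{\tiny tang}}}\|f_{\tau_i,T}\|_{L^2(S)}^2$ (from Proposition \ref{wave} and Lemma \ref{Lemma2.7}). This yields the per-pair bound $\lct R^{O(\delta)}R^{1/16}A_\alpha(H)^{3/8}(a_1a_2)^{13/16}$, up to the usual error terms inherited from (\ref{startpttang}). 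The key point is that Lemma \ref{gentrace} contributes the radius $R^{1}$, not $R^{\alpha}$, so the stray power of $R$ is $+1/16$ \emph{independently of $\alpha$}; this is exactly where we part from the proof of Theorem \ref{thejapp}, in which the weight is absorbed instead through $\int_{B_j}H^{4/(4-p)}\le A_\alpha(H)R^\alpha$, and it is this that forces the value $p=13/4$.

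From here the argument is word for word that of Theorem \ref{thejapp}. Since $13/16=p/4<1$, summing the per-pair bound over the non-adjacent pairs gives $\sum_{\tau_1,\tau_2}(a_1a_2)^{p/4}\lct(\sum_\tau a_\tau)^{p/2}$, and we split $(\sum_\tau a_\tau)^{p/2}=(\sum_\tau a_\tau)^{(p-3-2\epsilon)/2}(\sum_\tau a_\tau)^{(3+2\epsilon)/2}$. The second factor is $\lct(\sum_\tau\|f_\tau\|_{L^2(S)}^2)^{3/2+\epsilon}$ because $a_\tau\lct\|f_\tau\|_{L^2(S)}^2$; for the first, Lemma \ref{Lemma3.6} says $\mbb T_{j,\mbox{\tiny tang}}$ carries tubes in only $R^{(1/2)+O(\delta)}$ directions, so (using $b=1$ in (\ref{baverages})) $\sum_\tau a_\tau\lct R^{O(\delta)}R^{-1/2}$, whence the first factor is $\lct R^{O(\delta)}R^{-1/16+\epsilon/2}$. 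The stray $R^{1/16}$ cancels, and after also summing over the $O(R^{3\delta})$ balls $B_j$ we obtain $\int_{B_j\cap W}(\mbox{Bil}_{P,\delta}Ef_{j,\mbox{\tiny tang}})^pH\lct R^{O(\delta)}R^{\epsilon/2}A_\alpha(H)^{3/8}(\sum_\tau\|f_\tau\|_{L^2(S)}^2)^{3/2+\epsilon}$, which is (\ref{estonbil}) with $q_1=3/8$, $q_2=1/2$. The error terms are harmless once one recalls $\|f_\tau\|_{L^2(S)}^2\lct 1$ from (\ref{bbdonL2(S)}) and takes $N$ large, exactly as in the proof of Theorem \ref{thejapp}.

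The main obstacle is the $R$-power bookkeeping just indicated: the $R^{1/16}$ left by the $L^2$--$L^4$ interpolation through Lemma \ref{gentrace} must be matched precisely by the $R^{-1/16}$ produced by the ``few directions'' bound of Lemma \ref{Lemma3.6}, and this cancellation occurs only at $p=13/4$ -- which is why this branch cannot reach an exponent below $13/4$ whatever the value of $\alpha$. One also has to check, as stipulated in Theorem \ref{biltobr} and stressed in Remark \ref{implaterref}, that every constant produced -- especially the one hidden in $R^{O(\delta)}$ -- is independent of $b$ and of $q_0,q_1$, so that the induction inside Theorem \ref{biltobr} closes uniformly; this is verified just as in the proof of Theorem \ref{thejapp}, using $p-3-2\epsilon<1-2\epsilon$.
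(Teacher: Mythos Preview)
Your proposal is correct and follows essentially the same route as the paper: verify the tangential hypothesis (\ref{estonbil}) by interpolating between the weighted $L^2$ bound from Lemma \ref{gentrace} and the bilinear $L^4$ bound (\ref{startpttang}), then feed the direction count of Lemma \ref{Lemma3.6} into the resulting expression, and finally invoke Theorem \ref{biltobr} with $b=1$. The arithmetic of $R$-powers you describe (the $R^{1/16}$ from interpolation exactly cancelling the $R^{-1/16}$ from Lemma \ref{Lemma3.6}, forcing $p=13/4$) is the same computation the paper performs, just with the H\"older step applied per non-adjacent pair rather than to the full sum $\mbox{Bil}_{P,\delta}Ef_{j,\mbox{\tiny tang}}$; the paper also uses $q_0=3/4$ rather than your $q_0=1$, which is immaterial.

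One small point: your bound $\|g_i\|_{L^2(S)}^2\lct a_i$ is not quite what Lemma \ref{Lemma2.7} states (that lemma gives $\|g_i\|_{L^2(S)}^2\lct\|f_{\tau_i}\|_{L^2(S)}^2$). The inequality you need does hold up to an $O(R^{-N+O(1)})$ error by the almost-orthogonality of Proposition \ref{wave}(iii), and this error is harmless for the same reasons as the other error terms you mention; but the paper sidesteps this entirely by interpolating at the level of the summed bilinear quantity, which keeps the $\|f_{\tau,j,\mbox{\tiny tang}}\|_{L^2(S)}^2$ factors (bounded via Lemma \ref{Lemma2.7}) cleanly separated from the $a_\tau$ factors (bounded via Lemma \ref{Lemma3.6}).
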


\begin{proof}
The argument we use here is very close to the one presented in \S 3.4 of
\cite{guth:poly}. Our starting point is again (\ref{startpttang}), which
gives the following $L^4$ bound on the bilinear term:
\begin{eqnarray*}
\lefteqn{\int_{B_j \cap W} \mbox{Bil}_{P,\delta}
         Ef_{j,\mbox{\tiny tang}}(x)^4 H(x)dx
\; \leq \; \int_{B_j \cap W} \mbox{Bil}_{P,\delta}
           Ef_{j,\mbox{\tiny tang}}(x)^4 dx} \\
& \lct & \sum_{\tau_1, \tau_2 \, \mbox{\tiny non-adjacent}}
         \int_{B_j \cap W} |E f_{\tau_1,j,\mbox{\tiny tang}}|^2
                           |E f_{\tau_2,j,\mbox{\tiny tang}}|^2 dx
         \; \lct \; R^{O(\delta)} R^{-1/2} M + {\mathcal E},
\end{eqnarray*}
where
\begin{displaymath}
M= \sum_{\tau_1,\tau_2}
\Big( \sum_{T_1 \in \mbb T_{j,\mbox{\tiny tang}}}
      \| f_{\tau_1,T_1} \|_{L^2(S)}^2 \Big)
\Big( \sum_{T_2 \in \mbb T_{j,\mbox{\tiny tang}}}
      \| f_{\tau_2,T_2} \|_{L^2(S)}^2 \Big)
\end{displaymath}
and ${\mathcal E}=
O \Big( R^{-N+4} \big( \sum_\tau \| f_\tau \|_{L^2(S)}^2 \big)^2 \Big)$.

On the other hand, Lemma \ref{gentrace} tells us that
\begin{eqnarray*}
\lefteqn{\int_{B_j \cap W} \mbox{Bil}_{P,\delta}
         Ef_{j,\mbox{\tiny tang}}(x)^2 H(x) dx}
\\
& \lct & \sum_{\tau_1, \tau_2 \, \mbox{\tiny non-adjacent}}
         \int_{B_j \cap W} |E f_{\tau_1,j,\mbox{\tiny tang}}|
                           |E f_{\tau_2,j,\mbox{\tiny tang}}| H(x) dx \\
& \leq & \sum_{\tau_1,\tau_2}
         \| E f_{\tau_1,j,\mbox{\tiny tang}} \|_{L^2(B_j,H(x)dx)}
         \| E f_{\tau_2,j,\mbox{\tiny tang}} \|_{L^2(B_j,H(x)dx)} \\
& \lct & A_\alpha(H) R^{1-\delta} \sum_{\tau_1,\tau_2}
         \| f_{\tau_1,j,\mbox{\tiny tang}} \|_{L^2(S)}
         \| f_{\tau_2,j,\mbox{\tiny tang}} \|_{L^2(S)} \\
&   =  & A_\alpha(H) R^{1-\delta}
         \Big( \sum_\tau \| f_{\tau,j,\mbox{\tiny tang}} \|_{L^2(S)} \Big)^2
\; \lct \; A_\alpha(H) R
         \sum_\tau \| f_{\tau,j,\mbox{\tiny tang}} \|_{L^2(S)}^2.
\end{eqnarray*}

Interpolating between the $L^2$ estimate and the $L^4$ estimate, we get for
all $2 \leq p \leq 4$,
\begin{eqnarray*}
\lefteqn{\int_{B_j \cap W} \mbox{Bil}_{P,\delta}
         Ef_{j,\mbox{\tiny tang}}(x)^p H(x)dx} \\
&  =   & \int_{B_j \cap W} \mbox{Bil}_{P,\delta}
         Ef_{j,\mbox{\tiny tang}}(x)^{(2-p/2)(2)}
         \mbox{Bil}_{P,\delta}
         Ef_{j,\mbox{\tiny tang}}(x)^{(-1+p/2)(4)} H(x)dx \\
& \leq & \Big( \int_{B_j \cap W} \mbox{Bil}_{P,\delta}
         Ef_{j,\mbox{\tiny tang}}(x)^2 H(x)dx \Big)^{2-\frac{p}{2}} \\
&      & \times \; \Big( \int_{B_j \cap W} \mbox{Bil}_{P,\delta}
         Ef_{j,\mbox{\tiny tang}}(x)^4 H(x)dx
         \Big)^{-1+\frac{p}{2}} \\
& \lct & \Big( A_\alpha(H) R \sum_\tau
               \| f_{\tau,j,\mbox{\tiny tang}} \|_{L^2(S)}^2
         \Big)^{2-\frac{p}{2}}
         \Big( R^{O(\delta)} R^{-1/2} M + {\mathcal E} \Big)^{\frac{p}{2}-1}
         \\
& \leq & R^{O(\delta)} A_\alpha(H)^{2-\frac{p}{2}}
         R^{\frac{5}{2}-\frac{3}{4}p}
         \Big( \sum_\tau \| f_{\tau,j,\mbox{\tiny tang}} \|_{L^2(S)}^2
         \Big)^{2-\frac{p}{2}} M^{\frac{p}{2}-1} \\
&      & + \; A_\alpha(H)^{2-\frac{p}{2}} R^{2-\frac{p}{2}}
         \Big( \sum_\tau \| f_{\tau,j,\mbox{\tiny tang}} \|_{L^2(S)}^2
         \Big)^{2-\frac{p}{2}} {\mathcal E}^{\frac{p}{2}-1}
\end{eqnarray*}
(since $\frac{p}{2} - 1 \leq 1$). By Lemma \ref{Lemma2.7} (applied to a
single subset $\mbb T_{j,\mbox{\tiny tang}} \subset \mbb T$), we have
$\| f_{\tau,j,\mbox{\tiny tang}}\|_{L^2(S)}^2 \lct \| f_\tau \|_{L^2(S)}^2$,
so $\Big( \sum_\tau \| f_{\tau,j,\mbox{\tiny tang}} \|_{L^2(S)}^2
    \Big)^{2-\frac{p}{2}}
    \lct \Big( \sum_\tau \| f_\tau \|_{L^2(S)}^2 \Big)^{2-\frac{p}{2}}$,
and so
\begin{eqnarray*}
\lefteqn{A_\alpha(H)^{2-\frac{p}{2}} R^{2-\frac{p}{2}}
         \Big( \sum_\tau \| f_{\tau,j,\mbox{\tiny tang}} \|_{L^2(S)}^2
         \Big)^{2-\frac{p}{2}} {\mathcal E}^{\frac{p}{2}-1}} \\
& \lct & A_\alpha(H)^{2-\frac{p}{2}} R^{2-\frac{p}{2}}
         \Big( \sum_\tau \| f_\tau \|_{L^2(S)}^2 \Big)^{2-\frac{p}{2}}
        \Big( R^{-N+4} \big( \sum_\tau \| f_\tau \|_{L^2(S)}^2 \big)^2
        \Big)^{\frac{p}{2}-1} \\
& = & A_\alpha(H)^{2-\frac{p}{2}} R^{1-(N-3)(p-2)/2}
      \Big( \sum_\tau \| f_\tau \|_{L^2(S)}^2 \Big)^{\frac{p}{2}}.
\end{eqnarray*}
Thus the error term can be handled with the aid of (\ref{bbdonL2(S)})
as in the proof of the previous theorem.

We write the main term as
\begin{displaymath}
R^{O(\delta)} A_\alpha(H)^{2-\frac{p}{2}} R^{\frac{5}{2}-\frac{3}{4}p}
\Big( \sum_\tau \| f_{\tau,j,\mbox{\tiny tang}} \|_{L^2(S)}^2
\Big)^{2-\frac{p}{2}} M^\nu M^{\frac{p}{2}-1-\nu},
\end{displaymath}
where $\nu$ is a positive number that will be determined later. Following
\cite{guth:poly}, we will estimate $M$ in two different ways. As we saw
during the proof of Theorem \ref{thejapp}, part (i) of Proposition
\ref{wave} tells us that
\begin{displaymath}
\sum_{T_1 \in \mbb T_{j,\mbox{\tiny tang}}}
\| f_{\tau_1,T_1} \|_{L^2(S)}^2 \lct \| f_{\tau_1} \|_{L^2(S)}^2
\hspace{0.23in} \mbox{and} \hspace{0.23in}
\sum_{T_2 \in \mbb T_{j,\mbox{\tiny tang}}}
\| f_{\tau_2,T_2} \|_{L^2(S)}^2 \lct \| f_{\tau_2} \|_{L^2(S)}^2,
\end{displaymath}
so $M \lct \big( \sum_\tau \| f_\tau \|_{L^2(S)}^2 \big)^2$, and so
\begin{displaymath}
\Big( \sum_\tau \| f_{\tau,j,\mbox{\tiny tang}} \|_{L^2(S)}^2
\Big)^{2-\frac{p}{2}} M^\nu \lct
\Big( \sum_\tau \| f_\tau \|_{L^2(S)}^2 \Big)^{2-\frac{p}{2}+2\nu}.
\end{displaymath}
In order to estimate $M^{(p/2)-1-\nu}$, we use Lemma \ref{Lemma3.6} as in
the proof of the previous theorem to get
$\sum_{T_1 \in \mbb T_{j,\mbox{\tiny tang}}} \|f_{\tau_1,T_1}\|_{L^2(S)}^2
\lct R^{O(\delta)} R^{-1/2}$ (recall that $b=1$ in this theorem), and
likewise with $\tau_1, T_1$ replaced by $\tau_2, T_2$. Therefore,
\begin{displaymath}
M^{\frac{p}{2}-1-\nu}
\lct R^{O(\delta)} \Big( \frac{1}{R} \Big)^{\frac{p}{2}-1-\nu}
\end{displaymath}
provided $p \geq 2(1+\nu)$.

Putting the bounds together, we arrive at
\begin{eqnarray*}
\lefteqn{\int_{B_j \cap W} \mbox{Bil}_{P,\delta}
         Ef_{j,\mbox{\tiny tang}}(x)^p H(x)dx} \\
& \lct & R^{O(\delta)} A_\alpha(H)^{2-\frac{p}{2}}
         R^{\frac{5}{2}-\frac{3}{4}p}
         \Big( \sum_\tau \| f_\tau \|_{L^2(S)}^2 \Big)^{2-\frac{p}{2}+2\nu}
         \Big( \frac{1}{R} \Big)^{\frac{p}{2}-1-\nu} \\
&      & + \, R^{1-(N-3)(p-2)/2} A_\alpha(H)^{2-\frac{p}{2}}
           \Big( \sum_\tau \| f_\tau \|_{L^2(S)}^2 \Big)^{\frac{p}{2}} \\
& \lct & R^{O(\delta)} A_\alpha(H)^{2-\frac{p}{2}}
         R^{\frac{5}{2}-\frac{3}{4}p}
         \Big( \frac{1}{R} \Big)^{\frac{p}{2}-1-\nu}
        \Big( \sum_\tau \| f_\tau \|_{L^2(S)}^2 \Big)^{2-\frac{p}{2}+2\nu}
\end{eqnarray*}
provided $p \geq 2(1+\nu)$.

We now determine $\nu$. We need to have
$2-\frac{p}{2}+2\nu=\frac{3}{2}+\epsilon$, so
$\nu=\frac{p}{4}-\frac{1}{4}+\frac{\epsilon}{2}$. Then
\begin{displaymath}
\frac{p}{2}-1-\nu = \frac{p}{4}-\frac{3}{4}-\frac{\epsilon}{2} \geq 0
\;\; \Longleftrightarrow \;\; p \geq 3 + 2 \epsilon,
\end{displaymath}
and
\begin{displaymath}
R^{\frac{5}{2}-\frac{3}{4}p} \Big( \frac{1}{R} \Big)^{\frac{p}{2}-1-\nu}
= R^{\frac{5}{2}-\frac{3}{4}p}
        \Big( \frac{1}{R} \Big)^{\frac{p}{4}-\frac{3}{4}-\frac{\epsilon}{2}}
= R^{\epsilon/2} R^{\frac{13}{4}-p} = R^{\epsilon/2}.
\end{displaymath}
Thus
\begin{displaymath}
\int_{B_j \cap W} \mbox{Bil}_{P,\delta} Ef_{j,\mbox{\tiny tang}}(x)^p H(x)dx
\lct R^{O(\delta)} R^{\epsilon/2} A_\alpha(H)^{2-\frac{p}{2}}
     \Big( \sum_\tau \| f_\tau \|_{L^2(S)}^2 \Big)^{\frac{3}{2}+\epsilon}.
\end{displaymath}
We note that the implicit constant in the $R^{O(\delta)}$ factor does not 
depend on $b$.

Invoking Theorem \ref{biltobr} (with $q_2=1/2$, $q_1=2-(p/2)$, and
$q_0=3/4$), we conclude that to every sufficiently small $\epsilon$ there 
are constants $K=K(\epsilon)$ and $C_\epsilon$ such that
$\lim_{\epsilon \to 0} K(\epsilon)= \infty$ and
\begin{eqnarray*}
\lefteqn{\int_{B_R} \mbox{Br}_{\beta} E f(x)^p H(x) dx} \\
& \leq & C_\epsilon R^{3\epsilon/4} A_\alpha(H)^{2-(p/2)}
         \Big( \sum_\tau \| f_\tau \|_{L^2(S)}^2 \Big)^{(3/2)+\epsilon}
         R^{\delta_{\mbox{\tiny trans}} \log(K^\epsilon \beta m)}
\end{eqnarray*}
for all $\beta \geq K^{-\epsilon}$, $m \geq 1$, $R \geq 1$, and
$f \in \Lambda(R,K,m,1)$.

Given a function $f \in L^2(S)$ that satisfies
\begin{displaymath}
\int_{B(\xi_0,R^{-1/2}) \cap S} |f(\xi)|^2 d\sigma(\xi)
\leq \frac{1}{R}
\end{displaymath}
for all $\xi_0 \in S$, then, writing $f=\sum_\tau f_\tau$ with
$\mbox{supp} \, f_\tau \subset \tau$ and
$(\mbox{supp} \, f_\tau) \cap (\mbox{supp} \, f_{\tau'})= \emptyset$ if
$\tau \not= \tau'$, we see that $f \in \Lambda(R,K,m,1)$ and
$\sum_\tau \| f_\tau \|_{L^2(S)}^2= \| f \|_{L^2(S)}^2$. Applying the above
estimate with $\beta= K^{-\epsilon}$, we obtain the desired result.
\end{proof}

\section{Parabolic scaling and the main induction argument}

Suppose $\tau$ is a cap in $S$ of center $(\omega_0,h(\omega_0))$ and radius
$r \leq 1$. Following \cite{guth:poly}, for $\omega \in B^2(\omega_0,r)$, we
define
\begin{displaymath}
\widetilde{h}(\omega)
= h(\omega)-h(\omega_0)-(\omega-\omega_0) \cdot \nabla h(\omega_0).
\end{displaymath}
Also, for $|\eta| \leq 1$, we define
\begin{displaymath}
h_1(\eta)= r^{-2} \widetilde{h}(\omega_0+r \eta)
= r^{-2} \widetilde{h}(\omega)
\end{displaymath}
and we let $S_1$ be the graph of $h_1$ over $B^2(0,1)$.

To every function $f$ on $\tau$ we associate a function $g$ on $S_1$ defined
by
\begin{displaymath}
g(\eta,h_1(\eta))= r^2 f(\omega_0+r\eta,h(\omega_0+r\eta))
                   J_h(\omega_0+r\eta) J_{h_1}(\eta)^{-1},
\end{displaymath}
where $J_h=\sqrt{1+|\nabla h|^2}$ and $J_{h_1}=\sqrt{1+|\nabla h_1|^2}$.
Then
\begin{eqnarray*}
\lefteqn{\Big| \int_{B^2(\omega_0,r)} f(\omega,h(\omega))
e^{-2 \pi i \big( (x_1,x_2) \cdot \omega + x_3 h(\omega) \big)} J_h(\omega)
d\omega \Big|} \\
& = & \Big| \int_{B^2(\omega_0,r)} f(\omega,h(\omega))
      e^{-2 \pi  i \big(
      ((x_1,x_2) + x_3 \nabla h(\omega_0)) \cdot (\omega-\omega_0)
       +x_3 r^2 h_1((\omega-\omega_0)/r) \big)} \\
&   & \mbox{} \hspace{0.48in} \times J_h(\omega) d\omega \Big| \\
& = & \Big| \int_{B^2(0,1)} f(\omega_0 + r \eta,h(\omega_0 + r \eta))
      e^{-2 \pi  i \big( ((x_1,x_2) + x_3 \nabla h(\omega_0)) \cdot (r \eta)
       + x_3 r^2 h_1(\eta) \big)} \\
&   & \mbox{} \hspace{0.48in} \times J_h(\omega_0+r \eta) r^2 d\eta \Big| \\
& = & \Big| \int_{B^2(0,1)} g(\eta,h_1(\eta))
      e^{-2 \pi  i \big( ((rx_1,rx_2) + r x_3 \nabla h(\omega_0)) \cdot \eta
      + r^2 x_3 h_1(\eta) \big)} J_{h_1}(\eta) d\eta \Big|,
\end{eqnarray*}
where we have applied the change of variables $\eta=(\omega-\omega_0)/r$.
Thus
\begin{displaymath}
\big| E_S f(x) \big|
=\big| E_{S_1} g \big( (rx_1,rx_2) + r x_3 \nabla h(\omega_0), r^2 x_3 \big)
 \big|
\end{displaymath}
for all $x \in \mbb R^3$. Also,
\begin{eqnarray*}
\lefteqn{\int_{B^2(0,1)} |g(\eta,h_1(\eta))|^2 J_{h_1}(\eta) d\eta} \\
& = & r^2 \int_{B^2(0,1)}
      |f(\omega_0 + r \eta,h(\omega_0 + r \eta))|^2 J_h(\omega_0 + r \eta)^2
      J_{h_1}(\eta)^{-1} r^2 d\eta,
\end{eqnarray*}
so applying the change of variables $\omega=\omega_0+r\eta$, we see that
\begin{eqnarray*}
\lefteqn{\int_{B^2(0,1)} |g(\eta,h_1(\eta))|^2 J_{h_1}(\eta) d\eta} \\
& = & r^2 \int_{B^2(\omega_0,r)} |f(\omega,h(\omega))|^2 J_h(\omega)^2
      J_{h_1}((\omega-\omega_0)/r))^{-1} d\omega.
\end{eqnarray*}
Clearly, $J_{h_1}((\omega-\omega_0)/r)) \geq 1$ and (by (\ref{bdongrad}))
$J_h(\omega) \leq 3$ for all $\omega \in B^2(\omega_0,r)$, so
\begin{displaymath}
\int_{B^2(0,1)} |g(\eta,h_1(\eta))|^2 J_{h_1}(\eta) d\eta
\leq 3r^2 \int_{B^2(\omega_0,r)}|f(\omega,h(\omega))|^2 J_h(\omega) d\omega.
\end{displaymath}

Define the linear map $T: \mbb R^3 \to \mbb R^3$ by
\begin{displaymath}
Tx= \big( (rx_1,rx_2) + r x_3 \nabla h(\omega_0), r^2 x_3 \big).
\end{displaymath}
As we saw above, we have
\begin{displaymath}
|E_S f(x)|=|E_{S_1}g(Tx)|,
\end{displaymath}
so that
\begin{displaymath}
\int_{B(0,R)} |E_S f(x)|^p H(x) dx= \int_{B(0,R)} |E_{S_1}g(Tx)|^p H(x) dx.
\end{displaymath}
Applying the change of variables $u=Tx$, this becomes
\begin{displaymath}
\int_{B(0,R)} |E_S f(x)|^p H(x) dx
= \int_{T(B(0,R))} |E_{S_1} g(u)|^p H(T^{-1}u) r^{-4} du.
\end{displaymath}
Since (by (\ref{bdongrad}))
\begin{displaymath}
|Tx| \leq r |(x_1,x_2)|+ r |x_3| |\nabla h(\omega_0)| + r^2 |x_3|
\leq \Big( 1 + \frac{7}{4}|\omega_0| + r \Big) r |x|,
\end{displaymath}
it follows that $T(B(0,R)) \subset B(0,4rR)$, and hence
\begin{equation}
\label{bdesbyes1}
\int_{B(0,R)} |E_S f(x)|^p H(x) dx
\leq \int_{B(0,4rR)} |E_{S_1} g(u)|^p H(T^{-1}u) r^{-4} du.
\end{equation}

Let $H'=H \circ T^{-1}$. We need to study
\begin{displaymath}
\int_{B(u_0,t)} H'(u) du = \int_{B(u_0,t)} H(T^{-1}u) du
\end{displaymath}
for $u_0 \in \mbb R^3$ and $t \geq 1$. We begin by noticing that
\begin{displaymath}
u \in B(u_0,t) \;\; \Longleftrightarrow \;\; |Tv| \leq t
\;\; \Longleftrightarrow \;\; v \cdot A v \leq t^2,
\end{displaymath}
where $v= T^{-1}u - T^{-1}u_0$ and
\begin{displaymath}
A= T^t T = \left( \begin{array}{ccc}
                  r^2 \; & \; 0   \; & \; r^2 \partial_1 h(\omega_0) \\
                  0   \; & \; r^2 \; & \; r^2 \partial_2 h(\omega_0) \\
                  r^2 \partial_1 h(\omega_0) \; & \;
                  r^2 \partial_2 h(\omega_0) \; & \;
                  r^2 |\nabla h(\omega_0)|^2 + r^4
                  \end{array} \right).
\end{displaymath}
The characteristic polynomial of this matrix is
\begin{displaymath}
|A - \lambda I| = \big( r^2-\lambda \big)
\big( \lambda^2 - (r^2 J_h(\omega_0)^2 + r^4) \lambda +r^6).
\end{displaymath}
Since $J_h(\omega_0) \geq 1$, we have
\begin{displaymath}
(r^2 J_h(\omega_0)^2+r^4)^2-4r^6 \geq (r^2+r^4)^2-4r^6 = r^4+r^8-2r^6
=(r^2-r^4)^2 \geq 0,
\end{displaymath}
so the eigenvalues of $A$ are
\begin{displaymath}
\lambda_1= r^2, \hspace{0.25in}
\lambda_2=
\frac{2r^6}{r^2J_h(\omega_0)^2+r^4+\sqrt{(r^2 J_h(\omega_0)^2+r^4)^2-4r^6}},
\end{displaymath}
and
\begin{displaymath}
\lambda_3=
\frac{r^2 J_h(\omega_0)^2+r^4+\sqrt{(r^2 J_h(\omega_0)^2 + r^4)^2-4r^6}}{2}.
\end{displaymath}
Since $1 \leq J_h(\omega_0) \leq 3$, it follows that
\begin{displaymath}
\lambda_2 \geq \frac{r^6}{r^2 J_h(\omega_0)^2+r^4} \geq \frac{r^6}{9r^2+r^4}
\geq \frac{r^4}{10} \hspace{0.25in} \mbox{and} \hspace{0.25in}
\lambda_3 \geq \frac{r^2}{2}.
\end{displaymath}
Therefore,
the image of $B(u_0,t)$ under $T^{-1}$ is contained in an ellipsoid
of center $x_0=T^{-1}u_0$, two short principal axes of length
$2\sqrt{2} \, t/r$, and long principal axis of length $2\sqrt{10} \, t/r^2$.
This ellipsoid can be covered by balls
$B(x_1,4t/r), \ldots, B(x_N,4t/r)$ with $N \leq 3/r$, so after applying
the change of variables $x=T^{-1}u$ we see that
\begin{eqnarray*}
\lefteqn{\int_{B(u_0,t)} H'(u) du = \int_{T^{-1}(B(u_0,t))} H(x) r^4 dx}
\\
& & \leq r^4 \sum_{j=1}^N \int_{B(x_j,4t/r)} H(x) dx
    \leq r^4 N A_\alpha(H) \Big( \frac{4t}{r} \Big)^\alpha
    \leq (3)(4^3) r^{3-\alpha} A_\alpha(H) t^\alpha
\end{eqnarray*}
for all $u_0 \in \mbb R^3$ and $t \geq 1$. Thus
\begin{equation}
\label{hprime192h}
A_\alpha(H') \leq (192) r^{3-\alpha} A_\alpha(H).
\end{equation}

\begin{thm}
\label{parabscaling}
Let $0 < \alpha \leq 3$, $3 \leq p \leq 4$, $2 \leq \gamma \leq 3$,
$0 \leq q_1 \leq 1$, $q_2 \geq 0$, and $c > 0$.

Suppose that we have the following estimate on the broad part of $Ef$: to
every $\epsilon \in (0,c)$ there are constants $K(\epsilon)$ and
$\bar{C}_\epsilon$ such that $\lim_{\epsilon \to 0} K(\epsilon)= \infty$ and
\begin{displaymath}
\int_{B(0,R)} \mbox{\rm Br}_{K^{-\epsilon}} Ef(x)^p H(x)dx
\leq \bar{C}_\epsilon R^\epsilon A_\alpha(H)^{q_1}
     R^{q_2} \| f \|_{L^2(S)}^{\gamma} \| f \|_{L^\infty(S)}^{p-\gamma}
\end{displaymath}
for all radii $R \geq 1$, weights $H$ of dimension $\alpha$, functions $h$
satisfying conditions {\rm (i)--(iv)} of {\rm Assumption \ref{graphofh}},
and functions $f \in L^\infty(S)$.

If $2p-\alpha-1-\gamma > 0$, then there is a constant $c'$, which only
depends on $\alpha, p$, and $\gamma$, such that for $0 < \epsilon < c'$ we
have
\begin{displaymath}
\int_{B(0,R)} |Ef(x)|^p H(x)dx \leq C_\epsilon R^\epsilon
\Big( \max \big[ A_\alpha(H), A_\alpha(H)^{q_1} \big] \Big) R^{q_2}
\| f \|_{L^2(S)}^{\gamma} \| f \|_{L^\infty(S)}^{p-\gamma},
\end{displaymath}
with
\begin{displaymath}
C_\epsilon=
2 \big( \bar{C}_\epsilon + 10^4 \sigma(S)^4 \big),
\end{displaymath}
for all radii $R \geq 1$, weights $H$ of dimension $\alpha$, functions $h$
satisfying conditions {\rm (i)--(iv)} of {\rm Assumption \ref{graphofh}},
and functions $f \in L^\infty(S)$.
\end{thm}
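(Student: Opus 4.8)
The plan is to argue by induction on the radius $R$: at each scale I peel off the broad part of $Ef$ with the hypothesis, and treat what remains — the ``narrow'' part — by parabolic rescaling, which reduces it to a strictly smaller radius. To set up, fix $\epsilon\in(0,c')$ with $c'$ to be chosen, put $K=K(\epsilon)$ and $\beta=K^{-\epsilon}$, cover $S$ by $\sim K^2$ essentially disjoint caps $\tau$ of radius $r=1/K$, and write $f=\sum_\tau f_\tau$ with $\mbox{supp}\,f_\tau\subset\tau$, so that $\sum_\tau\|f_\tau\|_{L^2(S)}^2\le\|f\|_{L^2(S)}^2$ and $\|f_\tau\|_{L^\infty(S)}\le\|f\|_{L^\infty(S)}$. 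From the pointwise bound $|Ef(x)|^p\le\mbox{Br}_\beta Ef(x)^p+\beta^{-p}\sum_\tau|Ef_\tau(x)|^p$, integrating $H\,dx$ over $B(0,R)$ and applying the hypothesis to the broad term, I obtain
\begin{displaymath}
\int_{B(0,R)}|Ef|^p H\,dx\le\bar C_\epsilon R^{\epsilon+q_2}A_\alpha(H)^{q_1}\|f\|_{L^2(S)}^{\gamma}\|f\|_{L^\infty(S)}^{p-\gamma}+K^{p\epsilon}\sum_\tau\int_{B(0,R)}|Ef_\tau|^p H\,dx.
\end{displaymath}
Since $C_\epsilon\ge2\bar C_\epsilon$ and $A_\alpha(H)^{q_1}\le\max[A_\alpha(H),A_\alpha(H)^{q_1}]$, the broad contribution is already at most half the target, so everything hinges on absorbing the narrow sum.

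For the narrow sum I would use the parabolic rescaling developed in this section, cap by cap. For a cap $\tau$ of radius $r$ and centre $(\omega_0,h(\omega_0))$: the rescaled surface $S_1$ again satisfies conditions (i)--(iv) of Assumption \ref{graphofh} (its Hessian equals that of $h$ after a translation and a dilation, and its higher-order derivatives only shrink because $r\le1$); the rescaled datum $g_\tau$ on $S_1$ obeys $\|g_\tau\|_{L^2(S_1)}^2\le3r^2\|f_\tau\|_{L^2(S)}^2$ and $\|g_\tau\|_{L^\infty(S_1)}\le3r^2\|f_\tau\|_{L^\infty(S)}$; by (\ref{bdesbyes1}), $\int_{B(0,R)}|E_Sf_\tau|^p H\,dx\le r^{-4}\int_{B(0,4rR)}|E_{S_1}g_\tau|^p H'_\tau\,du$ with $H'_\tau=H\circ T^{-1}$; and by (\ref{hprime192h}) together with the monotonicity of $t\mapsto\max[t,t^{q_1}]$ one gets $\max[A_\alpha(H'_\tau),A_\alpha(H'_\tau)^{q_1}]\le192\,r^{3-\alpha}\max[A_\alpha(H),A_\alpha(H)^{q_1}]$, with the constant $192$ independent of $q_1$. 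If $R\ge K/4$ then $4rR\in[1,R/2]$ (recall $K\ge8$), so the inductive hypothesis applies to each rescaled integral; plugging it in, using $\gamma\ge2$ to pass from $\sum_\tau\|f_\tau\|_{L^2(S)}^{\gamma}$ to $\|f\|_{L^2(S)}^{\gamma}$, and collecting the powers of $r$ (net exponent $-4+(3-\alpha)+\gamma+2(p-\gamma)+(\epsilon+q_2)=2p-\alpha-1-\gamma+q_2+\epsilon$) yields
\begin{displaymath}
K^{p\epsilon}\sum_\tau\int_{B(0,R)}|Ef_\tau|^p H\,dx\le C_1\,K^{(p-1)\epsilon-(2p-\alpha-1-\gamma+q_2)}\,C_\epsilon R^{\epsilon+q_2}\max[A_\alpha(H),A_\alpha(H)^{q_1}]\|f\|_{L^2(S)}^{\gamma}\|f\|_{L^\infty(S)}^{p-\gamma},
\end{displaymath}
where $C_1$ is an absolute constant (the overlap of $\{\tau\}$, the factor $192$, and a few powers of $3$ and $4$). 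Because $2p-\alpha-1-\gamma>0$ and $q_2\ge0$, the exponent of $K$ tends to a strictly negative limit as $\epsilon\to0$, while $K(\epsilon)\to\infty$; hence for $\epsilon$ below a threshold $c'$ depending only on $\alpha,p,\gamma$ (enlarging $q_2$ only helps, and the $192$ is $q_1$-free) the prefactor $C_1K^{(p-1)\epsilon-(2p-\alpha-1-\gamma+q_2)}$ is $\le\tfrac{1}{2}$, the narrow term is at most half the target, and the induction closes for $R\ge K/4$. For $1\le R<K/4$, where $4rR<1$ and the inductive hypothesis is unavailable, I would instead bound each rescaled integral by the trivial estimate $\|E_{S_1}g_\tau\|_{L^\infty(\mbb R^3)}\le\|g_\tau\|_{L^1(S_1)}\le\sigma(S_1)^{1/2}\|g_\tau\|_{L^2(S_1)}$ on a ball of radius $<1$; the same bookkeeping, now converting the leftover $R^\alpha$ into $K^\alpha$ via $R<K/4$, gives a prefactor $C\,K^{p\epsilon-(2p-\alpha-1-\gamma)}$ times $A_\alpha(H)\|f\|_{L^2(S)}^{\gamma}\|f\|_{L^\infty(S)}^{p-\gamma}$, again $\le\tfrac{1}{2}C_\epsilon$ times the target for $\epsilon$ small (by $2p-\alpha-1-\gamma>0$). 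When $R$ is below an absolute constant the claim is immediate from $\|Ef\|_{L^\infty}\le\|f\|_{L^1(S)}$, $\int_{B(0,R)}H\,dx\le A_\alpha(H)R^\alpha$, and the additive $10^4\sigma(S)^4$ built into $C_\epsilon$ (note that $\sigma(S)$ is bounded above and below by absolute constants under Assumption \ref{graphofh}).

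The hard part will be the constant bookkeeping in the narrow step rather than any individual estimate: I must make sure that rescaling the $\sim K^2$ caps and summing genuinely gains a negative power of $K$, and — this is precisely what the conclusion demands — that $c'$ and the constant $192$ controlling $A_\alpha(H'_\tau)$ are independent of $q_1$ and $q_2$, so that the theorem can later be applied uniformly with all admissible values of these parameters (which is what permits, for instance, the large values of $b$ needed for part (ii) of Theorem \ref{mainjj}). This forces one to track every implicit and explicit constant and to use the inequality $2p-\alpha-1-\gamma>0$ exactly as stated rather than a weaker substitute. Verifying that $S_1$ still satisfies Assumption \ref{graphofh} and that $H'_\tau$ is still a weight of dimension $\alpha$ is routine given (\ref{bdongrad}) and (\ref{hprime192h}), as is the handling of the base range, so the computation above is essentially the whole proof.
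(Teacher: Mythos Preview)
Your overall strategy --- induction on $R$, broad/narrow split, parabolic rescaling of the narrow term --- is correct, and your power-count is the right one. But there is a genuine gap in the weight step: the claimed inequality
\begin{displaymath}
\max\big[A_\alpha(H'_\tau),A_\alpha(H'_\tau)^{q_1}\big]\le 192\,r^{3-\alpha}\max\big[A_\alpha(H),A_\alpha(H)^{q_1}\big]
\end{displaymath}
is false. Monotonicity of $t\mapsto\max[t,t^{q_1}]$ only yields $\max[A',(A')^{q_1}]\le\max[cA,(cA)^{q_1}]$ with $c=192\,r^{3-\alpha}$ and $A=A_\alpha(H)$; the further step $\max[cA,(cA)^{q_1}]\le c\max[A,A^{q_1}]$ fails whenever $c<1$ and $q_1<1$, since raising a number below $1$ to a power $<1$ makes it larger (take $A=1$, $q_1=1/4$, $c=r^{3/2}$: the left side is $r^{3/8}$, the right side $r^{3/2}$). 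This is fatal rather than cosmetic: without the full $r^{3-\alpha}$ gain, your net exponent of $r$ drops to at best $2p-4-\gamma+(3-\alpha)q_1$, which is negative in the regimes needed for Theorem~\ref{mainjj} (e.g.\ $\alpha=2$, $p=22/7$, $\gamma=3$, $q_1=3/14$ gives $-1/2$), and the induction does not close.

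The paper's remedy is to decouple the dependence on $A_\alpha(H)$ from the rescaling by inserting an auxiliary convolution before the parabolic change of variables. One writes $f_\tau=\phi_\rho(\cdot-\xi_\tau)F_\tau$ with $\phi$ Schwartz and $\widehat\phi$ supported in the unit ball; then $|Ef_\tau|^p\lct\rho^{-3(p-1)}\,|\widehat\phi(\rho\,\cdot)|*|EF_\tau|^p$, and integrating against $H$ extracts a factor $A_\alpha(H)$ \emph{linearly} while replacing $H$ by a new weight $\mathcal H$ satisfying $A_\alpha(\mathcal H)\le 5\rho^{\alpha-3}$, a bound independent of $H$. After parabolic rescaling, $A_\alpha(\mathcal H')\le 192\,r^{3-\alpha}\cdot 5\rho^{\alpha-3}\le 960$, so $\max[A_\alpha(\mathcal H'),A_\alpha(\mathcal H')^{q_1}]\le 960$ uniformly in $q_1$, and the inductive hypothesis contributes only an absolute constant at this spot. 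The linearly extracted $A_\alpha(H)$ is precisely why the conclusion carries $\max[A_\alpha(H),A_\alpha(H)^{q_1}]$ rather than $A_\alpha(H)^{q_1}$ alone: the broad term contributes the $q_1$-th power, the narrow term the first power. With this device in place, your remaining bookkeeping goes through and recovers the exponent $2p-\alpha-1-\gamma$.
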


\begin{proof}
We are going to prove the theorem by induction on $R$. The estimate is true
for $1 \leq R \leq 10$:
\begin{eqnarray*}
\int_{B(0,R)} |Ef(x)|^p H(x)dx
& \leq & \| f \|_{L^1(S)}^p \int_{B(0,10)} H(x)dx \\
&  =   & \Big( \int_{B(0,10)} H(x)dx \Big)^{1-q_1+q_1} \| f \|_{L^1(S)}^p \\
& \leq & R^{\epsilon+q_2} |B(0,10)|^{1-q_1} A_\alpha(H)^{q_1}
         (10^{\alpha q_1})
         \| f \|_{L^1(S)}^{\gamma} \| f \|_{L^1(S)}^{p-\gamma} \\
& \leq & (5^{1-q_1}) (10^3) \sigma(S)^4 R^\epsilon A_\alpha(H)^{q_1} R^{q_2}
         \| f \|_{L^2(S)}^{\gamma} \| f \|_{L^\infty(S)}^{p-\gamma},
\end{eqnarray*}
where we have used the fact that $\sigma(S) \geq 1$ for all $h$.

Suppose $R \geq 10$ and our estimate is true for all functions $h$
satisfying conditions (i)--(iv) of Assumption \ref{graphofh}, weights $H$ of
dimension $\alpha$, and all radii in the interval $[1,R/2]$. Then
\begin{eqnarray*}
\lefteqn{\int_{B(0,R)} |Ef(x)|^p H(x)dx} \\
& \leq & \int_{B(0,R)} \mbox{Br}_{K^{-\epsilon}} Ef(x)^p H(x)dx
            + K^\epsilon \sum_\tau \int_{B(0,R)} |Ef_\tau(x)|^p H(x)dx \\
& \leq & \bar{C}_\epsilon R^\epsilon A_{\alpha}(H)^{q_1} R^{q_2}
         \| f \|_{L^2(S)}^{\gamma} \| f \|_{L^\infty(S)}^{p-\gamma}
         + K^\epsilon \sum_\tau \int_{B(0,R)} |Ef_\tau(x)|^p H(x)dx.
\end{eqnarray*}
We have $K$ functions $f_\tau$ each supported in a cap of diameter $r=1/K$,
and hence in a set of the form $B(\xi_\tau,\rho) \cap S$ with
$r \leq \rho \leq 3 r$ and $\xi_\tau \in S$. We are going to use parabolic
scaling and the induction hypothesis to bound
$\sum_\tau \int_{B(0,R)} |E f_\tau|^p H(x) dx$. We let $\phi$ be a
non-negative Schwartz function on $\mbb R^3$ such that $\phi \geq 1$ on the
unit ball and $\widehat{\phi}$ is supported in the unit ball, and we observe
that
\begin{displaymath}
|\phi_\rho(\xi - \xi_\tau)| \geq \frac{1}{\rho^3}
\hspace{0.25in} \mbox{on} \hspace{0.25in} B(\xi_\tau,\rho).
\end{displaymath}
We also define the function $F_\tau$ on $B(\xi_\tau,\rho) \cap S$ by the
equation
\begin{displaymath}
f_\tau(\xi) = \phi_\rho(\xi - \xi_\tau) F_\tau(\xi)
\end{displaymath}
and we observe that $|F_\tau| \leq \rho^3 |f_\tau|$. Then
\begin{displaymath}
E f_\tau (x)
= \Big( \phi_\rho(\cdot - \xi_\tau) \widehat{\Big)\;} \ast E F_\tau (x)
= \int e^{-2 \pi i (x-y) \cdot \xi_\tau}
       \widehat{\phi}(\rho(x-y)) E F_\tau (y) dy,
\end{displaymath}
so that
\begin{eqnarray*}
|E f_\tau (x)|^p
& \leq & \Big( \int |\widehat{\phi}(\rho(x-y))| \; |E F_\tau (y)| dy \Big)^p
                                                                          \\
& \leq & \Big( \int |\widehat{\phi}(\rho(x-y))| dy \Big)^{p-1}
         \Big( \int |E F_\tau (y)|^p |\widehat{\phi}(\rho(x-y))| dy \Big) \\
&   =  & \frac{1}{\rho^{3(p-1)}} \big\| \widehat{\phi} \, \big\|_{L^1}^{p-1}
          \int |E F_\tau (y)|^p |\widehat{\phi}(\rho(x-y))| dy,
\end{eqnarray*}
which gives
\begin{eqnarray*}
\lefteqn{\int_{B(0,R)} |E f_\tau (x)|^p H(x) dx} \\
& \leq & \frac{\rho^3}{\rho^{3p}}\big\| \widehat{\phi} \, \big\|_{L^1}^{p-1}
         \int |E F_\tau (y)|^p
         \int_{B(0,R)} |\widehat{\phi}(\rho(x-y))| H(x) dx dy.
\end{eqnarray*}
We now define the function ${\mathcal H}$ on $\mbb R^3$ by
\begin{displaymath}
{\mathcal H}(y)=
\big\| \widehat{\phi} \, \big\|_{L^\infty}^{-1} A_\alpha(H)^{-1} \rho^\alpha
\int_{B(0,R)} |\widehat{\phi}(\rho(x-y))| H(x) dx,
\end{displaymath}
notice that ${\mathcal H}$ is supported in the ball $B(0,R+\rho^{-1})$, and
conclude that
\begin{eqnarray*}
\lefteqn{\int_{B(0,R)} |E f_\tau (x)|^p H(x) dx} \\
& \leq & \big\| \widehat{\phi} \, \big\|_{L^\infty} \, A_\alpha(H) \,
         \big\| \widehat{\phi} \, \big\|_{L^1}^{p-1} \rho^{3 - \alpha}
         \rho^{-3 p} \int_{B(0,R+\rho^{-1})} |E F_\tau (y)|^p
         {\mathcal H}(y)dy.
\end{eqnarray*}
Since $\widehat{\phi}$ is supported in the unit ball, we have
\begin{displaymath}
\big\| \widehat{\phi} \, \big\|_{L^1}
\leq |B(0,1)| \, \big\| \widehat{\phi} \, \big\|_{L^\infty}
\leq 5 \big\| \widehat{\phi} \, \big\|_{L^\infty}.
\end{displaymath}
Thus
\begin{eqnarray}
\label{EfbdbyEF}
\lefteqn{\int_{B(0,R)} |E f_\tau (x)|^p H(x) dx} \nonumber \\
& \leq & 5^3 \big\| \widehat{\phi} \, \big\|_{L^\infty}^4
         A_\alpha(H) \rho^{3 - \alpha - 3 p}
         \int_{B(0,R+\rho^{-1})} |E F_\tau (y)|^p {\mathcal H}(y)dy
\end{eqnarray}
($\| \widehat{\phi} \|_{L^\infty} \geq 1$ because
$\widehat{\phi}(0)= \| \phi \|_{L^1} \geq 1$).

The function ${\mathcal H}$ is a weight on $\mbb R^3$ of the same dimension
as $H$. In fact,
\begin{displaymath}
{\mathcal H}(y) \leq
\frac{\rho^\alpha \big\| \widehat{\phi} \,\big\|_{L^\infty}^{-1}}
     {A_\alpha(H)} \big\| \widehat{\phi} \, \big\|_{L^\infty}
\int_{B(y,1/\rho)} H(x)dx
\leq \frac{\rho^\alpha}{A_\alpha(H)} A_\alpha(H)
\Big( \frac{1}{\rho} \Big)^\alpha = 1
\end{displaymath}
(provided $1/\rho \geq 1$) for all $y \in \mbb R^3$, so
$\| {\mathcal H} \|_{L^\infty} \leq 1$. Also,
\begin{eqnarray*}
\int_{B(y_0,t)} {\mathcal H}(y) dy
& = & \int \chi_{B(y_0,t)}(y) {\mathcal H}(y) dy \\
& = & \frac{\rho^\alpha \big\| \widehat{\phi} \,\big\|_{L^\infty}^{-1}}
           {A_\alpha(H)}
      \int_{B(0,R)} \int \chi_{B(y_0,t)}(y) \,
      |\widehat{\phi}(\rho(x-y))| dy H(x)dx.
\end{eqnarray*}
Applying the change of variables $v=\rho(x-y)$ to the inner integral, we get
\begin{eqnarray*}
\int_{B(y_0,t)} {\mathcal H}(y) dy
& = & \frac{\rho^{\alpha-3} \big\| \widehat{\phi} \,\big\|_{L^\infty}^{-1}}
           {A_\alpha(H)}
      \int_{B(0,R)} \int \chi_{B(y_0,t)}(x-\rho^{-1}v) \,
      |\widehat{\phi}(v)| dv H(x) dx \\
& = & \frac{\rho^{\alpha-3} \big\| \widehat{\phi} \,\big\|_{L^\infty}^{-1}}
           {A_\alpha(H)}
      \int_{B(0,R)} |\widehat{\phi}(v)|
      \int \chi_{B(y_0,t)}(x-\rho^{-1}v) H(x) dx dv \\
& = & \frac{\rho^{\alpha-3} \big\| \widehat{\phi} \,\big\|_{L^\infty}^{-1}}
           {A_\alpha(H)}
      \int_{B(0,R)} |\widehat{\phi}(v)|
      \int \chi_{B(y_0+\rho^{-1}v,t)}(x) H(x) dx dv \\
& \leq & \frac{\rho^{\alpha-3}
               \big\| \widehat{\phi} \,\big\|_{L^\infty}^{-1}}
              {A_\alpha(H)}
     \int_{B(0,R)} |\widehat{\phi}(v)| A_\alpha(H) t^\alpha dv \\
& \leq & \big\| \widehat{\phi} \, \big\|_{L^\infty}^{-1}
         \big\| \widehat{\phi} \, \big\|_{L^1}
         \, \rho^{\alpha-3} t^\alpha
\end{eqnarray*}
for all $y_0 \in \mbb R^3$ and $t \geq 1$, so that
\begin{equation}
\label{bdonfancyh}
A_\alpha({\mathcal H})
\leq \big\| \widehat{\phi} \, \big\|_{L^\infty}^{-1}
     \big\| \widehat{\phi} \, \big\|_{L^1} \, \rho^{\alpha-3}
\leq 5 \, \rho^{\alpha-3}.
\end{equation}

We know that $\tau$ is the graph of $h$ over $B^2(\omega_0,r)$, so, by
(\ref{bdesbyes1}),
\begin{eqnarray*}
\int_{B(0,R+\rho^{-1})} |E F_\tau(x)|^p {\mathcal H}(x) dx
&   =  & \int_{B(0,R+\rho^{-1})} |E_S F_\tau(x)|^p {\mathcal H}(x) dx \\
& \leq & r^{-4} \int_{B(0,4rR+4)} |E_{S_1} G(u)|^p {\mathcal H}'(u) du
\end{eqnarray*}
($B(0,4rR+4r\rho^{-1}) \subset B(0,4rR+4)$ because $r \leq \rho$), and so
(choosing $K$ large enough for $4rR+4< R/2$) the induction hypothesis tells
us that
\begin{eqnarray*}
\lefteqn{\int_{B(0,R+\rho^{-1})} |EF_\tau(x)|^p {\mathcal H}(x) dx}\\
& \leq & r^{-4} C_\epsilon (4rR+4)^\epsilon \Big( \max
         \big[ A_\alpha({\mathcal H}'), A_\alpha({\mathcal H}')^{q_1} \big]
         \Big) (4rR+4)^{q_2}
         \| G \|_{L^2(S_1)}^{\gamma} \| G \|_{L^\infty(S_1)}^{p-\gamma}.
\end{eqnarray*}
Since
\begin{displaymath}
G(\eta,h_1(\eta))= r^2 F_\tau(\omega_0+r\eta,h(\omega_0+r\eta))
                   J_h(\omega_0+r\eta) J_{h_1}(\eta)^{-1}
\end{displaymath}
and $1 \leq J \leq 3$, it follows that
\begin{displaymath}
\| G \|_{L^\infty(S_1)} \leq 3 r^2 \| F_\tau \|_{L^\infty(S)}.
\end{displaymath}
Also, since
\begin{displaymath}
\int_{B^2(0,1)} |G(\eta,h_1(\eta))|^2 J_{h_1}(\eta) d\eta
\leq 3r^2 \int_{B^2(\omega_0,r)}
     |F_\tau(\omega,h(\omega))|^2 J_h(\omega) d\omega,
\end{displaymath}
we have
\begin{displaymath}
\| G \|_{L^2(S_1)}^2 \leq 3 r^2 \| F_\tau \|_{L^2(S)}^2.
\end{displaymath}
Therefore,
\begin{eqnarray*}
\lefteqn{\int_{B(0,R+\rho^{-1})} |EF_\tau(x)|^p {\mathcal H}(x)dx} \\
& \leq & r^{-4} C_\epsilon \Big( \frac{R}{2} \Big)^\epsilon \Big( \max
         \big[ A_\alpha({\mathcal H}'), A_\alpha({\mathcal H}')^{q_1} \big]
         \Big) \Big( \frac{R}{2} \Big)^{q_2}
         \| G \|_{L^2(S_1)}^{\gamma} \| G \|_{L^\infty(S_1)}^{p-\gamma} \\
& \leq & 3^4 r^{2p-4-\gamma} C_\epsilon R^\epsilon \Big( \max
         \big[ A_\alpha({\mathcal H}'), A_\alpha({\mathcal H}')^{q_1} \big]
         \Big) R^{q_2} \| F_\tau \|_{L^2(S)}^{\gamma}
         \| F_\tau \|_{L^\infty(S)}^{p-\gamma}.
\end{eqnarray*}
Since (by (\ref{hprime192h}) and (\ref{bdonfancyh}))
\begin{displaymath}
A_\alpha({\mathcal H}') \leq (192) r^{3-\alpha} A_\alpha({\mathcal H})
\leq (192) r^{3-\alpha} (5 \rho^{\alpha-3}) \leq 960
\end{displaymath}
and $0 \leq q_1 \leq 1$, it follows that
\begin{displaymath}
\max \big[ A_\alpha({\mathcal H}'), A_\alpha({\mathcal H}')^{q_1} \big]
\leq 960.
\end{displaymath}
So
\begin{eqnarray*}
\lefteqn{\int_{B(0,R+\rho^{-1})} |EF_\tau(x)|^p {\mathcal H}(x)dx} \\
& \leq & (3^4 \times 960) r^{2p-4-\gamma} C_\epsilon R^\epsilon R^{q_2}
         \| F_\tau \|_{L^2(S)}^{\gamma}
         \| F_\tau \|_{L^\infty(S)}^{p-\gamma},
\end{eqnarray*}
and so (using (\ref{EfbdbyEF}))
\begin{eqnarray*}
\lefteqn{\int_{B(0,R)} |Ef_\tau(x)|^p H(x)dx} \\
& \leq & 10^7 \big\| \widehat{\phi} \, \big\|_{L^\infty}^4
         \rho^{3-\alpha-3p} r^{2p-4-\gamma} C_\epsilon R^\epsilon
         A_\alpha(H) R^{q_2} \| F_\tau \|_{L^2(S)}^{\gamma}
         \| F_\tau \|_{L^\infty(S)}^{p-\gamma}.
\end{eqnarray*}
Recalling that $|F_\tau| \leq \rho^3 |f_\tau|$, this becomes
\begin{eqnarray*}
\lefteqn{\int_{B(0,R)} |Ef_\tau(x)|^p H(x)dx} \\
& \leq & 10^7 \big\| \widehat{\phi} \, \big\|_{L^\infty}^4 \rho^{3-\alpha}
         r^{2p-4-\gamma} C_\epsilon R^\epsilon A_\alpha(H) R^{q_2}
         \| f_\tau \|_{L^2(S)}^{\gamma}
         \| f_\tau \|_{L^\infty(S)}^{p-\gamma} \\
& \leq & (10^7 \times 3^3) \big\| \widehat{\phi} \, \big\|_{L^\infty}^4
         r^{2p-\alpha-1-\gamma} C_\epsilon R^\epsilon A_\alpha(H) R^{q_2}
         \| f_\tau \|_{L^2(S)}^{\gamma} \| f \|_{L^\infty(S)}^{p-\gamma}
\end{eqnarray*}
(recall that $\rho \leq 3r$). Thus
\begin{eqnarray*}
\lefteqn{K^\epsilon \sum_\tau \int_{B(0,R)} |Ef_\tau(x)|^p H(x)dx} \\
& \leq & 10^9 \big\| \widehat{\phi} \, \big\|_{L^\infty}^4
         r^{2p-\alpha-1-\gamma-\epsilon} C_\epsilon R^\epsilon A_\alpha(H)
         R^{q_2} \| f \|_{L^\infty(S)}^{p-\gamma}
         \sum_\tau \| f_\tau \|_{L^2(S)}^{\gamma}.
\end{eqnarray*}
Now
\begin{displaymath}
\sum_\tau \| f_\tau \|_{L^2(S)}^{\gamma}
= \sum_\tau \Big( \int |f_\tau|^2 d\sigma \Big)^{\gamma/2}
\leq \Big( \sum_\tau \int |f_\tau|^2 d\sigma \Big)^{\gamma/2}
= \| f \|_{L^2(S)}^{\gamma}
\end{displaymath}
provided $\gamma \geq 2$, and
\begin{displaymath}
10^9 \big\| \widehat{\phi} \, \big\|_{L^\infty}^4
r^{2p-\alpha-1-\gamma-\epsilon} C_\epsilon
\leq \bar{C}_\epsilon + 10^4 \sigma(S)^4
\end{displaymath}
provided
\begin{displaymath}
10^9 \big\| \widehat{\phi} \, \big\|_{L^\infty}^4
r^{2p-\alpha-1-\gamma-\epsilon} \leq \frac{1}{2},
\end{displaymath}
so
\begin{eqnarray*}
\lefteqn{K^\epsilon \sum_\tau \int_{B_R(0)} |Ef_\tau(x)|^p H(x)dx} \\
& \leq & \big( \bar{C}_\epsilon + 10^4 \sigma(S)^4 \big) R^\epsilon
         A_\alpha(H) R^{q_2}
         \| f \|_{L^2(S)}^{\gamma} \| f \|_{L^\infty(S)}^{p-\gamma}
\end{eqnarray*}
provided
\begin{displaymath}
r^{2p-\alpha-1-\gamma-\epsilon} \leq 10^{-10}
\big\| \widehat{\phi} \, \big\|_{L^\infty}^{-4}.
\end{displaymath}
Since $\lim_{\epsilon \to 0} K(\epsilon)= \infty$, the induction closes if
$2p-\alpha-1-\gamma > 0$, and we obtain
\begin{displaymath}
\int_{B(0,R)} |Ef(x)|^p H(x)dx
\leq C_\epsilon R^\epsilon
     \Big( \max \big[ A_\alpha(H), A_\alpha(H)^{q_1} \big] \Big)
     R^{q_2} \| f \|_{L^2(S)}^{\gamma} \| f \|_{L^\infty(S)}^{p-\gamma},
\end{displaymath}
as desired.
\end{proof}

\section{Proof of Theorem \ref{mainjj}}

(i) We let $b=1$ in Theorem \ref{thejapp}. Then
$p=2(4\alpha+3)/(2\alpha+3)$, and to every
$0 < \epsilon \leq \min[c_0,(p-3)/2]$ there are constants $K=K(\epsilon)$
and $C_\epsilon$ such that $\lim_{\epsilon \to 0} K= \infty$ and
\begin{displaymath}
\int_{B(0,R)} \mbox{Br}_{K^{-\epsilon}} Ef(x)^p H(x)dx
\leq C_\epsilon R^\epsilon A_\alpha(H)^{1-\frac{p}{4}}
     \| f \|_{L^2(S)}^{3+2\epsilon}
\end{displaymath}
for all functions $f \in L^2(S)$ that satisfy the inequality
\begin{equation}
\label{lastsecb1}
\int_{B(\xi_0,R^{-1/2}) \cap S} |f|^2 d\sigma \leq \frac{1}{R}
\end{equation}
for all $\xi_0 \in S$.

Given a non-zero function $f \in L^\infty(S)$, we see that the function
$\| f \|_{L^\infty(S)}^{-1} f$ satisfies (\ref{lastsecb1}), and the above
estimate becomes
\begin{displaymath}
\int_{B(0,R)} \mbox{Br}_{K^{-\epsilon}} Ef(x)^p H(x)dx
\leq \bar{C}_\epsilon R^\epsilon A_\alpha(H)^{1-\frac{p}{4}}
     \| f \|_{L^2(S)}^3 \| f \|_{L^\infty(S)}^{p-3}.
\end{displaymath}
Applying Theorem \ref{parabscaling} with $q_1=1-(p/4)$, $q_2=0$, and
$\gamma=3$, we get the required result provided $2p>\alpha+4$. Solving this
inequality for $\alpha$, we get $3/2 < \alpha < 5/2$. We have thus proved
part (i) except for the case $\alpha=3/2$.

We remind the reader about what we mentioned in \S 1.1 concerning the case
$\alpha = 3/2$ of Theorem \ref{mainjj}. When $\alpha=3/2$, parts (i) and
(ii) of Theorem \ref{mainjj} agree, but the proof belongs to part (ii).

(ii) We suppose first that $\alpha > 3/2$. We let
\begin{displaymath}
b = b_\epsilon = \frac{\alpha-(3/2)}{2 \epsilon}-\alpha-\frac{1}{2},
\end{displaymath}
in Theorem \ref{thejapp}. This requires some explanation. The conclusion of
Theorem \ref{thejapp} holds for $0 < \epsilon \leq \min[c_0,(p-3)/2]$. Since
$b \geq 1$, we have
\begin{displaymath}
\frac{p-3}{2}
= \frac{1}{2} \Big( \frac{8\alpha+6b}{2\alpha+2b+1} - 3 \Big)
\leq \frac{\alpha-(3/2)}{2\alpha+3}.
\end{displaymath}
So we assume that $c_0 \leq (\alpha-(3/2))/(2\alpha+3)$ and choose $b$ to
satisfy $\epsilon = (p-3)/2$. Solving this equation for $b$, we arrive at
the solution $b=b_\epsilon$ as above.

We, therefore, have the following estimate on the broad part of $Ef$:
\begin{displaymath}
\int_{B(0,R)} \mbox{Br}_{K^{-\epsilon}} Ef(x)^p H(x)dx
\leq C_\epsilon R^{(b+1)\epsilon/2} A_\alpha(H)^{1-\frac{p}{4}}
\| f \|_{L^2(S)}^{3+2\epsilon}
\end{displaymath}
whenever $R \geq 1$, $f \in L^2(S)$, and
\begin{equation}
\label{ridofbL2}
\int_{B(\xi_0,R^{-1/2}) \cap S} |f|^2 d\sigma \leq
\frac{1}{R^{(b+1)/2}}
\end{equation}
for all $\xi_0 \in S$.

Given a non-zero function $f \in L^2(S)$, we see that the function
$R^{-(b+1)/4} \| f \|_{L^2(S)}^{-1} f$ satisfies (\ref{ridofbL2}), and the
above estimate becomes
\begin{displaymath}
\int_{B(0,R)} \mbox{Br}_{K^{-\epsilon}} Ef(x)^p H(x)dx \leq C_\epsilon
A_\alpha(H)^{1-\frac{p}{4}} R^{(p-3)(b+1)/4} \| f \|_{L^2(S)}^p.
\end{displaymath}

By H\"{o}lder's inequality, we have
\begin{displaymath}
\int_{B(0,R)} \mbox{Br}_{K^{-\epsilon}} Ef(x)^3 H(x)dx
\leq \Big( A_\alpha(H) R^\alpha \Big)^{1-\frac{3}{p}}
\Big( \int_{B_R(0)} \mbox{Br}_{K^{-\epsilon}} Ef(x)^p H(x)dx
\Big)^{\frac{3}{p}},
\end{displaymath}
and hence
\begin{displaymath}
\Big( \int_{B(0,R)} \mbox{Br}_{K^{-\epsilon}} Ef(x)^3 H(x)dx
\Big)^{\frac{1}{3}}
\leq C_\epsilon^{\frac{1}{p}} R^{\alpha \frac{p-3}{3p}}
     A_\alpha(H)^{\frac{1}{12}} R^{\frac{(p-3)(b+1)}{4p}} \| f \|_{L^2(S)}.
\end{displaymath}
Inserting for $b$ its value in term of $\epsilon$, we get
\begin{displaymath}
(p-3)(b+1) = \alpha-\frac{3}{2}- \Big( \alpha-\frac{1}{2} \Big) (2\epsilon),
\end{displaymath}
so that
\begin{eqnarray*}
\alpha \frac{p-3}{3p} + \frac{(p-3)(b+1)}{4p}
&   =  & \alpha \frac{2\epsilon}{3p} -
         \Big( \alpha-\frac{1}{2} \Big) \frac{\epsilon}{2p}
         + \frac{1}{4p} \Big( \alpha-\frac{3}{2} \Big) \\
&   =  & \frac{\epsilon}{4p} + \alpha \frac{\epsilon}{6p}
         + \frac{1}{4p} \Big( \alpha-\frac{3}{2} \Big) \\
& \leq & \frac{\epsilon}{4p} + \frac{\epsilon}{2p}
         + \frac{1}{12} \Big( \alpha-\frac{3}{2} \Big)
\end{eqnarray*}
(because $\alpha \leq 3 \leq p$), and hence
\begin{displaymath}
\int_{B(0,R)} \mbox{Br}_{K^{-\epsilon}} Ef(x)^3 H(x)dx
\leq \bar{C}_\epsilon R^\epsilon A_\alpha(H)^{\frac{1}{4}}
     R^{\frac{1}{4}(\alpha-\frac{3}{2})} \| f \|_{L^2(S)}^3.
\end{displaymath}
Applying Theorem \ref{parabscaling} with $q_1=1/4$,
$q_2=(1/4)(\alpha-(3/2))$, $p=3$, and $\gamma=3$, we arrive at the desired
result provided $2p > \alpha + 4$, i.e.\ provided $\alpha < 2$.

We have proved part (ii) of Theorem \ref{mainjj} in the regime
$3/2 < \alpha < 2$:
\begin{displaymath}
\int_{B(0,R)} |E f(x)|^3 H(x)dx
\leq C_\epsilon(\alpha,S) R^\epsilon A_{\alpha,3}(H)
     R^{\frac{1}{4}(\alpha - \frac{3}{2})} \| f \|_{L^2(S)}^3
\end{displaymath}
for all $f \in L^2(S)$ and $R \geq 1$. In particular, when
$\alpha=(3/2)+\epsilon$, this becomes
\begin{displaymath}
\int_{B(0,R)} |E f(x)|^3 H(x)dx
\leq C_\epsilon(S) R^\epsilon A_{(3/2)+\epsilon,3}(H)
     R^{\epsilon/4} \| f \|_{L^2(S)}^3.
\end{displaymath}
But from the definition of $A_\alpha(H)$, we see that
$A_\beta(H) \leq A_\alpha(H)$ if $\beta \geq \alpha$, so the same is true
for $A_{\alpha,p}(H)$, and so
\begin{displaymath}
\int_{B(0,R)} |E f(x)|^3 H(x)dx
\leq C_\epsilon(S) R^{2\epsilon} A_{(3/2),3}(H) \| f \|_{L^2(S)}^3.
\end{displaymath}

(iii) In this part we use Theorem \ref{themapp}. We have the following
estimate on the broad part of $Ef$:
\begin{displaymath}
\int_{B(0,R)} \mbox{Br}_{K^{-\epsilon}} Ef(x)^p H(x)dx
\leq C_\epsilon R^\epsilon A_\alpha(H)^{2-\frac{p}{2}}
     \| f \|_{L^2(S)}^3 \| f \|_{L^\infty(S)}^{p-3}
\end{displaymath}
for all $f \in L^\infty(S)$, where $p=13/4$. Of course,
\begin{displaymath}
\| f \|_{L^2(S)}^3
= \| f \|_{L^2(S)}^{\gamma} \| f \|_{L^2(S)}^{3-\gamma}
\lct \| f \|_{L^2(S)}^{\gamma} \| f \|_{L^\infty(S)}^{3-\gamma}
\end{displaymath}
whenever $0 \leq \gamma \leq 3$, so the above estimate implies that
\begin{displaymath}
\int_{B(0,R)} \mbox{Br}_{K^{-\epsilon}} Ef(x)^p H(x)dx
\leq \bar{C}_\epsilon R^\epsilon A_\alpha(H)^{2-\frac{p}{2}}
     \| f \|_{L^2(S)}^{\gamma} \| f \|_{L^\infty(S)}^{p-\gamma}.
\end{displaymath}
Applying Theorem \ref{parabscaling} with $q_1=2-(p/2)$, $q_2=0$, and
$2 \leq \gamma \leq 3$, we arrive at the desired conclusion provided
$2p-\alpha-1-\gamma > 0$, i.e.\ provided $\gamma < (11/2)-\alpha$.

\end{document}